\allowdisplaybreaks \linespread{1.2}
\newtheorem*{question*}{Question}
\newtheorem{theorem}{Theorem}[section]
\newtheorem{lemma}[theorem]{Lemma}
\newtheorem{corollary}[theorem]{Corollary}
\newtheorem{proposition}[theorem]{Proposition}
\theoremstyle{definition}
\newtheorem{definition}[theorem]{Definition}
\newtheorem{definition-lemma}[theorem]{Definition-Lemma}
\newtheorem{definition-theorem}[theorem]{Definition-Theorem}
\newtheorem{remark}[theorem]{Remark}
\newtheorem{proof*}{Proof}
\numberwithin{equation}{section}
\renewcommand\appendix{\par
\setcounter{section}{}
\setcounter{subsection}{}
\gdef\thesection{Appendix~\Alph{section}}}
\newcommand{\ulin}{\underline}
\newcommand{\wt}{\widetilde}
\newcommand{\wh}{\widehat}
\newcommand{\ov}{\overline}
\newcommand{\mfr}{\mathfrak}
\newcommand{\mc}{\mathcal}
\newcommand{\mbf}{\mathbf}
\newcommand{\mbb}{\mathbb}
\newcommand{\tbf}{\textbf}
\newcommand{\mds}{\mathds}
\newcommand{\C}{{\mathbb C}}
\newcommand{\at}{\mathop{\rm At}\nolimits}
\newcommand{\arccosh}{\mathop{\rm arccosh}\nolimits}
\newcommand{\an}{\mathop{\mathbf{An}}\nolimits}
\newcommand{\set}{\mathop{\mathbf{Sets}}\nolimits}
\newcommand{\rr}{\mathop{\rm R}\nolimits}
\newcommand{\ns}{\mathop{\rm NS}\nolimits}
\newcommand{\kerr}{\mathop{\rm Ker}\nolimits}
\newcommand{\coh}{\mathop{\mathbf{\rm Coh}}\nolimits}
\newcommand{\ccc}{\mathop{\rm c}\nolimits}
\newcommand{\pr}{\mathop{\rm pr}\nolimits}
\newcommand{\num}{\mathop{\rm Num}\nolimits}
\newcommand{\sym}{\mathop{\rm Sym}\nolimits}
\newcommand{\homm}{\mathop{\rm Hom}\nolimits}
\newcommand{\hilb}{\mathop{\rm Hilb}\nolimits}
\newcommand{\ch}{\mathop{\rm ch}\nolimits}
\newcommand{\td}{\mathop{\rm td}\nolimits}
\newcommand{\modd}{\mathop{\rm Mod}\nolimits}
\newcommand{\hh}{\mathop{\mathscr H}\nolimits}
\newcommand{\Hom}{\mathop{\rm{\mathscr H}om}\nolimits}
\newcommand{\Ext}{\mathop{\rm{\mathscr E}xt}\nolimits}
\newcommand{\ext}{\mathop{\rm Ext}\nolimits}
\newcommand{\dimm}{\mathop{\rm dim}\nolimits}
\newcommand{\id}{\mathop{\rm id}\nolimits}
\newcommand{\Tor}{\mathop{\rm\mc Tor}\nolimits}
\newcommand{\tr}{\mathop{\rm tr}\nolimits}
\newcommand{\SL}{\mathop{\tbf{\rm SL}}\nolimits}
\newcommand{\GL}{\mathop{\tbf{\rm GL}}\nolimits}
\newcommand{\dett}{\mathop{\rm det}\nolimits}
\newcommand{\spec}{\mathop{\rm Spec}\nolimits}
\newcommand{\proj}{\mathop{\rm Proj}\nolimits}
\newcommand{\Pic}{\mathop{\rm Pic}\nolimits}
\newcommand{\Sch}{\mathop{\mathbf{Sch}}\nolimits}
\newcommand{\et}{\mathop{\text{\'et}}\nolimits}
\newcommand{\rk}{\mathop{\rm rk}\nolimits}
\newcommand{\supp}{\mathop{\rm Supp}\nolimits}
\renewcommand{\abstractname}{Abstract}
\begin{document}

\title{From Hitchin Systems to Rational Elliptic Surfaces with $\mbb C^*$-actions via Orbifold Hilbert Schemes}

\author[1]{Yonghong Huang\thanks{(\Letter) Email: yonghonghuangmath@gmail.com. }}


\renewcommand\Affilfont{\small}
\affil[1]{Department of Mathematics, Xinjiang University, Urumqi P. R. China}

\date{}

\maketitle
\renewcommand{\abstractname}{Abstract}

\begin{abstract}
Using orbifold Hilbert schemes, we compactify all two-dimensional Hitchin systems corresponding to $\wt A_0$, $\wt D_4$, $\wt E_6$, $\wt E_7$, and $\wt E_8$, thereby obtaining four rational elliptic surfaces with $\mbb{C}^*$-actions. Their singular fibers and relative minimal models are listed in Table~\ref{main table}. A particularly interesting point is that we found they can all be obtained by performing a finite number of blow-ups on the second Hirzebruch surface. To this end, we prove that Hilbert schemes of orbifold surfaces are connected smooth projective schemes, and we use the Hilbert–Chow morphism to construct the minimal resolutions of the coarse moduli spaces.
\end{abstract}


\section{Introduction}
The study of Higgs bundles and Hitchin systems represents a profound and dynamic intersection of algebraic geometry, integrable systems, representation theory, and mathematical physics. Founded on Nigel Hitchin’s seminal work on the two-dimensional reduction of the self-dual Yang--Mills equations (\cite{hi87}), this area has since revealed deep connections between moduli spaces, symplectic geometry, singularity theory, and the geometric Langlands program. A pivotal conjecture put forward by Philip Boalch proposed that the Hilbert scheme of points on a moduli space of meromorphic Higgs bundles should itself carry the structure of a moduli space of such Higgs bundles (\cite{boa11}). This conjecture was verified for cotangent bundles of elliptic curves by Gorsky, Nekrasov, and Rubtsov (\cite{gnr01}), and later established in full generality by Groechenig (\cite{go14}).

Using the Fourier–Mukai transform, Groechenig constructed all two-dimensional Hitchin systems associated with the affine Dynkin diagrams $\widetilde{D}_4$, $\widetilde{E}_6$, $\widetilde{E}_7$, and $\widetilde{E}_8$ through crepant resolutions of the GIT quotients $T^\vee E / \Gamma$, where $E$ is an elliptic curve and $\Gamma$ a finite cyclic group. These systems correspond precisely to moduli spaces of orbifold Higgs bundles over one-dimensional Calabi--Yau orbifolds, including elliptic curves and weighted projective lines with prescribed cyclic stacky structures. Despite this landmark achievement, several fundamental geometric questions remained open: explicit compactifications of these Hitchin systems, their realization as projective algebraic surfaces, the structure of their singular fibers, their relatively minimal models, and their compatibility with $\mbb{C}^*$-actions and Poisson structures.

Orbifold Hilbert schemes provide a natural framework for addressing these gaps. Much of the earlier literature concerns the case of quotients of algebraic surfaces by finite groups, where the orbifold Hilbert scheme coincides with the equivariant Hilbert scheme, allowing many properties to be studied equivariantly. For instance, Bridgeland, King and Miles Reid used $G$-Hilbert schemes to construct minimal resolutions and establish the derived McKay correspondence for finite subgroups of $\SL_2$ (see \cite{bkr01}). Ishii later extended this to general finite small subgroups $G\subset\GL_2$, showing that $\hilb^1([\mbb C^2/G])$ yields the minimal resolution of $\mbb C^2/G$ (see \cite{is02}). For a finite subgroup $G\subset\SL_2$, the Hilbert schemes $\hilb^n([\mbb C^2/G])$ of quotient stack $[\mbb C^2/G]$ can be realized as Nakajima quiver varieties, and their geometry is by now well understood; see, for example, Nakajima’s ICM 2002 report~\cite{na02}. In contrast, when $G \subset \GL_2$, much less is known about the geometry of the corresponding Hilbert schemes. In particular, the quiver-theoretic approach is generally ineffective in this setting, and it is not known whether $\hilb^n([\mbb C^2/G])$ can be realized as a quiver variety for any finite small subgroup $G\subset\GL_2$.

Although many results are already known on moduli spaces of sheaves and Hilbert schemes over orbifold surfaces, much remains to be developed. In this paper, we aim to develop some basic aspects of the theory of moduli spaces of sheaves and Hilbert schemes on orbifold surfaces, and apply this framework to compactify and characterize the aforementioned two-dimensional Hitchin systems. We first investigate wall-crossing phenomena for stability conditions on orbifold surfaces and prove the existence of polarizations ensuring that semistability coincides with stability for generic numerical K-classes. We then show that moduli spaces of stable sheaves are connected, smooth, projective schemes. Crucially, we construct orbifold Hilbert schemes for general projective orbifold surfaces, establish their smoothness and connectedness, and prove that the Hilbert-Chow morphism provides minimal and Poisson resolutions of the coarse moduli spaces, extending classical results beyond the $\SL_2$ case to all finite small subgroups of $\GL_2$.

Applying these general results to the Hitchin systems of types $\widetilde{D}_4$, $\widetilde{E}_6$, $\widetilde{E}_7$, and $\widetilde{E}_8$, we realize their natural compactifications as orbifold Hilbert schemes of projective cotangent bundles. These compactifications are rational elliptic surfaces with $\mathbb{C}^*$-actions whose fibrations possess singular fibers only over $0$ and $\infty$. We explicitly describe these fibers, their dual graphs, and relatively minimal models, demonstrating that all such surfaces arise from iterated blow-ups of the second Hirzebruch surface. In doing so, we complete the geometric classification of these Hitchin systems, extend the scope of orbifold Hilbert scheme theory, and reveal unanticipated links between integrable systems, rational elliptic surfaces, orbifold geometry, and minimal resolutions of singularities.

\subsection{Main results}
For an algebraic normal surface with quotient singularities, there exists an associated smooth Deligne-Mumford stack\textemdash namely, the canonical stack\textemdash whose coarse moduli space is the surface itself, and whose stacky locus has codimension two. Henceforth, in what follows, we only consider the orbifold surface whose stacky locus has codimension two. Let $X$ be an orbifold surface with coarse moduli space $\pi : \mc X\rightarrow X$. In Section~\ref{sec NS gp}, we study the wall-crossing theory on the orbifold surface $\mc X$. The reason for employing the classical wall-crossing theory is that, at present, Bridgeland stability is defined only for orbifold surfaces with rational double point singularities (see \cite{lr22}). We analyze the wall-crossing phenomena for stability conditions on $\mc X$ and establish the existence of a polarization $H$ on $X$ such that, for a generic numerical $K$-class $\upsilon \in K^{\mathrm{num}}(\mc X)$, the moduli space $\mc M_\upsilon$ of semistable sheaves contains no strictly semistable objects (see Theorem~\ref{thm change of plarization} and Lemma~\ref{lemm no str ss}). Assuming additionally that either $K_\mc X \cdot \pi^*H < 0$ or $K_\mc X \cong \mc O_\mc X$, we show that $\mc M_\upsilon$ is a connected, smooth, projective scheme (Proposition~\ref{pro sm of mod}, Corollary~\ref{thm connectedness of moduli}). Moreover, if $\mc X$ is equipped with a Poisson structure, then $\mc M_\upsilon$ naturally inherits a compatible Poisson structure, thereby forming a connected, smooth, projective Poisson variety (Proposition~\ref{pro poi st on moduli}). To achieve these results, we construct the Atiyah class for smooth Deligne–Mumford stacks (Section~\ref{subsec at class}), in a manner distinct from the general construction for algebraic stacks by Kuhn \cite{ku24}. This construction enables the definition of the Kodaira–Spencer map for $\mc M_\upsilon$, yielding explicit descriptions of its tangent and cotangent bundles (Proposition~\ref{pro ks map}, Corollary~\ref{cor cotang of mod sp}). Furthermore, we express the diagonal class of $\mc M_\upsilon$ in terms of Chern classes and prove that its cohomology ring is generated by the Künneth components of the orbifold Chern character associated with a universal sheaf (Theorem~\ref{thm genetators of moduli}). In Section~\ref{sec orb Hilb schemes}, we carry out a systematic study of the Hilbert scheme $\hilb^{\alpha}(\mc X)$ for $\alpha \in \mc N$ (notation as in Corollary~\ref{cor lattice for hilb}). These investigations culminate in the following theorem:

\begin{theorem}
\begin{enumerate}[$(1)$]
\item $\hilb^n(\mc X)$ is a smooth connected projective scheme.
\item  The Hilbert-Chow morphism $h : \hilb^{n}(\mc X)\rightarrow\sym^n(X)$ is a resolution of singularities. 
\item $h : \hilb^{1}(\mc X)\rightarrow X$ is the minimal resolution.
\item If $\mc X$ equipped with a Poisson structure, then $h$ is a Poisson resolution with respect to the induced Poisson structures.
\end{enumerate}
\end{theorem}
\begin{corollary}
Suppose that $W$ is a smooth connected quasiprojective scheme equipped with an action of a finite group $G$. If the fixed locus of the action is zero-dimensional, then
\begin{equation*}
		\hilb^n([W/G])=\{Z\subseteq W|\text{$Z$ is a $G$-invariant closed subscheme with $H^0(\mc O_Z)\cong\rho_{\rm reg}^{\oplus n}$}\}
\end{equation*} 
is a smooth connected quasiprojective scheme. Moreover, all the conclusions of the above theorem remain valid for $[W/G]$.
\end{corollary}

\begin{corollary}
Let $X$ be an irreducible symplectic projective surface with quotient singularities and let $\mc X$ be its associated canonical stack. Then the Hilbert-Chow morphism 
\[
	h :	\hilb^n(\mc X)\longrightarrow\sym^n(X)
\]
is a symplectic resolution.
\end{corollary}

As there exists a natural morphism 
\begin{equation}\label{equ res of hilb}
	\hilb^n(\mc X)\longrightarrow\hilb^{n}(X)
\end{equation}
(see (\ref{equ mor orb to coar}) in Section \ref{subsec HC}), it serves as a resolution whenever $X$ possesses only rational double singularities. In fact, a notable result is due to Zheng \cite{ze23}, who proved that the Hilbert scheme of points on a normal quasi-projective surface with at worst rational double point singularities is irreducible. However, for general quotient singularities, this statement does not hold; in fact, Zheng constructed explicit counterexamples in the same paper.

Using quiver-theoretic methods, Craw, Gammelgaard, Gyenge, and Szendr\H{o}i showed that, in the case $\mc X = [\mathbb C^2/G]$ with $G \subset \SL_2$, the morphism~\eqref{equ res of hilb} provides the unique projective symplectic resolution; see~\cite{cggs21}. This motivates the following question:

\begin{question*}
	Let $\mc X$ be an irreducible symplectic surface with quotient singularities. Its associated canonical stack $\mc X$ is then a symplectic orbifold surface. Does the morphism~\eqref{equ res of hilb} give the unique projective symplectic resolution (up to isomorphism)?
\end{question*}

We also obtain the following proposition.

\begin{proposition}
For any finite small group $G\subset\GL_2$, $\hilb^n([\spec (\mbb C[x,y])/G])$ and $\hilb^n([\spec (\mbb C[\![x,y]\!])/G])$ are connected. 
\end{proposition}
For $G = \{\mathrm{id}\} \subset \GL_2$, the connectedness of $\hilb^n([\spec(\mathbb C[\![x,y]\!])/G])$ was established by Brian\c{c}on, who in fact proved that $\hilb^n(\mathbb C[\![x,y]\!])$ is irreducible (\cite{br77}). Crawley-Boevey showed that $\hilb^n([\mathbb C^2/G])$ is connected for any finite subgroup $G \subset \SL_2$ (\cite{cr01}). More recently, Bejleri and Zaimi established the connectedness of $\hilb^n([\mathbb C^2/G])$ in the case where $G$ is abelian (see \cite[Corollary 2.3]{bz23}).

In Section \ref{sec cpt hitchin systems}, we compactify the two-dimensional Hitchin systems via orbifold Hilbert schemes. Precisely, the two-dimensional Hitchin systems corresponding to the affine Dynkin diagrams $\wt A_0$, $\wt D_4$, $\wt E_6$, $\wt E_7$ and $\wt E_8$ are the moduli spaces of orbifold Higgs bundles on 
\begin{equation*}
E,\quad \mds P_{2,2,2,2}^1,\quad \mds P_{3,3,3}^1,\quad \mds P_{4,4,2}^1\quad \text{and}\quad \mds P_{6,3,2}^1
\end{equation*}
respectively, where $E$ is an elliptic curve and $\mds P^1_{a_1,\cdots,a_s}$ denotes a projective line with $s$ orbifold points of the specified orders. These are precisely the one-dimensional Calabi-Yau orbifolds, and, except for $E$, each can be realized as a nontrivial quotient stack $\mc X_i:=[E_i/\mu_i]$ of a certain elliptic curve $E_i$ by cyclic group $\mu_i$ for $i=2,3,4,6$. In the case of $\wt A_0$, the Higgs moduli space is $T^\vee E$, and its natural compactification is $\mds P(T^\vee E\oplus\mc O_E)$. Therefore we restrict our attention to the remaining cases.

\begin{theorem}\label{thm cpf of two higgs}
The two-dimensional Hitchin systems for $\wt D_4$, $\wt E_6$, $\wt E_7$, and $\wt E_8$ admit natural compactifications 
\begin{equation}\label{equ cpt of hitch}
\hilb^{1}\!\bigl(\mds P(T^\vee\mc X_i\oplus\mc O_{\mc X_i})\bigr)
\end{equation}
with the following properties:
\begin{enumerate}[$(1)$]
\item The natural $\mbb{C}^*$-action and Poisson structure on $\hilb^1(\mds P(T^\vee\mc X_i \oplus \mc O_{\mc X_i}))$ are compatible with, and extend, the $\mbb{C}^*$-action and symplectic structure on $\mc M(i)$.
\item The Hitchin maps extend to the compositions
\begin{equation}\label{equ cpt Hitch map}
\begin{tikzcd}
\hilb^1\!\bigl(\mds P(T^\vee\mc X_i\oplus\mc O_{\mc X_i})\bigr)\arrow[r, "h_i"] \arrow[rr,bend left=15, "\pi_i"] & E\times\mds P^1/\mu_i \arrow[r] & \mds P^1/\mu_i \cong \mds P^1,
\end{tikzcd}
\end{equation}
where $h_i$ are the Hilbert–Chow morphisms. 
\item Each $h_i$ is the minimal resolution of the GIT quotient
\begin{equation}\label{equ cpt of hitch 1}
		 \mds P(T^\vee E_i\oplus\mc O_{E_i})/\mu_i,
\end{equation}
and provides a Poisson resolution.
\item The boundary (with reduced structure) consists of $s+1$ copies of $\mds P^1$, where $s$ is the number of orbifold points of $\mc X_i$.
\end{enumerate}

\end{theorem}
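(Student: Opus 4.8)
The plan is to show that for each $i\in\{2,3,4,6\}$ the orbifold surface $\mc X:=\mds P(T^\vee\mc X_i\oplus\mc O_{\mc X_i})$ satisfies the hypotheses of the general Theorem above, and then to deduce the compactification $\hilb^1(\mds P(T^\vee\mc X_i\oplus\mc O_{\mc X_i}))$ together with properties $(1)$--$(4)$. First I would pin down the geometry. Since $\mc X_i=[E_i/\mu_i]$ one has $T^\vee\mc X_i=[T^\vee E_i/\mu_i]$ and $\mc X=[\mds P(T^\vee E_i\oplus\mc O_{E_i})/\mu_i]$; as $T^\vee E_i\cong\mc O_{E_i}$ this is $[(E_i\times\mds P^1)/\mu_i]$, with $\mu_i$ acting on the $\mds P^1$-factor through its action on $T^\vee E_i$ and trivially on the $\mc O_{E_i}$-summand. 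This is a smooth projective Deligne--Mumford stack of dimension two whose coarse space is the GIT quotient $X:=\mds P(T^\vee E_i\oplus\mc O_{E_i})/\mu_i\cong(E_i\times\mds P^1)/\mu_i$; over each of the $s$ orbifold points of $\mc X_i$ the induced action on the $\mds P^1$-fibre has exactly two fixed points (the zero section and the section at infinity), so the stacky locus of $\mc X$ consists of $2s$ isolated points and hence has codimension two. Because $\mc X_i$ is a Calabi--Yau orbifold curve, $\det(T^\vee\mc X_i\oplus\mc O_{\mc X_i})=K_{\mc X_i}$ has degree $0$; pulling $K_{\mc X}$ back along the \'etale cover $\mds P(\mc O_{E_i}^{\oplus 2})=E_i\times\mds P^1$ gives $\mathrm{pr}_2^{*}\mc O_{\mds P^1}(-2)$, which has strictly negative intersection against every ample divisor, so $K_{\mc X}\cdot\pi^{*}H<0$ for the polarization $H$ of Theorem~\ref{thm change of plarization}. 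Finally $\mc X$ carries the Poisson structure induced by the $\mu_i$-invariant Liouville form on $T^\vee\mc X_i$. Thus $\mc X$ is of the type treated above.

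Parts $(1)$ and $(3)$ then follow quickly. Applying the general Theorem above to $\mc X$: $\hilb^1(\mc X)$ is a smooth connected projective scheme with a natural Poisson structure, and the Hilbert--Chow morphism $h_i\colon\hilb^1(\mc X)\to\sym^1(X)=X$ is the minimal resolution and (the $n=1$ case of the Poisson-resolution statement) a Poisson resolution of $X=\mds P(T^\vee E_i\oplus\mc O_{E_i})/\mu_i$; this is exactly $(3)$. Write $D_\infty\subset X$ for the image of the section at infinity and $V:=X\setminus D_\infty=T^\vee E_i/\mu_i$. Restricting over $V$, the morphism $h_i^{-1}(V)\to V$ is again the minimal resolution, so---the singularities of $T^\vee E_i/\mu_i$ being of type $A$, hence canonical---it is the crepant resolution of $T^\vee E_i/\mu_i$, which by Groechenig \cite{go14} is the Hitchin moduli space $\mc M(i)$; by uniqueness of minimal resolutions of surfaces this gives the open immersion with dense image $\mc M(i)\hookrightarrow\hilb^1(\mc X)$. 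For $(1)$: the fibrewise $\mbb C^{*}$-scaling of $T^\vee\mc X_i$ (weight one on $T^\vee\mc X_i$, zero on $\mc O_{\mc X_i}$) commutes with $\mu_i$, extends to $\mc X$ fixing the zero and infinity sections, and by functoriality of $\hilb^1$ induces a $\mbb C^{*}$-action on $\hilb^1(\mc X)$ for which $h_i$ is equivariant; over $V$ it is the scaling of the Higgs field, so by uniqueness of lifts of a $\mbb C^{*}$-action to a minimal resolution it agrees on $\mc M(i)$ with Hitchin's action. Likewise the Poisson structure of $\hilb^1(\mc X)$ restricts over $V$ to the one induced by the Liouville form, which under $h_i^{-1}(V)\cong\mc M(i)$ is the Higgs symplectic form; since both the action and the bivector are regular on all of $\hilb^1(\mc X)$, they extend the corresponding structures on $\mc M(i)$.

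For $(2)$ and $(4)$: under $\mc M(i)\cong h_i^{-1}(V)$ the Hitchin map is the composite $h_i^{-1}(V)\xrightarrow{h_i}V=T^\vee E_i/\mu_i\to\mbb C$ whose last arrow sends $[e,\phi]$ to the $\mu_i$-invariant $\phi^{\,i}$, i.e.\ it is the fibre projection $\mathrm{pr}_2\colon E_i\times\mds P^1\to\mds P^1$ followed by $\mds P^1\to\mds P^1/\mu_i\cong\mds P^1$; hence it extends over $\hilb^1(\mc X)$ to precisely the composite $\pi_i$ of \eqref{equ cpt Hitch map}, with $h_i$ the Hilbert--Chow morphism, which is $(2)$. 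The reduced boundary equals $h_i^{-1}(D_\infty)$: it is the strict transform of $D_\infty\cong E_i/\mu_i\cong\mds P^1$ together with the $h_i$-exceptional curves over the $s$ singular points of $X$ lying on $D_\infty$, each of which is the image of an orbifold point $p_j$ (of order $a_j$) on the section at infinity; in a local chart there the $\mu_{a_j}$-action has weights $(1,1)$ (the base coordinate and the fibre coordinate $t/\phi$ both transforming with weight one), so the singularity is of type $\tfrac{1}{a_j}(1,1)$ and is resolved by a single $(-a_j)$-curve $\cong\mds P^1$. Hence the reduced boundary is a union of $s+1$ copies of $\mds P^1$, giving $(4)$. (By contrast the zero section meets $X$ in singularities with local weights $(1,-1)$, i.e.\ of type $A_{a_j-1}$, whose resolutions lie in the interior $\mc M(i)$.)

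The only non-formal input is the identification used in $(1)$: that the intrinsic $\mbb C^{*}$-action and symplectic form of the Higgs moduli space $\mc M(i)$ correspond, under $\mc M(i)\cong h_i^{-1}(V)$, to the fibre scaling and to the Liouville form pulled up from $T^\vee E_i/\mu_i$. This must be extracted from Groechenig's construction of $\mc M(i)$ as a crepant resolution of $T^\vee E_i/\mu_i$ in \cite{go14}, and I expect it to be the main obstacle. Everything else---that $\mc X$ meets the hypotheses of the general Theorem, the uniqueness of minimal resolutions of surfaces and of torus-action lifts to them, the functoriality of $\hilb^1$ and of Hilbert--Chow, and the local normal form of cyclic quotient singularities---is routine.
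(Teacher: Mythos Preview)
Your proposal is correct and follows essentially the same route as the paper: verify that $\mc X=\mds P(T^\vee\mc X_i\oplus\mc O_{\mc X_i})$ has codimension-two stacky locus and $K_\mc X\cdot H<0$, then apply the general machinery to obtain $(3)$, identify the open piece with $\mc M(i)$ via uniqueness of minimal resolutions, and invoke Groechenig's work (the paper also cites \cite{jia25} for the symplectic identification) for the non-formal content of $(1)$.

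The one genuine difference is your treatment of $(4)$. The paper identifies the boundary as the degeneracy locus of the induced Poisson structure on $\hilb^1(\mc X)$, using Remark~\ref{rem poi st on hilb} to see it equals $\{\mc Z:\mc Z\cap\mc D\neq\emptyset\}$; it then decomposes this set-theoretically as $\hilb^1(\mds P(T^\vee\mc X_i))\cup\bigcup_j\mu_{a_j}\text{-}\hilb(\mbb C[\![x,y]\!])$ and appeals to Lemma~\ref{lemm pun hilb} (a direct parametrisation of $\mu_r$-clusters in the formal disk by $\mds P^1$) for each piece. Your argument instead reads the boundary as $h_i^{-1}(D_\infty)$ on the coarse side and analyses the cyclic quotient singularities on $D_\infty$: the local weights at infinity are $(1,1)$, so each is of type $\tfrac{1}{a_j}(1,1)$ with a single exceptional $\mds P^1$. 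Both arguments are valid and describe the same curves; yours is more elementary (it avoids the Poisson-degeneracy description and Lemma~\ref{lemm pun hilb}), while the paper's is more intrinsic to the Hilbert-scheme viewpoint and makes the Poisson degeneracy visible. For $(2)$ the paper supplies Proposition~\ref{pro hitchin map}, which proves via the BNR/Fourier--Mukai picture exactly the factorisation you assert; you should regard that proposition as the rigorous justification of the step where you identify the Hitchin map with the fibre projection.
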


For brevity, we denote the GIT quotient (\ref{equ cpt of hitch 1}) and its compactifications (\ref{equ cpt of hitch}) by $X_i$ and $\wt X_i$, respectively.

\begin{theorem}
Each $\wt X_i$ is a rational elliptic surface with a $\mathbb{C}^*$-action, whose fibration $\pi_i : \wt X_i \to \mathbb{P}^1$ has singular fibers only over $0$ and $\infty$, as summarized in Table \ref{main table}. Each $\wt X_i$ is obtained by blowing up the second Hirzebruch surface (see Propositions \ref{pro stru of ell 2}, \ref{prop stru for ell 3}, \ref{pro ell 4 str}, \ref{pro ell 6 str}), and the Hitchin system $M(i)$ is isomorphic to $\wt X_i$ with the fiber over $\infty$ removed.
\end{theorem}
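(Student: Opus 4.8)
The plan is to read everything off the explicit model supplied by Theorem~\ref{thm cpf of two higgs}: since $T^\vee E_i$ is a trivial line bundle, $\mds P(T^\vee E_i\oplus\mc O_{E_i})\cong E_i\times\mathbb{P}^1$, with $\mu_i$ acting by its automorphisms on $E_i$ and by $[\xi:w]\mapsto[\lambda^{-1}\xi:w]$ on the fibre ($\lambda=dg$ at a fixed point), so that $X_i=(E_i\times\mathbb{P}^1)/\mu_i$. The second projection induces the extended Hitchin map $\pi_i:\wt X_i\xrightarrow{h_i}X_i\to\mathbb{P}^1/\mu_i\cong\mathbb{P}^1$, and the first projection gives a conic bundle $X_i\to E_i/\mu_i\cong\mathbb{P}^1$ (the coarse space of $\mc X_i$). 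From the conic bundle, whose generic fibre is a smooth $\mathbb{P}^1$ (the $\mu_i$-action being free over a generic point), Tsen's theorem gives that $X_i$ is rational, hence so is $\wt X_i$. The generic fibre of $\pi_i$ is $(E_i\times\{\text{orbit}\})/\mu_i\cong E_i$, an elliptic curve, so $\pi_i$ is an elliptic fibration and $\wt X_i$ is a rational elliptic surface; the $\mathbb{C}^*$-action and the Poisson structure, and their compatibility with $\pi_i$ (the $\mathbb{C}^*$-action on $\mathbb{P}^1$ fixing $0,\infty$), come from Theorem~\ref{thm cpf of two higgs}(1).

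Next I would localize the singular fibres. The non-free locus of the $\mu_i$-action on $E_i\times\mathbb{P}^1$ consists of points lying over $[0:1]$ or $[1:0]$ in the fibre $\mathbb{P}^1$, hence over $\{0,\infty\}\subset\mathbb{P}^1$; therefore $X_i$ is smooth over $\mathbb{P}^1\setminus\{0,\infty\}$, $h_i$ is an isomorphism there, and every fibre over that locus is a smooth elliptic curve. Over $0$, $\pi_i^{-1}(0)$ is the strict transform of $(E_i\times\{[0:1]\})/\mu_i\cong\mathbb{P}^1$ together with the $h_i$-exceptional curves over the singularities of $X_i$ on it; over the orbifold point of $\mc X_i$ of order $m_j$ that singularity is the symplectic one $\tfrac{1}{m_j}(1,m_j-1)$, i.e.\ of type $A_{m_j-1}$. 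Hence the dual graph of $\pi_i^{-1}(0)$ is a central $\mathbb{P}^1$ with chains of lengths $m_1-1,\dots,m_s-1$ attached, which is the affine Dynkin diagram $\wt D_4,\wt E_6,\wt E_7,\wt E_8$ for $\mds P^1_{2,2,2,2},\mds P^1_{3,3,3},\mds P^1_{4,4,2},\mds P^1_{6,3,2}$; by Kodaira's classification $\pi_i^{-1}(0)$ is of type $I_0^*,IV^*,III^*,II^*$ (Euler numbers $6,8,9,10$). In particular $h_i$ restricts over a neighbourhood of $0$ to the unique minimal resolution of the surface rational double points of $T^\vee E_i/\mu_i=X_i\setminus\pi_i^{-1}(\infty)$, so that $\wt X_i\setminus\pi_i^{-1}(\infty)$ is that minimal resolution; this agrees with Groechenig's crepant resolution, i.e.\ with the Hitchin system $M(i)$, with matching $\mathbb{C}^*$-action and symplectic form by part~(1).

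The delicate part is the fibre over $\infty$. There $\pi_i^{-1}(\infty)$ is the strict transform $\tilde C_\infty$ of $(E_i\times\{[1:0]\})/\mu_i\cong\mathbb{P}^1$ together with the exceptional curves over the now \emph{non-symplectic} singularities $\tfrac1{m_j}(1,1)$, each of which resolves to a single $(-m_j)$-curve. Since $(E_i\times\{[1:0]\})^2=0$ and the quotient map has degree $|\mu_i|$ on this section, $C_\infty^2=0$, and computing the drop of self-intersection across the $\tfrac1{m_j}(1,1)$-resolutions gives $\tilde C_\infty^2=-2$ in the case $\wt D_4$ (all $m_j=2$, so $\pi_i^{-1}(\infty)$ is again $I_0^*$ and $\wt X_2$ is relatively minimal) and $\tilde C_\infty^2=-1$ in the cases $\wt E_6,\wt E_7,\wt E_8$ — so that in those cases $\wt X_i$ is \emph{not} relatively minimal. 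I would then contract the $(-1)$-curve $\tilde C_\infty$ (and the further $(-1)$-curves this produces), obtaining a relatively minimal rational elliptic surface; the identity $\sum_v e(\text{fibre}_v)=12$ together with $e(\pi_i^{-1}(0))=8,9,10$ forces the remaining fibre to be $IV,III,II$ respectively. Finally, $\wt X_i$ (resp.\ its relative minimal model, resp.\ $X_i$) is exhibited as a blow-up of $\mathbb{F}_2$ by relatively minimalizing the conic bundle $X_i\to E_i/\mu_i$: the negative section, coming from the zero-section image corrected by the $A_{m_j-1}$-resolutions, has self-intersection $-2$. These computations, done case by case, are the content of Propositions~\ref{pro stru of ell 2}, \ref{prop stru for ell 3}, \ref{pro ell 4 str}, \ref{pro ell 6 str}, and the resulting data is tabulated in Table~\ref{main table}.

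I expect the main obstacles to be exactly (i) and (ii) of the last paragraph: correctly computing the multiplicities of $\tilde C_\infty$ and of the $(-m_j)$-curves in $\pi_i^{-1}(\infty)$ and tracking how the iterated blow-downs collapse this tree to a tangential/cuspidal Kodaira fibre, and pinning down the birational chain to $\mathbb{F}_2$ through the resolution of the cyclic quotient singularities. By contrast, rationality, the elliptic structure, the localization of the singular fibres, the identification of $\pi_i^{-1}(0)$, and the identification $M(i)\cong\wt X_i\setminus\pi_i^{-1}(\infty)$ should all follow formally from Theorem~\ref{thm cpf of two higgs} and the explicit model $(E_i\times\mathbb{P}^1)/\mu_i$.
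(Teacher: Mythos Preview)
Your proposal is correct and follows essentially the same route as the paper: the explicit model $X_i=(E_i\times\mathbb{P}^1)/\mu_i$, the two projections giving the elliptic fibration $\pi_i$ and the conic bundle $\pi_i'$, the identification of the cyclic quotient singularities as $\tfrac{1}{m_j}(1,m_j-1)$ over $0$ and $\tfrac{1}{m_j}(1,1)$ over $\infty$, and the contraction of the conic bundle to $\mathbb{F}_2$ are exactly what the paper does in Lemmas~\ref{lemm H fib for ell 2}--\ref{lemm ell 6 E} and Propositions~\ref{pro stru of ell 2}--\ref{pro ell 6 str}.

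Two differences in execution are worth noting. First, you invoke Tsen's theorem and the Euler-number identity $\sum_v e(F_v)=12$ to pin down the Kodaira type over~$\infty$ on the relative minimal model; the paper instead tracks the successive blow-downs of $D_\infty$ explicitly in each case (Figures in the proofs of Lemmas~\ref{lemm E fib for ell 3}, \ref{lemm ell 4 E}, \ref{lemm ell 6 E}), which also yields the intermediate self-intersections recorded in Table~\ref{main table}. Second, your computation of $\tilde C_\infty^2$ via ``drop of self-intersection across the $\tfrac{1}{m_j}(1,1)$-resolutions'' is a $\mathbb{Q}$-divisor calculation on a singular surface; the paper avoids this by writing $\wt\pr_2^{-1}(\infty)=|\mu_i|\,\wt D_\infty$ as a fibre class, pulling back to $\wt X_i$, and using the fibre relation $(\,|\mu_i|\,D_\infty+\sum m_jF_j\,)\cdot D_\infty=0$ together with adjunction to read off $D_\infty^2$ and the multiplicities directly. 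Your shortcuts are valid but would need to be made precise; the paper's intersection-theoretic bookkeeping is more pedestrian but self-contained.
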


\begin{table}[H]
\begin{center}
\begin{tabular}{|m{0.5cm}|m{2.5cm}|m{1.8cm}|m{3.9cm}|m{2.8cm}|m{2.5cm}|}
\hline
\textbf{} & \textbf{Fiber over 0} & \textbf{Fiber over $\infty$}&\textbf{Dual graph (over $0$)} & \textbf{Dual graph (over $\infty$)} & \textbf{Relatively minimal model} \\ 

\hline
			
$\wt X_2$ & $2D_0+E_1+E_2+E_3+E_4$ & $2D_\infty+F_1+F_2+F_3+F_4 $ & 

\begin{center}
\begin{tikzpicture}[scale=0.25][font=\tiny]
	
\node at (-5,0) {$\mathrm{I}_0^* (\wt D_4)$};
	
\node[circle, fill=black, inner sep=1.5pt] (A) at (0,0) {};
\node[circle, fill=black, inner sep=1.5pt] (B) at (-2,2) {};
\node[circle, fill=black, inner sep=1.5pt] (C) at (2,2) {};
\node[circle, fill=black, inner sep=1.5pt] (D) at (-2,-2) {};
\node[circle, fill=black, inner sep=1.5pt] (E) at (2,-2) {};

\draw [thick] (A)--(B) (A)--(C) (A)--(D) (A)--(E);

\node at (A)  [yshift=-10pt]  {$-2$};
\node[left] at (B) {$-2$};
\node[right] at (C) {$-2$};
\node[left] at (D) {$-2$};
\node[right] at (E) {$-2$};

\end{tikzpicture}
\end{center} 
&
\begin{center}

\begin{tikzpicture}[scale=0.25][font=\tiny]
	
\node at (-5,0) {$\mathrm{I}_0^* (\wt D_4)$};		
\node[circle, fill=black, inner sep=1.5pt] (A) at (0,0) {};
\node[circle, fill=black, inner sep=1.5pt] (B) at (-2,2) {};
\node[circle, fill=black, inner sep=1.5pt] (C) at (2,2) {};
\node[circle, fill=black, inner sep=1.5pt] (D) at (-2,-2) {};
\node[circle, fill=black, inner sep=1.5pt] (E) at (2,-2) {};
\draw [thick] (A)--(B) (A)--(C) (A)--(D) (A)--(E);

\node at (A) [yshift=-10pt] {$-2$};
\node[left] at (B) {$-2$};
\node[right] at (C) {$-2$};
\node[left] at (D) {$-2$};
\node[right] at (E) {$-2$};
	\end{tikzpicture}
\end{center} 
& Already relatively minimal \\ \hline
			
$\wt X_3$ & $3D_0+2E_{11}+E_{21}+2E_{12}+E_{22}+2E_{13}+E_{23}$ & $3D_\infty+F_1+F_2+F_3$ &

\begin{center}
\begin{tikzpicture}[scale=0.6][font=\tiny]
		
\node at (-2,-1) {$\mathrm{IV}^* (\wt E_6)$};

\node[circle, fill=black, inner sep=1.5pt] (A) at (-2,0) {};
\node[circle, fill=black, inner sep=1.5pt] (B) at (-1,0) {};
\node[circle, fill=black, inner sep=1.5pt] (C) at (0,0) {};
\node[circle, fill=black, inner sep=1.5pt] (D) at (1,0) {};
\node[circle, fill=black, inner sep=1.5pt] (G) at (2,0) {};
\node[circle, fill=black, inner sep=1.5pt] (E) at (0,-1) {};
\node[circle, fill=black, inner sep=1.5pt] (F) at (0,-2) {};
		
\draw [thick] (A) -- (B) -- (C) -- (D)--(G);
\draw [thick] (C) -- (E) -- (F);
	\node [above left] at (A)  {-2};
	\node [above]   at (B)  {-2};
	\node [above]   at (C)  {-2};
	\node  [above]  at (D)  {-2};
	\node  [above]  at (G) {-2};
	\node  [right]  at (E)  {-2};
	\node  [right]  at (F) {-2};
\end{tikzpicture}
\end{center}

&
\begin{center}
\begin{tikzpicture}[scale=0.3][font=\tiny]
\node[circle, fill=black, inner sep=1.5pt] (A) at (0,2)    {};
\node[circle, fill=black, inner sep=1.5pt] (B) at (-2,2)   {};
\node[circle, fill=black, inner sep=1.5pt] (C) at (2,2)    {};
\node[circle, fill=black, inner sep=1.5pt] (D) at (0, 4)   {};
		
\draw [thick] (A)--(B);
\draw [thick](C)--(A);
\draw [thick](A)--(D);
		

\node [below]   at (A)  {$-1$};
\node [left]    at (B)  {$-2$};
\node [right]   at (C)  {$-2$};
\node [above]    at (D)  {$-2$};
		
\end{tikzpicture}
\end{center}
& Blow down $D_\infty$: fiber over $\infty$ becomes $\mathrm{IV} (\wt A_2)$ \\

\hline
			
$\wt X_4$ & $4D_0+3E_{11}+2E_{21}+E_{31}+3E_{12}+2E_{22}+E_{32}+2E$ & $4D_\infty+F_1+F_2+2F$ &

\begin{center}
\begin{tikzpicture}[scale=0.5][font=\tiny]

\node at (-2,-1) {$\mathrm{III}^* (\wt E_7)$};
\node[circle, fill=black, inner sep=1.5pt] (A) at (-2,0) {};
\node[circle, fill=black, inner sep=1.5pt] (B) at (-1,0) {};
\node[circle, fill=black, inner sep=1.5pt] (C) at (0,0) {};
\node[circle, fill=black, inner sep=1.5pt] (D) at (1,0) {};
\node[circle, fill=black, inner sep=1.5pt] (G) at (2,0) {};
\node[circle, fill=black, inner sep=1.5pt] (E) at (3,0) {};
\node[circle, fill=black, inner sep=1.5pt] (F) at (4,0) {};
\node[circle, fill=black, inner sep=1.5pt] (H) at (1,-1) {};

\draw [thick] (A) -- (B) -- (C) -- (D) -- (G) -- (E) -- (F);
\draw  [thick] (D)--(H);
		
\node  [above]   at (A)   {$-2$};
\node  [above]   at (B)   {$-2$};
\node  [above]   at (C)   {$-2$};
\node  [above]   at (D)   {$-2$};
\node  [above]   at (G)   {$-2$};
\node  [above]  at (E)    {$-2$};
\node  [above]  at (F)    {$-2$};
\node  [left]   at (H)    {$-2$};


\end{tikzpicture}
\end{center}

&
\begin{center}
\begin{tikzpicture}[scale=0.25][font=\tiny]
		\node[circle, fill=black, inner sep=1.5pt] (A) at (0,0)  {};
		\node[circle, fill=black, inner sep=1.5pt] (B) at (3,0)  {};
		\node[circle, fill=black, inner sep=1.5pt] (C) at (6,0)  {};
		\node[circle, fill=black, inner sep=1.5pt] (D) at (3,2)  {};
		
		\draw [thick] (A)--(B)--(C);
		\draw [thick] (B)--(D);
	    \node [left]       at (A)  {$-4$};
		\node [below]      at (B)  {$-1$};
		\node [right]      at (C)  {$-4$};
		\node [above]      at (D)   {$-2$};

		
\end{tikzpicture}
\end{center} 		
& Blow down two $(-1)$-curves over $\infty$: fiber becomes $\mathrm{III}(\wt A_1)$\\ 

\hline
			
$\wt X_6$ & $6D_0+E_5+2E_4+3E_3+4E_2+5E_1+4E_6+2E_7+3E_8$ & $6D_\infty+F_1+2F_2+3E_9$ &

\begin{center}
\begin{tikzpicture}[scale=0.5][font=\tiny]
	
\node at (-1,-1) {$\mathrm{II}^* (\wt E_8)$};		
\node[circle, fill=black, inner sep=1.5pt] (A) at (-2,0) {};
\node[circle, fill=black, inner sep=1.5pt] (B) at (-1,0) {};
\node[circle, fill=black, inner sep=1.5pt] (C) at (0,0) {};
\node[circle, fill=black, inner sep=1.5pt] (D) at (1,0) {};
\node[circle, fill=black, inner sep=1.5pt] (G) at (2,0) {};
\node[circle, fill=black, inner sep=1.5pt] (E) at (3,0) {};
\node[circle, fill=black, inner sep=1.5pt] (F) at (4,0) {};
\node[circle, fill=black, inner sep=1.5pt] (H) at (5,0) {};
\node[circle, fill=black, inner sep=1.5pt] (I) at (3,-1) {};
		
\draw [thick] (A) -- (B) -- (C) -- (D) -- (G) -- (E) -- (F)--(H);
\draw  [thick] (E)--(I);
		
		\node [above]    at (A)  {$-2$};
		\node [above]    at (B)  {$-2$};
		\node [above]    at (C)  {$-2$};
		\node [above]    at (D)  {$-2$};
		\node [above]    at (G)  {$-2$};
		\node [above]    at (E)  {$-2$};
		\node [above]    at (F)  {$-2$};
		\node [above]    at (H)  {$-2$};
		\node [left]     at (I)  {$-2$};
\end{tikzpicture}
\end{center}

&

\begin{center}
\begin{tikzpicture}[scale=0.25][font=\tiny]
		\node[circle, fill=black, inner sep=1.5pt] (A) at (0,0)  {};
		\node[circle, fill=black, inner sep=1.5pt] (B) at (3,0)  {};
		\node[circle, fill=black, inner sep=1.5pt] (C) at (6,0)  {};
		\node[circle, fill=black, inner sep=1.5pt] (D) at (3,2)  {};
		
		\draw [thick] (A)--(B)--(C);
		\draw [thick] (B)--(D);
		\node [left]       at (A)  {$-6$};
		\node [below]      at (B)  {$-1$};
		\node [right]      at (C)  {$-2$};
		\node [above]      at (D)   {$-3$};
		
	
\end{tikzpicture}
\end{center}

								

& Blow down three $(-1)$-curves over $\infty$: fiber becomes $\mathrm{II}$ \\
\hline
\end{tabular}
\end{center}
\caption{The singular fibers of $\pi_i$ (the numbers in dual graphs denote self-intersection).}
\label{main table}
\end{table}

\subsection{Notations and conventions}\label{sec: conven}
All schemes and Deligne-Mumford stacks are defined over $\mathbb C$. Deligne-Mumford stacks are assumed to be of finite type over $\mathbb C$, with finite diagonal, and with quasiprojective coarse moduli schemes. An orbifold is a smooth, connected Deligne-Mumford stack with trivial generic stabilizer. Unless otherwise specified, we assume that $\mc X$ is a projective orbifold surface whose stacky locus has codimension two. The set of stacky points of $\mc X$ is denoted by $\{p_k\}_{k\in\mfr I}$, whose images under the coarse map $\pi : \mc X\rightarrow X$ are also written as $p_k$. We write $K_0(\mc X)_\mathbb C:=K_0(\mc X)\otimes\mbb C$ for the Grothendieck group of coherent sheaves, $T\mc X$ for the tangent sheaf, and $K_\mc X$ for the canonical line bundle when $\mc X$ is smooth. The category of quasicoherent sheaves is $\rm Qcoh$, and $\mc X_{\text{\'et}}$ denotes its small \'etale site. For Deligne-Mumford stacks $\mc X$ and $\mc Y$, let $\pr_1$ and $\pr_2$ be the natural projections from $\mc X\times\mc Y$. For a locally free sheaf $E$ on $\mc X$, the projective bundle $\mds P(E)$ is ${\bm\proj}(\rm Sym^\bullet E^\vee)$.

Some basic facts on finite group representations: let $G$ be a finite group and $K(G)$ its representation ring over $\mathbb Z$. Enumerating the irreducible complex representations as $\{\rho_0,\ldots,\rho_t\}$ with $\rho_0$ trivial, we have $K(G)=\bigoplus_{i=0}^t \mathbb Z \rho_i$. The character $\chi_\rho$ of a representation $\rho$ is $\chi_\rho(g)=\tr(\rho(g))$, and the inner product of two characters is
\[
\langle \chi_\rho,\chi_\varrho\rangle = \frac{1}{|G|}\sum_{g\in G} \tr(\rho(g))\overline{\tr(\varrho(g))}.
\]
In particular, the first orthogonality relation reads $\langle\chi_{\rho_i},\chi_{\rho_j}\rangle = \delta_{ij}$.

\section{N\'{e}ron-Severi group and change of polarization}\label{sec NS gp}
In this section we study the change of polarization for $\mc X$ and formulate general results that will be needed later. For a first reading, this section may be skipped and consulted as necessary.

\subsection{N\'{e}ron-Severi group}
We assume that $\mc X$ is a connected normal projective Deligne-Mumford stack in this subsection.
Recall that the Picard functor
\begin{equation*}
	\Pic_\mc X : \Sch\rightarrow\set,\quad T\mapsto\frac{\Pic(\mc X\times T)}{\Pic(T)}.
\end{equation*}
is representable by a scheme $\Pic(\mc X)$ and its connected component of identity $\Pic^0(\mc X)$ is an abelian varity of dimension $\dimm_{\mbb C}H^1(\mc X,\mc O_{\mc X})$ (see \cite[Corollary 2.3.6]{bro12} and \cite[Th\'{e}or\`{e}me 1.3]{bro09}).

\begin{definition}\label{def of ns 1}
The N\'{e}ron-Severi group of $\mc X$ is defined as
\begin{equation*}
\ns(\mc X):=\Pic(\mc X)(\mbb C)/\Pic^0(\mc X)(\mbb C).
\end{equation*}
\end{definition}

\begin{theorem}[Brochard]
The group $\ns(\mc X)$ is of finite type. Moreover, if we regard $\ns(\mc X)$ as a constant group scheme, then there is an exact sequence
	\begin{equation*}
		\xymatrix@C=0.5cm{
			0 \ar[r] & \Pic^0(\mc X) \ar[r]  & \Pic(\mc X) \ar[r]  & \ns(\mc X) \ar[r] & 0 }.
	\end{equation*}
	\qed
\end{theorem}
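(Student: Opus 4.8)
This is Brochard's theorem \cite{bro12}; the plan is to recall how it follows from the structure theory of locally finite-type commutative group schemes over a field together with the classical ``theorem of the base''. By \cite{bro09} the functor $\Pic_{\mc X}$ is representable, so $\Pic(\mc X)$ is a commutative group scheme locally of finite type over $\C$. For any such group scheme the identity component is an open and closed subgroup scheme of finite type, and the fppf quotient $Q:=\Pic(\mc X)/\Pic^{0}(\mc X)$ exists as a separated group scheme which is étale over $\C$; since $\C$ is algebraically closed, $Q$ is the constant group scheme attached to the abstract group $Q(\C)$. Because $\Pic^{0}(\mc X)$ is an abelian variety, hence smooth over $\C$, every $\Pic^{0}(\mc X)$-torsor over $\spec\C$ is trivial, so applying $(-)(\C)$ to $0\to\Pic^{0}(\mc X)\to\Pic(\mc X)\to Q\to 0$ keeps it exact; thus $Q(\C)=\Pic(\mc X)(\C)/\Pic^{0}(\mc X)(\C)=\ns(\mc X)$ by Definition \ref{def of ns 1}. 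Identifying $\ns(\mc X)$ with the constant group scheme $Q$ then yields the displayed exact sequence of group schemes at once.

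It remains to prove that $\ns(\mc X)$ is finitely generated, i.e.\ that it has finite rank and finite torsion subgroup. I would build a first Chern class homomorphism $c_{1}\colon\Pic(\mc X)(\C)\to H^{2}(\mc X;\Z)$ — into the singular cohomology of the analytification via the exponential sequence $0\to\Z\to\mc O_{\mc X^{\mathrm{an}}}\to\mc O_{\mc X^{\mathrm{an}}}^{*}\to 0$, or into $H^{2}_{\et}(\mc X;\Z_{\ell}(1))$ via the Kummer sequences $1\to\mu_{\ell^{n}}\to\mbb G_{m}\to\mbb G_{m}\to 1$ — and then observe two things. First, $\Pic^{0}(\mc X)(\C)$ is divisible, being the group of $\C$-points of an abelian variety, and a divisible group admits no nonzero homomorphism to a finitely generated abelian group, so (granting the finiteness input below) $c_{1}$ kills $\Pic^{0}(\mc X)(\C)$. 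Second, in the proper case a line bundle with $c_{1}=0$ lifts to $H^{1}(\mc X^{\mathrm{an}},\mc O)$ and hence lies in $\Pic^{0}(\mc X)$, so $\ker c_{1}=\Pic^{0}(\mc X)(\C)$. Consequently $c_{1}$ descends to an injection $\ns(\mc X)\hookrightarrow H^{2}(\mc X;\Z)$, and since $\mc X$ is of finite type over $\C$ the target is a finitely generated abelian group; therefore so is $\ns(\mc X)$.

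The finiteness of $H^{\bullet}(\mc X)$ for the Deligne–Mumford stack $\mc X$, and the comparison identifying $\ker(c_{1}\otimes\Q)$ with $\Pic^{0}(\mc X)(\C)$, are the only genuinely stacky inputs and constitute the main obstacle. In \cite{bro12} they are handled either by dévissage on an atlas of $\mc X$ or by comparison with the coarse space $X$, which is a scheme where the classical theorem of the base applies; because the stacky locus of $\mc X$ has codimension two, the discrepancy between $\Pic(\mc X)$ and $\pi^{*}\Pic(X)$ is controlled by the finitely many characters of the finitely many stabilizer groups and contributes only a finitely generated subgroup. For the orbifolds relevant to this paper, which are global quotients $\mc X=[M/G]$ with $M$ smooth projective and $G$ finite, one may alternatively argue by hand: $\Pic(\mc X)=\Pic^{G}(M)$ sits in an exact sequence $0\to\homm(G,\C^{*})\to\Pic^{G}(M)\to\Pic(M)^{G}$, and the last term is finitely generated modulo its abelian-variety part by the classical Néron–Severi theorem for $M$, whence $\ns(\mc X)$ is finitely generated.
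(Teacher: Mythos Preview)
The paper does not give a proof of this theorem: it is stated with a trailing \qed\ and attributed to Brochard, with the references \cite{bro09,bro12} cited in the preceding paragraph. In other words, the paper treats this as a black-box input. Your proposal is therefore not so much a different proof as an attempt to supply an argument where the paper supplies none.

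As a sketch of Brochard's argument your outline is reasonable. The first paragraph (structure theory of locally finite-type commutative group schemes over $\C$, smoothness of $\Pic^0$ giving exactness on $\C$-points) is the standard route to the exact sequence. For finite generation, your exponential-sequence argument that $\ker c_1=\Pic^0(\mc X)(\C)$ is the clean one and makes the divisibility remark superfluous; you should lead with that rather than the circular-looking ``divisible maps trivially to finitely generated, granting finiteness below''. One caveat: you invoke the codimension-two hypothesis on the stacky locus and the global-quotient description, but Brochard's theorem is stated and proved in \cite{bro12} in much greater generality, so these are shortcuts specific to the present paper's setting, not part of the general proof. If your aim is only to justify the theorem for the $\mc X$ used here, the last paragraph's alternative argument via $\Pic^G(M)$ is adequate and arguably the most transparent.
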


For $\mc X$, the associated analytic Deligne–Mumford stack $\mc X^{\rm an}$ is defined in \cite[D\'{e}finition 5.2]{toen99(1)}.  
The analytification functor
\begin{equation*}
\coh(\mc X)\to\coh(\mc X^{\rm an}),\quad F\mapsto F^{\rm an}
\end{equation*}
is an equivalence of abelian categories (GAGA for DM stacks, \cite[Th\'{e}or\`{e}me 5.10]{toen99(1)}), hence $\Pic(\mc X)\cong\Pic(\mc X^{\rm an})$.  
Moreover, by \cite[proof of Th\'{e}or\`{e}me 3.3]{gro60}, the exponential sequence
\begin{equation}\label{equ exact seq of exp}
\begin{tikzcd}[column sep=1.2em]
0 \arrow[r] & \underline{\mbb Z} \arrow[r] & \mc O_\mc {X^{\rm an}} \arrow[r] & \mc O_{\mc X^{\rm an}}^* \arrow[r] & 0 	
\end{tikzcd}	
\end{equation}
holds for $\mc X^{\rm an}$. The associated long exact sequence is 
\begin{equation}\label{equ exp exact seq}
\begin{tikzcd}[column sep=1.1em]
0 \arrow [r] & H^1(\mc X^{\rm an},\underline{\mbb Z}) \arrow[r] & H^1(\mc X^{\rm an},\mc O_{\mc X^{\rm an}}) \arrow[r] & \Pic(\mc X^{\rm an})\arrow[r,"\ccc_1"] & H^2(\mc X^{\rm an},\underline{\mbb Z}) \arrow[r] & H^2(\mc X^{\rm an},\mc O_\mc X)
\end{tikzcd}
\end{equation}
where $\ccc_1(L)$ is the first Chern class of $L\in\Pic(\mc X^{\rm an})$.

\begin{definition}\label{def ns group}
The N\'{e}ron-Severi group of $\mc X^{\rm an}$ is
\begin{equation*}
 \ns(\mc X^{\rm an}):=\Pic(\mc X^{\rm an})/\Pic^0(\mc X^{\rm an})
\end{equation*}
where $\Pic^0(\mc X^{\rm an})$ is $\kerr(c_1)\cong H^1(\mc X^{\rm an},\mc O_{\mc X^{\rm an}})/H^1(\mc X^{\rm an},\underline{\mbb Z})$.
\end{definition}
\begin{remark}\label{rm def of ns}
If $\mc X$ is a projective orbifold surface, $\Pic^0(\mc X^{\rm an})$ is a compact complex torus of dimension $b_3/2$ where $b_3$ is the third Betti number of $X$ (see (\romannumeral 1), (\romannumeral 2) and (\romannumeral 5) in Section \ref{subsec hodge indx}).
\qed
\end{remark}

\begin{definition}\label{def an pic sf}
Let $\an$ be the category of analytic spaces. We consider the functor
\begin{equation}
\ulin{\Pic}_{\mc X^{\rm an}} : \an \longrightarrow \set,\quad S\longmapsto H^0(S,R^1p_{S*}\mc O^*_{\mc X^{\rm an}\times S})
\end{equation}
	where $p_S : \mc X^{\rm an}\times S\rightarrow S$ is the projection to $S$.
\end{definition}
\begin{remark}\label{rem pic presf}
If $\an$ is equipped with the usual analytic topology, then $\ulin{\Pic}_{\mc X^{\rm an}}$ is the sheaf associated to the presheaf
\begin{equation*}
\an \longrightarrow \set,\quad S\longmapsto \Pic(\mc X^{\rm an}\times S).
\end{equation*}
\end{remark}

\begin{proposition}\label{pro rep of an pic}
The functor $\ulin{\Pic}_{\mc X^{\rm an}}$ is represented by the abelian analytic group $\Pic(\mc X^{\rm an})$, fitting into the exact sequence
\begin{equation*}
\begin{tikzcd}[column sep=1.5em]
0 \arrow[r] & \Pic^0(\mc X^{\rm an}) \arrow[r] & \Pic(\mc X^{\rm an})
\arrow[r] & \ns(\mc X^{\rm an}) \arrow[r] & 0,
\end{tikzcd}
\end{equation*}
where $\ns(\mc X^{\rm an})$ is discrete.
\end{proposition}

\begin{proof}
The proof of this proposition is the same as the proof of Th\'{e}or\`{e}me 3.3 in \cite{gro60}.
\end{proof}

\begin{corollary}\label{cor eq of an gps}
 As analytic groups, $\Pic(\mc X)^{\rm an}\cong\Pic(\mc X^{\rm an})$ and $\Pic^0(\mc X)^{\rm an}\cong\Pic^0(\mc X^{\rm an})$.
\end{corollary}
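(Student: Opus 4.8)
The plan is to construct a natural morphism of analytic groups $\Phi\colon\Pic(\mc X)^{\rm an}\to\Pic(\mc X^{\rm an})$ and then prove it is an isomorphism, in the spirit of the FGA argument (\cite{gro60}) now fed with GAGA for Deligne--Mumford stacks. Since $\Pic_\mc X$ is representable by $\Pic(\mc X)$, there is a universal class on $\mc X\times\Pic(\mc X)$, defined modulo pullbacks from the second factor. Analytification commutes with fibre products, so the analytification of this class is a class on $\mc X^{\rm an}\times\Pic(\mc X)^{\rm an}$ modulo pullbacks from the second factor; by Remark~\ref{rem pic presf} it is thus an element of $\ulin{\Pic}_{\mc X^{\rm an}}(\Pic(\mc X)^{\rm an})$, and since $\ulin{\Pic}_{\mc X^{\rm an}}$ is represented by $\Pic(\mc X^{\rm an})$ (Proposition~\ref{pro rep of an pic}), such an element is the same datum as a morphism $\Phi$. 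It is a group homomorphism because the universal class is compatible with tensor products.

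I would then check $\Phi$ is an isomorphism in two steps. First, $\Phi$ is bijective on points: it sends the class of a line bundle $L$ on $\mc X$ to that of $L^{\rm an}$ on $\mc X^{\rm an}$, and GAGA for DM stacks (\cite[Th\'{e}or\`{e}me 5.10]{toen99(1)}), being an equivalence $\coh(\mc X)\xrightarrow{\sim}\coh(\mc X^{\rm an})$, restricts to a bijection between isomorphism classes of line bundles, i.e.\ between $\Pic(\mc X)(\mbb C)$ and $\Pic(\mc X^{\rm an})$. Second, $\Phi$ is a local isomorphism at the identity. Both groups are smooth: $\Pic(\mc X)$ is a group scheme locally of finite type over $\mbb C$, hence smooth by Cartier's theorem, and $\Pic(\mc X^{\rm an})$ is a complex Lie group by Proposition~\ref{pro rep of an pic}. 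Their tangent spaces at $0$ are $H^1(\mc X,\mc O_\mc X)$ and $H^1(\mc X^{\rm an},\mc O_{\mc X^{\rm an}})$, computed from the deformation theory of the trivial line bundle (the truncated exponential $a\mapsto 1+\varepsilon a$), and on these $d\Phi_0$ is the comparison isomorphism $H^1(\mc X,\mc O_\mc X)\xrightarrow{\sim}H^1(\mc X^{\rm an},\mc O_{\mc X^{\rm an}})$ of GAGA. A homomorphism of smooth analytic groups whose differential at the identity is invertible is a local isomorphism everywhere (translate), and a bijective local isomorphism of complex manifolds is an isomorphism; hence $\Phi$ is an isomorphism of analytic groups. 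One could instead simply observe that both sides are then complex manifolds of the same dimension $\dimm_{\mbb C}H^1(\mc X,\mc O_\mc X)$, and a bijective holomorphic map between equidimensional complex manifolds is automatically biholomorphic.

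For the identity components, analytification induces a bijection on $\pi_0$ for schemes of finite type over $\mbb C$, so $\Pic^0(\mc X)^{\rm an}$ is the connected component of the identity in $\Pic(\mc X)^{\rm an}$; an isomorphism of analytic groups carries the identity component onto the identity component, which on the target is $\Pic^0(\mc X^{\rm an})$ by Definition~\ref{def ns group}. Hence $\Phi$ restricts to $\Pic^0(\mc X)^{\rm an}\xrightarrow{\sim}\Pic^0(\mc X^{\rm an})$.

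The main obstacle is making $\Phi$ genuinely rigorous. Either one needs the universal (Poincar\'{e}) class to exist on $\mc X\times\Pic(\mc X)$, or, more robustly, one works throughout with the sheafified functors and shows that the analytification of $T\mapsto\Pic(\mc X\times T)/\Pic(T)$ agrees with $\ulin{\Pic}_{\mc X^{\rm an}}$ on all analytic bases; this is exactly the family version of GAGA --- identifying line bundles on $\mc X^{\rm an}\times S$ with algebraic ones when $S$ is algebraic, and controlling general $S$ via the sheafification of Remark~\ref{rem pic presf} --- and is the heart of the FGA argument. A secondary point is that the tangent-space step uses GAGA in the sharper form $H^i(\mc X,F)\cong H^i(\mc X^{\rm an},F^{\rm an})$, not merely the categorical equivalence; this is contained in To\"{e}n's theorem.
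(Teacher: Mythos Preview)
Your argument is correct and close in spirit to the paper's, but the decisive step differs. Both you and the paper construct the comparison map from the (analytified) Poincar\'e/universal class. The paper then argues on the identity component first: since $\Pic^0(\mc X)$ is an abelian variety, $\Pic^0(\mc X)^{\rm an}$ is a compact complex torus, and the restricted map $a'$ is a holomorphic injection between compact complex tori of the same dimension $b_3/2$, hence an isomorphism by the rigidity fact from \cite[p.~19]{gh94}; the full statement is then deduced. You instead run a tangent-space argument: $d\Phi_0$ is the GAGA comparison isomorphism on $H^1(\mc O)$, smoothness (Cartier) plus bijectivity on points (GAGA) then force $\Phi$ to be a biholomorphism. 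Your route avoids invoking the abelian-variety structure and the torus rigidity lemma, and it makes explicit the bijectivity-on-points step that the paper's final sentence (``therefore $a$ is an isomorphism'') leaves implicit. The paper's route is shorter once one grants that $\Pic^0(\mc X)$ is an abelian variety, and it sidesteps the need to identify $d\Phi_0$ precisely. Your closing caveat about the existence of the Poincar\'e bundle is well taken; the paper simply asserts it.
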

\begin{proof}
Let $\mc P$ be the Poincar\'{e} line bundle on $\mc X\times\Pic(\mc X)$.  
Its analytification $\mc P^{\rm an}$ induces a bijective morphism of analytic groups $a : \Pic(\mc X)^{\rm an}\rightarrow \Pic(\mc X^{\rm an})$. Since $\Pic^0(\mc X)$ is an abelian variety \cite[Th\'{e}or\`{e}me 1.3]{bro09}, 
$\Pic^0(\mc X)^{\rm an}$ is a compact complex torus \cite[Proposition 3.2, Expos\'{e} XII]{sga1}, 
and it is the identity component of $\Pic(\mc X)^{\rm an}$ \cite[Corollaire 2.6, Expos\'{e} XII]{sga1}. Restricting $a$ gives $a' : \Pic^0(\mc X)^{\rm an}\rightarrow \Pic^0(\mc X^{\rm an})$, a holomorphic bijection between compact complex tori. Hence $a'$ is an isomorphism \cite[p.\,19]{gh94}, and therefore $a$ is an isomorphism of analytic groups.	
\end{proof}

\begin{corollary}\label{cor eq of ns gp}
	$\ns(\mc X)\cong\ns(\mc X^{\rm an})$.
	\qed
\end{corollary}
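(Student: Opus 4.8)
The plan is to deduce $\ns(\mc X)\cong\ns(\mc X^{\rm an})$ from the two identifications already established, namely $\Pic(\mc X)^{\rm an}\cong\Pic(\mc X^{\rm an})$ and $\Pic^0(\mc X)^{\rm an}\cong\Pic^0(\mc X^{\rm an})$ from Corollary~\ref{cor eq of an gps}, together with the algebraic defining exact sequence
\[
0 \longrightarrow \Pic^0(\mc X) \longrightarrow \Pic(\mc X) \longrightarrow \ns(\mc X) \longrightarrow 0
\]
of Brochard and the analytic defining exact sequence of Proposition~\ref{pro rep of an pic}. The key point is that the $\mbb C$-points of an algebraic group and of its analytification agree, so passing to $\mbb C$-points turns the algebraic sequence into a sequence of abstract groups that maps to the analytic one.

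First I would take $\mbb C$-points throughout the Brochard sequence; since $\ns(\mc X)$ is viewed as a constant group scheme, $\ns(\mc X)(\mbb C)=\ns(\mc X)$ as an abstract group, and the sequence of $\mbb C$-points is still exact because $\Pic(\mc X)\to\ns(\mc X)$ is smooth (or simply because $\Pic^0(\mc X)$ is the identity component, so the quotient map is surjective on $\mbb C$-points). Next I would note that for any group scheme locally of finite type over $\mbb C$, the $\mbb C$-points coincide canonically with the $\mbb C$-points (equivalently the underlying group) of the associated analytic group; applying this to $\Pic(\mc X)$ and $\Pic^0(\mc X)$, and combining with the isomorphisms $\Pic(\mc X)^{\rm an}\cong\Pic(\mc X^{\rm an})$, $\Pic^0(\mc X)^{\rm an}\cong\Pic^0(\mc X^{\rm an})$ of Corollary~\ref{cor eq of an gps}, I obtain a commutative diagram with exact rows
\[
\begin{array}{ccccccccc}
0 &\to& \Pic^0(\mc X)(\mbb C) &\to& \Pic(\mc X)(\mbb C) &\to& \ns(\mc X) &\to& 0\\
 & & \downarrow\wr & & \downarrow\wr & & \downarrow & & \\
0 &\to& \Pic^0(\mc X^{\rm an}) &\to& \Pic(\mc X^{\rm an}) &\to& \ns(\mc X^{\rm an}) &\to& 0
\end{array}
\]
in which the left two vertical arrows are isomorphisms of abstract groups. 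A diagram chase (the five lemma, or directly: the induced map on cokernels is an isomorphism) then gives $\ns(\mc X)\cong\ns(\mc X^{\rm an})$.

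The only mild subtlety, and the step I would be most careful about, is making sure the bottom row of the diagram really is the defining sequence for $\ns(\mc X^{\rm an})$ in the sense of Definition~\ref{def ns group}: Proposition~\ref{pro rep of an pic} gives the exact sequence with $\Pic^0(\mc X^{\rm an})=\ker(c_1)$, and one must check this agrees with the subgroup appearing as the image of $\Pic^0(\mc X)(\mbb C)$ under the analytification map $a$. But this is exactly what Corollary~\ref{cor eq of an gps} asserts: $a$ restricts to an isomorphism $\Pic^0(\mc X)^{\rm an}\xrightarrow{\sim}\Pic^0(\mc X^{\rm an})$, so the left square commutes with vertical isomorphisms and the identification of the two bottom subgroups is built in. Hence no genuine obstacle remains; the corollary is a formal consequence of Corollary~\ref{cor eq of an gps} and the two exact sequences, via the five lemma.
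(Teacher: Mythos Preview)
Your proposal is correct and is exactly the argument the paper has in mind: the corollary is marked with \qed and no proof, because it is the immediate consequence of Corollary~\ref{cor eq of an gps} together with the two defining exact sequences (Brochard's and Proposition~\ref{pro rep of an pic}) via the five lemma, precisely as you spell out. There is nothing missing and no alternative route being taken.
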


\begin{proposition}\label{pro equ of pic gp}
	$\Pic^0(\mc X)$ is isomorphic to $\Pic^0(X)$ as group schemes.
\end{proposition}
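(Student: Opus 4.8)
The plan is to prove the isomorphism is induced by pullback along the coarse moduli map $\pi\colon\mc X\to X$. First I would observe that $(\pi\times\id_T)^*$ is compatible with tensor products and with further pullback, hence is functorial in $T$ and descends to the quotient functors defining $\Pic_X$ and $\Pic_\mc X$; using that $\Pic(X)$ is a group scheme (Grothendieck, $X$ being projective) and that $\Pic(\mc X)$ is one by the cited results, this gives a homomorphism of group schemes $\pi^*\colon\Pic(X)\to\Pic(\mc X)$. A homomorphism of group schemes carries the identity component into the identity component, so $\pi^*$ restricts to a homomorphism of abelian varieties $\varphi\colon\Pic^0(X)\to\Pic^0(\mc X)$, and the whole problem becomes: show $\varphi$ is an isomorphism.

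Next I would establish the cohomological comparison that drives everything. Since $\pi$ is the coarse moduli map, $\pi_*\mc O_{\mc X}=\mc O_X$, and since we work over $\C$ (so $\mc X$ is tame) the functor $\pi_*$ is exact on quasicoherent sheaves, whence $R^i\pi_*\mc O_{\mc X}=0$ for $i>0$. The Leray spectral sequence therefore degenerates and its edge maps give isomorphisms $\pi^*\colon H^i(X,\mc O_X)\xrightarrow{\,\sim\,}H^i(\mc X,\mc O_{\mc X})$ for all $i$. Two consequences: (i) $\dim\Pic^0(X)=h^1(X,\mc O_X)=h^1(\mc X,\mc O_{\mc X})=\dim\Pic^0(\mc X)$; and (ii) under the standard identifications $T_0\Pic(Y)\cong H^1(Y,\mc O_Y)$ coming from $\Pic(Y[\eps])$ and the truncated exponential sequence $0\to\mc O_Y\to\mc O^*_{Y[\eps]}\to\mc O^*_Y\to0$, the differential $d\varphi$ at the origin is exactly the pullback map $H^1(X,\mc O_X)\to H^1(\mc X,\mc O_{\mc X})$, hence an isomorphism.

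Finally I would show $\ker\varphi$ is trivial and conclude. By the projection formula, $\pi_*\pi^*L\cong L\otimes\pi_*\mc O_{\mc X}\cong L$ for every $L\in\Pic(X)$, so $\pi^*L\cong\mc O_{\mc X}$ forces $L\cong\mc O_X$; thus $\varphi$ is injective on $\C$-points. Since $d\varphi$ is an isomorphism, $\varphi$ is étale at the identity, hence (being a homomorphism) étale everywhere; then $\ker\varphi=\varphi^{-1}(e)$ is étale over $\C$ and closed in the proper scheme $\Pic^0(X)$, hence finite étale over $\C$, and since it has a single $\C$-point it is trivial. Consequently $\varphi$ is étale, proper and quasi-finite, hence finite étale; it is injective on $\C$-points, and its image is open (étale) and closed (proper) in the connected variety $\Pic^0(\mc X)$, hence all of it, so $\varphi$ is bijective on points. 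A finite étale morphism that is bijective on points has degree $1$, hence is an isomorphism, giving $\Pic^0(X)\cong\Pic^0(\mc X)$ as group schemes.

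I expect the only genuinely non-formal ingredient to be the cohomological comparison in the second paragraph — concretely, the facts $\pi_*\mc O_{\mc X}=\mc O_X$ and exactness of $\pi_*$ in characteristic zero, together with the identification of the induced isomorphism on $H^1$ with $d\varphi$; everything afterwards is standard abelian-variety bookkeeping. One could instead argue analytically via Proposition~\ref{pro rep of an pic}, but then matching the integral lattices $H^1(\mc X^{\rm an},\ulin{\mbb Z})$ and $H^1(X^{\rm an},\ulin{\mbb Z})$ becomes its own nuisance, so the algebraic route above seems cleaner.
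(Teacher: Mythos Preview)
Your argument is correct, but it takes a genuinely different route from the paper's proof. The paper works analytically: it pushes the exponential sequence forward along $\pi$, assembles a $3\times3$ diagram on $X^{\rm an}$, and uses the facts $R^{>0}\pi_*\mc O_{\mc X^{\rm an}}=0$ and $H^1(G_k,\mbb Z)=0$ to identify both $H^1(X^{\rm an},\mc O)\cong H^1(\mc X^{\rm an},\mc O)$ and $H^1(X^{\rm an},\ulin{\mbb Z})\cong H^1(\mc X^{\rm an},\ulin{\mbb Z})$; a diagram chase then gives $\Pic^0(X^{\rm an})\cong\Pic^0(\mc X^{\rm an})$, and one finishes by Corollary~\ref{cor eq of an gps} together with full faithfulness of analytification on proper schemes. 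So the very ``nuisance'' you flagged---matching the integral lattices---is exactly what the paper handles, via $H^1(G_k,\mbb Z)=0$.

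Your approach instead stays algebraic: build $\pi^*$ as a homomorphism of group schemes, use the same vanishing $R^{>0}\pi_*\mc O_{\mc X}=0$ to identify the tangent spaces and hence show $\varphi$ is \'etale, then use the projection formula for injectivity and finish by standard isogeny bookkeeping. This is cleaner and more self-contained, and it would work over any field of characteristic zero without invoking GAGA. What the paper's longer route buys is the full diagram~(\ref{equ diag of long ex sq of exp seq}), which is immediately recycled in Corollary~\ref{cor ex sq of ns gp} to produce the exact sequence for $\ns(\mc X)/\ns(X)$ with cokernel sitting inside $\bigoplus_k H^2(G_k,\mbb Z)$; your argument gives the $\Pic^0$ isomorphism but not this extra structural information about the N\'eron--Severi quotient.
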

\begin{proof}

The coarse moduli space of $\mc X^{\rm an}$ is $X^{\rm an}$ which is the analytification of $X$ (see \cite{toen99(1)}). Since $\pi_*\ulin{\mbb Z}=\ulin{\mbb Z}$, $\pi_*\mc O_{\mc X^{\rm an}}=\mc O_{X^{\rm an}}$ and $\pi_*\mc O_{\mc X^{\rm an}}^*=\mc O_{X^{\rm an}}^*$, we have the following diagram where horizontal and vertical sequences are exact.
\begin{equation}\label{equ diag of exp seq}
\begin{split}
\xymatrix@C=0.3cm{
&            0                &      0                &        0               &     \\
0 \ar[r] & \rr\pi_*\ulin{\mbb Z}/\ulin{\mbb Z} \ar[u]\ar[r]  & \rr\pi_*\mc O_{\mc X^{\rm an}}/\mc O_{X^{\rm an}}  \ar[u]\ar[r] & \rr\pi_*\mc O_{\mc X^{\rm an}}^*/\mc O_{X^{\rm an}}^* \ar[u]\ar[r] & 0   \\
0 \ar[r] & \rr\pi_*\ulin{\mbb Z}  \ar[u]\ar[r] & \rr\pi_*\mc O_{\mc X^{\rm an}} \ar[u]\ar[r] & \rr\pi_*\mc O_{\mc X^{\rm an}}^* \ar[u]\ar[r] & 0 \\
0 \ar[r] & \ulin{\mbb Z} \ar[u]\ar[r]  & \mc O_{X^{\rm an}}  \ar[u]\ar[r] & \mc O_{X^{\rm an}}^* \ar[u]\ar[r] & 0 \\
& 0 \ar[u] & 0 \ar[u] & 0 \ar[u]}
\end{split}
\end{equation}

For every abelian sheaf $J$ on $\mc X^{\rm an}$, $\mbb H^0(X^{\rm an},\rr\pi_*J/J)=0$ and $\mbb H^i(X^{\rm an},\rr\pi_*J)=H^i(\mc X^{\rm an},J)$ for all $i$. Since $\rr^i\pi_*\mc O_{\mc X}=\mc O_{X}$ for $i=0$ and vanishes for $i>1$ \cite[Remark 1.4(3)]{ni08}, the same holds for $\pi : \mc X^{\rm an}\to X^{\rm an}$ \cite[GAGA Theorem]{toen99(1)}.

Taking the long exact hypercohomology sequences of (\ref{equ diag of exp seq}) yields
\begin{equation}\label{equ diag of long ex sq of exp seq}
\begin{split}
\xymatrix@C=0.2cm{
0 \ar[r] & 0 \ar[r] & 0 \ar[r] &\ast \ar[r] & \mbb H^2(X^{\rm an},\rr\pi_*\ulin{\mbb Z}/\ulin{\mbb Z}) \ar[r] & 0  \\
0 \ar[r] & H^1(\mc X^{\rm an},\ulin{\mbb Z})  \ar[u]\ar[r]  & H^1(\mc X^{\rm an},\mc O_{\mc X^{\rm an}}) \ar[u]\ar[r] & H^1(\mc X^{\rm an},\mc O_{\mc X^{\rm an}}^*) \ar[u]\ar[r]^{\quad\ccc_1} & H^2(\mc X^{\rm an},\ulin{\mbb Z})\ar[u]\ar[r] & H^2(\mc X^{\rm an},\mc O_{\mc X^{\rm an}}) \ar[u] \\
0 \ar[r] & H^1(X^{\rm an},\ulin{\mbb Z}) \ar[u]^{\cong}\ar[r] & H^1(X^{\rm an},\mc O_{X^{\rm an}})  \ar[u]^{\cong}\ar[r] & H^1(X^{\rm an},\mc O_{X^{\rm an}}^*)\ar[u]\ar[r]^{\quad\ccc_1} & H^2(X^{\rm an},\ulin{\mbb Z}) \ar[u]\ar[r] & H^2(X^{\rm an},\mc O_{X^{\rm an}}) \ar[u]^{\cong} \\
0\ar[r] &  0 \ar[u]\ar[r] &  0  \ar[u]\ar[r] & 0 \ar[u]\ar[r] & \mbb H^1(X^{\rm an},\rr\pi_*\ulin{\mbb Z}/\ulin{\mbb Z}) \ar[u]\ar[r] & 0 \ar[u]}
\end{split}
\end{equation}

Chasing the diagram then gives $\Pic^0(X^{\rm an})\cong\Pic^0(\mc X^{\rm an})$, hence $\Pic^0(X)^{\rm an}\cong\Pic^0(\mc X)^{\rm an}$ (Proposition \ref{cor eq of an gps}). Since the functor of analytification on proper schemes is fully faithful (see \cite[Corollaire 4.5 in expos\'{e} \uppercase\expandafter{\romannumeral 12}]{sga1}), we conclude $\Pic^0(X)\cong\Pic^0(\mc X)$.

\end{proof}

\begin{corollary}\label{cor ex sq of ns gp}
If $\mc X$ is a projective orbifold surface with codimension two stacky locus, the Néron–Severi groups satisfy:
\begin{enumerate}[$(1)$]
\item There is a short exact sequence
\begin{equation}\label{equ seq of ns gp} 
\xymatrix@=0.5cm{ 0 \ar[r] & \ns(X) \ar[r] & \ns(\mc X) \ar[r] & {\ns(\mc X)}/{\ns(X)} \ar[r] & 0 } 
\end{equation}
where $\ns(\mc X)/\ns(X)$ is finite.
\item The Picard numbers agree: $\rho(\mc X)=\rho(X)$.
\end{enumerate}
\end{corollary}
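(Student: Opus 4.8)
The plan is to deduce both parts from the isomorphism $\Pic^0(X)\cong\Pic^0(\mc X)$ of Proposition~\ref{pro equ of pic gp} together with a second reading of the diagram~(\ref{equ diag of long ex sq of exp seq}) already used to prove it. First I would note that $\pi^*:\Pic(X)\to\Pic(\mc X)$ is injective: since $\pi_*\mc O_{\mc X}=\mc O_X$, the projection formula gives $\pi_*\pi^*L\cong L$ for every $L\in\Pic(X)$. By Proposition~\ref{pro equ of pic gp}, $\pi^*$ carries $\Pic^0(X)$ isomorphically onto $\Pic^0(\mc X)$, so it descends to a homomorphism $\ns(X)\to\ns(\mc X)$. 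Applying the snake lemma to the commutative ladder formed by the two short exact sequences $0\to\Pic^0\to\Pic\to\ns\to 0$ --- whose left vertical arrow is an isomorphism and whose middle one is injective --- shows at once that $\ns(X)\to\ns(\mc X)$ is injective and that its cokernel is canonically isomorphic to $\operatorname{coker}\!\bigl(\pi^*:\Pic(X)\to\Pic(\mc X)\bigr)$. This produces the exact sequence~(\ref{equ seq of ns gp}) and reduces part~(1) to showing that this cokernel is finite.

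For the finiteness I would pass to analytic spaces: GAGA for projective schemes and the GAGA theorem for Deligne--Mumford stacks (invoked earlier in the section) give $\Pic(X)\cong\Pic(X^{\rm an})$ and $\Pic(\mc X)\cong\Pic(\mc X^{\rm an})$ compatibly with pullback along $\pi$, so it suffices to bound $\operatorname{coker}\!\bigl(\pi^*:\Pic(X^{\rm an})\to\Pic(\mc X^{\rm an})\bigr)$. The column of~(\ref{equ diag of long ex sq of exp seq}) containing $\Pic(\mc X^{\rm an})=H^1(\mc X^{\rm an},\mc O^*_{\mc X^{\rm an}})$ reads $0\to\Pic(X^{\rm an})\xrightarrow{\pi^*}\Pic(\mc X^{\rm an})\to\ast$; it is exact at $\Pic(X^{\rm an})$ and $\Pic(\mc X^{\rm an})$ because $\mbb H^0(X^{\rm an},\rr\pi_*\mc O^*_{\mc X^{\rm an}}/\mc O^*_{X^{\rm an}})=0$ (fact~(2) in the proof of Proposition~\ref{pro equ of pic gp}), so $\operatorname{coker}(\pi^*)$ embeds in the entry $\ast\cong\mbb H^1(X^{\rm an},\rr\pi_*\mc O^*_{\mc X^{\rm an}}/\mc O^*_{X^{\rm an}})$. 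Exactness of the top row of~(\ref{equ diag of long ex sq of exp seq}) then identifies $\ast$ with $\bigoplus_k H^2(G_k,\mbb Z)$, a finite group (it also equals $\bigoplus_k H^1(G_k,\mc O^*_{\mc X^{\rm an},q_k})$ via the exponential sequence on the stalks, using $H^{>0}(G_k,\mc O_{\mc X^{\rm an},q_k})=0$). Hence $\operatorname{coker}(\pi^*)$ is finite, which proves~(1). Part~(2) is then immediate: $\rho(\mc X)=\rk_\Z\ns(\mc X)$ and $\rho(X)=\rk_\Z\ns(X)$ coincide because, by~(1), $\ns(X)$ has finite index in $\ns(\mc X)$.

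The whole argument is a diagram chase once Proposition~\ref{pro equ of pic gp} is available, so there is no serious obstacle; the only spots where the orbifold geometry is actually used are the two vanishing facts that power the finiteness step --- $\mbb H^0(X^{\rm an},\rr\pi_*\mc O^*_{\mc X^{\rm an}}/\mc O^*_{X^{\rm an}})=0$, which rests on the stacky locus having codimension two (equivalently $\pi_*\mc O^*_{\mc X^{\rm an}}=\mc O^*_{X^{\rm an}}$), and $H^{>0}(G_k,\mc O_{\mc X^{\rm an},q_k})=0$, which rests on tameness and $\C$-linearity --- together with the finiteness of the groups $H^2(G_k,\mbb Z)$, which is where the finiteness of the stabilizers $G_k$ enters.
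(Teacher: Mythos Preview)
Your proof is correct and follows essentially the same approach as the paper: both extract the exact sequence $0\to\ns(X)\to\ns(\mc X)\to\bigoplus_kH^2(G_k,\mbb Z)$ from the diagram~(\ref{equ diag of long ex sq of exp seq}) established in the proof of Proposition~\ref{pro equ of pic gp}. The paper simply asserts this in one line, whereas you spell out the chase carefully --- first proving injectivity of $\pi^*$ on Picard groups via the projection formula, then invoking the snake lemma to transfer this to N\'eron--Severi groups and identify the cokernel with $\operatorname{coker}(\pi^*)$, and finally bounding that cokernel by the entry $\ast\cong\bigoplus_kH^2(G_k,\mbb Z)$ from the third column and top row of the diagram; your added justification of the vanishing $\mbb H^0(X^{\rm an},\rr\pi_*\mc O^*_{\mc X^{\rm an}}/\mc O^*_{X^{\rm an}})=0$ is a useful point that the paper leaves implicit.
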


\begin{proof}
For $j>0$, the support of $\rr^j\pi_*J$ lies in the stacky locus. The stalk at a stacky point $p_k$ is $H^j(G_k,J_{q_k})$, where $J_{q_k}$ is the stalk of $J$ at a lift $q_k$ of $p_k$. Hence $\mbb H^i(X^{\rm an},\rr\pi_*J/J)=\bigoplus_k H^i(G_k,J_{q_k})$ for $i>0$.
In particular,
\begin{equation*}
\mbb H^i(X^{\rm an},\rr\pi_*\ulin{\mbb Z}/\ulin{\mbb Z})=
\begin{cases}
		0 & i=1,\\
\bigoplus_k H^i(G_k,\mbb Z) & i\ge 2,
\end{cases}
\end{equation*}
where we using $H^1(G_k,\mbb Z)=0$. By diagram \ref{equ diag of long ex sq of exp seq}, we obtain
\[\xymatrix@C=0.5cm{
	0 \ar[r] & \ns(X) \ar[r]  & \ns(\mc X) \ar[r] & \bigoplus_kH^2(G_k,\mbb Z) }.\]
Since each $H^2(G_k,\mbb Z)$ is finite, the claim follows.
\end{proof}

\begin{proposition}
Under the assumptions stated above, we obtain the following results.
\begin{enumerate}[$(1)$]
\item The torsion part $\ns(\mc X)_{\rm tor}$ of $\ns(\mc X)$ is isomorphic to the torsion part $H^2(\mc X,\ulin{\mbb Z})_{\rm tor}$ of $H^2(\mc X,\ulin{\mbb Z})$.
\item If we use $\ns_\mbb R(\mc X)$ to indicate $\ns(\mc X)\otimes_{\mbb Z}\mbb R$, then we have
\begin{equation}\label{equ ns group}
\ns_\mbb R(\mc X)=H^{1,1}(\mc X^{\rm an})\cap H^2(\mc X^{\rm an},\ulin{\mbb R}).
\end{equation}
\end{enumerate}
\end{proposition}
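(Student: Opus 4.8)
The plan is to prove the orbifold version of the Lefschetz $(1,1)$-theorem: I would carry out the argument on the analytic stack $\mc X^{\rm an}$ and then transport the conclusions to $\mc X$ using Corollary~\ref{cor eq of ns gp} together with the comparison $H^2(\mc X,\ulin{\mbb Z})\cong H^2(\mc X^{\rm an},\ulin{\mbb Z})$. Two ingredients enter. First, the exponential long exact sequence~(\ref{equ exp exact seq}) and Definition~\ref{def ns group} identify
\[
\ns(\mc X^{\rm an})\;=\;\Pic(\mc X^{\rm an})/\ker(\ccc_1)\;\cong\;\im(\ccc_1)\;=\;\ker\!\bigl(H^2(\mc X^{\rm an},\ulin{\mbb Z})\longrightarrow H^2(\mc X^{\rm an},\mc O_{\mc X^{\rm an}})\bigr).
\]
Second, I would use the Hodge decomposition of the orbifold surface $\mc X^{\rm an}$ from Section~\ref{subsec hodge indx}: $H^2(\mc X^{\rm an},\mbb C)=H^{2,0}\oplus H^{1,1}\oplus H^{0,2}$ with $H^{0,2}=H^2(\mc X^{\rm an},\mc O_{\mc X^{\rm an}})$, $\overline{H^{2,0}}=H^{0,2}$, $\overline{H^{1,1}}=H^{1,1}$ (so $h^{2,0}=h^{0,2}$ and $b_2=2h^{0,2}+h^{1,1}$), and the map $H^2(\mc X^{\rm an},\mbb C)\to H^2(\mc X^{\rm an},\mc O_{\mc X^{\rm an}})$ induced by $\ulin{\mbb C}\hookrightarrow\mc O_{\mc X^{\rm an}}$ equal to the projection onto the $(0,2)$-summand.

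For part (1) the argument is short. Since $H^2(\mc X^{\rm an},\mc O_{\mc X^{\rm an}})$ is a $\mbb C$-vector space, hence torsion free, the subgroup $H^2(\mc X^{\rm an},\ulin{\mbb Z})_{\rm tor}$ is annihilated by the map displayed above and therefore lies inside $\ns(\mc X^{\rm an})$, in fact inside $\ns(\mc X^{\rm an})_{\rm tor}$. Conversely $\ns(\mc X^{\rm an})$ is realized as a subgroup of $H^2(\mc X^{\rm an},\ulin{\mbb Z})$, so $\ns(\mc X^{\rm an})_{\rm tor}\subseteq H^2(\mc X^{\rm an},\ulin{\mbb Z})_{\rm tor}$. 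Hence $\ns(\mc X^{\rm an})_{\rm tor}=H^2(\mc X^{\rm an},\ulin{\mbb Z})_{\rm tor}$, and (1) follows after the two identifications.

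For part (2), I would set $V=H^2(\mc X^{\rm an},\ulin{\mbb R})$, so that $H^2(\mc X^{\rm an},\ulin{\mbb Z})/(\text{torsion})$ is a full lattice in $V$ and $V\otimes_{\mbb R}\mbb C=H^2(\mc X^{\rm an},\mbb C)$. Every $\alpha\in V$ is self-conjugate, so $\alpha^{0,2}=\overline{\alpha^{2,0}}$ and $\alpha^{1,1}=\overline{\alpha^{1,1}}$; writing $\alpha=(\alpha^{2,0}+\overline{\alpha^{2,0}})+\alpha^{1,1}$ exhibits an $\mbb R$-linear isomorphism $V\cong H^{2,0}\oplus(V\cap H^{1,1})$, whence $\dimm_{\mbb R}(V\cap H^{1,1})=b_2-2h^{0,2}=h^{1,1}$. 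By the first paragraph an integral class lies in $\ns(\mc X^{\rm an})$ exactly when its $(0,2)$-component vanishes, and such a class, being real, is then of type $(1,1)$; hence $\ns(\mc X^{\rm an})\subseteq H^{1,1}\cap H^2(\mc X^{\rm an},\ulin{\mbb Z})$ and so $\ns_{\mbb R}(\mc X^{\rm an})\subseteq V\cap H^{1,1}$. For the reverse inclusion it is enough to bound ranks: $H^2(\mc X^{\rm an},\ulin{\mbb Z})/\ns(\mc X^{\rm an})$ injects into $H^{0,2}$, which has real dimension $2h^{0,2}$, so $\rk\ns(\mc X^{\rm an})\ge b_2-2h^{0,2}=h^{1,1}=\dimm_{\mbb R}(V\cap H^{1,1})$. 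Comparing dimensions forces $\ns_{\mbb R}(\mc X^{\rm an})=V\cap H^{1,1}=H^{1,1}(\mc X^{\rm an})\cap H^2(\mc X^{\rm an},\ulin{\mbb R})$, and transporting along Corollary~\ref{cor eq of ns gp} yields (2).

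The hard part is not this bookkeeping but the Hodge-theoretic input being invoked: one needs the compact Kähler orbifold surface $\mc X^{\rm an}$ to carry a genuine Hodge decomposition in which $H^2(\mc O_{\mc X^{\rm an}})$ is the $(0,2)$-piece, with $h^{p,q}=h^{q,p}$, and in which the de Rham–Dolbeault comparison identifies $\ulin{\mbb C}\hookrightarrow\mc O_{\mc X^{\rm an}}$ on $H^2$ with the projection onto $H^{0,2}$. This is precisely what I would take from Section~\ref{subsec hodge indx} (established there via Baily's Hodge theory for $V$-manifolds, or equivalently via the rational quotient singularities of the coarse space $X$); once it is in hand, the only genuine computation is the conjugation-symmetry dimension count for $V\cap H^{1,1}$.
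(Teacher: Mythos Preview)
Your treatment of part (1) and of the inclusion $\ns_{\mbb R}(\mc X^{\rm an})\subseteq V\cap H^{1,1}$ in part (2) are correct and agree with the paper's approach. The gap is in your reverse-inclusion step: you claim that because $H^2(\mc X^{\rm an},\ulin{\mbb Z})/\ns(\mc X^{\rm an})$ injects as an abelian group into $H^{0,2}$, its rank is bounded by $\dimm_{\mbb R}H^{0,2}=2h^{0,2}$. But an injection of a finitely generated free abelian group into a finite-dimensional real vector space does \emph{not} bound the rank by the real dimension---for instance $\mbb Z^n\hookrightarrow\mbb R$ via $(a_1,\dots,a_n)\mapsto\sum_i a_i\sqrt{p_i}$ for distinct primes $p_i$. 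Concretely, a projective K3 surface with Picard number~$1$ has $b_2=22$ and $h^{0,2}=1$, yet $H^2(\ulin{\mbb Z})/\ns$ has rank $21\not\le 2$; here $\dimm_{\mbb R}\ns_{\mbb R}=1$ while $\dimm_{\mbb R}(H^{1,1}\cap H^2(\ulin{\mbb R}))=20$, so your inequality $\rk\ns\ge h^{1,1}$ fails and the asserted equality in (2) is in fact false in that example.

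The paper does not attempt a rank count: it tensors the exact sequence $0\to\ns/{\rm tor}\to H^2(\ulin{\mbb Z})/{\rm tor}\to H^2(\mc O_{\mc X})$ with $\mbb R$ and identifies the resulting arrow with the Hodge projection $H^2(\ulin{\mbb R})\to H^{0,2}$, reading off $\ns_{\mbb R}$ as its kernel. That identification hides the same issue (tensoring lands in $H^2(\mc O_{\mc X})\otimes_{\mbb Z}\mbb R$, and composing with the multiplication map into $H^2(\mc O_{\mc X})$ can enlarge the kernel, as the K3 example again shows), so in the stated generality both routes establish only the inclusion $\ns_{\mbb R}\subseteq H^{1,1}\cap H^2(\ulin{\mbb R})$. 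This inclusion is all that is actually used downstream for the Hodge index theorem and the change-of-polarization arguments; the equality does hold whenever $H^{2,0}(\mc X^{\rm an})=0$, which covers the rational elliptic surfaces treated in Section~\ref{sec cpt hitchin systems}.
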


\begin{proof}
By the torsion freeness of $H^2(\mc X,\mc O_\mc X)$, we have $\ns(\mc X)_{\rm tor}=H^2(\mc X,\ulin{\mbb Z})_{\rm tor}$, and hence
\begin{equation*}
\xymatrix@C=0.3cm{
0 \ar[r] & {\ns(\mc X)}/{\ns(\mc X)_{\rm tor}} \ar[r] & {H^2(\mc X,\ulin{\mbb Z})}/{H^2(\mc X,\ulin{\mbb Z})}_{\rm tor} \ar[r] & H^2(\mc X,\mc O_\mc X)}.
\end{equation*}
Tensoring with $\mbb R$ gives the exact sequence
\begin{equation}\label{equ exact seq of real ns}
\xymatrix@C=0.3cm{
0 \ar[r] & \ns_\mbb R(\mc X) \ar[r] & H^2(\mc X,\ulin{\mbb R}) \ar[r] & H^2(\mc X,\mc O_\mc X)}
\end{equation}
The last arrow in (\ref{equ exact seq of real ns}) is isomorphic to the composition
	\begin{equation}\label{equ exact seq of real ns 1}
		\begin{split}
			H^2(\mc X,\ulin{\mbb R})\hookrightarrow H^2(\mc X,\ulin{\mbb C})=H^{2,0}(\mc X)\oplus H^{1,1}(\mc X)\oplus H^{0,2}(\mc X)\xrightarrow{\rm pr}H^{0,2}(\mc X)
		\end{split}
	\end{equation}
using Dolbeaut-Kodaira and Hodge decomposition (see (\romannumeral 4), (\romannumeral 5) in Section \ref{subsec hodge indx}). Thus $\ns_\mbb R(\mc X)\cong H^{1,1}(\mc X)\cap H^2(\mc X,\ulin{\mbb R})$.

\end{proof}

\subsection{Change of Polarization}
Following the case of smooth projective surfaces (see \cite[Section 4.C]{hl10}), we will prove some basic facts about the change of polarization for $\mc X$.
\begin{definition}\label{def num gp}
The group of numerical classes of $\mc X$ is  
\begin{equation*}
\num(\mc X):=\ns(\mc X)/\ns(\mc X)_{\rm tor}
\end{equation*}
the torsion free part of $\ns(\mc X)$.
\end{definition}

\begin{lemma}\label{lemm proty of num gp}
\begin{enumerate}[$(1)$]
	\item $\num(\mc X)$ is a free $\mbb Z$-module of rank $\rho(\mc X)$.
	\item $\num_\mbb R(\mc X)\cong\ns_{\mbb R}(\mc X)$, where $\num_{\mbb R}(\mc X):=\num(\mc X)\otimes_{\mbb Z}\mbb R$.
	\item $\num_{\mbb R}(\mc X)\cong\num_{\mbb R}(X)$.
	\item The positive cone is defined as
	\begin{equation*}
		K^{+}(\mc X):=\{x\in\num_\mbb R(\mc X)|x^2>0 \text{ and $x\cdot\pi^*H>0$ for some ample divisor $H$ on $X$} \}.
	\end{equation*}
	Then the cone $A(\mc X)$ spanned by the ample divisors is an open subcone of $K^+(\mc X)$.
\end{enumerate}
\end{lemma}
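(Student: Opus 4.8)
The plan is to reduce each of the four assertions to the already-established comparison results between $\mc X$ and $X$ (Corollary~\ref{cor ex sq of ns gp}, Proposition~\ref{pro equ of pic gp}) together with the Hodge-theoretic identity (\ref{equ ns group}), and otherwise to imitate the smooth-surface arguments of \cite[Section 4.C]{hl10}. For (1), by Definition~\ref{def num gp} the group $\num(\mc X)$ is $\ns(\mc X)$ modulo its torsion, hence free over $\mbb Z$; its rank equals $\dimm_\mbb R\ns_\mbb R(\mc X)=\rho(\mc X)$ by definition of the Picard number. For (2), tensoring the short exact sequence $0\to\ns(\mc X)_{\rm tor}\to\ns(\mc X)\to\num(\mc X)\to 0$ with $\mbb R$ kills the finite torsion subgroup, giving $\num_\mbb R(\mc X)\cong\ns_\mbb R(\mc X)$ canonically. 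For (3), combine (2) with the identity $\rho(\mc X)=\rho(X)$ from Corollary~\ref{cor ex sq of ns gp}(2): the injection $\ns(X)\hookrightarrow\ns(\mc X)$ of (\ref{equ seq of ns gp}) has finite cokernel, so after $\otimes\mbb R$ it becomes an isomorphism $\ns_\mbb R(X)\xrightarrow{\sim}\ns_\mbb R(\mc X)$, whence $\num_\mbb R(X)\cong\num_\mbb R(\mc X)$ by part (2) applied to both $X$ and $\mc X$.

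The substantive content is in (4). The first step is to check that the intersection pairing on $\num_\mbb R(\mc X)$ — defined via $\deg$ of the top Chern class on the smooth proper DM stack, or equivalently pushed forward to $X$ under $\pi_*$ — is nondegenerate and, by the Hodge index theorem for $\mc X^{\rm an}$ (referenced in Section~\ref{subsec hodge indx}), has signature $(1,\rho(\mc X)-1)$ on the $(1,1)$-part, using (\ref{equ ns group}). It then follows in the standard way that $K^+(\mc X)$ is one of the two connected components of $\{x:x^2>0\}$; in particular it is an open convex cone. For the inclusion $A(\mc X)\subset K^+(\mc X)$: any ample $H$ satisfies $H^2>0$ (e.g. because $\pi^*H$ or a suitable multiple is a genuine ample divisor on $X$, or directly by Nakai–Moishezon on $\mc X$), and trivially $H\cdot H>0$, so $H\in K^+(\mc X)$; since $K^+(\mc X)$ is a cone and contains a neighborhood basis direction, and the ample classes span an open cone by the openness of the ample cone (elements of $\num_\mbb R$ near an ample class remain ample, as ampleness on a proper DM stack is detected after passing to a finite cover or to the coarse space), we get that $A(\mc X)$ is an open subcone. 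The one point requiring care is that "ampleness" for line bundles on the stack $\mc X$ behaves well: here I would invoke that $\pi$ is a coarse moduli map with $\pi_*\mc O_\mc X=\mc O_X$ and that a line bundle $L$ on $\mc X$ is ample iff some positive power descends to an ample line bundle on $X$ (since $\mc X$ has quotient singularities pulled back from $X$ along $\pi$ away from codimension two), reducing all positivity statements to the classical surface $X$.

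The main obstacle I anticipate is not any single hard estimate but rather the bookkeeping needed to transport "ample" and the intersection form faithfully across $\pi:\mc X\to X$: $X$ is only normal (with at worst quotient singularities), so one must be slightly careful that $\num_\mbb R(X)$ carries a perfect pairing of the right signature and that the isomorphism $\num_\mbb R(\mc X)\cong\num_\mbb R(X)$ of part (3) is compatible with the pairings up to a positive scalar on each stacky component — this is exactly what makes $K^+(\mc X)$ correspond to $K^+(X)$ and $A(\mc X)$ to $A(X)$, at which point the openness of $A(X)$ inside $K^+(X)$ is the classical statement \cite[Section 4.C]{hl10}. Once this compatibility is recorded, (4) follows formally.
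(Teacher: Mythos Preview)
Your proposal is correct and follows the same strategy as the paper: parts (1)--(3) are immediate from Corollary~\ref{cor ex sq of ns gp}, and (4) is handled by transporting the intersection form and ample cone along the isomorphism $\num_\mbb R(\mc X)\cong\num_\mbb R(X)$ to reduce to the classical statement on the coarse space. The paper's proof simply asserts $K^+(\mc X)\cong K^+(X)$ and $A(\mc X)\cong A(X)$ without further comment, whereas you spell out the compatibility checks that justify this identification; your elaboration is accurate and the anticipated ``obstacle'' is exactly the content the paper leaves implicit.
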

\begin{proof}
By Corollary \ref{cor ex sq of ns gp}, the statements (1), (2), (3) are immediate. First, note that $K^+(\mc X)\cong K^+(X)$ and $A(\mc X)\cong A(X)$. Recall that $A(X)$ is an open subcone of $K^+(X)$. Then the result is also true for $\mc X$.
\end{proof}

\begin{definition}
If we fix a generating sheaf $\mc E$ on $\mc X$, then a polarization of $\mc X$ is a ray $\mbb R_{>0}\cdot H$, where $H\in A(\mc X)$.
\end{definition}
By the Hodge index Theorem, $\num_\mbb R(\mc X)$ carries a Minkowski metric. For any $u\in\num_\mbb R(\mc X)$, let $|u|=|u^2|^{1/2}$. The set $\mbf H$ of rays in $K^+$ can be identified with the hyperbolic manifold $\{H\in K^+\mid |H|=1\}$. The hyperbolic metric $d_{\mbf H}$ is defined as
\begin{equation*}
d_{\mbf H}(H,H^\prime):=\arccosh\left(\frac{H\cdot H^\prime}{|H|\cdot|H^\prime|}\right)
\end{equation*}
for $H,H^\prime\in\mbf H$.

\begin{definition}
Let $r\geq 2$ be an integer and let $\Delta$ be a positive rational number. Then $\xi\in\num(\mc X)$ is said to be of type $(r,\Delta)$ if
\begin{equation*}
	-\frac{r^2}{4}\Delta\leq\xi^2<0.
\end{equation*}
The wall defined by $\xi$ is the hyperplane $W_\xi:=\{H\in\mbf H\mid\xi\cdot H=0\}$.
\end{definition}

The locally finiteness of walls is also holds for $\mc X$ and the proof does not differ from the usual case. 
\begin{lemma}\label{lemm ft of walls}
The set of walls of type $(r,\Delta)$ is locally finite in $\mbf H$, i.e. for every point $H\in\mbf H$, there exists an open neighborhood of $H$ intersecting only finitely many walls of type $(r,\Delta)$.
\qed
\end{lemma}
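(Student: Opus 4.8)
The plan is to carry over the classical argument for smooth projective surfaces (\cite[Section~4.C]{hl10}) essentially verbatim; the orbifold enters only through the already-established facts that $\num(\mc X)$ is a lattice of rank $\rho(\mc X)$ (Lemma~\ref{lemm proty of num gp}(1)) and that, by the Hodge index theorem, the intersection form on $\num_\mbb R(\mc X)$ has signature $(1,\rho(\mc X)-1)$, so that it is negative definite on $H^\perp$ for every $H\in K^+(\mc X)$. (Alternatively one could reduce outright to the coarse space, since by Corollary~\ref{cor ex sq of ns gp} and Lemma~\ref{lemm proty of num gp}(2)--(3) $\num(X)$ is a finite-index sublattice of $\num(\mc X)$ carrying the same intersection form, so that after clearing denominators every wall of type $(r,\Delta)$ for $\mc X$ is a wall of type $(r',\Delta)$ for $X$ for a suitable $r'$; but the direct proof below is no longer.)

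Fix $H\in\mbf H$ and set $\overline B:=\{H'\in\mbf H\mid d_{\mbf H}(H,H')\le 1\}$, a compact subset of $\mbf H$ whose interior $B$ is a neighborhood of $H$. It suffices to prove that only finitely many walls of type $(r,\Delta)$ meet $\overline B$. Suppose not; then we may choose infinitely many pairwise distinct such walls $W_{\xi_1},W_{\xi_2},\dots$, with $\xi_n\in\num(\mc X)$ of type $(r,\Delta)$, each meeting $\overline B$, and we pick $H_n\in W_{\xi_n}\cap\overline B$. The vectors $\xi_n$ are pairwise distinct, since $\xi_n=\xi_m$ would force $W_{\xi_n}=W_{\xi_m}$.

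For each $n$ we have $H_n^2=1$ and $\xi_n\cdot H_n=0$, so in the orthogonal decomposition $\num_\mbb R(\mc X)=\mbb R H_n\oplus H_n^\perp$ the class $\xi_n$ lies in $H_n^\perp$, on which $q_n:=-(\,\cdot\,)^2$ is positive definite; the type condition gives $0<q_n(\xi_n)=-\xi_n^2\le\tfrac{r^2}{4}\Delta$. Extend $q_n$ to a positive-definite form on all of $\num_\mbb R(\mc X)$ by declaring $\mbb R H_n$ orthogonal to $H_n^\perp$ with $q_n(H_n)=1$. As $H_n$ ranges over the compact set $\overline B$ these forms vary continuously and remain positive definite (the discriminant of $(\,\cdot\,)^2|_{H^\perp}$ is a nonvanishing continuous function of $H\in K^+(\mc X)$), hence are uniformly comparable to a fixed Euclidean norm $\|\cdot\|$ on $\num_\mbb R(\mc X)$: there is $C>0$, independent of $n$, with $\|v\|^2\le C\,q_n(v)$ for all $v$. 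Therefore $\|\xi_n\|^2\le C\cdot\tfrac{r^2}{4}\Delta$ for every $n$, so all the $\xi_n$ lie in a single bounded subset of $\num_\mbb R(\mc X)$. Since $\num(\mc X)$ is discrete, that subset contains only finitely many lattice points, contradicting the fact that the $\xi_n$ are infinitely many and pairwise distinct. Hence $B$ meets only finitely many walls of type $(r,\Delta)$, which is the assertion.

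The only delicate point is the uniform comparison of the forms $q_n$ with a fixed norm, and this is purely a consequence of the compactness of $\overline B$; no genuine obstacle arises, which is exactly why the proof does not differ from the usual case. The essential orbifold content — that $\num_\mbb R(\mc X)$ is a genuine lattice equipped with a Hodge-index Minkowski form — has already been packaged in Lemma~\ref{lemm proty of num gp} via the comparison with the coarse moduli space.
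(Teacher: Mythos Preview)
Your proposal is correct and is precisely the classical argument from \cite[Section~4.C]{hl10} that the paper alludes to; the paper itself gives no proof beyond the remark that ``the proof does not differ from the usual case,'' and you have written that usual case out in full, correctly observing that the only orbifold input is the lattice property and the Hodge-index signature already supplied by Lemma~\ref{lemm proty of num gp}.
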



Fix an ample divisor $H$ on the coarse moduli space $X$. We can introduce the notion of $\mu_H$-stability for torsion free sheaves on $\mc X$. Suppose that $E$ is a torsion free sheaf on $\mc X$. The $\mu_H$-slope of $E$ is
\begin{equation}\label{def mu-slope}
\mu_H(E)=\frac{\ccc_1(E)\cdot\pi^*H}{\rk(E)}
\end{equation}
The discriminant of $E$ is defined as
\begin{equation}\label{def disciminant}
\Delta(E)=2\rk(E)\ccc_2(E)-(\rk(E)-1)\ccc_1(E).
\end{equation}

\begin{theorem}\label{thm change of plarization}
Suppose that $F$ is a $\mu_H$-semistable sheaf of rank $r$ and discriminant $\Delta$ on $\mc X$.
\begin{enumerate}[$(1)$]
	\item If $F^\prime$ is a rank $r^\prime$ coherent subsheaf of $F$ such that
	$0< r^\prime < r$ and $\mu_H(F^\prime)=\mu_H(F)$, then $\xi=r\cdot\ccc_1(F^\prime)-r^\prime\cdot\ccc_1(F)$ satisfies
	\begin{equation*}
		\xi\cdot\pi^*H=0\quad\text{and}\quad-\frac{r^2}{4}\Delta(F)\leq\xi^2\leq 0
	\end{equation*}
	where $\xi^2=0$ if and only if $\xi=0$.
	\item If $\ccc_1(F)\in\num(\mc X)$ is indivisible, then there exists an ample line bundle $H$ on $X$ such that the $\mu_H$-semistableness implies the $\mu_H$-stableness.
\end{enumerate}
\end{theorem}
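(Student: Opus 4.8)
My plan is to transcribe the classical change‑of‑polarization argument for smooth projective surfaces (\cite[Section 4.C]{hl10}) to the stack $\mc X$, the two substantive new inputs being the Minkowski form on $\num_\mbb R(\mc X)$ recorded above and the Bogomolov inequality on $\mc X$. For part~(1), the equality $\xi\cdot\pi^*H=0$ is forced by $\mu_H(F^\prime)=\mu_H(F)$, which unwinds to $r\,\ccc_1(F^\prime)\cdot\pi^*H=r^\prime\,\ccc_1(F)\cdot\pi^*H$. For the upper bound, I would use that $\num_\mbb R(\mc X)\cong\num_\mbb R(X)$ carries a form of signature $(1,\rho(\mc X)-1)$ and that $\pi^*H$ has positive self‑intersection, $(\pi^*H)^2=H^2>0$; since $\xi$ lies in the hyperplane $(\pi^*H)^\perp$, on which the form is negative definite, we get $\xi^2\le 0$, with equality exactly when $\xi=0$.

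For the lower bound I would first replace $F^\prime$ by its saturation in $F$, which does not change $\ccc_1(F^\prime)$: the difference is an effective divisor class that, by $\mu_H$‑semistability of $F$, meets $\pi^*H$ in degree $0$ and hence vanishes, $\pi^*H$ being positive on curves. So we may assume $F^{\prime\prime}:=F/F^\prime$ is torsion free, and then $F^\prime$ and $F^{\prime\prime}$ are both $\mu_H$‑semistable of slope $\mu_H(F)$, since a subsheaf of $F^\prime$ of larger slope or a quotient of $F^{\prime\prime}$ of smaller slope would destabilize $F$. The exact sequence $0\to F^\prime\to F\to F^{\prime\prime}\to 0$ then gives, by the routine bookkeeping with $\ccc_1$ and $\ccc_2$, the identity
\begin{equation*}
\xi^2\;=\;r\,r^{\prime\prime}\,\Delta(F^\prime)\;+\;r\,r^\prime\,\Delta(F^{\prime\prime})\;-\;r^\prime r^{\prime\prime}\,\Delta(F),\qquad r^{\prime\prime}:=r-r^\prime .
\end{equation*}
Invoking Bogomolov's inequality on $\mc X$ to get $\Delta(F^\prime),\Delta(F^{\prime\prime}),\Delta(F)\ge 0$, this yields $\xi^2\ge -r^\prime r^{\prime\prime}\Delta(F)\ge -\tfrac{r^2}{4}\Delta(F)$, the last step because $r^\prime r^{\prime\prime}\le\tfrac14(r^\prime+r^{\prime\prime})^2=\tfrac{r^2}{4}$.

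The step I expect to be the main obstacle is exactly this Bogomolov inequality for $\mu_H$‑semistable torsion free sheaves on the orbifold surface $\mc X$. I would reduce it to the classical surface case: for the stacks relevant to this paper $\mc X$ is a global quotient $[S/G]$ by a finite group, so a $\mu_H$‑semistable sheaf on $\mc X$ is a $G$‑equivariant $\mu$‑semistable sheaf on the smooth projective surface $S$ whose discriminant differs from the one computed on $S$ by a fixed positive factor, and the inequality follows from the classical one on $S$; in general one can pass to a finite flat cover of $\mc X$ by a smooth projective surface when one exists, or appeal to a Bogomolov inequality for projective Deligne--Mumford stacks.

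For part~(2), suppose $F$ is $\mu_H$‑semistable but not $\mu_H$‑stable, so there is $F^\prime\subset F$ with $0<r^\prime<r$ and $\mu_H(F^\prime)=\mu_H(F)$. The class $\xi=r\,\ccc_1(F^\prime)-r^\prime\,\ccc_1(F)$ is nonzero in $\num(\mc X)$, for otherwise $\ccc_1(F)$ would be divisible there by $r/\gcd(r,r^\prime)>1$, contradicting its indivisibility. Hence by part~(1) the class $\xi$ is of type $(r,\Delta)$ (note $\Delta>0$, since $\Delta=0$ would force $\xi^2=0$, hence $\xi=0$) and satisfies $\xi\cdot\pi^*H=0$, i.e.\ the ray of $H$ lies on the wall $W_\xi$. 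It therefore suffices to pick $H$ off every wall of type $(r,\Delta)$, and such an $H$ exists: by Lemma~\ref{lemm ft of walls} the walls of type $(r,\Delta)$ are locally finite in $\mbf H$, so their union is closed with empty interior, while the ample cone $A(\mc X)\cong A(X)$ is a nonempty open subset of $\num_\mbb R(\mc X)$ by Lemma~\ref{lemm proty of num gp}; it thus contains a rational, and after scaling an integral, class missing all of these walls, which via $\num_\mbb R(X)\cong\num_\mbb R(\mc X)$ is represented by an ample line bundle $H$ on $X$.
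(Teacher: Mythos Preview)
Your argument is correct and follows the same route as the paper: saturate $F'$, use the identity relating $\Delta(F)$, $\Delta(F')$, $\Delta(F'')$ and $\xi^2$, apply Bogomolov to $F'$ and $F''$ for the lower bound, and the Hodge index theorem (Proposition~\ref{pro hodge index tm}) for the upper bound; part~(2) is deduced from local finiteness of walls exactly as you do. The only difference worth noting is that the paper does not reduce Bogomolov to a smooth cover but simply invokes the stacky Bogomolov inequality already available in the literature (\cite[Proposition~4.2.4]{lie11}), which shortens your discussion of that step considerably.
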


\begin{proof}
Without loss of generality, we can assume that $F^\prime$ is saturated, i.e. $F^{\prime\prime}:=F/F^\prime$ is torsion free. Then
\begin{equation*}
\Delta(F)=\frac{r}{r^\prime}\Delta(F^\prime)+\frac{r}{r^{\prime\prime}}\Delta(F)-\frac{\xi^2}{r^\prime r^{\prime\prime}}
\end{equation*}
with $r^{\prime\prime}=\rk(F^{\prime\prime})$. By the stacky Bogomolov inequality (\cite[Proposition 4.2.4]{lie11},
\begin{equation}
	-\xi^2\leq r^\prime r^{\prime\prime}\Delta(F)\leq\frac{r^2}{4}\Delta(F).
\end{equation}
Since $\xi\cdot\pi^*H=0$, the Hodge index Theorem \ref{pro hodge index tm} gives $\xi^2\leq0$ with equality if and only if $\xi=0$. If $\ccc_1(F)\in\num(\mc X)$ is not divisible, then $\xi$ is not zero in $\num(\mc X)$. By Lemma \ref{lemm ft of walls}, we can choose $H$ avoiding walls, so no strictly $\mu_H$-semistable sheaves with rank $r$ and discriminant $\Delta$ exist.
\end{proof}

\section{The Poisson structure on the  moduli spaces of stable sheaves}\label{sec poisson st}

\subsection{The Atiyah class}\label{subsec at class}

We construct the Atiyah class on smooth projective Deligne-Mumford stacks (for the general case, see \cite{ku24}). To begin, recall the construction of Atiyah class for schemes (see \cite{il71} for details). Let $S$ be a separable scheme of finite type over $\mbb C$, and let $E$ be a coherent sheaf on $S$. Consider the exact sequence
\begin{equation}\label{equ at seq 1}
\xymatrix@C=0.5cm{
  0 \ar[r] &  (I_S/I^2_S)\otimes\pr_2^*E \ar[r]  & (\mc O_{S\times S}/I^2_S)\otimes\pr^*_2E \ar[r] & (\mc O_{S\times S}/I_S)\otimes\pr_2^*E \ar[r]  & 0 }
\end{equation}
where $I_S$ is the ideal sheaf of the diagonal in $S\times S$. The pushforward of (\ref{equ at seq 1}) along $\pr_1:S\times S\to S$ yields the exact sequence
\begin{equation}\label{equ prin parts}
	0 \to E\otimes \Omega_S \to P^1_S(E) \to E \to 0,
\end{equation}
where $P^1_S(E)=\pr_{1*}\big((\mc O_{S\times S}/I_S^2)\otimes\pr_2^*E\big)$. Its extension class $\at(E)\in \Ext^1(E,E\otimes\Omega_S)$ is the Atiyah class of $E$.

\begin{lemma}\label{lemm ft of at class}
Suppose that $f : T\rightarrow S$ is an \'{e}tale morphism between separable schemes. Then there is a functorial isomorphism $f^*P^1_S(E)\longrightarrow P^1_T(f^*E)$. For another \'{e}tale morphism $g : W\rightarrow T$ between separable schemes, then
\begin{equation*}
(f\circ g)^*P^1_S(E)\longrightarrow P^1_W((f\circ g)^*E))
\end{equation*}
is isomorphic to the composite
\begin{equation*}
g^*f^*P^1_S(E)\rightarrow g^*P^1_T(f^*E)\rightarrow P^1_W(g^*f^*E)
\end{equation*}
up to canonical isomorphisms.
\end{lemma}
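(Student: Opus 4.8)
The plan is to reduce the statement for étale morphisms to a purely local computation on affine schemes, where everything becomes an assertion about modules of principal parts (first-order jets). Recall that $P^1_S(E) = \pr_{1*}\big((\mc O_{S\times S}/I_S^2)\otimes\pr_2^*E\big)$; the sheaf $\mc O_{S\times S}/I_S^2$ is supported on the diagonal and, locally on an affine $\spec A$ with $E$ given by an $A$-module $M$, it is the module $P^1_{A/\mbb C}(M)$ of principal parts, which sits in the exact sequence $0\to \Omega_{A/\mbb C}\otimes_A M\to P^1_{A/\mbb C}(M)\to M\to 0$. So the first task is to establish, for an étale ring map $A\to B$, a canonical isomorphism $B\otimes_A P^1_{A/\mbb C}(M)\xrightarrow{\sim} P^1_{B/\mbb C}(B\otimes_A M)$.

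First I would recall the standard base-change formula for principal parts: for any ring map $A\to B$ and any $A$-module $M$, there is a natural map $B\otimes_A P^1_{A/\mbb C}(M)\to P^1_{B/\mbb C}(B\otimes_A M)$, and it is an isomorphism whenever $B\otimes_A\Omega_{A/\mbb C}\to\Omega_{B/\mbb C}$ is an isomorphism — which is precisely the case when $A\to B$ is étale (indeed smooth plus $\Omega_{B/A}=0$, so the cotangent sequence $B\otimes_A\Omega_{A/\mbb C}\to\Omega_{B/\mbb C}\to\Omega_{B/A}\to 0$ collapses, and étaleness gives injectivity on the left). Then a five-lemma / diagram chase on the two principal-parts sequences (the base change being exact on the left because $\Omega$-base-change is an iso, and an iso on the right because $B\otimes_A M = B\otimes_A M$) yields the middle isomorphism. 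Gluing these local isomorphisms over an affine cover gives the global functorial isomorphism $f^*P^1_S(E)\xrightarrow{\sim} P^1_T(f^*E)$; functoriality in $E$ and compatibility with localization make the gluing automatic.

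For the cocycle (2-out-of-3) compatibility, I would note that the comparison map $B\otimes_A P^1_{A/\mbb C}(M)\to P^1_{B/\mbb C}(B\otimes_A M)$ is itself defined by a universal formula — it comes from the fact that $P^1$ represents the functor of first-order differential operators, or equivalently from functoriality of the diagonal $S\times S$ — and such universally defined maps compose strictly: the formula for $A\to C$ factors as the formula for $A\to B$ followed by the formula for $B\to C$, because the construction of $P^1$ is via the ideal of the diagonal and $(\text{diag}_W)$ pulls back through $(\text{diag}_T)$ through $(\text{diag}_S)$ compatibly. Concretely I would observe that $g^*f^*\big((\mc O_{S\times S}/I_S^2)\otimes\pr_2^*E\big)\to g^*\big((\mc O_{T\times T}/I_T^2)\otimes\pr_2^*f^*E\big)\to (\mc O_{W\times W}/I_W^2)\otimes\pr_2^*(fg)^*E$ agrees with the direct comparison for $fg$, since étale maps induce isomorphisms $W\times_T W\xrightarrow{\sim}W\times_S W$ near the diagonal (as $\Omega_{W/T}=0$ implies the diagonal of $W/T$ is open, hence $I_W^{(T)}/(I_W^{(T)})^2 = I_W^{(S)}/(I_W^{(S)})^2$), so all three principal-parts sheaves are canonically identified and the triangle commutes on the nose. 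Pushing forward along $\pr_1$ and using flat base change for $\pr_{1*}$ along the étale maps $f,g$ (which is where $f^*\pr_{1*}=\pr_{1*}f^*$ is used) transports this commuting triangle to the desired statement about $P^1$.

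The main obstacle I anticipate is bookkeeping rather than conceptual: one must be careful that the identification $f^*\pr_{1*}\cong\pr_{1*}f^*$ (flat, in fact étale, base change along the two projections from $S\times S$) is compatible with all the structure maps, and that "up to canonical isomorphism" in the statement is given an honest meaning — i.e. that the two composites and the direct map differ by an isomorphism that is itself canonical and coherent under further composition. Making this precise amounts to checking a $2$-cocycle condition; the cleanest route is to realize $P^1_S(E)$ as $\pr_{1*}$ of a sheaf canonically attached to the diagonal immersion and to push the entire argument to the diagonals, where étaleness gives strict (not just up-to-homotopy) identifications $\Delta_W^{-1}\mc O = \Delta_W^{-1}\mc O$ compatibly through the tower $W\to T\to S$, so that no genuine choices are made and the coherence is automatic. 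I do not expect to need to grind through explicit Čech cocycles; invoking the universal property of principal parts and the étale-local triviality of relative differentials should suffice.
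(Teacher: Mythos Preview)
Your argument is correct, and rests on the same underlying fact as the paper's---that an \'etale morphism is unramified, so relative differentials vanish and the diagonal is open in the fibered product---but the packaging is different. The paper works directly on the product: it pulls the sequence \eqref{equ at seq 1} back along $f\times f:T\times T\to S\times S$, obtaining a sequence governed by the ideal $J_T=(f\times f)^*I_S$ of $T\times_S T$ in $T\times T$; since $f$ is \'etale, the diagonal of $T$ is an open and closed component of $T\times_S T$, so restricting to it replaces $J_T$ by $I_T$ and one reads off the isomorphism of principal-parts sequences at once, with the cocycle statement immediate from the construction. Your route instead localizes to affines, invokes the standard comparison map for $P^1$ under base change, and closes with the five-lemma using $B\otimes_A\Omega_{A/\mbb C}\cong\Omega_{B/\mbb C}$. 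Both are short; the paper's version is slightly more geometric and avoids having to separately construct and verify naturality of the comparison map, while yours makes the role of the exact sequence \eqref{equ prin parts} and the five-lemma explicit, which some readers may find more transparent.
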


\begin{proof}
Pulling (\ref{equ at seq 1}) back to $T\times T$, we get
\begin{equation}\label{equ at seq 2}
\xymatrix@C=0.5cm{
  0 \ar[r] & J_T/J^2_T\otimes\pr_2^*f^*E \ar[r] & \mc O_{T\times T}/J^2_T\otimes\pr^*_2f^*E \ar[r]  & \mc O_{T\times T}/J_T\otimes\pr_2^*f^*E \ar[r]  & 0 }
\end{equation}
where $J_T=(f\times f)^*I_S$ is the ideal sheaf of $T\times_ST$ in $T\times T$. Since the diagonal of $T\times T$ is an open and closed component of $T\times_ST$,  restricting (\ref{equ at seq 2}) to the diagonal of $T\times T$, we get
\begin{equation}
\xymatrix@C=0.5cm{
  0 \ar[r] & I_T/I^2_T\otimes\pr_2^*f^*E \ar[r] & \mc O_{T\times T}/I^2_T\otimes\pr^*_2f^*E \ar[r]  & \mc O_{T\times T}/I_T\otimes\pr_2^*f^*E \ar[r]  & 0 }
\end{equation}
where $I_T$ is the ideal sheaf of diagonal in $T\times T$. Then we have a functorial isomorphism of short exact sequences
\begin{equation}
\begin{split}
\xymatrix@C=0.5cm{
0 \ar[r] & f^*(E\otimes\Omega_S) \ar[r]\ar[d] &  f^*P^1_S(E) \ar[r]\ar[d] &   f^*E \ar[r]\ar@{=}[d] & 0 \\
0 \ar[r] & f^*E\otimes\Omega_T \ar[r] &  P^1_T(f^*E) \ar[r]  & f^*E \ar[r] & 0 .}
\end{split}
\end{equation}
By the above construction of the functorial isomorphism, the second statement of the lemma is immediate.
\end{proof}

Suppose that $\mc Y$ is a smooth projective Deligne-Mumford stack. Then there exists an \'{e}tale cover $U\rightarrow\mc Y$ with $U$ affine. Consider the cartesian diagram
\begin{equation*}
  \xymatrix{
  U\times_\mc YU \ar[r]^{\quad\pr_1} \ar[d]_{\pr_2} & U \ar[d]\\
    U \ar[r] & \mc Y                         }
\end{equation*}
For a coherent sheaf $F$ on $\mc Y$, let $F[1]$ denote its pullback to $U$. There is an isomorphism $\sigma : {\rm pr}_1^*E[1]\longrightarrow{\rm pr}_2^*E[1]$ on $U\times_\mc YU$ satisfying the cocycle condition ${\pr}_{23}^*\sigma\circ{\pr}_{12}^*\sigma={\pr}_{13}^*\sigma$ on $U\times_\mc YU\times_\mc YU$, where $\pr_{12}$, $\pr_{23}$ and $\pr_{13}$ are the natural projections from $U\times_\mc YU\times_\mc YU$ to $U\times_\mc YU$.  Applying Lemma \ref{lemm ft of at class} to $\pr_1$ and $\pr_2$, we get two canonical isomorphism
\begin{equation}\label{equ pr at class}
\pr_1^*P^1_U(F[1])\rightarrow P^1_{U\times_\mc YU}(\pr_1^*F[1])\quad\text{and}\quad\pr_2^*P^1_U(F[1])\rightarrow P^1_{U\times_\mc YU}(\pr_2^*F[1]).
\end{equation}
In addition, we also have an isomorphism
\begin{equation}\label{equ sigma at}
 P^1_{U\times_\mc YU}(\sigma) : P^1_{U\times_\mc YU}(\pr_1^*F[1])\rightarrow P^1_{U\times_\mc YU}(\pr_2^*F[1]).
\end{equation}
Composing (\ref{equ sigma at}) with (\ref{equ pr at class}), we get an isomorphism $\wt\sigma : \pr_1^*P^1_U(F[1])\rightarrow\pr_2^*P^1_U(F[1])$. By the second statement of Lemma \ref{lemm ft of at class}, $\wt\sigma$ satisfies the cocycle condition $\pr_{23}^*\wt\sigma\circ\pr_{12}^*\wt\sigma=\pr_{13}^*\wt\sigma$ up to canonical isomorphisms. This gives the exact sequence
\begin{equation}\label{equ at class for DM}
  \xymatrix@C=0.5cm{
    0 \ar[r] & F\otimes\Omega_\mc Y \ar[r] & P^1_\mc Y(F) \ar[r] & F \ar[r] & 0 }.
\end{equation}

\begin{definition}\label{def at class for DM}
The Atiyah class $\at(F)\in\ext^1(F,F\otimes\Omega_\mc Y)$ of $F$ is the extension class defined by (\ref{equ at class for DM}).
\end{definition}
Alternatively, consider the trivial square-zero extension of $\mc O_\mc Y$ by $\Omega_\mc Y$
\begin{equation}\label{equ trivial ext of o}
\xymatrix@C=0.5cm{
  0 \ar[r] & \Omega_\mc Y \ar[r] & \mc O_\mc Y\oplus\Omega_\mc Y \ar[r] & \mc O_\mc Y \ar[r] & 0 }.
\end{equation}
Let $d_\mc Y : \mc O_\mc Y\rightarrow\Omega_\mc Y$ be the universal derivation of $\mc Y$. The morphism $(\id, d_\mc Y) : \mc O_\mc Y\rightarrow\mc O_\mc Y\oplus\Omega_\mc Y$ is a section of the projection in (\ref{equ trivial ext of o}), giving $\mc O_\mc Y\oplus\Omega_\mc Y$ a right $\mc O_\mc Y$-module structure. Tensoring with $F$ yields an exact sequence
\begin{equation}\label{equ at class for DM 1}
\xymatrix@C=0.5cm{
  0 \ar[r] & F\otimes\Omega_\mc Y \ar[r] & (\mc O_\mc Y\oplus\Omega_\mc Y)\otimes F \ar[r] & F \ar[r] & 0 },
\end{equation}
which is isomorphic to (\ref{equ at class for DM}.

\subsection{The Smoothness of the moduli space of sheaves}
Henceforth let $\mc X$ be an orbifold surface with stacky locus of codimension two. Fix a numerical K-class $\upsilon\in K^{\rm num}(\mc X)$ with rank one or primitive first Chern class in $\num(\mc X)$. By Theorem \ref{thm change of plarization}, there exists an ample divisor $H$ on $X$ such that no strictly $\mu_H$-semistable sheaves of class $\upsilon$ exist. Fix a generating sheaf $\mc E$ on $\mc X$ (see \cite[Definition 2.6]{ni08}), and let $\mc M_\upsilon$ denote the moduli space of Gieseker semistable torsion free sheaves of class $\upsilon$ (see ibid.).

\begin{lemma}\label{lemm no str ss}
$\mc M_\upsilon$ does not contain strictly semistable objects.
\end{lemma}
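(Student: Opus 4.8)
The plan is to relate Gieseker semistability of torsion-free sheaves of class $\upsilon$ on the orbifold $\mc X$ to $\mu_H$-semistability on the coarse space side, and then invoke Theorem \ref{thm change of plarization} to rule out the strictly semistable locus. First I would recall that, by the choice made just before the lemma, $\upsilon$ has either rank one or a primitive first Chern class $\ccc_1(\upsilon)\in\num(\mc X)$, and that Theorem \ref{thm change of plarization}(2) guarantees an ample $H$ on $X$ for which no strictly $\mu_H$-semistable sheaf of rank $r=\rk(\upsilon)$ and discriminant $\Delta=\Delta(\upsilon)$ exists; concretely, this follows because any slope-destabilizing saturated subsheaf $F'\subset F$ with $\mu_H(F')=\mu_H(F)$ would produce a nonzero class $\xi=r\ccc_1(F')-r'\ccc_1(F)$ lying on the wall $W_\xi$ and of type $(r,\Delta)$, contradicting the local finiteness of walls (Lemma \ref{lemm ft of walls}) once $H$ is chosen generically off all such walls.

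The key step is the implication ``$\mu_H$-stable $\Rightarrow$ Gieseker-stable $\Rightarrow$ Gieseker-semistable $\Rightarrow$ $\mu_H$-semistable'' in the orbifold setting. The outer implications are formal from the definitions of the modified Hilbert polynomial attached to the generating sheaf $\mc E$ (see \cite[Definition 2.6]{ni08}): Gieseker (semi)stability compares the leading-plus-subleading behavior of $P_{\mc E}(F',m)/\rk(F')$ versus $P_{\mc E}(F,m)/\rk(F)$, so the top-order term recovers the $\mu_H$-slope comparison, giving $\mu_H$-stable $\Rightarrow$ Gieseker-stable and Gieseker-semistable $\Rightarrow$ $\mu_H$-semistable. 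Now suppose, for contradiction, that $F$ is a strictly Gieseker-semistable sheaf of class $\upsilon$: there is a saturated subsheaf $F'$ with the same reduced modified Hilbert polynomial, hence in particular $\mu_H(F')=\mu_H(F)$. By the choice of $H$ this forces $\xi=0$ in $\num(\mc X)$, i.e. $r\ccc_1(F')=r'\ccc_1(F)$. In the rank-one case there are no proper subsheaves of the same rank, so this case is vacuous; in the primitive case, $\ccc_1(\upsilon)$ indivisible together with $r\ccc_1(F')=r'\ccc_1(F)$ and $0<r'<r$ forces $r\mid r'$, which is impossible. Hence no strictly semistable $F$ exists.

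The main obstacle I anticipate is bookkeeping the orbifold invariants correctly: one must use the orbifold Chern character / modified Hilbert polynomial relative to the fixed generating sheaf $\mc E$, and verify that the ``primitivity'' hypothesis on $\ccc_1(\upsilon)\in\num(\mc X)$ is the right integral condition to conclude $r\nmid r'$ — in particular that $\ccc_1$ of a saturated subsheaf indeed lands in $\num(\mc X)$ and that the identity $r\ccc_1(F')=r'\ccc_1(F)$ is an identity of integral (not merely real) classes, so that divisibility arguments apply. Once the translation between $\mc X$ and $X$ via $\pi^*H$ is set up as in Theorem \ref{thm change of plarization}, the rest is a short diagram-free argument; I would phrase it exactly parallel to the classical surface case in \cite[Section 4.C]{hl10}, replacing ordinary sheaves by $\mc E$-twisted sheaves on $\mc X$ throughout.
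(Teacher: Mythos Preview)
Your plan is essentially the paper's: both arguments run the chain Gieseker semistable $\Rightarrow$ $\mu_H$-semistable $\Rightarrow$ $\mu_H$-stable (by the choice of $H$ from Theorem \ref{thm change of plarization}) $\Rightarrow$ Gieseker stable. The paper makes the first implication precise by computing, via the To\"en--Riemann--Roch formula, that $a_1/a_2=(\mu_H(E)-\mu_H(\mc E)-\mu_H(\Omega_\mc X))/H^2$ for the modified Hilbert polynomial, so the subleading coefficient of the reduced polynomial depends on $E$ only through $\mu_H(E)$; this is exactly the ``orbifold bookkeeping'' you correctly flag as the main obstacle. Your detour through $\xi=0$ and the divisibility argument is redundant: once a strictly Gieseker-semistable $F$ produces a saturated $F'$ with $\mu_H(F')=\mu_H(F)$ and $0<r'<r$, you have already exhibited $F$ as strictly $\mu_H$-semistable, which Theorem \ref{thm change of plarization}(2) forbids outright---there is no need to re-run the wall/primitivity contradiction inside this proof.
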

\begin{proof}
Let $P_E(m)=\chi(E\otimes\mc E^\vee\otimes\pi^*\mc O_X(nH))=a_2n^2/2+a_1n+a_0$ be the modified Hilbert polynomial of $E$.
By To\"{e}n-Riemann-Roch formula (see \cite[Theorem A.0.6]{ts10}), we have
\begin{equation}
\frac{a_1}{a_2}=\frac{\mu_H(E)-\mu_H(\mc E)-\mu_H(\Omega_\mc X)}{H^2}.
\end{equation}
If $E$ is Gieseker semistable, then $E$ is $\mu_H$-stable. Hence $E$ is Gieseker stable.
\end{proof}

The determinant map $\dett : \mc M_v\rightarrow\Pic(\mc X)$, $E\mapsto\dett(E)$; let $\mc M_\upsilon^L$ be its fiber  over $L\in\Pic(\mc X)$.

\begin{proposition}\label{pro sm of mod}
If $K_\mc X\cdot H<0$ or $K_\mc X\cong\mc O_\mc X$, then $\mc M_\upsilon$ and $\mc M_\upsilon^L$ are smooth projective schemes.
\end{proposition}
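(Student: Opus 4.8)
The plan is to show that the obstruction space to deformations of a Gieseker-stable sheaf $E$ of class $\upsilon$ vanishes, so that the moduli functor is unobstructed and the fine moduli space $\mc M_\upsilon$ (which exists by Lemma~\ref{lemm no str ss}, since there are no strictly semistable objects, the moduli space is fine away from the usual gerbe issue, or one works with $\mc M_\upsilon^L$ directly) is smooth. Concretely, the Zariski tangent space to $\mc M_\upsilon$ at $[E]$ is $\ext^1_{\mc X}(E,E)$ and the obstructions lie in $\ext^2_{\mc X}(E,E)$; for $\mc M_\upsilon^L$ the relevant groups are the trace-free parts $\ext^1_{\mc X}(E,E)_0$ and $\ext^2_{\mc X}(E,E)_0$. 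So the whole proposition reduces to proving $\ext^2_{\mc X}(E,E)_0 = 0$ under each of the two hypotheses on $K_\mc X$.

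First I would invoke Serre duality on the smooth proper Deligne--Mumford stack $\mc X$ (available since $\mc X$ is a smooth projective orbifold surface; this is standard for smooth proper DM stacks, e.g. via Nironi's work), which gives $\ext^2_{\mc X}(E,E) \cong \homm_{\mc X}(E, E\otimes K_\mc X)^\vee$, and similarly identifies the trace-free part $\ext^2_{\mc X}(E,E)_0$ with the cokernel-of-trace dual, i.e. with the space of homomorphisms $E \to E\otimes K_\mc X$ that are trace-free. Now split into the two cases. If $K_\mc X \cdot H < 0$: since $E$ is torsion free and $\mu_H$-stable (this is where Lemma~\ref{lemm no str ss} and Theorem~\ref{thm change of plarization} enter) of slope $\mu_H(E)$, any nonzero $\varphi : E \to E\otimes K_\mc X$ would be a nonzero map between $\mu_H$-semistable sheaves of slopes $\mu_H(E)$ and $\mu_H(E) + K_\mc X\cdot H < \mu_H(E)$; stability of the source and semistability of the target force $\varphi = 0$. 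Hence $\ext^2_{\mc X}(E,E) = 0$ and a fortiori $\ext^2_{\mc X}(E,E)_0 = 0$. If $K_\mc X \cong \mc O_\mc X$: then $\ext^2_{\mc X}(E,E) \cong \homm_{\mc X}(E,E)^\vee$, which is one-dimensional because $E$ is stable hence simple; but this one-dimensional space is exactly the image of the (co)trace, so the trace-free part $\ext^2_{\mc X}(E,E)_0$ vanishes, and likewise the trace map $\ext^2_{\mc X}(E,E)\to H^2(\mc X,\mc O_\mc X)$ is an isomorphism. In either case $\mc M_\upsilon^L$ is smooth; for $\mc M_\upsilon$ one combines this with smoothness of $\Pic(\mc X)$ (the determinant morphism is smooth, its fibers being the $\mc M_\upsilon^L$), or argues directly that $\ext^2_{\mc X}(E,E)$ is itself zero (case $K_\mc X\cdot H<0$) or that the full obstruction map lands in the semi-regularity-controlled piece and the fiber dimension is locally constant (case $K_\mc X\cong\mc O_\mc X$).

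Projectivity is the softer part: $\mc M_\upsilon$ is constructed as a GIT quotient of a suitable Quot scheme on $\mc X$ following Nironi \cite{ni08}, hence is a projective scheme; no strictly semistable objects (Lemma~\ref{lemm no str ss}) means the GIT quotient is geometric and the stack-theoretic subtleties do not obstruct projectivity. The main obstacle I anticipate is making the deformation theory rigorous in the stacky setting — namely, verifying that the tangent–obstruction theory of the moduli problem on $\mc X$ is governed by $\ext^i_{\mc X}(E,E)$ exactly as in the scheme case, and that Serre duality with the correct dualizing sheaf $K_\mc X$ holds. This is where the Atiyah-class construction of Section~\ref{subsec at class} does the real work: it provides the Kodaira--Spencer/obstruction map intrinsically on the DM stack, so that the classical arguments of \cite{hl10} transfer verbatim once one has $\at(E)$ in hand. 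Everything downstream is then a two-line slope or simplicity argument.
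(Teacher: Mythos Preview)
Your proposal is correct and follows essentially the same approach as the paper: Serre duality reduces the obstruction space $\ext^2(E,E)_0$ to $\homm(E,E\otimes K_\mc X)_0$, and then a slope/simplicity argument using $\mu_H$-stability of $E$ kills the trace-free part in both cases. The only cosmetic difference is that the paper treats the two hypotheses uniformly (noting $\dim\homm(E,E\otimes K_\mc X)\le 1$ in either case, since both $E$ and $E\otimes K_\mc X$ are $\mu_H$-stable) and then defers to \cite[Theorem~4.5.4]{hl10} for the passage from vanishing obstructions to smoothness of both $\mc M_\upsilon$ and $\mc M_\upsilon^L$, whereas you split the cases and spell out the deformation-theoretic and projectivity framework more explicitly.
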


\begin{proof}
For a Gieseker stable sheaf $E$, both $E$ and $E\otimes K_\mc X$ are $\mu_H$-stable, so $\dimm_\mbb C\homm(E,E\otimes K_\mc X)\leq1$. By Serre duality, $\ext^2(E,E)_0\cong\homm(E,E\otimes K_\mc X)_0=0$. Then the standard argument in \cite[Theorm 4.5.4]{hl10} shows that $\mc M_\upsilon$ and $\mc M_\upsilon^L$ are smooth.
\end{proof}


\subsection{The Kodaira-Spencer map}
Pick a (twisted) universal sheaf $\mbf E$ on $\mc M_\upsilon\times\mc X$. The decomposition $\Omega_{\mc M_\upsilon\times\mc X}=\pr_1^*\Omega_{\mc M_\upsilon}\oplus\pr_2^*\Omega_\mc X$ induces a decomposition of the Atiyah class $\at(\mbf E)=\at(\mbf E)_1+\at(\mbf E)_2$, where $\at(\mbf E)_1\in\ext^1(\mbf E,\mbf E\otimes\pr_1^*\Omega_{\mc M_\upsilon})$ and $\at(\mbf E)_2\in\ext^1(\mbf E,\mbf E\otimes\pr_2^*\Omega_\mc X)$. Via the Grothendieck spectral sequence
\begin{equation*}
E_2^{i,j}=H^i(\mc M_\upsilon,\Ext^j_{\pr_1}(\mbf E,\mbf E\otimes\pr_1^*\Omega_{\mc M_\upsilon}))\Longrightarrow\ext^{i+j}(\mbf E,\mbf E\otimes\pr_1^*\Omega_{\mc M_\upsilon}),
\end{equation*}
we get the global to local map $\ext^1(\mbf E,\mbf E\otimes\pr^*_1\Omega_{\mc M_\upsilon})\rightarrow H^0(\mc M_\upsilon,\Ext^1_{\pr_1}(\mbf E,\mbf E\otimes\pr_1^*\Omega_{\mc M_\upsilon}))$, and we denote the image of $\at(\mbf E)_1$ by the same symbol. The Kodaira-Spencer map is then
\begin{equation*}
{\rm KS} : T\mc M_\upsilon\xrightarrow{\at(\mbf E)_1}T{\mc M_\upsilon}\otimes\Ext^1_{\pr_1}(\mbf E,\mbf E\otimes\pr^*_1\Omega_{\mc M_\upsilon})\rightarrow\Ext^1_{\pr_1}(\mbf E,\mbf E).
\end{equation*}

\begin{proposition}\label{pro ks map}
$\rm KS $ is an isomorphism.
\end{proposition}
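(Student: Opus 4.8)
The plan is to show that $\mathrm{KS}$ is an isomorphism of sheaves on $\mc M_\upsilon$ by checking it on fibers, i.e. pointwise, and reducing to the standard deformation-theoretic identification of the tangent space at a point $[E]\in\mc M_\upsilon$ with $\ext^1(E,E)$. First I would recall that $\mc M_\upsilon$ is smooth (Proposition~\ref{pro sm of mod}) and that its dimension equals $\dimm_\mbb C\ext^1(E,E)$ for every stable $E$, which is constant in the family; this constancy, together with the vanishing $\ext^0(E,E)=\mbb C\cdot\id$ and $\ext^2(E,E)_0=0$ established in Lemma~\ref{lemm no str ss} and Proposition~\ref{pro sm of mod}, forces $\Ext^1_{\pr_1}(\mbf E,\mbf E)$ and $T\mc M_\upsilon$ to be locally free of the same rank and allows me to replace the relative $\Ext$ sheaves by their fiberwise values via cohomology-and-base-change.

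The main computational content is to identify the fiber of $\mathrm{KS}$ at $[E]$ with the classical Kodaira–Spencer isomorphism coming from deformation theory. For this I would restrict $\at(\mbf E)_1$ to $\{[E]\}\times\mc X$ and use the fact that the Atiyah class is functorial under base change (the stacky analogue of Lemma~\ref{lemm ft of at class}, applied now to the closed immersion $\{[E]\}\times\mc X\hookrightarrow\mc M_\upsilon\times\mc X$, or more precisely to a versal deformation), so that contracting $\at(\mbf E)_1|_{[E]}$ against a tangent vector $v\in T_{[E]}\mc M_\upsilon$ recovers exactly the Kodaira–Spencer class $\kappa(v)\in\ext^1(E,E)$ of the infinitesimal deformation in the direction $v$. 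The key input is that the universal property of $\mc M_\upsilon$ makes $v\mapsto\kappa(v)$ the canonical isomorphism $T_{[E]}\mc M_\upsilon\xrightarrow{\sim}\ext^1(E,E)$; this is the heart of the obstruction-theory description of the moduli space. One should also note that the twist by the Brauer/gerbe class used to make $\mbf E$ exist does not affect the argument: the Atiyah class of a twisted sheaf is still well-defined by the descent construction in Section~\ref{subsec at class}, and twisting $\mbf E$ by a line bundle pulled back from $\mc M_\upsilon$ changes $\at(\mbf E)_1$ only by a term proportional to $\id_{\mbf E}$, which dies under the trace-free projection implicit in composing with $\Ext^1_{\pr_1}(\mbf E,\mbf E)$ — hence $\mathrm{KS}$ is independent of the choice of $\mbf E$.

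Concretely, the steps in order: (1) verify $\Ext^j_{\pr_1}(\mbf E,\mbf E)$ is locally free for all $j$ by base change, using $\ext^0(E,E)=\mbb C$ and $\ext^2(E,E)=\mbb C$ (Serre duality together with $\ext^2(E,E)_0=0$ and $K_\mc X\cdot H<0$ or $K_\mc X\cong\mc O_\mc X$), so that the formation of these $\Ext$ sheaves commutes with arbitrary base change; (2) reduce the statement that $\mathrm{KS}$ is an isomorphism of vector bundles to the statement that each fiber $\mathrm{KS}_{[E]}:T_{[E]}\mc M_\upsilon\to\ext^1(E,E)$ is an isomorphism; (3) unwind the definition of $\mathrm{KS}_{[E]}$: it is the composite of the contraction of the fiber of $\at(\mbf E)_1$ with the edge map of the local-to-global spectral sequence, which by the functoriality of the Atiyah class under the inclusion $\{[E]\}\times\mc X\hookrightarrow\mc M_\upsilon\times\mc X$ equals the Kodaira–Spencer map of the universal family at $[E]$; (4) invoke the representability/universal property of $\mc M_\upsilon$ to conclude that this Kodaira–Spencer map is the canonical identification $T_{[E]}\mc M_\upsilon\cong\ext^1(E,E)$ furnished by deformation theory.

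The step I expect to be the main obstacle is (3): carefully matching the global Atiyah-class definition of $\mathrm{KS}$ given in the text — built via étale descent from the scheme-theoretic principal parts — with the deformation-theoretic Kodaira–Spencer map, in the stacky and twisted setting. The point is that one must know that the Atiyah class $\at(\mbf E)$ constructed by descent genuinely classifies infinitesimal deformations of $\mbf E$ along $\mc M_\upsilon$; in the scheme case this is classical (Illusie), but here I would either cite the compatibility of the descent construction with Illusie's cotangent-complex formalism for Deligne–Mumford stacks, or give a short direct argument by restricting the exact sequence (\ref{equ at class for DM}) to first-order neighborhoods of $[E]$ in $\mc M_\upsilon$ and identifying the resulting extension with the one defining the deformation. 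Everything else is bookkeeping with base change and the trace decomposition.
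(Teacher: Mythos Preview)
Your proposal is correct and follows essentially the same approach as the paper: both reduce to fibers and identify $\mathrm{KS}_{[E]}$ with the deformation-theoretic Kodaira--Spencer map, then appeal to the universal property of $\mc M_\upsilon$. The paper carries out your step (3) concretely by using the description of the Atiyah class via the trivial square-zero extension (equation~(\ref{equ at class for DM 1})): given $\mc v\in T_p\mc M_\upsilon$, one pulls back the square-zero extension of $\mc O_{\mc M_\upsilon\times\mc X}$ along $\mc v$ and tensors with $\mbf E$, landing directly on the extension class in $\ext^1(\mbf E_p,\mbf E_p)$ of the first-order deformation, which is exactly your ``direct argument by restricting to first-order neighborhoods.'' Your extra care about base change and independence of the twist is sound but not written out in the paper.
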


\begin{proof}
For any closed point $p\in\mc M_\upsilon$ and $\mbf v\in T_p\mc M_\upsilon$, we have a morphism of $\mc O_{\mc M_\upsilon}$-modules $\mbf v : \Omega_{\mc M_\upsilon}\rightarrow\mbb C$, and its pullback $\pr_1^*\mbf v : \pr_1^*\Omega_{\mc M_\upsilon}\rightarrow\mc O_\mc X$. Consider the diagram
\begin{equation}\label{equ equ 3 for ks}
\begin{split}
\xymatrix@C=0.3cm{
\ext^1(\mbf E,\mbf E\otimes\pr_1^*\Omega_{\mc M_\upsilon}) \ar[r]\ar[d] & \ext^1(\mbf E,\mbf E\otimes\mc O_\mc X) \ar[d] \\
H^0(\mc M_\upsilon,\Ext^1_{\pr_1}(\mbf E,\mbf E\otimes\pr_1^*\Omega_{\mc M_\upsilon})) \ar[r] & H^0(\mc M_\upsilon,\Ext^1_{\pr_1}(\mbf E,\mbf E\otimes\mc O_\mc X))}
\end{split}
\end{equation}
where the vertical arrows are the global to local maps. Let $i_p : p\rightarrow\mc M_\upsilon$ and $\mc X\rightarrow\mc M_\upsilon\times\mc X$ be the inclusions. Then $\mbf E\otimes\mc O_\mc X=i_*\mbf E_p$, where $\mbf E_p$ is the stable sheaf corresponding to $p$. Hence $\Ext^1_{\pr_1}(\mbf E,\mbf E\otimes\mc O_\mc X)=i_{p*}\ext^1(\mbf E_p,\mbf E_p)$ and $\ext^1(\mbf E,\mbf E\otimes\mc O_\mc O)=\ext^1(\mbf E_p,\mbf E_p)$. The second vertical arrow in diagram (\ref{equ equ 3 for ks}) is an isomorphism. The image of $\mbf v$ under the Kodaira-Spencer map is then the image of $\at(\mbf E)_1$ under the top arrow, which corresponds to the extension
\begin{equation}\label{equ equ 5 for ks}
\xymatrix{
0\ar[r] &  \mbf E_p \ar[r] & \wt{\mbf E}_p \ar[r] & \mbf E_p \ar[r] & 0}.
\end{equation}
defined by $\mbf v\in\Ext^1(E_p,E_p)$. This follows from pulling back the trivial square-zero extensions of $\mc O_{\mc M_\upsilon\times\mc X}$ along $\mbf v$ and tensoring with $E$.
\end{proof}

\begin{corollary}\label{cor cotang of mod sp}
The cotangent bundle of $\mc M_\upsilon$ is isomorphic to $\Ext_{\pr_1}^1(\mbf E,\mbf E\otimes\pr_2^*K_\mc X)$.
\end{corollary}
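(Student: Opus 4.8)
The plan is to deduce the statement from Proposition \ref{pro ks map} by Serre duality for the smooth proper family $\pr_1 : \mc M_\upsilon \times \mc X \to \mc M_\upsilon$. First I would recall that Proposition \ref{pro ks map} identifies the tangent bundle: $T\mc M_\upsilon \cong \Ext^1_{\pr_1}(\mbf E, \mbf E)$. Dualizing, the cotangent bundle is $\Ext^1_{\pr_1}(\mbf E,\mbf E)^\vee$, so the task is to show $\Ext^1_{\pr_1}(\mbf E,\mbf E)^\vee \cong \Ext^1_{\pr_1}(\mbf E,\mbf E\otimes\pr_2^*K_\mc X)$.

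The key step is relative Serre duality along $\pr_1$. Since $\mc X$ is a smooth proper Deligne--Mumford stack of dimension two with dualizing sheaf $K_\mc X$ (and $\mc M_\upsilon$ is smooth projective by Proposition \ref{pro sm of mod}), the morphism $\pr_1$ is smooth proper of relative dimension two with relative dualizing complex $\pr_2^*K_\mc X[2]$. Relative Serre duality then gives, for the perfect complex $\mc Hom(\mbf E,\mbf E) = \mbf E^\vee \otimes^{\mathbb L} \mbf E$ (note any twist by a line bundle pulled back from $\mc M_\upsilon$ cancels in $\mbf E^\vee\otimes\mbf E$, so the non-uniqueness of the twisted universal sheaf is harmless),
\[
R\pr_{1*}\big(\mc Hom(\mbf E,\mbf E)\big)^\vee \cong R\pr_{1*}\big(\mc Hom(\mbf E,\mbf E)^\vee \otimes \pr_2^*K_\mc X\big)[2] \cong R\pr_{1*}\big(\mc Hom(\mbf E,\mbf E)\otimes\pr_2^*K_\mc X\big)[2],
\]
using the self-duality $\mc Hom(\mbf E,\mbf E)^\vee\cong\mc Hom(\mbf E,\mbf E)$. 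Taking the degree $-1$ cohomology sheaf of the left side against the degree $+1$ cohomology sheaf of the right side, together with the vanishing $\Ext^0_{\pr_1}(\mbf E_p,\mbf E_p) = \mathbb C$ and $\Ext^2_{\pr_1}(\mbf E_p,\mbf E_p)_0 = 0$ fibrewise (the latter from the proof of Proposition \ref{pro sm of mod}), forces the relevant spectral sequence to degenerate so that $\Ext^1_{\pr_1}(\mbf E,\mbf E)$ is locally free and its dual is $\Ext^1_{\pr_1}(\mbf E,\mbf E\otimes\pr_2^*K_\mc X)$. Then $\Omega_{\mc M_\upsilon} = (T\mc M_\upsilon)^\vee \cong \Ext^1_{\pr_1}(\mbf E,\mbf E)^\vee \cong \Ext^1_{\pr_1}(\mbf E,\mbf E\otimes\pr_2^*K_\mc X)$.

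The main obstacle I anticipate is establishing relative Serre duality and the base-change/degeneration statements in the \emph{stacky} setting, where one must be careful that Grothendieck duality holds for the (representable, smooth, proper) morphism $\pr_1$ even though $\mc X$ is a Deligne--Mumford stack, and that the trace/pairing is compatible with the fibrewise $\Ext$ groups. I would handle this either by passing to an \'etale cover of $\mc X$ and descending, or by invoking the version of Grothendieck--Serre duality for Deligne--Mumford stacks (e.g. Nironi's setup, already cited as \cite{ni08}). The fibrewise vanishing $\homm(\mbf E_p,\mbf E_p\otimes K_\mc X)_0 = 0$ needed to trim the trace-part contributions is exactly what was proved in Proposition \ref{pro sm of mod}, so the remaining work is purely formal homological algebra once duality is in place.
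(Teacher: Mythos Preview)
Your proposal is correct and follows essentially the same route as the paper: invoke Grothendieck duality for Deligne--Mumford stacks along $\pr_1$ to get $R\pr_{1*}R\Hom(\mbf E,\mbf E\otimes\pr_2^*K_\mc X[2])\cong R\Hom(R\pr_{1*}R\Hom(\mbf E,\mbf E),\mc O_{\mc M_\upsilon})$, then take cohomology. One small correction: the relevant reference for stacky Grothendieck duality is \cite{ni09} (Corollary 2.10), not \cite{ni08}; the paper cites it directly and simply says ``taking cohomology'' where you spell out the base-change and degeneration concerns more carefully.
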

\begin{proof}
By Grothendieck duality for Deligne-Mumford stacks (see \cite[Corollary 2.10]{ni09}), we have that $
R\pr_{1*}R\Hom(\mbf E,\mbf E\otimes\pr_2^*K_\mc X[2])\cong R\Hom(R\pr_{1*}R\Hom(\mbf E,\mbf E),\mc O_{\mc M_\upsilon})$. Taking cohomology gives $\Ext_{\pr_1}^1(\mbf E,\mbf E\otimes\pr_2^*K_\mc X)\cong\Hom(\Ext_{\pr_1}^1(\mbf E,\mbf E),\mc O_{\mc O_{\mc M_\upsilon}})$.

\end{proof}

\subsection{The Poisson structure on $\mc M_\upsilon$}\label{subsec poisson st}
\begin{definition}
	Let $\mc X$ be a smooth Deligne-Mumford stack. A Poisson structure on $\mc X$ is a bilinear operation $\{\cdot,\cdot\}$ on $\mc O_\mc X$ such that for any object $U\rightarrow\mc X$ of $\mc X_{\et}$ and any $f,g,h\in\mc O_\mc X(U\rightarrow\mc X)$, the three axioms hold: (1) Skew-symmetry: $\{f,g\}=-\{g,f\}$; (2) Leibniz rule: $\{f,gh\}=\{f,g\}h+g\{f,h\}$; (3) Jacobi identity: $\{f,\{g,h\}\}+\{g,\{h,f\}\}+\{h,\{f,g\}\}=0$.
\end{definition}
$\{\cdot,\cdot\}$ determines an antisymmetric contravariant $2$-tensor $\theta\in H^0(\mc X,\wedge^2T_\mc X)$ by
\begin{equation}\label{equ theta tensor}
	\langle\theta,df\wedge dg\rangle:=\{f,g\}.
\end{equation}
Conversely, any $\theta\in H^0(\mc X,\wedge^2T_\mc X)$ defines a Poisson structure by (\ref{equ theta tensor}) iff $\wt d\theta=0$ (see \cite[Proposition 1.1]{bo95}), where $\wt d : H^0(\mc X,\wedge^2T_\mc X)\rightarrow H^0(\mc X,\wedge^3T_\mc X)$ is defined as follows: for any $1$-forms $\alpha_1,\alpha_2,\alpha_3$,
\begin{equation}
	\begin{split}
		\wt d\theta(\alpha_1,\alpha_2,\alpha_3):=B_\theta(\alpha_1)\theta(\alpha_2,\alpha_3)+B_\theta(\alpha_2)\theta(\alpha_3,\alpha_1)+B_\theta(\alpha_3)\theta(\alpha_1,\alpha_2)\\
		-\langle[B_\theta(\alpha_1),B_\theta(\alpha_2)],\alpha_3\rangle-\langle[B_\theta(\alpha_2),B_\theta(\alpha_3)],\alpha_1\rangle-\langle[B_\theta(\alpha_3),B_\theta(\alpha_1)],
		\alpha_2\rangle,
	\end{split}
\end{equation}
where $B_\theta : \Omega_\mc X\rightarrow T_\mc X$ is defined by $\langle\theta,\alpha_1\wedge\alpha_2\rangle=\langle B_\theta(\alpha_1),\alpha_2\rangle$ and $[\cdot,\cdot]$ is the usual commutator of the vector fields.

\begin{lemma}\label{cor poisson st 2 ts}
	If $\dimm \mc X=2$, then Poisson structures on $\mc X$ are global sections of $K_\mc X^{-1}$.
	\qed
\end{lemma}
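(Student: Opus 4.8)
The plan is to deduce the statement from two elementary observations about rank-two locally free sheaves, together with the correspondence between Poisson brackets and bivectors recorded around (\ref{equ theta tensor}). First I would note that, since $\mc X$ is smooth of dimension two, $\Omega_\mc X$ is locally free of rank two, so by definition of the canonical bundle $K_\mc X=\wedge^2\Omega_\mc X$; the canonical isomorphism $\wedge^{\mathrm{top}}E^\vee\cong(\wedge^{\mathrm{top}}E)^\vee$ valid for any finite-rank locally free sheaf $E$ then gives a canonical isomorphism $\wedge^2 T_\mc X=\wedge^2(\Omega_\mc X^\vee)\cong(\wedge^2\Omega_\mc X)^\vee=K_\mc X^{-1}$. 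Since these operations are functorial in $\Omega_\mc X$, the identification is defined globally on $\mc X$ (one may also see it by passing to an \'etale atlas and descending). In particular $H^0(\mc X,\wedge^2 T_\mc X)\cong H^0(\mc X,K_\mc X^{-1})$ as $\mbb C$-vector spaces.

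Next I would observe that the integrability condition $\wt d\theta=0$ is automatic in dimension two: the operator $\wt d$ takes values in $H^0(\mc X,\wedge^3 T_\mc X)$, and $\wedge^3$ of a locally free sheaf of rank two is zero, so $H^0(\mc X,\wedge^3 T_\mc X)=0$ and every $\theta\in H^0(\mc X,\wedge^2 T_\mc X)$ satisfies $\wt d\theta=0$.

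Finally I would combine these: by the assignment $\{f,g\}=\langle\theta,df\wedge dg\rangle$, a Poisson structure on $\mc X$ corresponds to a section $\theta\in H^0(\mc X,\wedge^2 T_\mc X)$ for which skew-symmetry and the Leibniz rule are automatic, while the criterion \cite[Proposition 1.1]{bo95} identifies the Jacobi identity with the equation $\wt d\theta=0$, which by the previous paragraph imposes no condition. Hence the set of Poisson structures on $\mc X$ is exactly $H^0(\mc X,\wedge^2 T_\mc X)$, which the first paragraph identifies with $H^0(\mc X,K_\mc X^{-1})$. I expect no genuine obstacle here; the only point deserving a line of care is the canonicity of $\wedge^2 T_\mc X\cong K_\mc X^{-1}$ over the stack rather than merely fibrewise, which is handled by the functoriality of the exterior-power and duality operations applied to $\Omega_\mc X$.
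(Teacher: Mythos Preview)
Your proposal is correct and is exactly the argument the paper has in mind: the lemma is stated with a bare \qed because the identification $\wedge^2 T_\mc X\cong K_\mc X^{-1}$ together with the vanishing $\wedge^3 T_\mc X=0$ (so the integrability condition $\wt d\theta=0$ is automatic) makes the claim immediate from the bivector description of Poisson structures given just before the lemma.
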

\begin{definition}
	A symplectic structure on $\mc X$ is a closed nondegenerate $2$-form on $\mc X$.
\end{definition}

For a smooth Deligne-Mumford curve $\mc C$, the total space of $K_\mc C$ is a symplectic, with natural compactification $\mds P(K_\mc C\oplus\mc O_\mc C)$.

\begin{proposition}\label{pro pois st on cp ct bd}
	$\mds P(K_\mc C\oplus\mc O_\mc C)$ carries a Poisson structure extending the symplectic form on $K_\mc C$.
\end{proposition}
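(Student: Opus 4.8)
The plan is to reduce everything, via Lemma \ref{cor poisson st 2 ts}, to exhibiting a global section of the anticanonical bundle $K_\mc Y^{-1}$ of $\mc Y:=\mds P(K_\mc C\oplus\mc O_\mc C)$ whose restriction to the open complement of the section at infinity is the bivector inverse to the canonical symplectic form. (Since $\dimm\mc C=1$, the line bundle $K_\mc C$ is the cotangent bundle $T^\vee\mc C$, so $\Tot(K_\mc C)$ carries the Liouville form $\omega=d\lambda$, as already noted before the statement.) Because $\dimm\mc Y=2$ we have $\wedge^3T_\mc Y=0$, so $\wt d\theta=0$ automatically and every global bivector is Poisson; hence the only real content is the holomorphic extension of the Liouville bivector across the boundary divisor.

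First I would compute $K_\mc Y^{-1}$. Write $E=K_\mc C\oplus\mc O_\mc C$ and $p:\mds P(E)\to\mc C$. The relative Euler sequence $0\to\mc O_{\mds P(E)}\to p^*E\ot\mc O_{\mds P(E)}(1)\to T_{\mds P(E)/\mc C}\to 0$ gives, on taking determinants, $K_{\mc Y/\mc C}\cong p^*\dett E^\vee\ot\mc O_{\mds P(E)}(-2)$, whence $K_\mc Y\cong p^*(K_\mc C\ot\dett E^\vee)\ot\mc O_{\mds P(E)}(-2)\cong\mc O_{\mds P(E)}(-2)$ using $\dett E\cong K_\mc C$. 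By the projective bundle formula $p_*\mc O_{\mds P(E)}(2)\cong\sym^2E^\vee\cong K_\mc C^{-2}\oplus K_\mc C^{-1}\oplus\mc O_\mc C$, and the last summand together with $1\in H^0(\mc C,\mc O_\mc C)$ singles out a canonical nonzero section $\theta\in H^0(\mc Y,K_\mc Y^{-1})$; a short identification shows its zero divisor is $2D_\infty$, where $D_\infty\subset\mc Y$ is the section of $p$ given by the sub-line-bundle $K_\mc C\hookrightarrow E$ and $\mc Y\setminus D_\infty\cong\Tot(K_\mc C)$.

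Next I would verify that $\theta$ restricts on $\Tot(K_\mc C)$ to the Liouville bivector, which is a purely local computation: in an orbifold coordinate chart $z$ on $\mc C$ with induced fibre coordinate $w$ one has $\lambda=w\,dz$, $\omega=dw\wedge dz$, so the inverse bivector is $\partial_z\wedge\partial_w$; passing to the chart at infinity via $w'=1/w$ turns this into $-w'^2\,\partial_z\wedge\partial_{w'}$, which is regular and vanishes to order two along $\{w'=0\}=D_\infty$. Thus the Liouville bivector extends holomorphically to all of $\mc Y$ with polar divisor empty and zero divisor $2D_\infty$, so it agrees up to a nonzero scalar with $\theta$. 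By Lemma \ref{cor poisson st 2 ts} this global bivector is a Poisson structure on $\mc Y$, and by construction it restricts to the symplectic structure on $K_\mc C$, the open leaf being $\Tot(K_\mc C)=\mc Y\setminus D_\infty$.

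The one point that needs care is the interaction with the stacky points of $\mc C$: one must know $\partial_z\wedge\partial_w$ descends along the local uniformizing group. For a $\mu_r$-chart acting by $z\mapsto\zeta z$ one has $dz\mapsto\zeta\,dz$ and hence $w\mapsto\zeta^{-1}w$, so $dw\wedge dz$—equivalently $\partial_z\wedge\partial_w$—is invariant; this is just the general fact that the canonical symplectic form on the cotangent bundle of a smooth Deligne–Mumford stack is intrinsically defined, so there is no genuine obstruction. Alternatively the whole argument can be carried out through the sheaf-theoretic computation of $K_\mc Y^{-1}$ above, which never leaves the category of Deligne–Mumford stacks, with the coordinate computation used only to match $\theta$ with the Liouville bivector. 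I expect this matching step to be the main thing to write out carefully; the $\mds P^1$-bundle cohomology is routine.
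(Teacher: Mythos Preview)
Your proposal is correct and follows essentially the same route as the paper: compute $\wedge^2T_{\mc Y}\cong K_\mc Y^{-1}\cong\mc O_{\mds P(E)}(2)$ via the relative Euler sequence, and then take the canonical section coming from the $\mc O_\mc C$ summand of $E^\vee$. The paper phrases this last step as squaring the section $s\in H^0(\mc O(1))$ induced by $(0,1)\in H^0(\mc C,T_\mc C\oplus\mc O_\mc C)=H^0(\mc C,E^\vee)$, which is exactly your section $\theta$ coming from $\mc O_\mc C\subset\sym^2E^\vee$; your local computation showing that this bivector restricts to the Liouville bivector on $\Tot(K_\mc C)$ and your check of $\mu_r$-invariance at the stacky points are details the paper omits.
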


\begin{proof}
	From the Euler sequence one computes $\wedge^2T_{\mds P(K_\mc C\oplus\mc O_\mc C)}\cong \mc O(2)$.
	The section $(0,1)\in H^0(\mc C,T_\mc C\oplus\mc O_\mc C)$ induces $s\in H^0(\mc O(1))$, and $s^{\otimes2}$ defines the desired Poisson structure.
\end{proof}

\begin{proposition}\label{pro poi st on moduli}
$\mc M_\upsilon$ has a natural Poisson structure, i.e. there exists a bilinear map
\begin{equation*}
\begin{split}
\wt\theta : \Ext^1_{\pr_1}(\mbf E,\mbf E\otimes\pr_2^*K_\mc X)\otimes\Ext^1_{\pr_1}(\mbf E,\mbf E\otimes\pr_2^*K_\mc X)
\longrightarrow\mc O_{\mc M_\upsilon}
\end{split}
\end{equation*}
which defines a Poisson structure on $\mc M_\upsilon$. Moreover, the restriction of $\wt\theta$ to $\mc M_\upsilon^L$ is also a Poisson structure.
\end{proposition}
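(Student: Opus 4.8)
The plan is to produce a Poisson bivector on $\mc M_\upsilon$ by dualizing the symplectic form on $K_\mc X$ through the Kodaira--Spencer isomorphism, following the classical strategy of Bottacin and Mukai adapted to the Deligne--Mumford setting. By Corollary~\ref{cor cotang of mod sp} the cotangent bundle $\Omega_{\mc M_\upsilon}$ is isomorphic to $\Ext^1_{\pr_1}(\mbf E,\mbf E\otimes\pr_2^*K_\mc X)$, and by Proposition~\ref{pro ks map} the tangent bundle $T\mc M_\upsilon$ is $\Ext^1_{\pr_1}(\mbf E,\mbf E)$. So a bilinear map $\wt\theta$ as in the statement is the same datum as a bundle map $\Omega_{\mc M_\upsilon}\to T\mc M_\upsilon$, i.e.\ an element of $H^0(\mc M_\upsilon,\wedge^2 T\mc M_\upsilon)$ once we check antisymmetry.

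First I would fix, once and for all, a global Poisson structure $\theta_\mc X\in H^0(\mc X,K_\mc X^{-1})=H^0(\mc X,\wedge^2 T\mc X)$ on the orbifold surface $\mc X$ (Lemma~\ref{cor poisson st 2 ts}); in the cases of interest $\theta_\mc X$ comes from the symplectic form on a total space of $K_\mc C$ as in Proposition~\ref{pro pois st on cp ct bd}. Contraction with $\theta_\mc X$ gives a morphism $K_\mc X\to T\mc X\otimes K_\mc X\otimes K_\mc X^{-1}$, more usefully a map $\pr_2^*K_\mc X\to\mc O$ twisted appropriately; concretely, composing the Yoneda pairing
\begin{equation*}
\Ext^1_{\pr_1}(\mbf E,\mbf E\otimes\pr_2^*K_\mc X)\otimes\Ext^1_{\pr_1}(\mbf E,\mbf E\otimes\pr_2^*K_\mc X)\longrightarrow\Ext^2_{\pr_1}(\mbf E,\mbf E\otimes\pr_2^*K_\mc X^{\otimes 2})
\end{equation*}
with the trace map $\Ext^2_{\pr_1}(\mbf E,\mbf E\otimes\pr_2^*K_\mc X^{\otimes2})\to R^2\pr_{1*}\pr_2^*K_\mc X^{\otimes2}$ and then with the map induced by $\theta_\mc X^{\otimes2}\in H^0(\mc X,K_\mc X^{-2})$ composed with relative Serre duality $R^2\pr_{1*}\mc O_{\mc M_\upsilon\times\mc X}\cong\mc O_{\mc M_\upsilon}$ yields the desired $\wt\theta$. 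Antisymmetry follows from the graded-commutativity of the Yoneda product together with the sign in the identification $\Ext^2_{\pr_1}(\mbf E,\mbf E\otimes\pr_2^*K_\mc X^{\otimes2})$; this is the point where one must be careful with signs but it is formal. That $\wt\theta$ restricts to a Poisson structure on the fiber $\mc M_\upsilon^L$ of the determinant map is automatic once one observes that the trace-free parts $\Ext^1_{\pr_1}(\mbf E,\mbf E)_0$ and its Serre dual are compatible with the pairing, so $\wt\theta$ preserves the subbundle $T\mc M_\upsilon^L\subset T\mc M_\upsilon$; this is exactly the splitting $\mc M_\upsilon\cong\Pic^0(\mc X)\times\mc M_\upsilon^L$ (locally) on tangent spaces.

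The main obstacle is the Jacobi identity, i.e.\ verifying $\wt d\wt\theta=0$ in the sense of the definition preceding Lemma~\ref{cor poisson st 2 ts}. The standard route, due to Bottacin, is to express $\wt\theta$ intrinsically in terms of the Atiyah class of $\mbf E$ and the fixed closed $2$-form on $\mc X$: one writes the bivector as the image, under Kodaira--Spencer and trace, of $\at(\mbf E)_2\cup\at(\mbf E)_2$ paired against $\theta_\mc X$, and then the closedness $\wt d\theta_\mc X=0$ on $\mc X$ propagates to $\wt d\wt\theta=0$ on $\mc M_\upsilon$ because the construction is natural in $\mc X$ and the obstruction to Jacobi is the image of $\wt d\theta_\mc X$ under a linear map. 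In the Deligne--Mumford setting one replays this computation \'etale-locally on $\mc M_\upsilon\times\mc X$ using the \'etale-local construction of the Atiyah class from Section~\ref{subsec at class}, checking that all the cocycle gluing identities (Lemma~\ref{lemm ft of at class}) are respected so that the resulting bivector and the identity $\wt d\wt\theta=0$ descend. Alternatively, and perhaps more cleanly, one deforms to the analytic category and invokes the deformation-theoretic interpretation: $\mc M_\upsilon$ is a moduli space of sheaves on a Poisson surface, and the general principle (Bottacin, Polishchuk) that such moduli spaces inherit a Poisson structure applies verbatim once the Kodaira--Spencer and duality statements above are in place. I would present the Atiyah-class computation as the proof, citing \cite{bo95} for the identity $\wt d\wt\theta = 0$ reduces to $\wt d\theta_\mc X = 0$, and remark that the orbifold case differs only in bookkeeping.
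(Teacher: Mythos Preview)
Your approach is essentially the same as the paper's: construct the pairing via cup product and trace, interpret it as a map $\wt B:\Omega_{\mc M_\upsilon}\to T\mc M_\upsilon$ using $\theta_\mc X$, and cite \cite{bo95} for the Jacobi identity; the restriction to $\mc M_\upsilon^L$ is handled in both cases by the trace--trace-free decomposition.

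There is one concrete slip in your construction. You propose to contract with $\theta_\mc X^{\otimes 2}\in H^0(\mc X,K_\mc X^{-2})$ and then invoke ``relative Serre duality $R^2\pr_{1*}\mc O_{\mc M_\upsilon\times\mc X}\cong\mc O_{\mc M_\upsilon}$''. But the relative dualizing sheaf for $\pr_1$ is $\pr_2^*K_\mc X$, so what Serre duality actually gives is $R^2\pr_{1*}\pr_2^*K_\mc X\cong\mc O_{\mc M_\upsilon}$, not $R^2\pr_{1*}\mc O$. The correct normalization, and the one the paper uses, is to apply $\theta_\mc X$ \emph{once}: after cup product and trace one lands in $\mc O_{\mc M_\upsilon}\otimes H^2(\mc X,K_\mc X^{\otimes 2})$, then $\theta_\mc X:K_\mc X^{\otimes 2}\to K_\mc X$ brings this to $\mc O_{\mc M_\upsilon}\otimes H^2(\mc X,K_\mc X)\cong\mc O_{\mc M_\upsilon}$. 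Equivalently, $\wt B$ is the map $\Ext^1_{\pr_1}(\mbf E,\mbf E\otimes\pr_2^*K_\mc X)\xrightarrow{\theta_\mc X}\Ext^1_{\pr_1}(\mbf E,\mbf E)$, where $\theta_\mc X$ acts on the coefficient $K_\mc X\to\mc O_\mc X$ once. With this correction your argument matches the paper's.
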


\begin{proof}
Let $\theta\in H^0(\mc X,\wedge^2T\mc X)$ be the antisymmetric two tensor defined by the Poisson structure on $\mc X$. Consider the commutative diagram
\begin{equation}
\begin{split}
\xymatrix@=0.5cm{
R\pr_{1*}R\Hom(\mbf E,\mbf E\otimes\pr_2^*K_\mc X)\otimes^LR\pr_{1*}R\Hom(\mbf E,\mbf E\otimes\pr_2^*K_\mc X) \ar[d]^\theta \ar[r]& \mc O_{\mc M_\upsilon}\otimes R\Gamma(K_\mc X^2) \ar[d]^\theta \\
R\pr_{1*}R\Hom(\mbf E,\mbf E\otimes\pr_2^*K_\mc X)\otimes^LR\pr_{1*}R\Hom(\mbf E,\mbf E)\ar[d]^{\cong}\ar[r] & \mc O_{\mc M_\upsilon}\otimes R\Gamma(K_\mc X) \ar@{=}[d] \\
R\pr_{1*}R\Hom(R\Hom(\mbf E,\mbf E),\pr_2^*K_\mc X)\otimes^LR\pr_{1*}R\Hom(\mbf E,\mbf E) \ar[r] & \mc O_{\mc M_\upsilon}\otimes R\Gamma(K_\mc X)
}
\end{split}
\end{equation}
where the horizontal arrows come from cup product and trace map, and the last from the evaluation map. Passing to cohomology yields
\begin{equation}\label{equ diag poisson st}
\begin{split}
\xymatrix@=0.5cm{
\Ext^1_{\pr_1}(\mbf E,\mbf E\otimes\pr_2^*K_\mc X)\otimes\Ext^1_{\pr_1}(\mbf E,\mbf E\otimes\pr_2^*K_\mc X) \ar[d]^\theta \ar[r] &  \mc O_{\mc M_\upsilon} \otimes H^2(\mc X,K^2_\mc X) \ar[d]^\theta \\
\Ext^1_{\pr_1}(\mbf E,\mbf E\otimes\pr_2^*K_\mc X)\otimes \Ext^1_{\pr_1}(\mbf E,\mbf E) \ar[d]^{\cong} \ar[r] & \mc O_{\mc M_\upsilon}\otimes
H^2(\mc X,K_\mc X) \ar@{=}[d] \\
\Ext^1_{\pr_1}(R\Hom(\mbf E,\mbf E),\pr_2^*K_\mc X)\otimes \Ext^1_{\pr_1}(\mbf E,\mbf E) \ar[r] & \mc O_{\mc M_\upsilon}\otimes H^2(\mc X,K_\mc X)
}
\end{split}
\end{equation}
where the last arrow is a perfect pairing (see \cite[Corollary 2.10]{ni09}). Thus the induced pairing 
\begin{equation}\label{equ poi st on mod}
\begin{split}
\Ext^1_{\pr_1}(\mbf E,\mbf E\otimes\pr_2^*K_\mc X)\otimes\Ext^1_{\pr_1}(\mbf E,\mbf E\otimes\pr_2^*K_\mc X)\\
\rightarrow \mc O_{\mc M_\upsilon}\otimes H^2(\mc X,K^2_\mc X)\xrightarrow{\theta}\mc O_{\mc M_\upsilon}\otimes H^2(\mc X,K_\mc X)\cong\mc O_{\mc M_\upsilon}.
\end{split}
\end{equation}
defines a Poisson structure $\wt\theta$ on $\mc M_\upsilon$. Equivalently, it corresponds to the map
\[
\wt B:\ T^\vee\mc M_\upsilon\;\cong\;\Ext^1_{\pr_1}(\mbf E,\mbf E\otimes\pr_2^*K_\mc X)
\;\xrightarrow{\ \theta\ }\;\Ext^1_{\pr_1}(\mbf E,\mbf E)\;\cong\;T\mc M_\upsilon,
\]
see \cite{bo95}. Finally, the trace map yields a decomposition
\[
\Ext^1_{\pr_1}(\mbf E,\mbf E\otimes\pr_2^*K_\mc X)
=\Ext^1_{\pr_1}(\mbf E,\mbf E\otimes\pr_2^*K_\mc X)_0
\oplus \bigl(\mc O_{\mc M_\upsilon}\otimes H^1(\mc X,K_\mc X)\bigr),
\]
orthogonal with respect to the pairing above. Since the trace-free part is identified with $T^\vee\mc M_\upsilon^L$, the restriction of $\wt\theta$ to $\mc M_\upsilon^L$ is again Poisson.

\end{proof}

\section{The Connectedness of the moduli spaces of sheaves}
The aim of this section is to prove that $\mc M_\upsilon$ is connected. We first express the diagonal class of $\mc M_\upsilon$ in terms of Chern classes, then show that $\mc M_\upsilon$ is generated by the K\"{u}nneth factors of the orbifold Chern character of a universal sheaf, and finally deduce the connectedness. Our main references are \cite{mk87, bea92, yo01, klc06, ma07}.

\begin{theorem}\label{thm genetators of moduli}
Suppose that $K_\mc X\cdot H<0$ or $K_\mc X\cong\mc O_\mc X$. Let $\mbf E$ be a (twisted) universal sheaf on $\mc M_\upsilon\times\mc X$.
\begin{enumerate}[$(1)$]
\item The Poincar\'{e} dual of the diagonal $\delta$ of $\mc M_\upsilon\times\mc M_\upsilon$ is
\begin{equation*}
\ccc_m(-[R\pr_{12*}(\pr_{23}^*\mbf E^\vee\otimes^L\pr_{13}^*\mbf E)])
\end{equation*}
where $m=\dimm\mc M_\upsilon$.
\item If $\mbf E$ is a universal sheaf, then the K\"{u}nneth factors of the orbifold Chern character of $\mbf E$ generate the cohomology ring of $H^*(\mc M_\upsilon,\mbb C)$.
\end{enumerate}
\end{theorem}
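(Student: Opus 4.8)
The plan is to follow the by-now-classical strategy of Beauville, Ellingsrud--Str\o mme, and Mukai, adapted to the orbifold setting, using the Atiyah-class machinery developed in Section~\ref{subsec at class}. For part~(1), I would first observe that the structure sheaf $\mc O_\delta$ of the diagonal $\delta\subset\mc M_\upsilon\times\mc M_\upsilon$ can be resolved by the complex $R\pr_{12*}(\pr_{23}^*\mbf E^\vee\otimes^L\pr_{13}^*\mbf E)$ on $\mc M_\upsilon\times\mc M_\upsilon$. Concretely, for two stable sheaves $E_p$, $E_q$ corresponding to points $p,q\in\mc M_\upsilon$, one has $\ext^i(E_p,E_q)=0$ for $i\neq 1$ unless $p=q$ (using $\hom(E_p,E_q)=0$ for $p\neq q$ by stability with equal slopes, $\hom(E_p,E_q)=\mbb C$ for $p=q$, and the vanishing $\ext^2(E_p,E_q)_0=0$ established in Proposition~\ref{pro sm of mod} together with $K_\mc X\cdot H<0$ or $K_\mc X\cong\mc O_\mc X$). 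This fiberwise calculation, combined with base change, shows that $-[R\pr_{12*}(\pr_{23}^*\mbf E^\vee\otimes^L\pr_{13}^*\mbf E)]$ has the numerical behavior of (a shift of) $\mc O_\delta$, so that its top Chern class $\ccc_m$ with $m=\dimm\mc M_\upsilon$ computes the class $[\delta]$; here I would invoke the standard fact that for a regular embedding the class of the diagonal is the top Chern class of the relevant virtual normal bundle / Koszul-type complex, noting that any twisting by a line bundle pulled back from a factor cancels in the derived $\Hom$ and hence does not affect the answer.

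For part~(2), the diagonal-decomposition argument runs as follows. Write $c=\ccc_m(-[R\pr_{12*}(\pr_{23}^*\mbf E^\vee\otimes^L\pr_{13}^*\mbf E)])\in H^*(\mc M_\upsilon\times\mc M_\upsilon,\mbb C)$, the class of $[\delta]$ from part~(1). Using the orbifold Grothendieck--Riemann--Roch theorem (To\"en's GRR for Deligne--Mumford stacks, as already cited via \cite{ts10}), I would expand $\ch(R\pr_{12*}(\pr_{23}^*\mbf E^\vee\otimes^L\pr_{13}^*\mbf E))$ in terms of $\pr_{1}^*$ and $\pr_{2}^*$ of the K\"unneth components of the orbifold Chern character $\widetilde{\ch}(\mbf E)$ on $\mc M_\upsilon\times\mc X$, together with the Todd class of $\mc X$ (which is a fixed universal input, independent of the moduli point). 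Taking the top Chern class, one finds that $[\delta]$ lies in the subring of $H^*(\mc M_\upsilon\times\mc M_\upsilon,\mbb C)$ generated by the classes $\pr_1^*\kappa_a$ and $\pr_2^*\kappa_b$, where the $\kappa_\bullet$ are the K\"unneth factors of $\widetilde{\ch}(\mbf E)$. Then for any class $\alpha\in H^*(\mc M_\upsilon,\mbb C)$ one has
\begin{equation*}
\alpha=\pr_{2*}\bigl(\pr_1^*\alpha\cdot[\delta]\bigr),
\end{equation*}
and substituting the expression for $[\delta]$ and applying the projection formula expresses $\alpha$ as a polynomial in the $\kappa_\bullet$. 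This shows the $\kappa_\bullet$ generate $H^*(\mc M_\upsilon,\mbb C)$ as a ring, which in particular forces $H^0(\mc M_\upsilon,\mbb C)=\mbb C$, i.e.\ connectedness, the corollary flagged after the theorem.

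The step I expect to be the main obstacle is the precise bookkeeping in the orbifold GRR expansion and the verification that one may genuinely push the argument through with an \emph{orbifold} (inertia-stack) Chern character rather than the naive one: one must check that the cohomology ring $H^*(\mc M_\upsilon,\mbb C)$ is the right target, that the Todd/inertia contributions from $\mc X$ really are constants that can be absorbed, and that the K\"unneth decomposition is compatible with the twisted universal sheaf (the twist by a Brauer class or a line bundle on $\mc M_\upsilon$ must be tracked and seen to be harmless, since in part~(2) we assume $\mbf E$ is an honest universal sheaf). A secondary technical point is ensuring base-change and the semicontinuity inputs are available in the Deligne--Mumford setting; here I would lean on Nironi's foundational work \cite{ni08, ni09} and the Grothendieck duality statement already used in Corollary~\ref{cor cotang of mod sp}. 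Once these foundational compatibilities are in place, the rest of the argument is formally identical to the scheme case treated in \cite{bea92, klc06, ma07}.
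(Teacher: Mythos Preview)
Your plan for part~(2) is essentially the paper's argument: apply To\"en--Riemann--Roch to $\pr_{12!}$, expand via K\"unneth on $H^*(\mc M_\upsilon)\otimes H^*(I\mc X)$, and then use $\alpha=\pr_{2*}(\pr_1^*\alpha\cdot[\delta])$. That part is fine.

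The gap is in part~(1). You assert that $R\pr_{12*}(\pr_{23}^*\mbf E^\vee\otimes^L\pr_{13}^*\mbf E)$ ``resolves'' $\mc O_\delta$ and then invoke a ``standard fact'' about virtual normal bundles of regular embeddings. Neither of these is what actually happens, and the vague formulation hides a genuine dichotomy that the paper treats separately.

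In the case $K_\mc X\cdot H<0$, Serre duality gives $\ext^2(E,F)\cong\homm(F,E\otimes K_\mc X)^\vee=0$ for \emph{all} pairs of stable sheaves in $\mc M_\upsilon$ (not just the trace-free part, and not just off the diagonal). Hence the derived pushforward is represented by a genuine two-term complex $[W^0\xrightarrow{u}W^1]$ of locally free sheaves. The diagonal is then precisely the $(r_0-1)$-st degeneracy locus of $u$, and $[\delta]=c_m(W^1-W^0)$ comes from the Thom--Porteous formula. This is the mechanism, not a Koszul resolution of $\mc O_\delta$.

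In the case $K_\mc X\cong\mc O_\mc X$, one has $\ext^2(E,E)\cong\mbb C$ on the diagonal, so the complex is genuinely of amplitude $[0,2]$ and Thom--Porteous does not apply directly. The paper constructs (Lemma~\ref{lemm main thm}) a specific three-term locally free complex $V_{-1}\to V_0\to V_1$ with controlled cohomology, and then invokes Markman's excess-type computation \cite[Lemma~4]{ma02} to conclude that $c_m$ of the virtual class still equals $[\delta]$. This step is not a formality: the extra $\ext^2$ supported on $\delta$ contributes, and one must check it contributes correctly.

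Your fiberwise summary ``$\ext^i(E_p,E_q)=0$ for $i\neq1$ unless $p=q$'' is correct only in the Calabi--Yau case; in the $K_\mc X\cdot H<0$ case $\ext^2$ vanishes everywhere, which is exactly what makes that case easier. You should separate the two cases and supply the actual degeneracy-locus / Markman argument rather than appeal to an unspecified normal-bundle statement.
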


\begin{proof}[Proof of the case $K_\mc X\cdot H<0$]
\tbf{Case I.} $\mc M_\upsilon$ is fine and $\mbf E$ is a universal sheaf. Consider the diagram
\begin{equation}\label{equ connect diag 1}
\begin{split}
\xymatrix{
& \mc M_\upsilon\times\mc M_\upsilon\times\mc X  \ar[dr]^{\pr_{23}}\ar[dl]_{\pr_{13}}\ar[d]^{\pr_{12}} \\
 \mc M_\upsilon\times\mc X   &   \mc M_\upsilon\times\mc M_\upsilon   & \mc M_\upsilon\times\mc X .}
\end{split}
\end{equation}
and the complex $R\pr_{12*}R\Hom(\pr_{23}^*\mbf E,\pr_{13}^*\mbf E)$. Since $\ext^2(E,F)=0$ for any stable sheaves $E,F$, this complex is represented by a two-term complex $[W^0\xrightarrow{u} W^1]$, with $\rk(W^1)=\rk(W^0)+m-1$ where $m=\dim \mc M_\upsilon$. For $z=(E_1,E_2)\in \mc M_\upsilon\times\mc M_\upsilon$, 
the exact sequence
\begin{equation*}
\xymatrix@=0.5cm{
0 \ar[r] & \homm(E_2,E_1) \ar[r] &  W^0\otimes k(z)   \ar[r]^{u(z)} &   W^1\otimes k(z) \ar[r] & \ext^1(E_2,E_1) \ar[r] & 0}
\end{equation*}
shows that the $(r_0-1)$-st degeneracy locus $D_{r_0-1}$ of $u$ coincides with the diagonal $\delta$. By Thom–Porteous, $[\delta]=c_m(W^1-W^0)\cap[\mc M_\upsilon\times\mc M_\upsilon]$, i.e. its Poincaré dual is
$c_m\!\left(-[R\pr_{12*}(\pr_{23}^*\mbf E^\vee\otimes^L\pr_{13}^*\mbf E)]\right)$. Using Proposition \ref{pro comp ext}, the K-theoretic pushforward satisfies
\begin{equation*}
\pr_{12!}([\pr_{23}^*\mbf E]^\vee\cdot[\pr_{13}^*\mbf E])=[R\pr_{12*}(\pr_{23}^*\mbf E^\vee\otimes^L\pr_{13}^*\mbf E)].
\end{equation*}
By T\"{o}en-Riemann-Roch formula,
\begin{equation*}
\ch([R\pr_{12*}(\pr_{23}^*\mbf E^\vee\otimes^L\pr_{13}^*\mbf E)])=I\pr_{12*}\left(\wt\ch([\pr_{23}^*\mbf E]^\vee)\cdot\wt\ch([\pr_{13}^*\mbf E])\cdot\wt\td( \pr_3^*T\mc X)\right)
\end{equation*}

By K\"{u}nneth decomposition, $\wt\ch(\mbf E)=\sum_i \pr_1^*\alpha_i\cup I\pr_2^*\beta_i$ for some $\alpha_i\in H^*(\mc M_\upsilon,\mbb C)$ and $\beta_i\in H^*(I\mc X,\mbb C)$. Then
\begin{equation*}
\ccc_m(-[R\pr_{12*}(\pr_{23}^*\mbf E^\vee\otimes^L\pr_{13}^*\mbf E)])=\sum_i\pr_1^*\alpha_i\cup\pr_2^*\gamma_i
\end{equation*}
with $\gamma_i\in H^*(\mc M_\upsilon,\mbb C)$. Thus $\{\alpha_i\}$ generates $H^*(\mc M_\upsilon,\mbb C)$.

\tbf{Case II.} $\mc M_\upsilon$ is not fine and $\mbf E$ a twisted universal sheaf. We will use some facts about twisted sheaves (see \cite[Subsection 2.1]{ca20} for details). By the proof of Proposition \ref{pro comp ext}, we obtain a twisted locally free resolution $L^\bullet$ of $\pr_{23}^*\mbf E$ such that $R\pr_{12*}R\Hom(\pr_{23}^*\mbf E,\pr_{13}^*\mbf E)=\pr_{12*}\Hom(L^\bullet,\pr_{13}^*\mbf E)$, a complex of locally free sheaves satisfying the same universal property. Hence, as in Case I, $R\pr_{12*}(\pr_{23}^*\mbf E^\vee\otimes^L\pr_{13}^*\mbf E)$ is represented by a two-term complex, and the Poincar\'{e} dual of $\delta$ is again $\ccc_m(-[R\pr_{12*}(\pr_{23}^*\mbf E^\vee\otimes^L\pr_{13}^*\mbf E)])$.

\end{proof}

\begin{proof}[Proof of the case: $K_\mc X\cong\mc O_\mc X$]

Using Lemma \ref{lemm main thm}, we obtain a complex of locally free sheaves
\[
\xymatrix@C=0.5cm{
	V_{-1} \ar[r] & V_0 \ar[r] & V_1}
\]
on $\mc M_\upsilon\times\mc M_\upsilon$ with cohomologies $\hh_{-1}=0$, $\hh_0=\Ext^1_{\pr_{12}}(\pr_{23}^*\mbf E,\pr_{13}^*\mbf E)$, $\hh_1=\Ext^2_{\pr_{12}}(\pr_{23}^*\mbf E,\pr_{13}^*\mbf E)$, and $-r_{-1}+r_0-r_1=m-2$, where $r_{-1}, r_0, r_1$ are the ranks of $V_{-1},V_0,V_1$, respectively. Both $\Ext^2_{\pr_{12}}(\pr_{23}^*\mbf E,\pr_{13}^*\mbf E)$ and $\Ext^2_{\pr_{12}}(\pr_{13}^*\mbf E,\pr_{23}^*\mbf E)$ are line bundles on the diagonal $\delta$. By \cite[Lemma~4]{ma02}, the Poincaré dual of $\delta$ is $c_m\!\left(-[R\pr_{12*}(\pr_{23}^*\mbf E^\vee\otimes^L\pr_{13}^*\mbf E)]\right)$. Assuming $\mc M_\upsilon$ is fine with universal sheaf $\mbf E$, we have
\[
\pr_{12!}([\pr_{23}^*\mbf E]^\vee\cdot[\pr_{13}^*\mbf E])
=[R\pr_{12*}(\pr_{23}^*\mbf E^\vee\otimes^L\pr_{13}^*\mbf E)].
\]
Applying the Toen–Riemann–Roch formula shows that the Künneth factors of the orbifold Chern character of $\mbf E$ generate $H^*(\mc M_\upsilon,\C)$.
\end{proof}

\begin{lemma}\label{lemm main thm}
There exists a locally free sheaf $\mbf A_1$ and a short exact sequence of sheaves
\begin{equation}\label{equ main thm 3}
\xymatrix@C=0.5cm{
0 \ar[r] & \mbf A_0 \ar[r] &  \mbf A_1 \ar[r] & \mbf E \ar[r] & 0}
\end{equation}
satisfying
\begin{enumerate}[$(1)$]
\item $\Ext^0_{\pr_{12}}(\pr^*_{23}\mbf E,\pr^*_{13}\mbf E)$=$\Ext^2_{\pr_{12}}(\pr_{23}^*\mbf A_0,\pr_{13}^*\mbf E)$=$\Ext^1_{\pr_{12}}(\pr^*_{23}\mbf A_1,\pr^*_{13}\mbf E)$=$\Ext^2_{\pr_{12}}(\pr^*_{23}\mbf A_1,\pr^*_{13}\mbf E)=0$;
\item  $\Ext^1_{\pr_{12}}(\pr_{23}^*\mbf A_0,\pr_{13}^*\mbf E)\cong\Ext^2_{\pr_{12}}(\pr_{23}^*\mbf E,\pr_{13}^*\mbf E)$;
\item $\Ext^0_{\pr_{12}}(\pr^*_{23}\mbf A_1,\pr^*_{13}\mbf E)$ is a locally free sheaf;
\item $\xymatrix@=0.5cm{0 \ar[r]  & \Ext^0_{\pr_{12}}(\pr_{23}^*\mbf A_1,\pr_{13}^*\mbf E) \ar[r]  & \Ext^0_{\pr_{12}}(\pr_{23}^*\mbf A_0,\pr_{13}^*\mbf E) \ar[r] & \Ext^1_{\pr_{12}}(\pr_{23}^*\mbf E,\pr^*_{13}\mbf E) \ar[r] &0  }$.
\end{enumerate}
\end{lemma}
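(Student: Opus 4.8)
The plan is to follow the method of Mukai, Beauville and Markman (cf.\ \cite{mk87, bea92, ma02}): resolve $\mbf E$ by a locally free sheaf which, fibrewise over $\mc M_\upsilon$, is a sufficiently negative twist of a trivial bundle, and then read off $(1)$--$(4)$ from the long exact sequence of relative $\Ext$-sheaves attached to (\ref{equ main thm 3}), using relative Grothendieck duality on Deligne--Mumford stacks together with cohomology-and-base-change.

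\textbf{Construction of $\mbf A_0$ and $\mbf A_1$.} Let $\mc E$ be the fixed generating sheaf on $\mc X$ and $H$ the ample divisor on $X$. Since the sheaves parametrised by $\mc M_\upsilon$ form a bounded family, I would first pick $n$ so large that for every $E\in\mc M_\upsilon$ the twist $E\otimes\mc E\otimes\pi^*\mc O_X(nH)$ is globally generated with vanishing higher cohomology (the orbifold form of Serre's theorem, \cite{ni08}), and set $G:=\mc E\otimes\pi^*\mc O_X(nH)$; $n$ will be enlarged finitely often below. Cohomology and base change for Deligne--Mumford stacks then makes $\mc W:=R\pr_{1*}(\mbf E\otimes\pr_2^*G)$ locally free on $\mc M_\upsilon$ with surjective counit $\pr_1^*\mc W\to\mbf E\otimes\pr_2^*G$; twisting by $\pr_2^*G^\vee$ I would set
\[
\mbf A_1:=\pr_1^*\mc W\otimes\pr_2^*G^\vee,\qquad \mbf A_0:=\ker(\mbf A_1\twoheadrightarrow\mbf E),
\]
so that $\mbf A_1$ is locally free and (\ref{equ main thm 3}) is exact. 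Both $\mbf A_1$ and $\mbf E$ are flat over $\mc M_\upsilon$, hence so is $\mbf A_0$; and since each fibre of $\mbf E$ is torsion free on the smooth surface $\mc X$, hence of homological dimension $\le 1$, each fibre of $\mbf A_0$ is locally free on $\mc X$. In the non-fine case the same construction is run with twisted analogues of $\mc W$ and $G$, using the twisted-sheaf formalism of \cite[Subsection 2.1]{ca20}.

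\textbf{Verification of $(1)$--$(4)$.} Because $\pr_{23}^*\mbf A_1$ is the tensor product of a sheaf pulled back from the second $\mc M_\upsilon$-factor with the pullback of the locally free sheaf $G^\vee$ from $\mc X$, the projection formula and flat base change along $\pr_{13}$ and $\pr_{12}$ give $R\pr_{12*}R\Hom(\pr_{23}^*\mbf A_1,\pr_{13}^*\mbf E)\cong\pr_1^*\mc W\otimes\pr_2^*\mc W^\vee$ (here $\pr_1,\pr_2$ denote the two projections of $\mc M_\upsilon\times\mc M_\upsilon$), a locally free sheaf concentrated in degree $0$; this at once yields $\Ext^1_{\pr_{12}}(\pr_{23}^*\mbf A_1,\pr_{13}^*\mbf E)=\Ext^2_{\pr_{12}}(\pr_{23}^*\mbf A_1,\pr_{13}^*\mbf E)=0$ and the local freeness of $\Ext^0_{\pr_{12}}(\pr_{23}^*\mbf A_1,\pr_{13}^*\mbf E)$, i.e.\ statement $(3)$ and the $\mbf A_1$-part of $(1)$. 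Next I would apply $R\pr_{12*}R\Hom(-,\pr_{13}^*\mbf E)$ to the $\pr_{23}$-pullback of (\ref{equ main thm 3}) to obtain a distinguished triangle whose middle term is the locally free sheaf just computed. Using $K_\mc X\cong\mc O_\mc X$, relative Grothendieck duality \cite[Corollary 2.10]{ni09} and the simplicity of stable sheaves, the top cohomology $\Ext^2_{\pr_{12}}(\pr_{23}^*\mbf E,\pr_{13}^*\mbf E)$ of $R\pr_{12*}R\Hom(\pr_{23}^*\mbf E,\pr_{13}^*\mbf E)$ is a line bundle supported on $\delta$; and after enlarging $n$ so that $\homm_\mc X(E,G^\vee)=0$ for all $E\in\mc M_\upsilon$, the fibrewise vanishing $\homm_\mc X(E_1,(\mbf A_0)_{E_2})=0$ --- obtained by applying $\homm_\mc X(E_1,-)$ to the fibre of (\ref{equ main thm 3}) and using that $(\mbf A_0)_{E_2}$ embeds into a direct sum of copies of $G^\vee$ --- forces, by cohomology-and-base-change and Serre duality on $\mc X$, that $\Ext^2_{\pr_{12}}(\pr_{23}^*\mbf A_0,\pr_{13}^*\mbf E)=0$. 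Thus $R\pr_{12*}R\Hom(\pr_{23}^*\mbf A_0,\pr_{13}^*\mbf E)$ sits in degrees $0$ and $1$, and chasing the long exact cohomology sequence of the triangle delivers the remaining vanishing statements in $(1)$, the isomorphism $(2)$, and the exact sequence $(4)$; the local freeness established above and the flatness of $\mbf A_0$ over $\mc M_\upsilon$ with locally free $\mc X$-fibres are exactly what allow base change to be applied here without derived correction terms.

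\textbf{Expected main obstacle.} I expect the hard part to be not the formal homological algebra but the availability of the underlying tools over Deligne--Mumford stacks --- Serre-type vanishing for twists by a generating sheaf, cohomology and base change, and Grothendieck duality --- which I would quote from \cite{ni08} and \cite{ni09}; and, relatedly, arranging that a single choice of $G$ makes all of $(1)$--$(4)$ hold simultaneously, which is precisely why $n$ must be enlarged finitely many times along the way. A secondary but nontrivial point is the non-fine case: the entire argument has to be rerun with a twisted universal sheaf and twisted locally free resolutions, which requires keeping the twisted-sheaf conventions of \cite{ca20} consistent throughout.
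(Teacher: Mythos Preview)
Your proposal is correct and follows essentially the same strategy as the paper: both construct $\mbf A_1$ as $\pr_1^*\pr_{1*}(\mbf E\otimes\pr_2^*G)\otimes\pr_2^*G^\vee$ for a sufficiently positive twist $G$ of the generating sheaf, and then read off $(1)$--$(4)$ from the long exact sequence of relative $\Ext$-sheaves attached to (\ref{equ main thm 3}), the key input being the vanishing of the higher $\Ext^i_{\pr_{12}}(\pr_{23}^*\mbf A_1,\pr_{13}^*\mbf E)$. The paper records this tersely (invoking boundedness via Mumford--Castelnuovo regularity and the appendix on relative Ext), whereas you spell out the projection-formula identification $R\pr_{12*}R\Hom(\pr_{23}^*\mbf A_1,\pr_{13}^*\mbf E)\cong\pr_1^*\mc W\otimes\pr_2^*\mc W^\vee$ and the Serre-duality argument for $\Ext^2_{\pr_{12}}(\pr_{23}^*\mbf A_0,\pr_{13}^*\mbf E)=0$; these extra details are sound and make the deduction of $(1)$--$(4)$ transparent.
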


\begin{proof}
Fix a generating sheaf $\mc E$ on $\mc X$, and consider
\begin{equation*}
\xymatrix@=0.5cm{
&  \mc M_v\times\mc X \ar[dl]_{\pr_1} \ar[dr]^{\pr_2}  \\
\mc M_v &&  \mc X.}
\end{equation*}
By the boundedness of semistable sheaves on Deligne-Mumford stacks (\cite[Theorem 4.27 (2)]{ni08}), there is an integer $n$ such that the Mumford-Castelnuovo regularity ${\rm reg}(E)\leq n$ for any semistable sheaf $E$. The natural surjection $\pr_1^*\pr_{1*}(\mbf E\otimes\pr_2^*\mc E^\vee(n))\otimes\pr_2^*\mc E(-n)\rightarrow\mbf E$ defines $\mbf A_1$, giving exact sequence (\ref{equ main thm 3}). Since $\Ext^3_{\pr_{12}}(\pr_{23}^*\mbf E,\pr_{13}^*\mbf E)=0$ (see Theorem \ref{thm base ch for ext} in Appendix \ref{app rela ext}), the long exact sequence associated to (\ref{equ main thm 3}) is
\begin{equation}\label{equ main thm 4}
\begin{split}
\xymatrix@=0.5cm{
  0 \ar[r] & \Ext^0_{\pr_{12}}(\pr_{23}^*\mbf E,\pr^*_{13}\mbf E) \ar[r]  & \Ext^0_{\pr_{12}}(\pr_{23}^*\mbf A_1,\pr_{13}^*\mbf E) \ar[r]  & \Ext^0_{\pr_{12}}(\pr_{23}^*\mbf A_0,\pr_{13}^*\mbf E) }\\
\xymatrix@=0.5cm{
  \ar[r] & \Ext^1_{\pr_{12}}(\pr_{23}^*\mbf E,\pr^*_{13}\mbf E) \ar[r]  & \Ext^1_{\pr_{12}}(\pr^*_{23}\mbf A_1,\pr^*_{13}\mbf E) \ar[r] & \Ext^1_{\pr_{12}}(\pr^*_{23}\mbf  A_0,\pr_{13}^*\mbf E)}\\
\xymatrix@=0.5cm{
  \ar[r] & \Ext^2_{\pr_{12}}(\pr_{23}^*\mbf E,\pr^*_{13}\mbf E) \ar[r]  & \Ext^2_{\pr_{12}}(\pr^*_{23}\mbf A_1,\pr^*_{13}\mbf E) \ar[r] & \Ext^2_{\pr_{12}}(\pr^*_{23}\mbf A_0,\pr_{13}^*\mbf E) \ar[r] & 0}\end{split}\end{equation}
(see Proposition \ref{lemm exact se ext} in Appendix \ref{app rela ext}).
Conditions (1)-(4) then follow directly.
\end{proof}

\begin{corollary}\label{thm connectedness of moduli}
$\mc M_\upsilon$ is connected.
\end{corollary}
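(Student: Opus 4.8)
The plan is to deduce connectedness of $\mc M_\upsilon$ from the description of the diagonal class obtained in Theorem~\ref{thm genetators of moduli}, following the standard strategy of Beauville and Mukai adapted to the orbifold setting. First I would reduce to the fine case: by the argument already used in the proof of Theorem~\ref{thm genetators of moduli} (passing to a twisted universal sheaf and a twisted locally free resolution), all the Chern-class identities that follow hold verbatim, so I may assume $\mc M_\upsilon$ carries a genuine universal sheaf $\mbf E$. Since $\mc M_\upsilon$ is a smooth projective scheme (Proposition~\ref{pro sm of mod}), to prove connectedness it suffices to show that $H^0(\mc M_\upsilon,\mbb C)$ is one-dimensional, equivalently that $\mc M_\upsilon$ has exactly one connected component.

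Next I would exploit part~(2) of Theorem~\ref{thm genetators of moduli}: the K\"unneth factors $\{\alpha_i\}$ of the orbifold Chern character $\wt\ch(\mbf E)\in H^*(\mc M_\upsilon\times I\mc X,\mbb C)$ generate $H^*(\mc M_\upsilon,\mbb C)$ as a ring. Suppose $\mc M_\upsilon = \coprod_{j} Z_j$ with the $Z_j$ the connected components. Then $H^0(\mc M_\upsilon,\mbb C)=\bigoplus_j \mbb C\cdot e_j$, where $e_j$ is the idempotent supported on $Z_j$. If there were more than one component, then $H^0$ would be a nontrivial product of fields, hence would contain an idempotent $e$ with $e\neq 0,1$. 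I would then argue that every $\alpha_i$ lies in the image of $H^0$ times positive-degree classes plus scalars, more precisely that each degree-zero K\"unneth component of $\wt\ch(\mbf E)$ is a \emph{scalar} multiple of the fundamental class $1\in H^0(\mc M_\upsilon,\mbb C)$ on each connected component with the \emph{same} scalar across components — this is where one uses that $\upsilon$ is a fixed numerical class, so the rank and the relevant Chern numbers of $\mbf E|_{Z_j\times\mc X}$ are locally constant and equal to the data prescribed by $\upsilon$, independent of $j$. Consequently all the generators $\alpha_i$ of degree $0$ are proportional to $1$, and the ring they generate lands in $H^{>0}\oplus\mbb C\cdot 1$; since this ring is all of $H^*(\mc M_\upsilon,\mbb C)$, we get $H^0(\mc M_\upsilon,\mbb C)=\mbb C$, i.e. the idempotent $e$ cannot exist. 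Hence $\mc M_\upsilon$ is connected.

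An alternative, cleaner route that I would present as the main argument uses part~(1) directly: the Poincar\'e dual of the diagonal $\delta\subset \mc M_\upsilon\times\mc M_\upsilon$ equals $\ccc_m(-[R\pr_{12*}(\pr_{23}^*\mbf E^\vee\otimes^L\pr_{13}^*\mbf E)])$, and by the To\"en--Riemann--Roch computation in the proof of Theorem~\ref{thm genetators of moduli} this class decomposes as $\sum_i \pr_1^*\alpha_i\cup\pr_2^*\gamma_i$ with $\alpha_i,\gamma_i\in H^*(\mc M_\upsilon,\mbb C)$. If $\mc M_\upsilon$ had components $Z_1,\dots,Z_k$ with $k\geq 2$, then the class $[Z_1\times Z_2]$ of a ``mixed'' component of $\mc M_\upsilon\times\mc M_\upsilon$ is nonzero in $H^*(\mc M_\upsilon\times\mc M_\upsilon,\mbb C)$, yet the diagonal $\delta$ is disjoint from $Z_1\times Z_2$, so $[\delta]|_{Z_1\times Z_2}=0$; on the other hand, restricting the K\"unneth decomposition $\sum_i\pr_1^*\alpha_i\cup\pr_2^*\gamma_i$ to $Z_1\times Z_2$ and pairing against homology classes of the form $(\text{pt of }Z_1)\times Z_2$ forces, after summing over a basis, a contradiction with the fact that $\delta\cap(Z_1\times Z_1)$ is the (nonempty) diagonal of $Z_1$, whose class is $\sum_i \pr_1^*\alpha_i\cup\pr_2^*\gamma_i|_{Z_1\times Z_1}$ and pairs nontrivially with $[\text{pt}]\times[Z_1]$ to give $[\text{pt}]$. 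Concretely: the restriction map $H^*(\mc M_\upsilon)\to H^*(Z_1)$ is surjective, the $\alpha_i|_{Z_1},\gamma_i|_{Z_1}$ express the diagonal of $Z_1$, hence generate $H^*(Z_1,\mbb C)$; but the \emph{same} classes $\alpha_i,\gamma_i$, restricted to $Z_2$, must then also generate $H^*(Z_2,\mbb C)$ while simultaneously the off-diagonal block forces $\sum_i \alpha_i|_{Z_1}\otimes\gamma_i|_{Z_2}=0$ in $H^*(Z_1)\otimes H^*(Z_2)$, which is impossible once both families are generating (take the degree-zero part: $\sum_i (\alpha_i)_0\,(\gamma_i)_0 = 1$ on each diagonal block but $=0$ on the off-diagonal block, a contradiction since the degree-zero components of $\alpha_i,\gamma_i$ are constants independent of the component).

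The main obstacle, and the point requiring genuine care, is the second route's bookkeeping of the degree-zero (constant) parts of the K\"unneth factors across different putative components: one must check that the formula for $[\delta]$ produced by To\"en--Riemann--Roch is \emph{global} on $\mc M_\upsilon\times\mc M_\upsilon$ — not assembled component by component — so that the same universal sheaf $\mbf E$ (with its fixed numerical class $\upsilon$, hence fixed rank and leading Chern data on every component) yields classes $\alpha_i,\gamma_i$ whose constant parts are honest scalars. This is exactly what forces $\sum_i(\alpha_i)_0(\gamma_i)_0$ to be the \emph{same} number on the diagonal blocks $Z_j\times Z_j$ (namely $1$, since $[\delta]$ restricts to the fundamental class of the diagonal of $Z_j$) and on the off-diagonal blocks $Z_j\times Z_{j'}$ (namely $0$), which is absurd. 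Apart from this the argument is formal, so I would organize the write-up as: (i) reduce to the fine case; (ii) invoke Theorem~\ref{thm genetators of moduli}(1)–(2); (iii) run the component-counting contradiction via the constant part of the diagonal class; (iv) conclude $H^0(\mc M_\upsilon,\mbb C)=\mbb C$, hence $\mc M_\upsilon$ is connected.
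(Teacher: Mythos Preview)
Your first approach is essentially the argument the paper has in mind: the paper defers to Markman \cite[Corollary~10]{ma07}, whose proof is exactly the observation that the K\"unneth factors of the (orbifold) Chern character of a universal sheaf have degree-zero parts determined by the fixed numerical class $\upsilon$, hence lie in $\mbb C\cdot 1+H^{>0}$, so the ring they generate has $H^0=\mbb C$. That part of your write-up is correct and matches the paper.

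Two points deserve tightening. First, your ``reduction to the fine case'' is not an actual reduction: the fact that the Chern-class identity in Theorem~\ref{thm genetators of moduli}(1) holds for a twisted universal sheaf does \emph{not} let you assume an untwisted one exists, and part~(2) as stated requires a genuine $\mbf E$. What Markman does (and what the paper's reference to the base change theorem in Appendix~\ref{app rela ext} is pointing at) is to work with a quasi-universal or twisted family throughout; the K\"unneth factors of $\wt\ch(\mbf E)$ still make sense, and their degree-zero parts are still scalars because the restriction of $\mbf E$ to $\{p\}\times\mc X$ has numerical class a fixed multiple of $\upsilon$ independent of $p$. You should say this rather than pretend $\mc M_\upsilon$ is fine. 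Second, in your ``alternative route'' the sentence ``$\sum_i(\alpha_i)_0(\gamma_i)_0=1$ on each diagonal block but $=0$ on the off-diagonal block'' mislocates the contradiction: the class $[\delta]$ lives in $H^{2m}$, so the relevant K\"unneth piece is $H^0\otimes H^{2m}$, not $H^0\otimes H^0$. The correct statement is that this piece equals $\sum_j e_j\otimes[pt_j]$, whereas your formula forces it to be $1\otimes v$ for a single $v\in H^{2m}$; these agree only when there is one component. Once you fix the grading, the two routes are the same argument.
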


\begin{proof}
Using Theorem \ref{thm genetators of moduli} and the base change theorem for relative Ext-sheaves on DM stacks (see Appendix \ref{app rela ext}), the proof of this corollary can be completed by following the proof of Corollary 10 in \cite{ma07}.
\end{proof}

\section{Orbifold Hilbert schemes}\label{sec orb Hilb schemes}

\subsection{Hilbert schemes}

Let $\mc Z$ be a zero-dimensional closed substack of $\mc X$ with coarse moduli space $Z$.

\begin{lemma}\label{lemm local str of DM}
For a stacky point $p$ with stabilizer group $G$, there exists a cartesian diagram
\begin{equation}\label{diag local str of DM}
\begin{tikzcd}[scale=0.6]
{[\spec(\mbb C[\![x,y]\!])/G]} \arrow[r]\arrow[d] & \mc X \arrow[d,"\pi"]\\
{\spec(\mbb C[\![x,y]\!]^G)} \ar[r] & X
\end{tikzcd}	
\end{equation}
where the $G$-action on $\mbb C[\![x,y]\!]$ is induced by the $G$-action on the cotangent space of $\mc X$ at $p$. Moreover, $\mbb C[\![x,y]\!]^G$ is isomorphic to the complete local ring $\wh{\mc O}_{X,p}$ of $X$ at $p$, and $\mbb C[\![x,y]\!]^G\rightarrow X$ coincides with $\spec(\wh{\mc O}_{X,p})\rightarrow X$.
\end{lemma}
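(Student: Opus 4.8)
The plan is to reduce the statement to the standard local structure theory of Deligne--Mumford stacks, combined with the explicit description of the stacky locus hypothesis. First I would invoke the local structure of a separated DM stack near a point with stabilizer $G$: there is an étale neighborhood of $p$ in $\mc X$ of the form $[\spec(R)/G]$, where $R$ is the completion of the local ring at a lift of $p$, and $G$ acts via its action on the cotangent space $\mf m/\mf m^2$. Since $\mc X$ is a smooth orbifold surface of dimension two, $R \cong \mbb C[\![x,y]\!]$, and the $G$-action on this complete local ring is the one induced (by functoriality of completion) from the linear $G$-action on the $2$-dimensional cotangent space $T_p^\vee\mc X$. This produces the top row and left vertical map of the diagram, and the square is cartesian essentially by construction: forming the coarse moduli space commutes with the étale base change $\spec(\mbb C[\![x,y]\!]^G)\to X$ (coarse moduli spaces are compatible with flat, in particular étale, base change on the target), and the coarse space of $[\spec(\mbb C[\![x,y]\!])/G]$ is $\spec$ of the ring of invariants.

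Next I would identify $\mbb C[\![x,y]\!]^G$ with $\wh{\mc O}_{X,p}$. This follows from the fact that coarse moduli space formation commutes with completion at a point: the coarse space of the completed stacky neighborhood $[\spec(\mbb C[\![x,y]\!])/G]$ is $\spec(\mbb C[\![x,y]\!]^G)$, and on the other hand, taking the coarse space of $\mc X$ and then completing at $p$ gives $\spec(\wh{\mc O}_{X,p})$; since the original étale neighborhood $[\spec(\mc O)/G]\to\mc X$ has coarse space étale over $X$ through $p$, completing both sides yields the same complete local ring. Hence $\mbb C[\![x,y]\!]^G \cong \wh{\mc O}_{X,p}$ and the induced map $\spec(\mbb C[\![x,y]\!]^G)\to X$ is identified with the canonical map $\spec(\wh{\mc O}_{X,p})\to X$.

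I expect the main obstacle to be the careful justification that everything is compatible with \emph{completion} rather than merely with étale localization --- i.e. that passing to the complete local ring $\mbb C[\![x,y]\!]$ (so that $R$ is literally a power series ring and the $G$-representation is literally the linearized cotangent action, with no higher-order correction terms) does not disturb the cartesian property or the identification of coarse spaces. The point is that completion is flat and that forming invariants commutes with the flat base change $\wh{\mc O}_{X,p}\to\mbb C[\![x,y]\!]^G$; one also needs that for a finite group $G$ acting on a smooth point of a surface, the action can be linearized (Cartan's lemma / the fact that a $G$-action on $\mbb C[\![x,y]\!]$ fixing the maximal ideal is conjugate to its linearization). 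Once these standard facts are assembled, the diagram and the two identifications follow formally. A secondary point to check is that the hypothesis ``stacky locus of codimension two'' guarantees $p$ is an isolated stacky point, so that the local model is genuinely $[\mbb C^2/G]$-type rather than involving a positive-dimensional stacky stratum; this is immediate from the codimension assumption.
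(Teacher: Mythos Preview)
Your proposal is correct and follows essentially the same approach as the paper: start from the local structure theorem giving an \'etale neighborhood $[\spec(A)/G]\to\mc X$ over $\spec(A^G)\to X$, then complete at the relevant maximal ideals and invoke Cohen's structure theorem to identify $\wh A\cong\mbb C[\![x,y]\!]$. The paper is slightly more explicit about the key algebraic step---writing $\wh A_{\mfr m}\cong A\otimes_B\wh B_{\mfr n}$ to show that completion commutes with taking $G$-invariants (using that $A$ is finite over $B=A^G$ with a unique maximal ideal over $\mfr n$)---whereas you phrase this as ``coarse moduli commutes with flat base change''; conversely, you are more explicit than the paper about why the $G$-action on $\mbb C[\![x,y]\!]$ is the linearized cotangent action (Cartan's lemma), which the paper leaves implicit.
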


\begin{proof}
By \cite[Theorem 11.3.1]{ol16}, there exists an \'etale neighborhood $\spec(B)\to X$ of $p$ and a cartesian diagram
\begin{equation*}
\begin{tikzcd}[scale=0.6]
{[\spec(A)/G]} \arrow[d]\arrow[r] & \mc X \arrow[d,"\pi"] \\
\spec(B) \arrow[r] & X	
\end{tikzcd}
\end{equation*}
with $B=A^G$. Let $\mfr n\subseteq B$ be the maximal ideal corresponding to a lift $p'\in\spec(B)$ of $p$, and let $\mfr m\subseteq A$ be the unique maximal ideal over $\mfr n$. Then $\wh A_\mfr m \cong\; A\otimes_B \wh B_\mfr n$. By the Cohen structure theorem, $\wh A_\mfr m\cong\mbb C[\![x,y]\!]$, hence $\wh B_\mfr n=\mbb C[\![x,y]\!]^G$. This yields the cartesian diagram \eqref{diag local str of DM}, where $\spec(\mbb C[\![x,y]\!]^G)\to X$ identifies with the formal neighborhood of $p$.
\end{proof}

\begin{lemma}\label{lemm local str subst}
If the coarse moduli space $Z$ of $\mc Z$ satisfies $\supp(Z)=\{p\}$, then $\mc Z \;\cong\; [\spec(\mbb C[\![x,y]\!]/I_\mc Z)/G]$ for some $G$-invariant ideal $I_\mc Z$ of finite colength.
\end{lemma}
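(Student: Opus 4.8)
The plan is to reduce the statement about the substack $\mc Z$ to a purely local computation on the étale chart provided by Lemma~\ref{lemm local str of DM}. First I would invoke Lemma~\ref{lemm local str of DM} to fix the cartesian diagram presenting $\mc X$ near $p$ as $[\spec(\mbb C[\![x,y]\!])/G]$ over $\spec(\wh{\mc O}_{X,p})=\spec(\mbb C[\![x,y]\!]^G)$. Since $\supp(Z)=\{p\}$ by hypothesis, the closed substack $\mc Z\subseteq\mc X$ is set-theoretically concentrated over the closed point of $\spec(\mbb C[\![x,y]\!]^G)$; in particular $\mc Z$ is already contained in (hence equal to its own intersection with) this étale neighborhood, so there is no loss in replacing $\mc X$ by the chart $[\spec(\mbb C[\![x,y]\!])/G]$.

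The second step is to use the equivalence between closed substacks of a quotient stack $[\spec R/G]$ and $G$-invariant closed subschemes of $\spec R$ (equivalently, $G$-stable ideals of $R$). Under this equivalence $\mc Z$ corresponds to a $G$-invariant ideal $I_\mc Z\subseteq\mbb C[\![x,y]\!]$, and by construction $\mc Z\cong[\spec(\mbb C[\![x,y]\!]/I_\mc Z)/G]$. It remains to check that $I_\mc Z$ has finite colength, i.e. that $\mbb C[\![x,y]\!]/I_\mc Z$ is a finite-dimensional $\mbb C$-vector space. This follows because $\mc Z$ is zero-dimensional (it is a zero-dimensional closed substack of $\mc X$ by the standing hypothesis on $\mc Z$, and its coarse space $Z$ has support the single point $p$): the coarse moduli space of $[\spec(\mbb C[\![x,y]\!]/I_\mc Z)/G]$ is $\spec\bigl((\mbb C[\![x,y]\!]/I_\mc Z)^G\bigr)$, which is an Artinian local ring, and since $\mbb C[\![x,y]\!]/I_\mc Z$ is a finite module over its ring of invariants (integral extension, $G$ finite), $\mbb C[\![x,y]\!]/I_\mc Z$ is itself Artinian, hence of finite length. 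This gives the finite colength of $I_\mc Z$.

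The main obstacle, and the only point requiring care, is the passage from the étale-local statement to a genuine isomorphism of stacks: one must verify that a closed substack supported over a single closed point of the coarse space is actually \emph{determined by}, and isomorphic to, its restriction to the formal (or étale) neighborhood produced by Lemma~\ref{lemm local str of DM}. This is where completeness is used — a priori Lemma~\ref{lemm local str of DM} gives an étale chart $\spec(B)\to X$ and then passes to completions, so strictly speaking $I_\mc Z$ lives over the complete local ring; one should note that since $\mc Z$ is a \emph{closed} substack whose support is the single closed point, it is (set-theoretically and scheme-theoretically) an infinitesimal neighborhood of that point, and infinitesimal neighborhoods are insensitive to étale localization and to completion, so the $G$-invariant ideal of finite colength obtained over $\mbb C[\![x,y]\!]$ pulls back to the ideal of $\mc Z$ itself. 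Once this identification is made, the displayed isomorphism $\mc Z\cong[\spec(\mbb C[\![x,y]\!]/I_\mc Z)/G]$ is immediate from the cartesian square in Lemma~\ref{lemm local str of DM}.
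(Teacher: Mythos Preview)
Your proposal is correct and follows essentially the same approach as the paper. The paper streamlines what you call the ``main obstacle'' by noting that $Z=\spec(\mc O_{X,p}/I_Z)$ is Artinian and hence factors through $\spec(\wh{\mc O}_{X,p})=\spec(\mbb C[\![x,y]\!]^G)$, so that $\mc Z\subseteq Z\times_X\mc X\subseteq[\spec(\mbb C[\![x,y]\!])/G]$ directly from the cartesian diagram of Lemma~\ref{lemm local str of DM}; this is exactly the ``insensitive to completion'' point you make, packaged via the fiber product.
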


\begin{proof}
We may write $Z=\spec(\mc O_{X,p}/I_Z)$ for some ideal $I_Z$ of finite colength. Using Lemma \ref{lemm local str of DM}, we obtain a cartesian diagram
\begin{equation*}
\begin{tikzcd}[scale=0.6]
Z\times_X\mc X \arrow[d] \arrow[r] & {[\spec(\mbb C[\![x,y]\!])/G]} \arrow[d] \arrow[r] & \mc X \arrow[d,"\pi"] \\
Z  \arrow[r] & {\spec(\mbb C[\![x,y]\!]^G)} \ar[r] & X.
\end{tikzcd}
\end{equation*}
Since $\mc Z$ is a closed substack of $Z\times_X\mc X$, it is also a closed substack of $[\spec(\mbb C[\![x,y]\!])/G]$.
\end{proof}

\begin{lemma}
If the decomposition of $A:=\mbb C[\![x,y]\!]/I_\mc Z$ into irreducible representations is
\begin{equation*}
\textstyle{A\cong\bigoplus_{0\leq i\leq t}\rho_i^{\oplus v_i}}\quad\text{for some integers $v_i$},
\end{equation*}
then $[\mc O_\mc Z]=\sum_{0\leq i\leq t}v_i[\mc O_p\otimes\rho_i]$ in $K_0(\mc X)$ (see Section \ref{sec orb chern char} for notation).
\qed
\end{lemma}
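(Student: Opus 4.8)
The plan is to pass to the explicit local model furnished by Lemma~\ref{lemm local str subst} and then run a $G$-equivariant d\'evissage along the $\mfr m$-adic filtration of $A$.

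First I would use Lemma~\ref{lemm local str subst} to write $\mc Z\cong[\spec A/G]$ with $A=\mbb C[\![x,y]\!]/I_\mc Z$ an Artinian local ring, finite-dimensional over $\mbb C$, on which $G$ acts through the linear action on the cotangent space of $\mc X$ at $p$ (via Lemma~\ref{lemm local str of DM}). Let $\mfr m\subset A$ be the maximal ideal, i.e.\ the image of $(x,y)$; since the $G$-action on $\mbb C[\![x,y]\!]$ is linear, every power $\mfr m^j$ is a $G$-submodule, and $\mfr m$ is nilpotent because $A$ has finite length. Let $\iota:\mc G_p\hookrightarrow\mc X$ be the closed immersion of the reduced residual gerbe at $p$; identifying $\mc G_p\cong BG$, a coherent sheaf on $\mc G_p$ is the same as a finite-dimensional $G$-representation $V$, and $\mc O_p\otimes V$ denotes its pushforward $\iota_*V$ in the notation of Section~\ref{sec orb chern char}. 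Since $\iota$ is a closed immersion, $\iota_*$ is exact and induces a homomorphism $K_0(\mc G_p)=K(G)\to K_0(\mc X)$ carrying $[\rho_i]$ to $[\mc O_p\otimes\rho_i]$.

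Next I would exploit the finite $G$-stable filtration $A\supseteq\mfr m A\supseteq\mfr m^2A\supseteq\cdots\supseteq\mfr m^NA=0$. Each quotient $\mfr m^jA/\mfr m^{j+1}A$ is annihilated by $\mfr m$, hence is $\iota_*$ of the $G$-representation $\mfr m^jA/\mfr m^{j+1}A$ on $\mc G_p$, so additivity of classes over the short exact sequences of the filtration gives, in $K_0(\mc X)$,
\[
[\mc O_\mc Z]=\sum_{j=0}^{N-1}\iota_*\bigl[\mfr m^jA/\mfr m^{j+1}A\bigr]=\iota_*\Bigl(\sum_{j=0}^{N-1}\bigl[\mfr m^jA/\mfr m^{j+1}A\bigr]\Bigr).
\]
Viewing $A$ and these quotients as $G$-representations, additivity of $[\cdot]$ in $K_0(\mc G_p)=K(G)$ over the same filtration gives $\sum_j\bigl[\mfr m^jA/\mfr m^{j+1}A\bigr]=[A]=\sum_i v_i[\rho_i]$, the last equality by hypothesis. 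Applying $\iota_*$ then yields $[\mc O_\mc Z]=\sum_i v_i[\mc O_p\otimes\rho_i]$, which is the assertion.

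I do not expect a genuine obstacle: this is a routine d\'evissage. The two points that warrant care are the $G$-equivariance of the $\mfr m$-adic filtration (which is exactly the linearity of the $G$-action on the cotangent space at $p$, cf.\ Lemma~\ref{lemm local str of DM}) and the identification of $\iota_*(-)$ with the notation $\mc O_p\otimes(-)$ of Section~\ref{sec orb chern char}. Equivalently, one may run the d\'evissage through a $G$-equivariant Jordan--H\"older series of $A$ over the skew group ring $\mbb C[\![x,y]\!]\rtimes G$, whose simple subquotients are precisely the $\rho_i$ with $\mfr m$ acting trivially, each occurring $v_i=\dimm_{\mbb C}\homm_G(\rho_i,A)$ times; this gives the same conclusion.
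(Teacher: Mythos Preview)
Your argument is correct. The paper states this lemma with a bare \qed\ and no proof, treating it as immediate; your $\mfr m$-adic d\'evissage is exactly the routine verification one would supply, and all the ingredients you flag (linearity of the $G$-action on $(x,y)$ so that each $\mfr m^j$ is $G$-stable, identification of $\iota_*V$ with $\mc O_p\otimes V$) are in place from Lemmas~\ref{lemm local str of DM} and~\ref{lemm local str subst} and the conventions of Section~\ref{sec orb chern char}.
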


\begin{lemma}
$\big\{[\mc O_q], [\mc O_{p_k}\otimes\rho_{k,1}],[\mc O_{p_k}\otimes\rho_{k,2}],\ldots,[\mc O_{p_k}\otimes\rho_{k,r_k}]\big\}_{k\in\mfr I}$ is linearly independent in $K^{\rm num}(\mc X)$ (see Definition \ref{def num K gp}), where $q$ is any non stacky point of $\mc X$ and $\{\rho_{k,0},\cdots,\rho_{k,r_k}\}$ are the sets of irreducible representations of $G_k$ whose trivial representations are $\rho_{k,0}$.
\end{lemma}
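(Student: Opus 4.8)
The plan is to show that any putative linear relation
\[
a\,[\mc O_q]+\sum_{k\in\mfr I}\sum_{j=1}^{r_k}b_{k,j}\,[\mc O_{p_k}\otimes\rho_{k,j}]=0
\qquad\text{in }K^{\rm num}(\mc X)
\]
is trivial, by pairing it under the Euler form $\chi_\mc X(-,-)$ (whose radical cuts out $K^{\rm num}(\mc X)$) against a convenient family of test classes: the skyscraper sheaves supported at the stacky points, together with $\mc O_\mc X$. I will first kill the coefficients $b_{k,j}$ and then, separately, kill $a$.

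For the $b_{k,j}$, I fix $k_0\in\mfr I$ and pair the relation with $[\mc O_{p_{k_0}}\otimes\rho_{k_0,i}]$ for $i=0,\dots,r_{k_0}$. Since $q$ is non-stacky and the $p_k$ are pairwise distinct, all cross terms coming from $[\mc O_q]$ and from the skyscrapers at the other $p_m$ drop out by disjointness of supports, leaving $\sum_{j\ge1}b_{k_0,j}\,\chi_\mc X(\mc O_{p_{k_0}}\otimes\rho_{k_0,j},\mc O_{p_{k_0}}\otimes\rho_{k_0,i})=0$ for every $i$. To compute these local Euler characteristics I would use Lemma~\ref{lemm local str of DM}: étale-locally (and after completion) $\mc X$ near $p_{k_0}$ is $[\spec\mbb C[\![x,y]\!]/G_{k_0}]$, with $G_{k_0}$ acting through the cotangent space $V$ at $p_{k_0}$; a Koszul resolution of the residue field followed by passage to $G_{k_0}$-invariants identifies $\ext^l_\mc X(\mc O_{p_{k_0}}\otimes\rho_{k_0,j},\mc O_{p_{k_0}}\otimes\rho_{k_0,i})$ with $(\rho_{k_0,j}^\vee\otimes\rho_{k_0,i}\otimes\wedge^l V^\vee)^{G_{k_0}}$ (zero for $l\notin\{0,1,2\}$), so that the Euler characteristic is the average over $G_{k_0}$ of $\overline{\chi_{\rho_{k_0,j}}(g)}\,\chi_{\rho_{k_0,i}}(g)\,w(g)$ with weight $w(g)=\det(\id_{V^\vee}-g)$.

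The conclusion then comes from representation theory. The equations above say that the class function $w\cdot\psi$, where $\psi:=\sum_{j\ge1}b_{k_0,j}\,\chi_{\rho_{k_0,j}^\vee}$, is orthogonal to every irreducible character, hence vanishes identically. Because the stacky locus has codimension two, no non-identity $g\in G_{k_0}$ fixes a nonzero (co)tangent vector, so $w$ vanishes only at $e$; therefore $\psi$ is supported at $\{e\}$, i.e.\ proportional to the class function $\mathbf 1_{\{e\}}=\tfrac1{|G_{k_0}|}\sum_i(\dim\rho_{k_0,i})\chi_{\rho_{k_0,i}}$. But $\psi$ is, by construction, a combination of the \emph{non-trivial} irreducible characters, whereas $\mathbf 1_{\{e\}}$ has a nonzero trivial-character component (coefficient $\tfrac{\dim\rho_{k_0,0}}{|G_{k_0}|}$); the first orthogonality relation then forces the proportionality constant to vanish, whence $\psi=0$ and all $b_{k_0,j}=0$. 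Running this over all $k_0\in\mfr I$ gives $b_{k,j}=0$ for all $k,j$. The relation collapses to $a[\mc O_q]=0$, and pairing with $[\mc O_\mc X]$ — using $\chi_\mc X(\mc O_\mc X,\mc O_q)=\sum_l(-1)^l\dim H^l(\mc X,\mc O_q)=1$ — yields $a=0$.

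I expect the main work to be the local $\ext$ computation together with the representation-theoretic input: one must identify the Euler pairing among the skyscrapers $\{[\mc O_{p_k}\otimes\rho_{k,j}]\}_j$ with the $w$-weighted character pairing, and recognize that its radical is precisely the line spanned by $\mathbf 1_{\{e\}}$, equivalently by the regular representation $[\mbb C G_{k_0}]=\sum_i(\dim\rho_{k_0,i})[\rho_{k_0,i}]$. Discarding the trivial representation $\rho_{k_0,0}$ from the list is exactly what makes the remaining classes transverse to this radical, and it works precisely because $\dim\rho_{k_0,0}=1\neq0$, so $[\mbb C G_{k_0}]$ has nonzero trivial component. (Alternatively, the same character computation can be read off from the twisted sectors of the orbifold Chern character: the component of $\wt\ch(\mc O_{p_k}\otimes\rho_{k,j})$ in the sector indexed by $g\in G_k$ is proportional to $\chi_{\rho_{k,j}}(g)$, and the identical orthogonality argument applies sector by sector.)
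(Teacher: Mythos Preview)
Your argument is correct and, at the representation-theoretic core, identical to the paper's: both reduce to the observation that a class function of the form $\sum_{j\ge1}b_{k_0,j}\chi_{\rho_{k_0,j}}$ that vanishes on $G_{k_0}\setminus\{e\}$ must be a multiple of the regular character $\chi_{\rm reg}$, hence zero because $\chi_{\rm reg}$ has nonzero $\rho_{k_0,0}$-component. The vanishing away from $e$ comes, in both cases, from $\det(\id-\rho_{\Omega}(g))\neq0$ for $g\neq e$, which is exactly where the codimension-two hypothesis on the stacky locus enters.

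The packaging differs. The paper invokes Corollary~\ref{cor chern char of num k gp} (injectivity of $\wt\ch$ on $K^{\rm num}(\mc X)$) together with Proposition~\ref{pro ch of rho}, reading off the twisted-sector components $\ch^{(g)}(\mc O_{p_k}\otimes\rho_{k,j})=\det(\id-\rho_{\Omega}(g))\chi_{\rho_{k,j}}(g)$ directly; linear independence of characters then finishes the job sector by sector, and the untwisted sector handles $a$. You instead pair against skyscraper test classes via the Euler form and compute the local $\ext$'s from the Koszul resolution, obtaining the same weighted character pairing $\langle\chi_{\rho_i},w\cdot\chi_{\rho_j}\rangle$ with $w(g)=\det(\id-\rho_{\Omega}(g))$; this is precisely the content of Proposition~\ref{pro comp of euler form}(4), which you have essentially rederived. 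Your parenthetical remark at the end makes the equivalence explicit. Either route is fine; the paper's is marginally shorter once the orbifold Chern character formulas are in hand, while yours is more self-contained and makes the role of the radical of the local Euler pairing (the line spanned by $[\mbb C G_k]$) transparent.
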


\begin{proof}
Recall the character of the regular representation of $G_k$ is
\begin{equation*}
\chi_{\rm reg}(g)=
\begin{cases}
|G_k| \quad\text{if $g$ is the identity element},\\
0 \quad \text{otherwise.}
\end{cases}
\end{equation*}
Using Proposition \ref{pro ch of rho} and Corollary \ref{cor chern char of num k gp}, we obtain the lemma by a direct computation of the orbifold Chern characters.
\end{proof}

\begin{corollary}\label{cor lattice for hilb}
$\mc N:=\mbb Z[\mc O_q]+\sum_{k\in\mfr I}\sum_{1\leq i\leq r_k}\mbb Z[\mc O_{p_k}\otimes\rho_{k,i}]$ is a lattice whose rank is $1+\sum_{k\in\mfr I}r_k$.
\qed
\end{corollary}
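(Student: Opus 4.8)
The plan is to deduce this corollary immediately from the preceding lemma. By definition $\mc N$ is generated as an abelian group by the finite set
\begin{equation*}
\mc B:=\{[\mc O_q]\}\cup\{[\mc O_{p_k}\otimes\rho_{k,i}] : k\in\mfr I,\ 1\le i\le r_k\}\subset K^{\rm num}(\mc X),
\end{equation*}
so $\mc N$ is a finitely generated subgroup of $K^{\rm num}(\mc X)$. First I would observe that $\mc N$ does not depend on the choice of the non-stacky point $q$: any two closed points lying in the smooth locus of the surface $\mc X$ are algebraically, hence numerically, equivalent, so their structure-sheaf classes agree in $K^{\rm num}(\mc X)$. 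Since $K^{\rm num}(\mc X)$ is torsion-free by construction (Definition~\ref{def num K gp}), the subgroup $\mc N$ is a finitely generated torsion-free abelian group, therefore free of finite rank; this is the sense in which it is a lattice.

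For the rank, the preceding lemma states exactly that the elements of $\mc B$ are $\mbb Z$-linearly independent in $K^{\rm num}(\mc X)$. Hence $\mc B$ is a $\mbb Z$-basis of $\mc N$ and
\begin{equation*}
\rk\mc N=|\mc B|=1+\sum_{k\in\mfr I}r_k,
\end{equation*}
where the $1$ accounts for $[\mc O_q]$ and each summand $r_k$ counts the nontrivial irreducible representations $\rho_{k,1},\dots,\rho_{k,r_k}$ of the stabilizer $G_k$ at $p_k$.

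The only substantive ingredient is the linear independence asserted in the preceding lemma, which is proved there by a direct computation of orbifold Chern characters (via Proposition~\ref{pro ch of rho} and Corollary~\ref{cor chern char of num k gp}) together with the orthogonality relations for the characters of $G_k$; granted that input, the corollary presents no further obstacle, being a purely formal statement about a finitely generated torsion-free group with a distinguished basis.
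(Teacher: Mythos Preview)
Your proposal is correct and takes essentially the same approach as the paper, which marks the corollary with \qed\ to indicate it is immediate from the preceding lemma on linear independence. One small remark: the torsion-freeness of $K^{\rm num}(\mc X)$ is not strictly needed here, since $\mbb Z$-linear independence of $\mc B$ already forces the surjection $\mbb Z^{|\mc B|}\to\mc N$ to be an isomorphism; but your observation is correct and does follow from Definition~\ref{def num K gp} (or from the injectivity of $\wt\ch$ in Corollary~\ref{cor chern char of num k gp}).
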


Fix a K-class $\alpha=n_0[\mc O_q]+\sum_{k\in\mfr I}\sum_{1\leq i\leq r_k}n_{k,i}[\mc O_{p_k}\otimes\rho_{k,i}]\in \mc{N}$ with $n_0\geq 0$ and $n_{k,i}\geq 0$. Consider the Hilbert scheme
\begin{equation}\label{equ orb hilb sch}
\hilb^\alpha(\mc X):=\{\text{closed substacks $\mc Z\subseteq\mc X$}|\quad[\mc O_\mc Z]=\alpha\}.
\end{equation}

\begin{proposition}\label{pro hilb sch}
$\hilb^\alpha(\mc X)$ is a smooth projective scheme with dimension
\begin{equation}\label{equ dim of hilb}
\begin{split}
2n_0+\sum_{k\in\mfr I}\bigg( \sum_{1\leq i\leq r_k}n_{k,i}\langle\chi_{\rho_{k,0}},\chi_{\rho_{k,i}\otimes\rho_{K_\mc X}}\rangle-\sum_{1\leq i,j\leq r_{k}}n_{k,i}n_{k,j}\big(\langle\chi_{\rho_{k,j}},\chi_{\rho_{k,i}}\rangle\\
+\langle\chi_{\rho_{k,j}},\chi_{\rho_{k,i}\otimes\rho_{K_\mc X}}\rangle-\langle\chi_{\rho_{k,j}},\chi_{\rho_{k,i}\otimes\rho_{\Omega_\mc X}}\rangle\big)\bigg).
\end{split}
\end{equation}
\end{proposition}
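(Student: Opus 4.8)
The plan is to identify $\hilb^\alpha(\mc X)$ with a fixed-determinant moduli space of sheaves, so that smoothness and projectivity are inherited from Section~\ref{sec poisson st}, and then to read off the dimension from a Riemann--Roch computation of an Euler form on $K^{\rm num}(\mc X)$. Put $\upsilon:=[\mc O_\mc X]-\alpha$; this is a K-class of rank one, so (automatically, since rank-one torsion-free sheaves are $\mu_H$-stable, cf. Theorem~\ref{thm change of plarization}) no strictly semistable objects of class $\upsilon$ exist, and Proposition~\ref{pro sm of mod} gives that $\mc M_\upsilon$ and the fibre $\mc M_\upsilon^{\mc O_\mc X}$ of $\dett$ over $\mc O_\mc X$ are smooth projective schemes. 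I claim $\hilb^\alpha(\mc X)\cong\mc M_\upsilon^{\mc O_\mc X}$. If $\mc Z\subseteq\mc X$ is zero-dimensional with $[\mc O_\mc Z]=\alpha$, then $I_\mc Z=\ker(\mc O_\mc X\to\mc O_\mc Z)$ is rank-one torsion-free with $[I_\mc Z]=\upsilon$ and $\det I_\mc Z\cong\mc O_\mc X$; conversely any rank-one torsion-free $E$ with $\det E\cong\mc O_\mc X$ satisfies $E\hookrightarrow E^{\vee\vee}\cong\mc O_\mc X$ with cokernel supported in codimension two, so $E=I_\mc Z$ for a unique $\mc Z$, the embedding being unique up to scalar since $\homm(E,\mc O_\mc X)\cong\homm(\mc O_\mc X,\mc O_\mc X)$ is one-dimensional. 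To upgrade this bijection to an isomorphism of schemes one runs the argument in families: torsion-free sheaves on the orbifold surface have projective dimension one, so a $T$-flat family with torsion-free rank-one fibres has a two-term locally free resolution stable under base change, its relative double dual is a family of trivial line bundles hence pulled back from $T$, and after the corresponding twist one recovers a family of ideal sheaves. This families step---controlling the relative double dual and the line-bundle twist over the base---is, I expect, the main technical obstacle; it is the orbifold counterpart of the classical identification of $\hilb^n$ of a surface with a rank-one fixed-determinant moduli space.

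Granting the identification, $\hilb^\alpha(\mc X)$ is smooth and projective, and at $[\mc Z]$ its tangent space is $\ext^1(I_\mc Z,I_\mc Z)_0$. Stability gives $\ext^0(I_\mc Z,I_\mc Z)_0=0$, and Serre duality with the hypothesis $K_\mc X\cdot H<0$ or $K_\mc X\cong\mc O_\mc X$ gives $\ext^2(I_\mc Z,I_\mc Z)_0=0$, exactly as in the proof of Proposition~\ref{pro sm of mod}. Hence
\[
\dim\hilb^\alpha(\mc X)=\dim\ext^1(I_\mc Z,I_\mc Z)_0=-\chi(I_\mc Z,I_\mc Z)_0=-\chi(I_\mc Z,I_\mc Z)+\chi(\mc O_\mc X,\mc O_\mc X),
\]
which depends only on $\upsilon=[\mc O_\mc X]-\alpha$, so it remains to evaluate this Euler form. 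Expanding $\chi([\mc O_\mc X]-\alpha,[\mc O_\mc X]-\alpha)$ bilinearly and writing $\alpha=n_0[\mc O_q]+\sum_{k\in\mfr I}\sum_{1\le i\le r_k}n_{k,i}[\mc O_{p_k}\otimes\rho_{k,i}]$, the pairings between sheaves supported at distinct points vanish, so the answer is the sum of the contribution of $[\mc O_q]$ and one block per stacky point.

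For a non-stacky point $\chi(\mc O_\mc X,[\mc O_q])=\chi([\mc O_q],\mc O_\mc X)=1$ and $\chi([\mc O_q],[\mc O_q])=0$, contributing $2n_0$. At $p_k$, using the local model $[\spec\mathbb C[\![x,y]\!]/G_k]$ of Lemma~\ref{lemm local str of DM}, a Koszul resolution of the origin in the local chart, To\"{e}n--Riemann--Roch and Serre duality on $\mc X$, one obtains $\chi(\mc O_\mc X,[\mc O_{p_k}\otimes\rho_{k,i}])=\dim\rho_{k,i}^{G_k}=0$ for $i\ge1$, $\chi([\mc O_{p_k}\otimes\rho_{k,i}],\mc O_\mc X)=\langle\chi_{\rho_{k,0}},\chi_{\rho_{k,i}\otimes\rho_{K_\mc X}}\rangle$, and
\[
\chi([\mc O_{p_k}\otimes\rho_{k,i}],[\mc O_{p_k}\otimes\rho_{k,j}])=\langle\chi_{\rho_{k,j}},\chi_{\rho_{k,i}}\rangle-\langle\chi_{\rho_{k,j}},\chi_{\rho_{k,i}\otimes\rho_{\Omega_\mc X}}\rangle+\langle\chi_{\rho_{k,j}},\chi_{\rho_{k,i}\otimes\rho_{K_\mc X}}\rangle,
\]
the last identity coming from $\sum_l(-1)^l[\wedge^l\rho_{T\mc X}]$ acting on $\rho_{k,i}^\vee\otimes\rho_{k,j}$, rewritten via $\langle\chi_\rho,\chi_{\sigma\otimes\tau^\vee}\rangle=\langle\chi_\sigma,\chi_{\rho\otimes\tau}\rangle$ in order to replace the tangent and anticanonical characters by $\rho_{\Omega_\mc X}$ and $\rho_{K_\mc X}$. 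Substituting these into $-\chi(I_\mc Z,I_\mc Z)+\chi(\mc O_\mc X,\mc O_\mc X)$ reproduces (\ref{equ dim of hilb}) term by term. (If $\hilb^\alpha(\mc X)=\varnothing$ the assertion is vacuous, so one may assume otherwise throughout.)
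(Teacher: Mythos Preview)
Your approach is essentially the paper's: identify $\hilb^\alpha(\mc X)$ with $\mc M_\upsilon^{\mc O_\mc X}$ for $\upsilon=[\mc O_\mc X]-\alpha$, deduce smoothness and projectivity from the moduli theory of Section~\ref{sec poisson st}, and compute the dimension as $\chi(\mc O_\mc X)-\chi(I_\mc Z,I_\mc Z)$ by expanding the Euler form via the formulas of Proposition~\ref{pro comp of euler form}. Your Euler-form calculations match the paper's exactly.

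There is one point where you are weaker than the paper. You invoke the hypothesis $K_\mc X\cdot H<0$ or $K_\mc X\cong\mc O_\mc X$ to kill $\ext^2(I_\mc Z,I_\mc Z)_0$, but the proposition is stated \emph{without} this hypothesis. The paper's argument is unconditional and specific to rank one: since $I_\mc Z^{\vee\vee}\cong\mc O_\mc X$, any $\phi\in\homm(I_\mc Z,I_\mc Z\otimes K_\mc X)$ is, on the complement of $\mc Z$, multiplication by a section of $K_\mc X$, which extends across the codimension-two locus $\mc Z$ by Hartogs; hence $\homm(I_\mc Z,I_\mc Z\otimes K_\mc X)=H^0(\mc X,K_\mc X)$ and, by Serre duality, $\ext^2(I_\mc Z,I_\mc Z)_0=\homm(\mc O_\mc X,K_\mc X)_0^\vee=0$ with no assumption on $K_\mc X$. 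You should replace your appeal to Proposition~\ref{pro sm of mod} by this direct computation. (Your worry about the families version of $\hilb^\alpha\cong\mc M_\upsilon^{\mc O_\mc X}$ is legitimate; the paper simply asserts the isomorphism, so you are not behind on that front.)
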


\begin{proof}
First, $\hilb^\alpha(\mc X)\rightarrow\mc M_\upsilon^{\mc O_\mc X}$, $\mc Z\mapsto I_\mc Z$
is an isomorphism, where $I_\mc Z$ is the ideal sheaf of $\mc Z$ and $\mc M_\upsilon^{\mc O_\mc X}$ is the moduli space of stable sheaves with trivial determinant and K-class $\upsilon=[\mc O_\mc X]-\alpha\in K^{\rm num}(\mc X)$. Hence $\hilb^\alpha(\mc X)$ is a smooth projective scheme (Proposition \ref{pro sm of mod}). The tangent space at $I_\mc Z$ is $\ext^1(I_\mc Z,I_\mc Z)_0$. Since $I_{\mc Z}^{\vee\vee}$ is a line bundle, we have $\homm(I_{\mc Z},I_\mc Z\otimes K_\mc X)=\homm(\mc O_\mc X,K_\mc X)$. By Serre duality, $\ext^2(I_\mc Z,I_\mc Z)_0=\homm(\mc O_\mc X,K_\mc X)_0^\vee=0$. Using Euler characteristic (Appendix \ref{subsec euler form}), $\dimm\hilb^{\alpha}(\mc X)=\chi(\mc O_\mc Z)-\chi(I_\mc Z,I_\mc Z)$, where
\begin{equation*}
\begin{split}
\chi(I_\mc Z,I_\mc Z)&=\chi(\mc O_\mc X)-2n_0-\sum_{k\in\mfr I}\bigg(\sum_{1\leq i\leq r_k}n_{k,i}\langle\chi_{\rho_{k,0}},\chi_{\rho_{k,i}\otimes\rho_{K_\mc X}}\rangle\\
&+\sum_{1\leq i,j\leq r_k}n_{k,i}n_{k,j}(\langle\chi_{\rho_{k,j}},\chi_{\rho_{k,i}}\rangle+\langle\chi_{\rho_{k,j}},\chi_{\rho_{k,i}\otimes\rho_{K_\mc X}}\rangle-\langle\chi_{\rho_{k,j}},\chi_{\rho_{k,i}\otimes\rho_{\Omega_\mc X}}\rangle)\bigg)
\end{split}
\end{equation*}
(see Proposition \ref{pro comp of euler form}). 
\end{proof}

\begin{proposition}\label{thm hilbert-chow poi}
$\hilb^\alpha(\mc X)$ is a smooth connected projective scheme provided that $K_\mc X\cdot H<0$ or $K_\mc X\cong\mc O_\mc X$. Furthermore, if $\mc X$ admits a Poisson structure, then $\hilb^\alpha(\mc X)$ admits one as well.
\end{proposition}

\begin{proof}
Under our assumption, Corollary \ref{thm connectedness of moduli} implies that $\mc M_\upsilon$ is a connected smooth projective scheme with $\upsilon=[\mc O_\mc X]-\alpha\in K^{\rm num}(\mc X)$. If nonempty, there is an isomorphism 
\begin{equation*}
\mc M_\upsilon\rightarrow\Pic^0(\mc X)\times\hilb^{\alpha}(\mc X),\quad F\mapsto (\dett(F), (\dett(F)/F)\otimes\dett(F)^{-1}).
\end{equation*}
By Proposition \ref{pro rep of an pic}, $\Pic^0(\mc X^{\rm an})$ is a complex torus. Hence $\Pic^0(\mc X)$ is connected (via GAGA theorem for DM stacks) and $\hilb^{\alpha}(\mc X)$ is connected as well. If $\mc X$ carries a Poisson structure, so does $\hilb^\alpha(\mc X)$ (Proposition \ref{pro poi st on moduli}). 
\end{proof}
\begin{remark}\label{rem poi st on hilb}
Alternatively, the tangent and cotangent spaces at an ideal sheaf $I_\mc Z$ are $T_{I_\mc Z} \hilb^\alpha(\mc X) = \homm(I_\mc Z, \mc O_\mc Z)$, and $T^\vee_{I_\mc Z} \hilb^\alpha(\mc X) = \homm(I_\mc Z, \mc O_\mc Z \otimes K_\mc X)$. If $\theta\in H^0(\mc X, \wedge^2 T\mc X)$ is a Poisson structure on $\mc X$, it induces a morphism $B_\theta(I_\mc Z) : \homm(I_\mc Z, \mc O_\mc Z \otimes K_\mc X) \rightarrow \homm(I_\mc Z, \mc O_\mc Z)$, defining the Poisson structure on $\hilb^\alpha(\mc X)$. Moreover, by \cite[Lemma 3.2]{bo98}, the kernel of $B_\theta(I_\mc Z)$ is $\homm(I_\mc Z, \Tor_1^{\mc O_\mc X}(\mc O_\mc Z, \mc O_\mc D))$, where $\mc D$ is the Cartier divisor determined by $\theta$.
\end{remark}

\subsection{Hilbert-Chow morphism}\label{subsec HC}
While the previous subsection treated the projective case, the Hilbert scheme for a quasiprojective orbifold surface can be defined via Quot functor on Deligne-Mumford stacks (\cite{os03}). Below we define the Hilbert-Chow morphism only in the projective case; the quasiprojective case can be treated in the same way. Let $\mc Z_{\hilb^\alpha}\subset\hilb^\alpha(\mc X)\times\mc X$ be the universal closed substack with ideal sheaf $\mc I_{\hilb^\alpha}$, flat over $\hilb^\alpha(\mc X)$. By the exactness of $(\pi\times\id_{\hilb^\alpha})_*$ ( \cite[Theorem 3.2]{aov08}), $(\pi\times\id_{\hilb^\alpha})_*\mc I_{\hilb^\alpha}$ is a sheaf of ideals on $\hilb^\alpha(\mc X)\times X$, flat over $\hilb^\alpha(\mc X)$ (\cite[Corollary 1.3]{ni08}). Indeed, $(\pi\times\id_{\hilb^\alpha})_*\mc I_{\hilb^\alpha}$ is the ideal sheaf of the coarse moduli space of $\mc Z_{\hilb^\alpha}$(see \cite[the proof of Lemma 3.7]{hu24} ), defining a morphism 

\begin{equation}\label{equ mor orb to coar}
\hilb^\alpha(\mc X)\longrightarrow\hilb^{n_0}(X). 
\end{equation}
Composing with the classical Hilbert-Chow morphism
\begin{equation}\label{equ class HC}
\hilb^{\alpha}(X)\rightarrow\sym^{n_0}(X),\quad Z\mapsto\sum_{z\in X(\mbb C)}\text{length}(\mc O_{Z,z})\cdot z,
\end{equation}
we obtain the Hilbert-Chow morphism $h : \hilb^\alpha(\mc X)\rightarrow\sym^{n_0}(X)$. We remark that $h$ is, in general, not surjective; however, it is surjective for $\alpha=n[\mc O_q]$. For brevity, we denote $\hilb^{n[\mc O_q]}(\mc X)$ by $\hilb^n(\mc X)$. At this point, we state one of our main results.

\begin{theorem}\label{thm connectedness of hilb n}
\begin{enumerate}[$(1)$]
\item $\hilb^n(\mc X)$ is a connected smooth projective scheme.
\item  The Hilbert-Chow morphism
\begin{equation}\label{equ HC n}
 h : \hilb^{n}(\mc X)\longrightarrow\sym^n(X)
\end{equation}
is a resolution of singularities. 
\item If $\mc X$ equipped with a Poisson structure, then $h$ is a Poisson resolution with respect to the induced Poisson structures.
	\end{enumerate}
\end{theorem}

Before giving the proof, we need to establish a technique lemma. Let $G\subseteq\GL_2(\mbb C)$ be a small finite subgroup, that is a finite subgroup acting freely on $\mbb C^2\setminus\{0\}$. Let $\hilb^n([\mbb C^2/G])$ (resp. $\hilb^n([\spec (\mbb C[\![x,y]\!])/G])$ ) be the scheme parametrising $G$-invariant ideals $I$ in $\mbb C[x,y]$ (resp. $\mbb C[\![x,y]\!]$) such that $\mbb C[x,y]/I$ (resp. $\mbb C[\![x,y]\!]/I$) is isomorphic to the direct sum of n copies of regular representation $\rho_{\rm reg}$ of $G$.

\begin{lemma}\label{pro conn gl2}
	$\hilb^{n}([\mbb C^2/G])$ and $\hilb^n([\spec (\mbb C[\![x,y]\!])/G])$ are connected.
\end{lemma}

\begin{proof}
Define a $G$-representation by 
\begin{equation*}
G\rightarrow\GL_3(\mbb C)\quad g\mapsto {\rm diag}(g,1)
\end{equation*}
which induce a $G$-action on $\mds P^2$. The quotient stack $[\mds P^2/G]$ compactifies $[\mbb C^2/G]$. The coarse space map factors as
\begin{equation}\label{equ diag can decom of sec G-action}
\begin{tikzcd}[row sep=1.5em, column sep=1.5em]
{[\mds P^2/G]} \arrow[dr] \arrow[rr] && {(\mds P^2/G)}^{\rm can} \arrow[dl] \\
			& \mds P^2/G &
\end{tikzcd}
\end{equation}
where $(\mds P^2/G)^{\rm can}$ is the canonical stack. The top arrow is an isomorphism over $[\mbb C^2/G]$ ( \cite[Theorem 1]{gs17}). Hence $(\mds P^2/G)^{\rm can}$ is also a compactification of $[\mbb C^2/G]$, and then $\hilb^{n}([\mbb C^2/G])$ is an open subscheme of $\hilb^n((\mds P^2/G)^{\rm can})$. For some sufficient large integer $N$, $\mc O_{\mds P^2}(N)$ descends to a very ample line bundle $\mc O_{\mds P^2/G}(H)$ on $\mds P^2/G$. Note that $K_{(\mds P^2/G)^{\rm can}}\cdot H<0$. By Corollary \ref{thm connectedness of moduli}, $\hilb^{n}([\mbb C^2/G])$ is a connected smooth quasiprojective scheme of dimension $2n$. Let $o$ be the singular point of $\mbb C^2/G$. The fiber of the Hilbert-Chow morphism $ \hilb^n([\mbb C^2/G])\rightarrow\sym^n(\mbb C^2/G)$ over $o$ with reduced scheme structure is isomorphic to $\hilb^n([\spec (\mbb C[\![x,y]\!])/G])_{\rm red}$, which is a projective scheme.
Since $\sym^n(\mbb C^2/G)\cong\mbb C^{2n}/G_n$ where $G_n$ is the wreath product of $G$ with the $n$-th symmetric group $S_n$, $\sym^n(\mbb C^2/G)$ is normal. By the Zariski's Main Theorem, $\hilb^n([\spec (\mbb C[\![x,y]\!])/G])$ is connected.
\end{proof}

\begin{proof}[Proof of Theorem \ref{thm connectedness of hilb n}]
Without loss of generality, assumed that $\mc X$ has exactly one orbifold point $p$. We have a natural stratification of the Cartesian product $X^n$:
\begin{equation*}
	X^n=\coprod_{k=0}^n X^n[k],
\end{equation*}
where $X^n[k]$ denote the locus of points in $X^n$ with exactly $k$ components equal to the singular point $p$. This induces a stratification of $\sym^n(X)$: 
\begin{equation*}
	\sym^n(X)=\coprod_{k=0}^n\sym^n(X)[k],
\end{equation*}
where $\sym^n(X)[k]=X^n[k]/S_n$ which are isomorphic to $\sym^{n-k}(X\setminus\{p\})$. Moreover, one has
\begin{equation*}
 h^{-1}(\sym^n(X)[k])=\hilb^k([\spec (\mbb C[\![x,y]\!])/G])\times\hilb^{n-k}(\mc X^*)
\end{equation*}
with $\mc X^*=\pi^{-1}(X\setminus\{p\})$. By the connectedness of $\hilb^{n-k}(\mc X^*)$ (\cite[Lemma 7.2.1]{fgiknv05}),  $h^{-1}(\sym^n(X)[k])$ is connected as well. The closure of $\hilb^n(\mc X^*)\subset\hilb^n(\mc X)$ is connected, and its image under $h$ covers $\sym^n(X)$. Hence $\hilb^n(\mc X)$ is connected. Since $h$ restricts to a Poisson resolution  $\hilb^n(\mc X^*)\rightarrow\sym^n(X^*)$, it follows that $h$ is a Poisson resolution with respect to the induced Poisson structures (\cite[Corollary 5.2]{fu05}).
\end{proof}

\begin{corollary}
Suppose that $W$ is a smooth connected quasiprojective scheme with an action of a finite group $G$. If the fixed locus is dimension zero, then
\begin{equation*}
\hilb^n([W/G])=\{Z\subseteq W|\text{$Z$ is a $G$-invariant closed subscheme with $H^0(\mc O_Z)\cong\rho_{\rm reg}^{\oplus n}$}\}
\end{equation*} 
is a smooth connected quasiprojective scheme.
\end{corollary}
\begin{proof}
First we can $G$-equivariantly embed $W$ into a projective space. The closure of $W$ with reduced scheme structure is denoted by $\wt W$, which is $G$-invariant. Let $\overline W$ be a $G$-equivariant resolution of $\wt W$ (see \cite{ko07}). Then $\overline W$ gives a $G$-equivariant compactification of $W$, and consequently, $[\overline W/G]$ is a compactification of $[W/G]$. If the stacky locus of $[\overline W/G]$ is codimension one, we consider the canonical stack $[\overline W/G]^{\rm can}$. By Theorem \ref{thm connectedness of hilb n}, the conclusion is immediate.
\end{proof}

\begin{corollary}
Let $X$ be an irreducible symplectic projective surface with quotient singularities and let $\mc X$ be its associated canonical stack. Then the Hilbert-Chow morphism 
\[
h :	\hilb^n(\mc X)\longrightarrow\sym^n(X)
\]
is a symplectic resolution.
\end{corollary}

\begin{proof}
It follows from Proposition 2.4 in \cite{bea00} that $\sym^n(X)$ has symplectic singularities.  Since $h$ is a projective resolution, the symplectic form on the smooth locus of $\sym^n(X)$ extends to a holomorphic two-form on $\hilb^n(\mc X)$.

On the other hand, under our assumption, $\mc X$ is a symplectic orbifold surface with only finitely many orbifold points; in particular, $\hilb^n(\mc X)$ carries a natrual symplectic structure. These two forms coincide, and hence the Hilbert-Chow morphism is a symplectic resolution.
\end{proof}

\begin{theorem}\label{thm minimal resl}
$h : \hilb^{1}(\mc X)\rightarrow X$ is the minimal resolution.
\end{theorem}

\begin{proof}
By \cite[Proposition 2.18]{ko07}, it suffices to check \'{e}tale-locally around the orbifold points of $X$. For an orbifold point, there exists an \'{e}tale neighborhood $\spec(A^G)\to X$ such that $\spec(A^G)\times_X \mc X \cong [\spec(A)/G]$, where $A$ is a smooth connected affine surface with $G$-action. By \cite[Proposition 2.3]{ct08}, $\spec(A^G)\times_X \hilb^{[\mc O_q]}(\mc X)$ is the component of $G$-$\hilb(\spec(A))$ containing free $G$-orbits. Recall a $G$-cluster $Z$ is a $G$-invariant finite subscheme of $\spec(A)$ with $H^0(\spec(A),\mc O_Z) \cong \mbb C[G]$. Since $\hilb^{|G|}(\spec(A))$ is smooth \cite[Theorem 7.2.3 (2)]{fgiknv05} and the $G$-action is linearizable, $G$-$\hilb(\spec(A))$ is smooth. Denote $\spec(A^G)\times_X \hilb^{[\mc O_q]}(\mc X)$ by $Y$.  

\textbf{Claim:} $Y$ is the minimal resolution of $\spec(A^G)$. Let $f : \wt Y\to \spec(A^G)$ be the minimal resolution and $g : \spec(A)\to \spec(A^G)$ the quotient map. The graph $\Gamma_g : \spec(A)\hookrightarrow \spec(A)\times \spec(A^G)$ defines a $G$-equivariant surjection $\mc O_{\spec(A)\times \spec(A^G)} \to \Gamma_{g*}\mc O_{\spec(A)}$. Pushing forward to $\spec(A^G)$ gives $\mc O_{\spec(A^G)}\otimes_\mbb C A \to g_* \mc O_{\spec(A)}$, and pulling back along $f$ yields $\mc O_{\wt Y}\otimes_\mbb C A \to f^* g_* \mc O_{\spec(A)}$. Modding out torsion, $f^* g_* \mc O_{\spec(A)}/{\rm torsion}$ is locally free of rank $|G|$ \cite[Lemma 2.2]{es85}, defining a $G$-cluster family over $\wt Y$. The induced morphism $\wt Y \to Y$ is an isomorphism outside the singular fiber, so minimality of $\wt Y$ implies $Y \cong \wt Y$.
\end{proof}

\section{Compactification of the Hitchin systems}\label{sec cpt hitchin systems}
In this section we restrict to the two-dimensional Hitchin systems corresponding to the affine Dynkin diagrams $\wt A_0$, $\wt D_4$, $\wt E_6$, $\wt E_7$ and $\wt E_8$, constructed by Groechenig (\cite{go14}). Concretely, they are moduli spaces of orbifold Higgs bundles on $E$, $\mds P_{2,2,2,2}^1$, $\mds P_{3,3,3}^1$, $\mds P_{4,4,2}^1$, $\mds P_{6,3,2}^1$, where $E$ is an elliptic curve and $\mds P^1_{a_1,\cdots,a_s}$ denotes an orbifold curve with coarse moduli space $\mds P^1$ and $s$ orbifold points of the specified orders. These are exactly all the one-dimensional Calabi-Yau orbifolds. Except $E$, each arises as a nontrivial quotient of an elliptic curve by a cyclic group:
\begin{equation*}
	[E_2/\mu_2],\quad[E_3/\mu_3],\quad [E_4/\mu_4],\quad [E_6/\mu_6].
\end{equation*}

For brevity, we denote the $\mu_i$-Hilbert scheme $\mu_i$-$\hilb(T^\vee E_i)$ by $\hilb^1(T^\vee\mc X_i)$.

\begin{theorem}[\cite{go14}]
$\hilb^1(T^\vee\mc X_i)$ is isomorphic to a two-dimensional moduli space $\mc M(i)$ of stable orbifold Higgs bundles on $\mc X_i:=[E_i/\mu_i]$. In particular these moduli spaces are crepant resolutions of the GIT quotients $T^\vee E_i/\mu_i$.
\qed
\end{theorem}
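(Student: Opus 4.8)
The plan is to separate two assertions: that $\hilb^1(T^\vee\mc X_i)$ is a moduli space of stable orbifold Higgs bundles on $\mc X_i$ (a $\mu_i$-equivariant instance of Boalch's conjecture), and that it is a crepant resolution of $T^\vee E_i/\mu_i$. Throughout I would use that, by definition, $\hilb^1(T^\vee\mc X_i)=\mu_i\text{-}\hilb(T^\vee E_i)$ — the moduli of $\mu_i$-clusters in the quasi-projective symplectic surface $T^\vee E_i$, equivalently the component of the Hilbert scheme of $|\mu_i|$ points on $[T^\vee E_i/\mu_i]$ whose general member is a free orbit — and that $T^\vee E_i\cong E_i\times\mbb A^1$ since $E_i$ has trivial cotangent bundle, with $\mu_i$ acting diagonally by the chosen automorphism on $E_i$ and by its inverse character on the $\mbb A^1$-factor, normalised so that $dz\wedge dw$ is preserved.

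First I would dispatch the crepant resolution statement directly. The lift of the $\mu_i$-action to $T^\vee E_i$ preserves the tautological one-form, hence the symplectic form $\omega=dz\wedge dw$, so $K_{[T^\vee E_i/\mu_i]}\cong\mc O$; at a point over a fixed point of a generator, the stabiliser acts on symplectic local coordinates by $\operatorname{diag}(\zeta,\zeta^{-1})$ for a root of unity $\zeta$, so the stabilisers lie in $\SL_2$ and $T^\vee E_i/\mu_i$ carries only $A$-type rational double points. For a finite symplectic quotient of a surface, the $G$-Hilbert scheme is the unique crepant resolution: this is the two-dimensional case of the derived McKay correspondence \cite{bkr01}, and it may also be recovered here from Theorem~\ref{thm minimal resl}, applied to a projective orbifold compactification of $[T^\vee E_i/\mu_i]$ as in the proof of Proposition~\ref{pro conn gl2}, together with the fact that the minimal resolution of an $A_n$ singularity is crepant. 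Hence the Hilbert--Chow morphism $\hilb^1(T^\vee\mc X_i)\to T^\vee E_i/\mu_i$ is a crepant (indeed the minimal) resolution.

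For the identification with a Higgs moduli space I would transport families through a $\mu_i$-equivariant Fourier--Mukai transform. The classical equivalence $D^b(E_i)\xrightarrow{\ \sim\ }D^b(E_i^\vee)$ extends relatively over $\mbb A^1$ to an equivalence $\Phi:D^b(T^\vee E_i)\xrightarrow{\ \sim\ }D^b(T^\vee E_i^\vee)$ carrying the structure sheaf of a finite length-$n$ subscheme to a rank-$n$ pure one-dimensional sheaf on $T^\vee E_i^\vee$ that is flat over $\mbb A^1$; reading multiplication by the $\mbb A^1$-coordinate as the Higgs field, this is the spectral sheaf of a degree-$0$ rank-$n$ Higgs bundle on $E_i^\vee$ — the mechanism of \cite{gnr01,boa11}. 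Since the chosen automorphism of $E_i$ is self-dual (its transpose, under the principal polarisation $E_i\cong E_i^\vee$, again generates $\mu_i$), $\Phi$ is $\mu_i$-equivariant and descends to a non-trivial autoequivalence of $D^b(T^\vee\mc X_i)$. I would then check that $\Phi$ sends a flat family of $\mu_i$-clusters to a flat family of $\mu_i$-equivariant rank-$|\mu_i|$, trivial-determinant Higgs bundles whose eigenvalues form a single $\mu_i$-orbit in $\mbb A^1$ — equivalently, a flat family of orbifold Higgs bundles on $\mc X_i$ of rank $|\mu_i|$, trivial determinant, with characteristic polynomial $\lambda^{|\mu_i|}+b$ for $b\in H^0(\mc X_i,K_{\mc X_i}^{\otimes|\mu_i|})\cong\mbb C$ — and that the inverse transform reverses this. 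Matching the orbifold stability condition to the $\mu_i$-cluster condition identifies $\hilb^1(T^\vee\mc X_i)$ with the moduli space $\mc M(i)$ of stable orbifold Higgs bundles on $\mc X_i$ with these invariants, which then contains no strictly semistable points; since the left side is two-dimensional, so is $\mc M(i)$, and the Hitchin map $\mc M(i)\to\mbb A^1$ corresponds to the map on $\hilb^1(T^\vee\mc X_i)$ recording the $\mu_i$-orbit of the $\mbb A^1$-coordinates of a cluster.

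The hard part will be this last step — making the relative Fourier--Mukai argument precise in the equivariant (equivalently, twisted) setting: verifying that the transform of a $\mu_i$-cluster is an honest semistable orbifold Higgs bundle and not merely a complex, fixing the discrete invariants (degree, determinant, and the parabolic weights inherited from the orbifold orders) so that $\mc M(i)$ has no strictly semistable objects, matching stability in both directions, and analysing the locus $b=0$, where the spectral curve meets the zero section and one must check that the isomorphism extends over the exceptional fibres of the resolution. All of this is carried out in \cite{go14}.
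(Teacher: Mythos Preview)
The paper does not supply its own proof of this theorem: it is stated with attribution to \cite{go14} and closed with \qed, i.e.\ it is quoted as an external result. There is therefore nothing in the paper to compare your argument against.

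That said, your sketch is a faithful outline of Groechenig's approach --- the crepant resolution statement via the two-dimensional derived McKay correspondence \cite{bkr01} (or equivalently via Theorem~\ref{thm minimal resl} applied to a compactification), and the identification with a Higgs moduli space via a $\mu_i$-equivariant relative Fourier--Mukai transform together with the BNR correspondence --- and you correctly flag the genuinely delicate steps (stability matching, pinning down the discrete invariants, behaviour over the locus $b=0$) as the content of \cite{go14}. The paper itself only revisits fragments of this machinery later, in the proof of Proposition~\ref{pro hitchin map} and Remark~\ref{rmk equiv isom}, where the relative Fourier--Mukai equivalence and BNR are invoked to match the Hilbert--Chow map with the Hitchin map and to record the compatibility of $\mbb C^*$-actions and symplectic structures.
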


\begin{remark}\label{rmk equiv isom}
Indeed, Groechenig proved that the $\mbb C^*$-action on $\mu_i$-$\hilb(T^\vee E_i)$ induced by the natural $\mbb C^*$ action on $T^\vee E_i$ coincides with the natural $\mbb C^*$-action on $\mc M(i)$, although this is not stated explicitly in his paper. In addition, this isomorphism is a symplectomorphism with respect to their natural symplectic structures (see \cite{jia25}).
\end{remark}



\begin{proposition}[\cite{go14}]\label{pro hitchin map}
Composing the Hilbert-Chow morphism with the coarse map of $\psi_i$ yields
\begin{equation}\label{equ hitchin map}
	\hilb^1(T^\vee\mc X_i)\rightarrow E_i\times\mbb C/\mu_i\rightarrow\mbb C/\mu_i\cong\mbb C
\end{equation}
which is isomorphic the corresponding Hitchin map.
\qed
\end{proposition}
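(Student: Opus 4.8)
The plan is to trace through what each arrow in (\ref{equ hitchin map}) does on the level of points and to match it against Groechenig's explicit description of the Hitchin map on $\mc M(i)$. Recall from the preceding discussion that $T^\vee\mc X_i=[E_i\times\mbb C/\mu_i]$, where $\mu_i$ acts on $E_i$ by translation-by-automorphism and on $\mbb C$ by a primitive character, so that the second projection descends to $\psi_i:T^\vee\mc X_i\to\mc Y_i=[\mbb C/\mu_i]$. A point of $\hilb^1(T^\vee\mc X_i)$ is a $\mu_i$-cluster $\mc Z\subseteq T^\vee\mc X_i$, i.e.\ (by Lemma \ref{lemm local str subst} and the identification with $\mu_i$-$\hilb(T^\vee E_i)$) a $\mu_i$-invariant length-$|\mu_i|$ subscheme $Z\subseteq T^\vee E_i=E_i\times\mbb C$ whose global sections form the regular representation. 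Under the coarse-moduli/Hilbert--Chow morphism $h_i$ such a cluster is sent to the $\mu_i$-orbit of its support in $E_i\times\mbb C$, i.e.\ to a point of $(E_i\times\mbb C)/\mu_i$; composing with the coarse map of $\psi_i$ records only the $\mbb C$-coordinate of that orbit, landing in $\mbb C/\mu_i$, which is $\cong\mbb C$ via the $\mu_i$-invariant monomial (the $|\mu_i|$-th power of the linear coordinate, up to the character weight). So the composite in (\ref{equ hitchin map}) sends a cluster to the image of its $\mbb C$-support under $\mbb C\to\mbb C/\mu_i\cong\mbb C$.

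Next I would invoke Groechenig's isomorphism $\hilb^1(T^\vee\mc X_i)\cong\mc M(i)$ together with Remark \ref{rmk equiv isom}, under which the universal cluster corresponds to the universal Higgs bundle; I would then recall that the Hitchin map on $\mc M(i)$ sends a Higgs field $\Phi$ to the invariants of its characteristic polynomial, which for rank one (as here, since $\mc M(i)$ is a two-dimensional moduli of \emph{line-bundle} orbifold Higgs bundles, the underlying object being a torsion-free rank-one sheaf on $\mc X_i$) is simply $\Phi$ itself viewed as a section of the relevant power of the canonical/twisting sheaf on $\mc X_i$, whose coarse invariant is exactly a point of $H^0(\mc X_i,\ (\text{twist})^{\otimes})\cong\mbb C$. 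The key point is that the spectral correspondence realising $\hilb^1(T^\vee\mc X_i)\cong\mc M(i)$ sends a cluster $Z\subseteq E_i\times\mbb C\cong T^\vee E_i$ to the Higgs bundle whose spectral data is $Z$; projecting $Z$ to the $\mbb C$-factor (the ``cotangent direction'', on which $\mu_i$ acts by the character) is precisely taking the characteristic polynomial, i.e.\ applying the Hitchin map. Thus the two maps $\hilb^1(T^\vee\mc X_i)\to\mbb C$ agree.

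Concretely the steps are: (i) identify $\hilb^1(T^\vee\mc X_i)$ with $\mu_i$-$\hilb(E_i\times\mbb C)$ and describe $h_i$ as ``$\mu_i$-orbit of support''; (ii) compute the coarse map of $\psi_i$ and the isomorphism $\mbb C/\mu_i\cong\mbb C$ explicitly via the invariant monomial, so that (\ref{equ hitchin map}) becomes ``$\mbb C$-coordinate of the support, to the $|\mu_i|$-th power''; (iii) recall (from \cite{go14}) that under the spectral correspondence $\mc M(i)\cong\mu_i$-$\hilb(T^\vee E_i)$ the Hitchin base is identified with $\mbb C/\mu_i$ and the Hitchin map with projection to the fibre direction of $T^\vee E_i$ modulo $\mu_i$; (iv) conclude that both composites coincide with this projection. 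The main obstacle I anticipate is step (iii): Groechenig's identification of the Hitchin map with the $\psi_i$-projection is implicit in \cite{go14} (as noted in Remark \ref{rmk equiv isom}, several such compatibilities are true but not spelled out there), so the proof must either cite the spectral-correspondence construction carefully enough to read this off, or reconstruct it by checking that the $\mbb C^*$-equivariant structures match — the Hitchin map being the unique (up to scale) $\mbb C^*$-equivariant map to $\mbb A^1$ of the correct weight, which pins it down once one knows the weight of the $\mbb C$-action on the cotangent direction. I would use this $\mbb C^*$-equivariance rigidity as the clean way to finish, bypassing a detailed chase through the spectral data.
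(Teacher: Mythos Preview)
Your step (iii) contains two real errors. First, the Higgs bundles in $\mc M(i)$ are not rank one: since $K_{\mc X_i}$ descends from $K_{E_i}\cong\mc O_{E_i}$ equipped with the nontrivial character $\sigma_i^*\omega=\zeta_i\omega$, one has $H^0(\mc X_i,K_{\mc X_i}^{\otimes j})=0$ unless $i\mid j$, so a one-dimensional Hitchin base forces rank exactly $i$ (these are the classical rank-$i$ parabolic systems on $\mbb P^1$ for $i=2,3,4,6$). Second, Groechenig's isomorphism does \emph{not} send a cluster $\mc Z$ to the Higgs bundle with spectral curve $\mc Z$: it is a relative Fourier--Mukai transform over $\mc Y_i$, so $\mc O_\mc Z$ and the spectral sheaf are FM-dual, not equal. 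Your conclusion that ``projecting $\mc Z$ to the $\mbb C$-factor computes the Hitchin map'' is nonetheless correct --- but the mechanism is that the FM kernel lives on $T^\vee\mc X_i\times_{\mc Y_i}T^\vee\mc X_i$, so the equivalence commutes with $\psi_i$-pushforward. This is precisely the paper's argument: the derived equivalence of \cite[Lemma~3.6, Prop.~4.2]{go14} is taken relative to $\hilb^1\times\mc Y_i$, hence the Fitting support of $(\id\times\psi_i)_*\mc O_{\mc Z_{\hilb^1}}$ coincides with the $\mc Y_i$-image of the universal spectral curve, which by BNR is the Hitchin image; the section of $\psi_i$ through the identity of $E_i$ then forces the induced map $\mbb C\to\mbb C$ on bases to be an isomorphism.

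Your $\mbb C^*$-rigidity fallback, on the other hand, is a valid and genuinely different route. Since $\hilb^1(T^\vee\mc X_i)$ resolves the normal affine variety $T^\vee E_i/\mu_i$, its ring of global functions is $\mbb C[T^\vee E_i]^{\mu_i}=\mbb C[t]^{\mu_i}=\mbb C[t^i]$ (the projective factor $E_i$ contributes only constants, and $t$ is the fibre coordinate of $\mbb C^*$-weight one); the weight-$i$ piece is therefore one-dimensional. Both the Hitchin map and the composite \eqref{equ hitchin map} are nonzero weight-$i$ regular functions (the former by the rank computation above, the latter since $\mbb C/\mu_i\cong\mbb C$ is realised by $t\mapsto t^i$; equivariance of the identification is supplied by Remark~\ref{rmk equiv isom}), hence they agree up to a nonzero scalar. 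This bypasses the FM/BNR analysis entirely, at the cost of taking the $\mbb C^*$-compatibility of Groechenig's isomorphism as a black box, whereas the paper's proof extracts the commutativity directly from the construction.
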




\begin{lemma}\label{lemm pun hilb}
If the formal power series ring $\mbb C[\![x,y]\!]$ is equipped with the $\mu_r$-action $\zeta_r(x)=\zeta_r\cdot x$ and $\zeta_r(y)=\zeta_r\cdot y$, where $\zeta_r$ is a primitive $r$-th root of unit, then $\hilb^1([\spec(\mbb C[\![x,y]\!])/\mu_r])\cong\mds P^1$.
\end{lemma}
\begin{proof}
Since any $I\in\hilb^1([\spec(\mbb C[\![x,y]\!])/\mu_r])$ can be represented as $I=(ax+by)+(x,y)^r$
for some $[a,b]\in\mds P^1$, we complete the proof.
\end{proof}

\begin{theorem}\label{thm cpf of two higgs}
The two-dimensional Hitchin systems for $\wt D_4$, $\wt E_6$, $\wt E_7$, and $\wt E_8$ admit natural compactifications 
\[
\hilb^{1}\!\bigl(\mds P(T^\vee\mc X_i\oplus\mc O_{\mc X_i})\bigr)
\]
with the following properties:
\begin{enumerate}[$(1)$]
\item The natural $\mbb{C}^*$-action and Poisson structure on $\hilb^1(\mds P(T^\vee\mc X_i \oplus \mc O_{\mc X_i}))$ are compatible with, and extend, the $\mbb{C}^*$-action and symplectic structure on $\mc M(i)$.
\item The Hitchin maps extend to the compositions
\[
\hilb^1\!\bigl(\mds P(T^\vee\mc X_i\oplus\mc O_{\mc X_i})\bigr)
\xrightarrow{\,h_i\,} E\times\mds P^1/\mu_i
\longrightarrow \mds P^1/\mu_i \cong \mds P^1,
\]
where $h_i$ are the Hilbert–Chow morphisms.
\item Each $h_i$ is the minimal resolution of the GIT quotient $\mds P(T^\vee E_i\oplus\mc O_{E_i})/\mu_i$, and provides a Poisson resolution.
\item The boundary (with reduced structure) consists of $s+1$ copies of $\mds P^1$, where $s$ is the number of orbifold points of $\mc X_i$.
\end{enumerate}
\end{theorem}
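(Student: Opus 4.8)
The plan is to identify $\mc X:=\mds P(T^\vee\mc X_i\oplus\mc O_{\mc X_i})$ explicitly, to verify that it is an irreducible orbifold surface with stacky locus of codimension two satisfying $K_\mc X\cdot\pi^*H<0$, and then to read off (1)--(4) from Theorems \ref{thm hilbert-chow poi} and \ref{thm minimal resl}; the only genuinely local input will be a weight computation at the stacky points over $0$ and $\infty$. First I would note that $T^\vee\mc X_i=\Omega_{\mc X_i}$ pulls back on $E_i$ to $\Omega_{E_i}\cong\mc O_{E_i}$ with the $\mu_i$-linearization induced from the action on $E_i$, so that $\mc X\cong[E_i\times\mds P^1/\mu_i]$, where $\mu_i$ acts on $E_i$ as before and on the fibre $\mds P^1$ as a rotation fixing the zero section $D_0$ and the section at infinity $D_\infty$. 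Thus $\mc X$ is an irreducible orbifold surface whose stacky locus is the finite set of points $(p_k,0),(p_k,\infty)$, $k=1,\dots,s$, hence of codimension two, and whose coarse moduli space is the GIT quotient $X_i=\mds P(T^\vee E_i\oplus\mc O_{E_i})/\mu_i$. From the relative Euler sequence, together with $\det(T^\vee\mc X_i\oplus\mc O_{\mc X_i})^{\vee}\otimes K_{\mc X_i}=T\mc X_i\otimes K_{\mc X_i}\cong\mc O_{\mc X_i}$, one computes $K_\mc X=\mc O_\mc X(-2)$, where $\mc O_\mc X(1)$ is the nef, fibrewise-positive relative hyperplane class; since $\pi^*H$ is strictly positive on all curves for $H$ ample on $X_i$, this forces $K_\mc X\cdot\pi^*H<0$. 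Moreover $-K_\mc X=\mc O_\mc X(2)$ carries the global section $v^{\otimes2}$, where $v$ is the essentially unique section of $\mc O_\mc X(1)$ and cuts out $D_\infty$; by Lemma \ref{cor poisson st 2 ts} this section is a Poisson structure on $\mc X$, vanishing on $2D_\infty$ and restricting to a nowhere-vanishing, hence symplectic, $2$-form on $T^\vee\mc X_i=\mc X\setminus D_\infty=\{v\neq0\}$.

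Granting this, Theorem \ref{thm hilbert-chow poi} applies to $\mc X$: the scheme $\hilb^1(\mc X)$ is smooth, connected, and projective, and carries a Poisson structure, and the Hilbert--Chow morphism $h_i\colon\hilb^1(\mc X)\to\sym^1(X_i)=X_i$ is a Poisson resolution, which by Theorem \ref{thm minimal resl} is in fact the minimal resolution of $X_i=\mds P(T^\vee E_i\oplus\mc O_{E_i})/\mu_i$; this is part (3). To see that $\hilb^1(\mc X)$ compactifies $\mc M(i)$, I would use that $T^\vee E_i$ is $\mu_i$-equivariantly open in $\mds P(T^\vee E_i\oplus\mc O_{E_i})$, so that $T^\vee E_i/\mu_i$ is open in $X_i$; its singularities are cyclic quotients of type $A_{a_k-1}$ at the images of $(p_k,0)$ --- there the $E_i$-direction has weight $1$ and the fibre direction near $0$ has weight $-1$ --- hence du Val, so Groechenig's crepant resolution $\mc M(i)=\hilb^1(T^\vee\mc X_i)$ coincides with the minimal resolution of $T^\vee E_i/\mu_i$. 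Uniqueness and functoriality of minimal resolutions of normal surfaces under open immersions then exhibit $\mc M(i)$ as the open subscheme $h_i^{-1}(T^\vee E_i/\mu_i)\subseteq\hilb^1(\mc X)$ on which $h_i$ restricts to the structure map of $\mc M(i)$; since both are irreducible surfaces this is a dense open immersion, so $\hilb^1(\mc X)$ is a projective compactification of $\mc M(i)$. The remaining structures then come for free: the $\mbb C^*$-action by fibrewise scaling on $[E_i\times\mds P^1/\mu_i]$ commutes with the $\mu_i$-rotation and so induces a $\mbb C^*$-action on $\hilb^1(\mc X)$ restricting on $\mc M(i)$ to cotangent scaling, i.e.\ the natural action (Remark \ref{rmk equiv isom}); likewise the Poisson structure on $\hilb^1(\mc X)$ restricts to the one induced by the symplectic form $v^{\otimes2}$ on $T^\vee\mc X_i$, which by Remark \ref{rmk equiv isom} and \cite{jia25} is the natural symplectic structure of $\mc M(i)$, giving part (1). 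For part (2), Proposition \ref{pro hitchin map} identifies the Hitchin map of $\mc M(i)$ with $\mc M(i)\to(E_i\times\mbb C)/\mu_i\to\mbb C/\mu_i\cong\mbb C$, and since the second projection $E_i\times\mds P^1\to\mds P^1$ is $\mu_i$-equivariant it descends to $X_i\to\mds P^1/\mu_i\cong\mds P^1$ extending that map; composing with $h_i$ produces (\ref{equ cpt Hitch map}), which restricts to the Hitchin map on $\mc M(i)$.

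Finally, for part (4): the boundary is $\hilb^1(\mc X)\setminus\mc M(i)=h_i^{-1}(D)$, where $D\subset X_i$ is the image of $E_i\times\{\infty\}$, namely $E_i/\mu_i\cong\mds P^1$. Since $h_i$ is an isomorphism away from the finitely many singular points of $X_i$, the strict transform of $D$ contributes one copy of $\mds P^1$; the singular points of $X_i$ lying on $D$ are the $s$ images of $(p_k,\infty)$, and \'etale-locally near each $\mc X\cong[\mbb C^2/\mu_{a_k}]$ with $\mu_{a_k}$ acting with weights $(1,1)$ --- the $E_i$-direction and the fibre direction towards $\infty$ both having weight $1$ --- so the minimal resolution contributes a single exceptional $\mds P^1$, of self-intersection $-a_k$, over each such point. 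Hence the reduced boundary is a union of $s+1$ copies of $\mds P^1$, which is part (4). I expect the main obstacle to be the identification, used above, of Groechenig's $\mu_i$-Hilbert scheme $\mc M(i)$ with an open subscheme of the orbifold Hilbert scheme $\hilb^1(\mc X)$ of Section \ref{sec orb Hilb schemes}: routing it through uniqueness of minimal resolutions first requires showing that $T^\vee E_i/\mu_i$ has only du Val singularities, and more generally forces one to keep track of the $\mu_{a_k}$-weights at both $(p_k,0)$ and $(p_k,\infty)$ --- exactly the computation that pins down the singularity types ($A_{a_k-1}$ over $0$, $\tfrac1{a_k}(1,1)$ over $\infty$) and hence, downstream, the boundary configurations recorded in Table \ref{main table}.
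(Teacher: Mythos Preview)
Your proof is correct and follows essentially the same backbone as the paper --- verify the hypotheses of Theorems \ref{thm hilbert-chow poi} and \ref{thm minimal resl} for $\mc X=\mds P(T^\vee\mc X_i\oplus\mc O_{\mc X_i})$, then read off (1)--(3) from those results together with Proposition \ref{pro pois st on cp ct bd}, Remark \ref{rmk equiv isom}, and Proposition \ref{pro hitchin map}. The two places where you diverge are worth noting.

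First, your identification of $\mc M(i)$ as an open subscheme of $\hilb^1(\mc X)$ goes through uniqueness of minimal resolutions, which you yourself flag as the ``main obstacle.'' The paper avoids this entirely: since $T^\vee\mc X_i$ is an open substack of $\mc X$, one has directly that $\hilb^1(T^\vee\mc X_i)$ is an open subscheme of $\hilb^1(\mc X)$, and by definition $\hilb^1(T^\vee\mc X_i)=\mu_i\text{-}\hilb(T^\vee E_i)=\mc M(i)$. This is cleaner and bypasses the du Val check at $(p_k,0)$.

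Second, for part (4) the paper characterizes the boundary intrinsically as the degeneracy locus of the Poisson structure on $\hilb^1(\mc X)$ (via Remark \ref{rem poi st on hilb}), which set-theoretically is $\{\mc Z:\mc Z\cap\mc D\neq\emptyset\}$ with $\mc D=2D_\infty$; this decomposes as $\hilb^1(\mds P(T^\vee\mc X_i))\cup\bigcup_{k=1}^s\mu_{a_k}\text{-}\hilb(\mbb C[\![x,y]\!])$, and Lemma \ref{lemm pun hilb} gives each piece as $\mds P^1$. Your argument --- strict transform of $D_\infty$ plus the $s$ exceptional curves over the $\tfrac{1}{a_k}(1,1)$ singularities --- is the image of this under $h_i$ and is equivalent, but the paper's Hilbert-scheme description stays closer to the moduli-theoretic language and makes the Poisson degeneracy manifest.
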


\begin{proof}
Note that $\hilb^1(T^\vee\mc X_i)$ is an open subscheme of $\hilb^{1}(\mds P(T^\vee\mc X_i\oplus\mc O_{\mc X_i}))$. Since $\mds P(T^\vee\mc X_i\oplus\mc O_{\mc X_i})$ carries a Poisson structure $\theta$ extending the symplectic form on $T^\vee \mc X_i$ (Proposition \ref{pro pois st on cp ct bd}), $\hilb^{1}(\mds P(T^\vee \mc X_i\oplus\mc O_{\mc X_i}))$ is a connected smooth Poisson projective surface, and the Hilbert-Chow morphism is a minimal (Poisson) resolution (Theorem \ref{thm hilbert-chow poi}, Theorem \ref{thm minimal resl} and \cite[Proposition 3.3]{fu05}). By Remark \ref{rmk equiv isom}, the natural $\mbb{C}^*$-action and Poisson structure on $\hilb^1(\mds P(T^\vee\mc X_i \oplus \mc O_{\mc X_i}))$ are compatible with, and extend, the $\mbb{C}^*$-action and symplectic structure on $\mc M(i)$. By Proposition \ref{pro hitchin map}, the condition (2) is immediate. The degenerate locus of $\theta$ is the divisor $\mc D=2\cdot\mds P(T^\vee \mc X_i)$, so the degenerate locus of the natural Poisson structure $B_\theta$ on $\hilb^{1}(\mds P(T^\vee\mc X_i\oplus\mc O_{\mc X_i}))$ is
\begin{equation*}
\{\mc Z\in\hilb^{1}(\mds P(T^\vee\mc X_i\oplus\mc O_{\mc X_i}))|\quad\mc Z\cap\mc D\neq\emptyset\}
\end{equation*}
(Remark \ref{rem poi st on hilb}), which is the complement of $\mu_r$-$\hilb(T^\vee E)$ in
$\hilb^{1}(\mds P(T^\vee \mc X_i\oplus\mc O_{\mc X_i}))$. With reduced structure, it is
\begin{equation*}
\textstyle{\hilb^{1}(\mds P(T^\vee\mc X_i))\bigcup_{i=1}^s\text{$\mu_{a_i}$-$\hilb(\mbb C[\![x,y]\!])$}}.
\end{equation*}
Since the corase moduli space of $\mds P(T^\vee\mc X_i)$ is $\mds P^1$, then $\hilb^{1}(\mds P(T^\vee\mc X_i))$ is isomorphic to $\mds P^1$. We complete the proof by applying Lemma \ref{lemm pun hilb}.

\end{proof}

In what follows, we will discuss each case individually.
\subsection{$\wt D_4$-case}\label{subsec ell 2}
Recall that any elliptic curve can be written in Weierstrass form $E_{(a,b)} : y^2=x^3+ax+b$. The involution given by negation in the group law is $\tau : E_{(a,b)}\rightarrow E_{(a,b)}$, $(x,y)\mapsto (x,-y)$, and induces an $\mu_2$-action on $E_{(a,b)}$ with four fixed points. Consider the orbifold curve $\mc X_2=[E_{(a,b)}/\mu_2]$ whose coarse moduli space is $\mds P^1$ with four orbifold points $p_1$, $p_2$, $p_3$, $p_4$. The differential $\omega=dx/2y=dy/(3x^2+a)$ is a globally defined one-form on $E_{(a,b)}$ satisfying $\tau^*\omega=-\omega$. Hence $\mds P(T^\vee\mc X_2\oplus\mc O_{\mc X_2})=[E_{(a,b)}\times\mds P^1/\mu_2]$, where the $\mu_2$-action on $E_{(a,b)}\times\mds P^1$ is $\tau :  (p,[z_0,z_1])\mapsto(\tau(p),[z_0,-z_1])$.
The projections $E\times\mds P^1\rightarrow E_{(a,b)}$ and $E_{(a,b)}\times\mds P^1\rightarrow\mds P^1$ are  $\mu_2$-equivariant, hence they descent to two morphisms

\begin{equation}\label{diag coarse of ell 2}
\begin{tikzcd}[row sep=1.6em, column sep=0.3em]
& X_2 \arrow[dl,swap,"\wt\pr_1"] \arrow[dr,"\wt\pr_2"] &  \\
{\mds P^1\cong E_{(a,b)}/\mu_2} & & {\mds P^1/\mu_2\cong\mds P^1}
\end{tikzcd}
\end{equation}
where $X_2=E_{(a,b)}\times\mds P^1/\mu_2$. Composing these with the minimal resolution $\wt\pi_2 : \wt X_2\rightarrow X_2$ yields two fibrations
\begin{equation}\label{equ fibr for ell 2}
	\begin{tikzcd}[row sep=2em, column sep=2em]
		& \wt X_2 \arrow[dl,swap,"\pi^\prime_2"] \arrow[dr,"\pi_2"] &  \\
\mds P^1 & & \mds P^1
	\end{tikzcd}
\end{equation}

\begin{lemma}\label{lemm H fib for ell 2}
$\pi_2^\prime : \wt X_2\rightarrow\mds P^1$ is a generically $\mds P^1$-fibration, which has exactly four singular fibers over the four orbifold points. More precisely, there exist smooth rational curves $\{D_i\}_{1\leq i\leq 4}$, $\{E_i\}_{1\leq i\leq 4}$, $\{F_i\}_{1\leq i\leq 4}$ such that $\pi_2^{\prime-1}(p_i)=2D_i+E_i+F_i$, whose dual graphs are
\begin{center}
\begin{tikzpicture}[scale=0.5]
		\node[circle, fill=black, inner sep=1.5pt] (A) at (0,2)    {};
		\node[circle, fill=black, inner sep=1.5pt] (C) at (2,2)    {};
		\node[circle, fill=black, inner sep=1.5pt] (D) at (4, 2)   {};
		
		\draw (A)--(C);
		\draw (C)--(D);
		
		\node [below]   at (A)  {$-2$};
		\node [below]   at (C)  {$-1$};
		\node [below]    at (D)  {$-2$};

		\node [above]   at (A)  {$E_i$};
		\node [above]   at (C)  {$D_i$};
		\node [above]    at (D)  {$F_i$};
\end{tikzpicture}
\end{center}
\end{lemma}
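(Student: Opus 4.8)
The plan is to realise $\wt X_2$ concretely as a quotient of a blow-up of $E_{(a,b)}\times\mds P^1$ and then read off the fibres of $\pi_2'$. First I would determine the singularities of $X_2=(E_{(a,b)}\times\mds P^1)/\mu_2$. The involution $\tau$ fixes exactly the four $2$-torsion points of $E_{(a,b)}$, and $[z_0:z_1]\mapsto[z_0:-z_1]$ fixes exactly $[1:0]$ and $[0:1]$ on $\mds P^1$, so the $\mu_2$-action on $E_{(a,b)}\times\mds P^1$ has eight fixed points, at each of which $\mu_2$ acts on the tangent plane by $-\id$; hence $X_2$ has exactly eight rational double points of type $A_1$, two lying over each orbifold point $p_i$. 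Blowing up $E_{(a,b)}\times\mds P^1$ at these eight points gives a smooth surface $\wh S$ carrying the lifted involution $\wt\tau$, which is the identity on each of the eight exceptional $(-1)$-curves and acts by $-1$ on their normal bundles; therefore $\wh S/\wt\tau$ is smooth and $\wh S/\wt\tau\to X_2$ is proper and birational, contracting precisely the eight images of these curves. Comparing intersection numbers along the degree $2$ map $p:\wh S\to\wh S/\wt\tau$ (branched over those eight curves) shows each image is a $(-2)$-curve, so $\wh S/\wt\tau\to X_2$ is the minimal resolution; by Theorem \ref{thm minimal resl} this identifies it with $\wt\pi_2:\wt X_2\to X_2$, and I will work with the double cover $p:\wh S\to\wt X_2$ from now on.

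Next I would analyse $\pi_2'=\wt\pr_1\circ\wt\pi_2$. Over $\mds P^1\setminus\{p_1,\dots,p_4\}$ the surface $X_2$ is smooth and $\wt\pr_1$ is, \'etale-locally, the projection $E_{(a,b)}\times\mds P^1\to E_{(a,b)}$ followed by the \'etale double cover $E_{(a,b)}\to E_{(a,b)}/\mu_2$; hence $\pi_2'$ is smooth there with every fibre isomorphic to $\mds P^1$, so it is a generically $\mds P^1$-fibration whose singular fibres can occur only over the $p_i$. Fix such a $p_i$, the image of a $2$-torsion point $e_i$. The fibre $\{e_i\}\times\mds P^1$ of the first projection meets exactly the two blown-up points $(e_i,[1:0])$ and $(e_i,[0:1])$; let $\wh C_i$ be its strict transform in $\wh S$ and $\wh E_i,\wh F_i$ the two exceptional curves over those points, so that $\wh C_i^2=-2$, $\wh E_i^2=\wh F_i^2=-1$, $\wh C_i\cdot\wh E_i=\wh C_i\cdot\wh F_i=1$ and $\wh E_i\cdot\wh F_i=0$. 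Put $D_i=p(\wh C_i)$, $E_i=p(\wh E_i)$ and $F_i=p(\wh F_i)$. Since $\wt\tau$ restricts to a nontrivial involution of $\wh C_i$ but fixes $\wh E_i$ and $\wh F_i$ pointwise, $p$ is generically \'etale along $\wh C_i$ and ramified along $\wh E_i$ and $\wh F_i$, so $p^*D_i=\wh C_i$, $p^*E_i=2\wh E_i$, $p^*F_i=2\wh F_i$. Comparing intersection numbers through $p$ (degree $2$) then yields $D_i^2=-1$, $E_i^2=F_i^2=-2$, $D_i\cdot E_i=D_i\cdot F_i=1$ and $E_i\cdot F_i=0$, so $D_i$, $E_i$, $F_i$ are smooth rational curves (each a quotient of, or isomorphic to, $\mds P^1$) with the dual graph in the statement.

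Finally I would compute the multiplicities. The composite $\wh S\xrightarrow{p}\wt X_2\xrightarrow{\pi_2'}\mds P^1$ agrees with $\wh S\to E_{(a,b)}\times\mds P^1\to E_{(a,b)}\to E_{(a,b)}/\mu_2\cong\mds P^1$, and $E_{(a,b)}\to E_{(a,b)}/\mu_2$ is ramified at $e_i$ with ramification index $2$. Hence the divisor-theoretic fibre of this composite over $p_i$ equals twice the total transform of $\{e_i\}\times\mds P^1$, namely $2\wh C_i+2\wh E_i+2\wh F_i$. Writing $(\pi_2')^{-1}(p_i)=aD_i+bE_i+cF_i$ and pulling back by $p$ forces $a\wh C_i+2b\wh E_i+2c\wh F_i=2\wh C_i+2\wh E_i+2\wh F_i$, hence $a=2$ and $b=c=1$, i.e.\ $(\pi_2')^{-1}(p_i)=2D_i+E_i+F_i$. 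The one genuinely delicate point is this bookkeeping of ramification: it is what simultaneously produces the coefficient $2$ in front of $D_i$ and the passage from $(-1)$-curves upstairs to $(-2)$-curves downstairs, so I would be careful to record exactly where the double cover $p$ is and is not \'etale.
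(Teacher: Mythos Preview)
Your argument is correct and complete, but it follows a genuinely different route from the paper's. The paper works intrinsically on $\wt X_2$: knowing only that the singularities of $X_2$ on $\wt D_i$ are of type $\tfrac12(1,1)$, it writes $\pi_2'^{-1}(p_i)=2D_i+n_iE_i+m_iF_i$ with unknown multiplicities, then uses the relations $(2D_i+n_iE_i+m_iF_i)\cdot E_i=0$, $(2D_i+n_iE_i+m_iF_i)\cdot F_i=0$ together with adjunction $K_{\wt X_2}\cdot(2D_i+n_iE_i+m_iF_i)=-2$ to deduce $D_i^2=-1$, $K_{\wt X_2}\cdot D_i=-1$ and $n_i=m_i=1$. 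You instead realise $\wt X_2$ explicitly as the $\wt\tau$-quotient of the blow-up $\wh S$ of $E_{(a,b)}\times\mds P^1$ at the eight fixed points, and compute all intersection numbers and multiplicities by pulling back along the double cover $p:\wh S\to\wt X_2$. Your construction has the advantage of being completely explicit and of making the origin of each curve transparent; the paper's approach has the advantage of generalising uniformly to the $\wt E_6,\wt E_7,\wt E_8$ cases, where the cyclic-group quotients have singularities such as $\tfrac13(1,1)$ or $\tfrac14(1,3)$ for which a single blow-up of the fixed locus no longer yields the minimal resolution, so your double-cover trick would require additional work there.
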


\begin{proof}
Suppose that $q_1$, $q_2$, $q_3$, $q_4$ are the fixed points of $\tau$, corresponding respectively to $p_1$, $p_2$, $p_3$, $p_4$. For each $q_i$, we obtain a smooth rational curve $\wt D_i=\{q_i\}\times\mds P^1/\mu_2$ on $X_2$, and 
\begin{equation}\label{equ Divisor for ell 2}
	\wt\pr_1^{-1}(p_i)=2\wt D_i. 
\end{equation}		
The only singular points of $X_2$ lying on $\wt D_i$ are $0$ and $\infty$. Let $E_i$ and $F_i$ be the exceptional curves over the singular points $0$, $\infty$, respectively. Since all the singular points of $X_2$ are of type $\frac{1}{2}(1,1)$, we have $E_i^2=F_i^2=-2$ and $K_{\wt X_2}\cdot E_i=K_{\wt X_2}\cdot F_i=0$. From (\ref{equ Divisor for ell 2}), it follows that $\pi_2^{\prime-1}(p_i)=2D_i+n_iE_i+m_iF_i$ for some natural numbers $n_i$ and $m_i$, where $D_i$ denotes the strict transform of $\wt D_i$ on $\wt X_2$. Intersecting with $E_i$ gives $(2D_i+n_iE_i+m_iF_i)\cdot E_i=0$, and hence $n_i=D_i\cdot E_i$. Analogously, we have $m_i=D_i\cdot F_i$. Hence, we obtain $2D_i^2+n_i^2+m_i^2=0$, so in particular $ D_i^2<0$. By the adjunction formula, $(2D_i+n_iE_i+m_iF_i)\cdot K_{\wt X_2}=-2$, which implies $D_i\cdot K_{\wt X_2}=-1$. Thus $D_i$ is an exceptional curve of first kind, and consequently, $n_i=m_i=1$. 
\end{proof}

\begin{lemma}\label{lemm E fib for ell 2}
The elliptic fibration $\pi_2 : \wt X_2\rightarrow\mds P^1$ is relatively minimal and 	
the singular fibers are the fibers over $0$ and $\infty$, which are of type $\uppercase\expandafter{\romannumeral 1}_0^\ast(\wt D_4)$. 

\end{lemma}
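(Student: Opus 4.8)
The plan is to realise $\pi_2$ as the elliptic fibration descending from the second projection of $E_{(a,b)}\times\mds P^1$, to compute its fibres over $0$ and $\infty$ by pulling back along the minimal resolution $\wt\pi_2$, and then to read off the Kodaira type.

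First I would check that $\pi_2$ is an elliptic fibration whose only reducible fibres sit over $0$ and $\infty$. The coarse quotient $\mds P^1\to\mds P^1/\mu_2$ is the double cover $[z_0,z_1]\mapsto[z_0^2,z_1^2]$, ramified exactly over $0$ and $\infty$. For $t\in\mds P^1/\mu_2$ away from these two points its preimage is a free $\mu_2$-orbit $\{t_1,t_2\}$, and the fibre of $E_{(a,b)}\times\mds P^1$ over it is $E_{(a,b)}\sqcup E_{(a,b)}$, with the nontrivial element of $\mu_2$ carrying the first copy to the second via $\tau$; hence the quotient fibre is $\cong E_{(a,b)}$. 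All eight singular points of $X_2$ — the images of the fixed points $(q_i,0)$ and $(q_i,\infty)$ of $\tau$ — lie over $0$ and $\infty$, so $\wt\pi_2$ is an isomorphism over such a $t$, and $\pi_2^{-1}(t)\cong E_{(a,b)}$ is a smooth curve of genus one. Thus $\pi_2$ is elliptic and $0,\infty$ carry its only singular fibres.

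Next I would compute $\pi_2^{-1}(0)$, the fibre over $\infty$ following by symmetry. Put $C_0:=(E_{(a,b)}\times\{0\})/\mu_2\cong E_{(a,b)}/\tau\cong\mds P^1$; it passes through the four singular points of $X_2$ lying over $0$, namely the images $r_1,\dots,r_4$ of the fixed points $(q_i,0)$. Since $\mds P^1\to\mds P^1/\mu_2$ is ramified to order two over $0$, while the degree-two quotient $q\colon E_{(a,b)}\times\mds P^1\to X_2$ is \'etale along a general point of $E_{(a,b)}\times\{0\}$ and pulls $C_0$ back with multiplicity one, one gets $q^*\bigl(\wt\pr_2^{-1}(0)\bigr)=2\,(E_{(a,b)}\times\{0\})=q^*(2C_0)$, hence $\wt\pr_2^{-1}(0)=2C_0$ as Cartier divisors on $X_2$. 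Write $D_0$ for the strict transform of $C_0$ under $\wt\pi_2$ and $E_1,\dots,E_4$ for the exceptional $(-2)$-curves over $r_1,\dots,r_4$ (these are the curves $E_i$ of Lemma~\ref{lemm H fib for ell 2}). By the projection formula, $\wt\pi_2^*(2C_0)=2D_0+\sum_i c_iE_i$ with $c_i=D_0\cdot E_i$, using $E_i^2=-2$ and $E_i\cdot E_j=0$ for $i\neq j$. By Lemma~\ref{lemm local str of DM}, near $r_i$ the coarse space $X_2$ is the $A_1$-singularity $\spec(\mbb C[\![x,y]\!]^{\mu_2})$ with $\mu_2$ acting on $\mbb C[\![x,y]\!]$ by $(x,y)\mapsto(-x,-y)$, and $C_0$, $\wt D_i$ are the images of the coordinate axes $\{y=0\}$, $\{x=0\}$; these meet transversally upstairs, so after the minimal resolution $D_0$ and the strict transform $D_i$ of $\wt D_i$ are disjoint and each meets $E_i$ transversally in one point. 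Hence $D_0\cdot E_i=1$, and
\[
\pi_2^{-1}(0)=\wt\pi_2^*\bigl(\wt\pr_2^{-1}(0)\bigr)=2D_0+E_1+E_2+E_3+E_4 .
\]

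Finally, a fibre is numerically equivalent to a general fibre, which is disjoint from $D_0$, so $\pi_2^{-1}(0)\cdot D_0=0$, giving $2D_0^2+4=0$, i.e.\ $D_0^2=-2$; and $D_0\cong\mds P^1$ since $C_0$ is smooth rational. Therefore $\pi_2^{-1}(0)$ is a union of five smooth rational $(-2)$-curves whose dual graph is $\wt D_4$ — the central curve $D_0$ meeting each of the four leaves $E_i$ in one point — with multiplicities $(2;1,1,1,1)$; by Kodaira's classification of singular fibres this is precisely a fibre of type $\mathrm{I}_0^*(\wt D_4)$, and symmetrically $\pi_2^{-1}(\infty)=2D_\infty+F_1+F_2+F_3+F_4$ is of the same type. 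Since every component of every reducible fibre has self-intersection $-2$, no fibre contains a $(-1)$-curve, so $\pi_2$ is relatively minimal, completing the proof. The only point requiring genuine care is the multiplicity bookkeeping through the ramified double cover and the resolution — namely that $C_0$ meets each $E_i$ exactly once, equivalently $\wt\pi_2^*C_0=D_0+\tfrac12\sum_iE_i$ — since that is exactly what produces the non-reduced Kodaira fibre $\mathrm{I}_0^*$ rather than a reduced configuration of rational curves.
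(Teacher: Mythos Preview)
Your proof is correct and follows the same overall strategy as the paper: identify $\wt\pr_2^{-1}(0)=2\wt D_0$ for a smooth rational curve $\wt D_0$ through the four $A_1$-points, write $\pi_2^{-1}(0)=2D_0+\sum_i c_iE_i$, determine the $c_i$, and read off the Kodaira type. The one substantive difference is in how you pin down $D_0\cdot E_i=1$. The paper argues numerically: from $(2D_0+\sum\wt n_jE_j)\cdot E_i=0$ it gets $\wt n_i=D_0\cdot E_i$, then uses adjunction $K_{\wt X_2}\cdot(\text{fibre})=0$ to obtain $K_{\wt X_2}\cdot D_0=0$, whence the genus formula forces $D_0^2=-2$ and $\sum\wt n_i^2=4$, so each $\wt n_i=1$. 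You instead compute $D_0\cdot E_i=1$ directly from the local picture of the $A_1$-resolution, where the strict transforms of the two coordinate axes each meet the exceptional curve once. Your route is slightly more geometric and avoids the adjunction step; the paper's route is purely intersection-theoretic and generalises verbatim to the $\wt E_6,\wt E_7,\wt E_8$ cases treated later (where the local singularities are more complicated and a direct local computation would be less convenient). You are also more explicit than the paper about why the generic fibre is a smooth elliptic curve and why relative minimality follows from all fibre components being $(-2)$-curves; the paper leaves these points implicit.
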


\begin{proof}
Note that $\wt\pr_2^{-1}(0)=2\wt D_0$, where $\wt D_0=E_{(a,b)}\times\{0\}/\mu_2$ is a smooth rational curve on $X_2$ containing four singular points of type $\frac{1}{2}(1,1)$. Hence $\pi_2^{-1}(0)=2D_0+\sum_{i=1}^4\wt n_iE_i$ with $D_0$ the strict transformation of $\wt D_0$ in $\wt X_2$ and $n_i\in\mbb N$. For each $E_i$, $(2D_0+\sum_{j=1}^{4}\wt n_jE_j)\cdot E_i=0$ gives $\wt n_i=D_0\cdot E_i$. Consequently, $2D_0^2+\sum_{i=1}^4\wt n_i^2=0$ and $D_0^2\leq -2$. By the adjunction formula, $K_{\wt X_2}\cdot (2D_0+\sum_{j=1}^4\wt n_jE_j)=0$, and then  $K_{\wt X_2}\cdot D_0=0$. Since $\frac{1}{2}(D_0\cdot D_0+K_{\wt X_2}\cdot D_0)+1$ is a nonnegative integer, we obtain $D_0^2=-2$ and $\wt n_1=\wt n_2=\wt n_3=\wt n_4=-1$. Analogously, we can show that $D_\infty$ satisfies similar equations.
\end{proof}

For the second Hirzebruch surface $h_2 : H_2=\mds P(\mc O(2)\oplus\mc O)\rightarrow\mds P^1$, the zero-section $E_0$ of $H_2$ is the image of the section $(0,1)$ of $\mc O(2)\oplus\mc O$. Moreover, $\mc O(2)\subset\mc O(2)\oplus\mc O$ determines a smooth rational curve $\mds P(\mc O(2))$ on $H_2$, which we denoted by $E_\infty$. Let $C_1$, $C_2$, $C_3$, $C_4$ be the fibers of $h_2$ over the points $p_1$, $p_2$, $p_3$, $p_4$, respectively. First, we blow up successively the points $E_0\cap C_1$, $E_0\cap C_2$, $E_0\cap C_3$, $E_0\cap C_4$, which yields a new surface and a morphism $h_2^{(1)} : H_2^{(1)}\rightarrow\mds P^1$. The resulting exceptional divisors are denoted by $E_1^{(1)}$, $E_2^{(1)}$, $E_3^{(1)}$, $E_4^{(1)}$. Next, let $\wt C_1$, $\wt C_2$, $\wt C_3$, $\wt C_4$ denote the strict transforms of $C_1$, $C_2$, $C_3$, $C_4$. Blowing up the four points $\wt C_1\cap E_1^{(1)}$, $\wt C_2\cap E_2^{(1)}$, $\wt C_3\cap E_3^{(1)}$, $\wt C_4\cap E_4^{(1)}$, we obtain a new surface $h_2^{(2)} : H_2^{(2)}\rightarrow\mds P^1$.

\begin{proposition}\label{pro stru of ell 2} 
$\pi_2^\prime : \wt X_2\rightarrow\mds P^1$ is isomorphic to $h_2^{(2)} : H_2^{(2)}\rightarrow\mds P^1$.
\end{proposition}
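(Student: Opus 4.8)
The plan is to exhibit $\wt X_2$ as exactly the iterated blow-up of $H_2$ that defines $H_2^{(2)}$, by running that construction in reverse, and then check that the two $\mds P^1$-fibrations agree under the resulting isomorphism. To begin, I would assemble, from Lemmas~\ref{lemm H fib for ell 2} and \ref{lemm E fib for ell 2} together with the local model of the $A_1$-resolution $\wt\pi_2$, the incidence data of the rational curves attached to $\pi_2'$. The curves $D_0,D_\infty$ are sections of $\pi_2'$: they are the strict transforms of $\wt D_0=E_{(a,b)}\times\{0\}/\mu_2$ and $\wt D_\infty=E_{(a,b)}\times\{\infty\}/\mu_2$, which $\wt\pr_1$ carries isomorphically onto $\mds P^1$, and $D_0^2=D_\infty^2=-2$. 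Over each orbifold point $p_i$ the fibre $\pi_2'^{-1}(p_i)=2D_i+E_i+F_i$ together with the two sections forms a chain $D_0-E_i-D_i-F_i-D_\infty$, i.e.\ $D_0\cdot E_i=E_i\cdot D_i=D_i\cdot F_i=F_i\cdot D_\infty=1$ with all other pairwise intersections among these curves zero, and $D_i^2=-1$, $E_i^2=F_i^2=-2$. The key incidences $D_0\cdot E_i=1=D_\infty\cdot F_i$ and $D_0\cdot F_i=D_\infty\cdot E_i=0$ come from the observation that the four $A_1$-points of $X_2$ over $z=0$ (whose exceptional curves are the $E_i$) lie on $\wt D_0$ but not on $\wt D_\infty$, and symmetrically for $z=\infty$, since resolving an $A_1$-point separates the two branches through it.

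I would then contract fibre components in two rounds. Contracting the four pairwise disjoint $(-1)$-curves $D_1,\dots,D_4$ gives $\phi_1\colon\wt X_2\to S_1$, after which each $E_i$ and $F_i$ is a $(-1)$-curve and the fibre over $p_i$ is $E_i+F_i$; contracting the four pairwise disjoint $(-1)$-curves $E_1,\dots,E_4$ gives $\phi_2\colon S_1\to S$. Since $\pi_2'$ has singular fibres only over $p_1,\dots,p_4$ (Lemma~\ref{lemm H fib for ell 2}), the induced morphism $S\to\mds P^1$ is a relatively minimal ruled surface, hence a $\mds P^1$-bundle. The image of $D_\infty$ is a section of self-intersection $-2$ (it meets none of the eight contracted curves), which forces $S\cong H_2=\mds P(\mc O(2)\oplus\mc O)$ with fibration $h_2$, the image of $D_\infty$ being the negative section $E_\infty=\mds P(\mc O(2))$ and the image of $D_0$ the section $E_0=\mds P(\mc O)$ of self-intersection $+2$ (each contraction of an $E_i$ raises $D_0^2$ by one).

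Finally I would read $\phi_2\circ\phi_1$ in reverse. The morphism $\phi_2\colon S_1\to H_2$ contracts $E_1,\dots,E_4$ to four points $\bar E_i$; as $E_i$ meets the image of $D_0$ and lies over $p_i$, one has $\bar E_i=E_0\cap C_i$, so $S_1\cong H_2^{(1)}$ with $E_i$ identified with the exceptional curve $E_i^{(1)}$. The morphism $\phi_1\colon\wt X_2\to S_1$ contracts $D_1,\dots,D_4$ to four points $\bar D_i$; in $S_1$ the curve $F_i$ is the unique $(-1)$-curve in the fibre over $p_i$ meeting $E_\infty$, hence is the strict transform $\wt C_i$ of $C_i$, and since $D_i$ meets $E_i^{(1)}$ and $\wt C_i$ we get $\bar D_i=\wt C_i\cap E_i^{(1)}$. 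Therefore $\wt X_2\cong H_2^{(2)}$, and because every contracted curve is a component of a fibre of $\pi_2'$ this isomorphism is over $\mds P^1$, giving $\pi_2'\cong h_2^{(2)}$.

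I expect the main obstacle to be the first step: pinning down the exact configuration of the fourteen curves, in particular which section meets $E_i$ and which meets $F_i$, since it is precisely this configuration that distinguishes $H_2$ from the other even Hirzebruch surfaces after the eight blow-downs. Once the chain $D_0-E_i-D_i-F_i-D_\infty$ is established, the self-intersection bookkeeping through the contractions and the recognition of the minimal model as $H_2$ (a section of self-intersection $-2$ forces the Hirzebruch number to equal $2$) are routine.
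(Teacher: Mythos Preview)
Your proposal is correct and follows essentially the same approach as the paper: blow down the four $(-1)$-curves $D_i$, then four more $(-1)$-curves in the resulting reducible fibres, recognise the minimal model as $H_2$ via a section of self-intersection $-2$, and read the process in reverse. The only cosmetic difference is that the paper contracts the $F_i^{(1)}$'s in the second round (tracking $D_0$ with $D_0^2=-2$) while you contract the $E_i$'s (tracking $D_\infty$); the two choices are interchanged by the evident symmetry swapping $0\leftrightarrow\infty$, and your choice has the minor advantage that it matches the definition of $H_2^{(2)}$ without needing that symmetry, which is why your reverse identification is more explicit than the paper's.
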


\begin{proof} 
According to Lemmas \ref{lemm H fib for ell 2} and \ref{lemm E fib for ell 2}, the surface $\wt X_2$ is illustrate in Figure \ref{fig ell 2 1}. Note that $D_1,D_2,D_3,D_4$ are exceptional curves of the first kind. 
\begin{figure}[H]
	\centering
	\begin{tikzpicture}[scale=0.85]
		\coordinate (A) at  (0,0);
		\coordinate (B) at  (6,0);
		\coordinate (C) at  (0,3);
		\coordinate (D) at  (6,3);
		\coordinate (E) at  (2.2,-0.3);
		\coordinate (E1) at (0.8,1.2);
		\coordinate (F) at  (1,0.5);
		\coordinate (F1) at (1,2.5);
		\coordinate (G) at  (0.8,1.8);
		\coordinate (G1) at  (2.3,3.3);
		\draw [thick] (A)--(B)
		node[pos=0,left]{$D_0$};
		\draw [thick] (C)--(D)
		node[pos=0,left]{$D_\infty$};
		\draw [thick] (E)--(E1)
		node[pos=0,left]{$E_1$};
		\draw [thick] (F)--(F1)
		node[midway,left]{$D_1$};
		\draw [thick] (G)--(G1)
		node[pos=1,left]{$F_1$};
		\coordinate (E2) at  (3.2,-0.3);
		\coordinate (E3) at (1.8,1.2);
		\coordinate (F2) at  (2,0.5);
		\coordinate (F3) at (2,2.5);
		\coordinate (G2) at  (1.8,1.8);
		\coordinate (G3) at  (3.3,3.3);
		\draw [thick] (E2)--(E3)
		node[pos=0,left]{$E_2$};
		\draw [thick] (F2)--(F3)
		node[midway,left]{$D_2$};
		\draw [thick] (G2)--(G3)
		node[pos=1,left]{$F_2$};
		\coordinate (E4) at  (4.2,-0.3);
		\coordinate (E5) at (2.8,1.2);
		\coordinate (F4) at  (3,0.5);
		\coordinate (F5) at (3,2.5);
		\coordinate (G4) at  (2.8,1.8);
		\coordinate (G5) at  (4.3,3.3);
		\draw [thick] (E4)--(E5)
		node[pos=0,left]{$E_3$};
		\draw [thick] (F4)--(F5)
		node[midway,left]{$D_3$};
		\draw [thick] (G4)--(G5)
		node[pos=1,left]{$F_3$};
		
		\coordinate (E6) at  (5.2,-0.3);
		\coordinate (E7) at (3.8,1.2);
		\coordinate (F6) at  (4,0.5);
		\coordinate (F7) at (4,2.5);
		\coordinate (G6) at  (3.8,1.8);
		\coordinate (G7) at  (5.3,3.3);
		\draw [thick] (E6)--(E7)
		node[pos=0,left]{$E_4$};
		\draw [thick] (F6)--(F7)
		node[midway,left]{$D_4$};
		\draw [thick] (G6)--(G7)
		node[pos=1,left]{$F_4$};
	\end{tikzpicture}
	\caption{Configuration of curves on $\wt X_2$}
	\label{fig ell 2 1}
\end{figure}
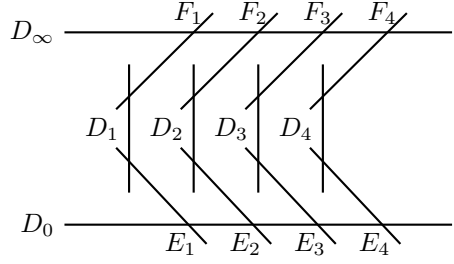
Successively blowing down $D_1$, $D_2$, $D_3$, $D_4$ yields a generically $\mds P^1$-fibration $\pi_2^{\prime(1)} : \wt X_2^{(1)}\rightarrow\mds P^1$ with singular fibers $\pi_2^{\prime(1)-1}(p_i)=E_i^{(1)}+F_i^{(1)}$, where $E_i^{(1)}$ (resp. $F_i^{(1)}$) denote the birational transforms of $E_i$ (resp. $F_i$). Both $E_i^{(1)}$ and $F_i^{(1)}$ are exceptional curves of the first kind. By further blowing down $ F_1^{(1)}$, $F_2^{(1)}$, $F_3^{(1)}$, $F_4^{(1)}$, we obtain a $\mds P^1$-fibration $\pi_2^{\prime(2)} : \wt X_2^{(2)}\rightarrow\mds P^1$, which is a Hirzebruch surface $H_n$ for some $n$. Since $D_0^2=-2$, we deduce that $\wt X_2^{(2)}\cong H_2$ (see \cite[Page.519]{gh94}).
\end{proof}

\subsection{$\wt E_6$-case}\label{subsec ell 3}
For the elliptic curve $E_{(0,1)} : y^2=x^3+1$, the automorphism $\sigma_3 : (x,y)\mapsto (e^{2\pi\sqrt{-1}/3}x,y)$ defines a $\mu_2$-action. For the orbifold curve $\mc X_3=[E_{(0,1)}/\mu_3]$, since $\sigma_3^*\omega=e^{2\pi\sqrt{-1}/3}\omega$, we have $\mds P(T^\vee\mc X_3\oplus\mc O_{\mc X_3})=[E_{(0,1)}\times\mds P^1/\mu_3]$, where the $\mu_3$-action is $(p,[z_0,z_1])\mapsto (\sigma_3(p),[z_0,e^{4\pi\sqrt{-1}/3}z_1])$. As in the $\wt D_4$-case, we have two diagrams:
\begin{equation*}
\begin{minipage}{0.45\textwidth}
\centering
\begin{tikzcd}[row sep=2em, column sep=1em]
& X_3 \arrow[dl, "\wt\pr_1"'] \arrow[dr, "\wt\pr_2 "]&  \\
\mds P^1 & & \mds P^1
\end{tikzcd}
\end{minipage}
\hfill
\begin{minipage}{0.45\textwidth}
\centering
\begin{tikzcd}[row sep=2em, column sep=1em]
	& \wt X_3 \arrow[dl, "\pi^\prime_3"'] \arrow[dr, "\pi_3"]&  \\
	\mds P^1 & & \mds P^1
\end{tikzcd}
\end{minipage}
\end{equation*}
where $X_3=E_{(0,1)}\times\mds P^1/\mu_3$ and $\wt X_3$ is its minimal resolution.

\begin{lemma}\label{lemm H fib for ell 3}
$\pi_3^\prime : \wt X_3\rightarrow\mds P^1$ is a generically $\mds P^1$-fibration, whose singular fibers are the ones over the orbifold points $p_1$, $p_2$, $p_3$ of $\mc X_3$. More precisely, there exist smooth rational curves $\{D_i\}_{1\leq i\leq 3}$, $\{E_{ij}\}_{1\leq i\leq 2, 1\leq j\leq 3}$, $\{F_k\}_{1\leq k\leq 3}$ such that $\pi_3^{\prime-1}(p_l)=3D_l+E_{1l}+2E_{2l}+F_l$, whose dual graphs are
\begin{center}
\begin{tikzpicture}[scale=0.5]
		\node[circle, fill=black, inner sep=1.5pt] (A) at (0,0)  {};
		\node[circle, fill=black, inner sep=1.5pt] (B) at (2,0)  {};
		\node[circle, fill=black, inner sep=1.5pt] (C) at (4,0)  {};
		\node[circle, fill=black, inner sep=1.5pt] (D) at (6,0)  {};
		
		
		\draw[thick] (A)--(B)--(C)--(D);
		
		\node [below]    at (A)  {$-2$};
		\node [below]    at (B)  {$-2$};
		\node [below]    at (C)  {$-1$};
		\node [below]    at (D)  {$-3$};
		
		\node [above]    at (A)  {$E_{1l}$};
		\node [above]    at (B)  {$E_{2l}$};
		\node [above]    at (C)  {$D_l$};
	    \node [above]    at (D)   {$F_l$};
\end{tikzpicture}
\end{center} 
\end{lemma}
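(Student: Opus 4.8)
The plan is to follow closely the argument of Lemma~\ref{lemm H fib for ell 2}, adapting the $\mu_2$-bookkeeping to $\mu_3$ and being careful that the two quotient singularities now lying on each $\wt D_l$ are of \emph{different} types.

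First I would set up the global picture: $\sigma_3$ has exactly three fixed points $q_1=(0,1)$, $q_2=(0,-1)$, $q_3$ (the point at infinity) on $E_{(0,1)}$, mapping to $p_1,p_2,p_3$; since the $\mu_3$-action on the $\mds P^1$-fibre of $\mds P(T^\vee\mc X_3\oplus\mc O_{\mc X_3})$ fixes only $0$ and $\infty$, each $\wt D_l:=\{q_l\}\times\mds P^1/\mu_3$ is a smooth rational curve on $X_3$ with $\wt\pr_1^{-1}(p_l)=3\wt D_l$ (the coefficient $3$ arising from total ramification of $E\to E/\mu_3$ over $p_l$, together with $E\times\mds P^1\to X_3$ being étale in codimension one). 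Over any point of $\mds P^1\setminus\{p_1,p_2,p_3\}$ the fibre of $\wt\pr_1$ is the quotient of a free three-point orbit times $\mds P^1$, hence a smooth $\mds P^1$, and $X_3$ is smooth there; so $\pi_3'$ is generically a $\mds P^1$-fibration whose only singular fibres lie over $p_1,p_2,p_3$.

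Next comes the local analysis along $\wt D_l$. Reading the weights off the explicit $\mu_3$-action — using $\sigma_3^*\omega=e^{2\pi\sqrt{-1}/3}\omega$ to see that $\mu_3$ acts on the cotangent line of $E$ at $q_l$ with weight $1$, and the fibre action $[z_0,z_1]\mapsto[z_0,e^{4\pi\sqrt{-1}/3}z_1]$ to get fibre cotangent weights $2$ at $0$ and $1$ at $\infty$ — Lemma~\ref{lemm local str of DM} identifies $X_3$ near the point over $0$ with the $A_2$-singularity $\frac13(1,2)$ and near the point over $\infty$ with $\frac13(1,1)$. Hence on the minimal resolution $\wt\pi_3:\wt X_3\to X_3$ the exceptional locus over $0$ is a chain of two $(-2)$-curves and the one over $\infty$ is a single $(-3)$-curve $F_l$; moreover, since near each point $\wt D_l$ is the image of a toric coordinate axis, its strict transform $D_l$ is smooth rational, meets $F_l$ transversally in one point, and meets exactly one of the two $(-2)$-curves — which we label $E_{2l}$, the other being $E_{1l}$ — transversally in one point, with $D_l\cdot E_{1l}=0$. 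I would then write $\pi_3^{\prime-1}(p_l)=3D_l+n_lE_{1l}+m_lE_{2l}+k_lF_l$ and intersect successively with $F_l$, $E_{1l}$, $E_{2l}$ (using $F_l^2=-3$, $E_{il}^2=-2$, $E_{1l}\cdot E_{2l}=1$, and that a fibre has zero intersection with each of its components): this yields $k_l=1$, $m_l=2n_l$ and $3+n_l-2m_l=0$, hence $(n_l,m_l,k_l)=(1,2,1)$. Finally, a general fibre being $\mds P^1$ gives $K_{\wt X_3}\cdot\pi_3^{\prime-1}(p_l)=-2$, and with $K_{\wt X_3}\cdot E_{il}=0$ and $K_{\wt X_3}\cdot F_l=1$ (adjunction on a $(-2)$- and a $(-3)$-curve) this forces $3\,K_{\wt X_3}\cdot D_l+1=-2$, so $K_{\wt X_3}\cdot D_l=-1$ and $D_l^2=-1$; the resulting intersection numbers are exactly the asserted dual graph.

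The step demanding the most care — and which has no counterpart in the $\wt D_4$-case, where every singular point is an $A_1$-point carrying a single exceptional curve — is the claim that the strict transform of $\wt D_l$ meets the $A_2$-chain in precisely one of its two $(-2)$-curves, transversally. I would justify this by identifying $\wt D_l$, near the point over $0$, with the toric boundary divisor of the chart $[\spec\mbb C[\![x,y]\!]/\mu_3]$ cut out by a single coordinate, so that its strict transform in the toric (hence minimal) resolution of the $A_2$-singularity meets only the adjacent exceptional curve; after that, the computation is the same intersection-theoretic bookkeeping as in Lemma~\ref{lemm H fib for ell 2}.
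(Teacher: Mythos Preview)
Your argument is correct and follows the same template as the paper's proof: identify $\wt\pr_1^{-1}(p_l)=3\wt D_l$, read off the singularity types $\tfrac13(1,2)$ at $0$ and $\tfrac13(1,1)$ at $\infty$, write the resolved fibre as $3D_l+n_lE_{1l}+m_lE_{2l}+k_lF_l$, and solve via fibre-component intersections and adjunction. The one place you diverge is the step you flag as ``demanding the most care'': you invoke the toric picture of the $A_2$ resolution to see in advance that $D_l\cdot E_{2l}=1$ and $D_l\cdot E_{1l}=0$, and then the linear system forces $(n_l,m_l,k_l)=(1,2,1)$ uniquely. The paper instead leaves $D_i\cdot E_{1i}$ and $D_i\cdot E_{2i}$ unknown, first uses adjunction to show $D_i$ is a $(-1)$-curve with $m_i=1$, and then reduces to the Diophantine equation $n_{1i}^2+n_{2i}^2-n_{1i}n_{2i}=3$, whose only positive solutions $(1,2)$ and $(2,1)$ differ by relabelling the two $(-2)$-curves. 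So your toric step is sound but not strictly necessary: the paper's purely intersection-theoretic route is a little shorter at the cost of the harmless ``up to order'' at the end, while yours fixes the labelling canonically.
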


\begin{proof}
Following the proof of Lemma \ref{lemm H fib for ell 2}, we can show that there exists a smooth rational curve $\wt D_i$ on $X_3$ such that $\wt\pr_1^{-1}(p_i)=3\wt D_i$. The singularities of $X_3$ on $\wt D_i$ are of type $\frac{1}{3}(1,2)$ at $0$ and of type $\frac{1}{3}(1,1)$ at $\infty$. The corresponding exceptional curves are $E_{1i}\cup E_{2i}$ and $F_i$, where $E_{1i}$, $E_{2i}$ and $F_i$ are smooth rational curves satisfying $E_{1i}^2=E_{2i}^2=-2$, $E_{1i}\cdot E_{2i}=1$ and $F_i^2=-3$ (see \cite[Satz 8]{rie77}). Thus $\pi_3^{-1}(p_i)=3D_i+n_{1i}E_{1i}+n_{2i}E_{2i}+m_iF_i$ with integers $n_{1i}$, $n_{2i}$, $m_i$. Intersecting with $E_{1i}$, $E_{2i}$, $F_i$ yields $3D_i\cdot E_{1i}-2n_{1i}+n_{2i}=0$, $3D_i\cdot E_{2i}-2n_{2i}+n_{1i}=0$ and $3D_i\cdot F_i-3m_i=0$. Hence, $9D_i^2+n_{1i}(2n_{1i}-n_{2i})+n_{2i}(2n_{2i}-n_{1i})+3m_i^2=0$, so $D_i^2<0$. By the adjunction formula, $K_{\wt X_3}\cdot(3D_i+n_{1i}E_{1i}+n_{2i}E_{2i}+m_iF_i)=-2$, which implies $3K_{\wt X_3}\cdot D_i=-2-m_i$. Thus, $D_i$ is an exceptional curve of the first kind, with $D_i\cdot D_i=K_{\wt X_3}\cdot D_i=-1$ and  $m_i=1$. Finally, $n_{1i}^2+n_{2i}^2-n_{1i}n_{2i}=3$, so $(n_{1i},n_{2i})$ is $(1,2)$ or $(2,1)$. Up to order, we take $n_{1i}=1$ and $n_{2i}=2$.
\end{proof}

\begin{lemma}\label{lemm E fib for ell 3}
The elliptic fibration $\pi_3 : \wt X_3\rightarrow\mds P^1$ has only two singular fibers, which are the ones over $0$ and $\infty$. Specifically, there exist smooth rational curves $D_0$, $D_\infty $ such that
\begin{enumerate}[$(1)$]
\item $\pi_3^{-1}(0)=3D_0+2E_{11}+E_{21}+2E_{12}+E_{22}+2E_{13}+E_{23}$ which is of type $\uppercase\expandafter{\romannumeral 4}^*$ $(\wt E_6)$.
\item $\pi_3^{-1}(\infty)=3D_\infty+F_1+F_2+F_3$ whose dual graph is

\begin{center}
\begin{tikzpicture}[scale=0.5]
\node[circle, fill=black, inner sep=1.5pt] (A) at (0,2)    {};
\node[circle, fill=black, inner sep=1.5pt] (B) at (-2,2)   {};
\node[circle, fill=black, inner sep=1.5pt] (C) at (2,2)    {};
\node[circle, fill=black, inner sep=1.5pt] (D) at (0, 4)   {};
		
\draw [thick] (A)--(B);
\draw [thick](C)--(A);
\draw [thick](A)--(D);
		
\node [below]   at (A)  {$-1$};
\node [below]   at (B)  {$-2$};
\node [below]   at (C)  {$-2$};
\node [left]    at (D)  {$-2$};

\node [above left]  at (A)  {$D_\infty$};
\node [left]        at (B)  {$F_1$};
\node [right]       at (C)  {$F_2$};
\node [right]       at (D)  {$F_3$};
\end{tikzpicture}
\end{center}
\end{enumerate} 

Furthermore, by blowing down $D_\infty$, we obtain a relatively minimal elliptic surface, the singular fiber of which at $\infty$ is of type $\uppercase\expandafter{\romannumeral 4}$ $(\wt A_2)$.
\end{lemma}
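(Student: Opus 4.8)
The plan is to follow the proof of Lemma~\ref{lemm E fib for ell 2} verbatim in structure, replacing the $\frac12(1,1)$-points of $X_2$ by the $\frac13(1,2)$- and $\frac13(1,1)$-points of $X_3$. As in the $\wt D_4$-case, one first checks that $\wt\pr_2^{-1}(0)=3\wt D_0$ and $\wt\pr_2^{-1}(\infty)=3\wt D_\infty$ on $X_3$, where $\wt D_0=E_{(0,1)}\times\{0\}/\mu_3$ and $\wt D_\infty=E_{(0,1)}\times\{\infty\}/\mu_3$; each is a smooth rational curve, being isomorphic to $E_{(0,1)}/\mu_3\cong\mds P^1$. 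The singular points of $X_3$ lying on $\wt D_0$ are the three $\frac13(1,2)$-points $(q_i,0)$, where $q_1,q_2,q_3$ are the fixed points of $\sigma_3$, and those on $\wt D_\infty$ are the three $\frac13(1,1)$-points $(q_i,\infty)$; by Lemma~\ref{lemm H fib for ell 3} their exceptional loci in $\wt X_3$ are the $A_2$-chains $E_{1i}\cup E_{2i}$ and the single $(-3)$-curves $F_i$, respectively. Since $\mds P^1\to\mds P^1/\mu_3$ is unramified away from $0$ and $\infty$, the fibration $\pi_3$ is smooth over $\mds P^1\setminus\{0,\infty\}$, so these two points carry the only singular fibers. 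All computations rest, as in Lemma~\ref{lemm E fib for ell 2}, on two facts: a fiber of $\pi_3$ has zero intersection with each of its components, and a smooth rational curve $C$ satisfies $C^2+K_{\wt X_3}\cdot C=-2$.

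For the fiber over $0$ I would let $D_0$ be the strict transform of $\wt D_0$ (again smooth rational) and write $\pi_3^{-1}(0)=3D_0+\sum_{i=1}^3(m_iE_{1i}+n_iE_{2i})$ with $m_i,n_i\geq1$. The one piece of genuinely geometric input is which component of the chain $E_{1i}\cup E_{2i}$ the curve $D_0$ meets: since $\wt D_0$ and $\wt D_i$ are the two analytic branches through the node $(q_i,0)$, their strict transforms meet the two opposite ends of the exceptional chain, and as $D_i$ meets $E_{2i}$ (Lemma~\ref{lemm H fib for ell 3}), the curve $D_0$ meets $E_{1i}$ transversally and is disjoint from $E_{2i}$. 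Intersecting $\pi_3^{-1}(0)$ with $E_{1i}$ and with $E_{2i}$ then gives $3-2m_i+n_i=0$ and $m_i-2n_i=0$, so $m_i=2$, $n_i=1$, i.e.
\begin{equation*}
\pi_3^{-1}(0)=3D_0+2E_{11}+E_{21}+2E_{12}+E_{22}+2E_{13}+E_{23}.
\end{equation*}
Intersecting this with $D_0$ yields $3D_0^2+6=0$, hence $D_0^2=-2$ and, by adjunction, $K_{\wt X_3}\cdot D_0=0$. The dual graph is the star with centre $D_0$ and three arms $E_{1i}\cup E_{2i}$, which is the affine diagram $\wt E_6$; so this fiber is of Kodaira type $\mathrm{IV}^*$.

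For the fiber over $\infty$ I would set $\pi_3^{-1}(\infty)=3D_\infty+\sum_{i=1}^3 c_iF_i$ with $D_\infty$ the strict transform of $\wt D_\infty$ and $c_i\geq1$. Here $\wt D_\infty$ is a line through the vertex of the $\frac13(1,1)$-cone at $(q_i,\infty)$, so its strict transform meets the exceptional $(-3)$-curve $F_i$ transversally in one point, $D_\infty\cdot F_i=1$; intersecting $\pi_3^{-1}(\infty)$ with $F_i$ gives $3-3c_i=0$, so $c_i=1$ and $\pi_3^{-1}(\infty)=3D_\infty+F_1+F_2+F_3$. Intersecting with $D_\infty$ gives $3D_\infty^2+3=0$, so $D_\infty^2=-1$ and, by adjunction, $K_{\wt X_3}\cdot D_\infty=-1$: thus $D_\infty$ is a $(-1)$-curve and the dual graph is the one displayed. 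Finally I would contract $D_\infty$, $\beta:\wt X_3\to\wt X_3'$: since $D_\infty\cdot F_i=1$ transversally, $\beta^*F_i'=F_i+D_\infty$ for the images $F_i'=\beta(F_i)$, whence $(F_i')^2=(F_i+D_\infty)^2=-2$ and $F_i'\cdot F_j'=1$ for $i\neq j$, with all three $F_i'$ passing through $\beta(D_\infty)$. That is exactly the type $\mathrm{IV}$ ($\wt A_2$) fiber; and since the fiber over $0$ consists of $(-2)$-curves and all other fibers are smooth elliptic, $\wt X_3'$ contains no $(-1)$-curve in a fiber and is therefore relatively minimal.

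The only step that is not pure bookkeeping is pinning down which $(-2)$-curve of the $A_2$-chain $D_0$ meets (and symmetrically which curves $D_\infty$ and the fiber components over $0$ meet); for this I would appeal to the explicit local model of the minimal resolution of a cyclic quotient singularity, as already used in Lemma~\ref{lemm H fib for ell 3} (cf.\ \cite[Satz 8]{rie77}): the strict transforms of the two branches of a node of an $A_2$-surface singularity meet the two distinct ends of the exceptional chain. Everything else is solving $2\times2$ linear systems and applying the adjunction formula, and is identical in form to the $\wt D_4$-case.
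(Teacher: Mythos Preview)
Your proof is correct and follows the same overall scheme as the paper's, but the one substantive step---pinning down which end of the $A_2$-chain $E_{1i}\cup E_{2i}$ the curve $D_0$ meets---is handled differently. You appeal directly to the local toric picture of the minimal resolution of $\frac13(1,2)$: the two coordinate branches through the node separate to the two ends of the chain, and since $D_i$ meets $E_{2i}$ (Lemma~\ref{lemm H fib for ell 3}), $D_0$ must meet $E_{1i}$. The paper instead computes the rational intersection number $\wt D_l\cdot\wt D_0=1/3$ on the singular surface $X_3$ via the projection formula, pulls back along the resolution to get $(3D_l+E_{1l}+2E_{2l}+F_l)\cdot D_0=1$, and reads off $D_0\cdot E_{1l}=1$, $D_0\cdot E_{2l}=0$ from this (together with $D_l\cdot D_0=F_l\cdot D_0=0$). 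Your route is more hands-on and avoids rational intersection numbers on singular surfaces; the paper's route is more uniform and is exactly what gets reused in the $\wt E_7$ and $\wt E_8$ cases (Lemmas~\ref{lemm ell 4 E} and~\ref{lemm ell 6 E}), where the longer exceptional chains make the local-model bookkeeping less pleasant. Similarly, the paper first deduces $D_0^2=-2$ abstractly and then invokes Zariski's Lemma and the Dynkin classification from \cite{bhpv04} to identify the fiber type, whereas you solve the $2\times 2$ linear systems directly and recognise $\wt E_6$ by inspection; both are fine here, but the paper's method scales more gracefully.
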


\begin{proof}
As before, there exists a smooth rational curve $\wt D_0\subset X_3$ containing three singular points of type $\frac{1}{3}(1,2)$ with $\wt\pr_2^{-1}(0)=3\wt D_0$. Thus $\pi_3^{-1}(0) = 3D_0 + \sum_{1\leq i\leq 2,\,1\leq j\leq 3} \wt n_{ij} E_{ij}$, where $D_0$ is the strict transform of $\wt D_0$, and $\{\wt n_{ij}\}_{1\leq i\leq 2,\,1\leq j\leq 3}$ is a set of natural numbers. Intersections give $9 D_0^2 + 2\sum_{i,j} \wt n_{ij}^2 - 2 \sum_{k=1}^{3} \wt n_{1k}\wt n_{2k} = 0$, and so $D_0^2 < 0$. By the adjunction formula, $K_{\wt X_3} \cdot \bigl(3 D_0 + \sum_{i,j} \wt n_{ij} E_{ij}\bigr) = 0$, so $K_{\wt X_3}\cdot D_0 = 0$, $D_0^2 = -2$ and $\sum_{i,j}\wt n_{ij}^2-\sum_{k=1}^{3} \wt n_{1k}\wt n_{2k} = 9$. Following Lemma 2.12 (ii) in \cite{bhpv04}, the intersection matrix of $\{D_0, E_{ij}\}$ with $\{\wt n_{kl}\}$ corresponds to the affine Dynkin diagram $\wt E_6$. The projection formula gives $\wt D_l \cdot \wt D_0 = 1/3$, hence $(3D_l + E_{1l} + 2E_{2l} + F_l)\cdot D_0 = 1$, so $D_0 \cdot E_{1l} = 1$ and $D_0 \cdot E_{2l} = 0$. This completes the proof of case (1) by Zariski's Lemma (see ibid). Analogously, let $\wt D_\infty\subset X_2$ be the smooth rational curve through three singular points of type $\frac{1}{3}(1,1)$ satisfying $\wt\pr_2^{-1}(\infty) = 3\wt D_\infty$. Then $\pi_2^{-1}(\infty) = 3 D_\infty + \wt m_1 F_1 + \wt m_2 F_2 + \wt m_3 F_3$ with $\wt m_i = D_\infty \cdot F_i$, which gives $3 D_\infty^2 + \wt m_1^2 + \wt m_2^2 + \wt m_3^2 = 0$ and $K_{\wt X_3} \cdot D_\infty < 0$. Hence, $D_\infty$ is an exceptional curve of the first kind with 
$\wt m_1 = \wt m_2 = \wt m_3 = 1$, as illustrated in Figure~\ref{fig ell 3 1}.

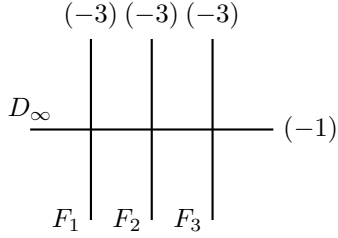
\begin{figure}[H]
\centering
\begin{tikzpicture}[scale=0.8]
\coordinate (A) at (0,0);
\coordinate (B) at (4,0);
\draw[thick] (A)--(B) 
node[pos=0,above]{$D_\infty$} 
node[pos=1,right]{$(-1)$};
\coordinate (D) at (1,1.5);
\coordinate (E) at (1,-1.5);
\draw[thick] (D)--(E) 
node[left]{$F_1$}
node[pos=0,above]{$(-3)$};
\coordinate (F) at (2,1.5);
\coordinate (G) at (2,-1.5);
\draw[thick] (F)--(G) 
node[left]{$F_2$}
node[pos=0,above]{$(-3)$};
\coordinate (H) at (3,1.5);
\coordinate (I) at (3,-1.5);
\draw[thick] (H)--(I) 
node[left]{$F_3$}
node[pos=0,above]{$(-3)$};

\end{tikzpicture}
\caption{Configuration before blowing down $D_\infty$}
\label{fig ell 3 1}
\end{figure}

Blowing down $D_\infty$ yields a relatively minimal elliptic surface with singular fiber of type $\uppercase\expandafter{\romannumeral 4}$ over $\infty$ (see Figure \ref{fig ell 3 2}).

\begin{figure}[H]
\centering
\begin{tikzpicture}[scale=0.7]
\coordinate (A) at (-1,1);
\coordinate (B) at (1,-1);
\coordinate (C) at (1,1);
\coordinate (D) at (-1,-1);
\coordinate (E) at (-1.5,0);
\coordinate (F) at  (1.5,0);
\draw [thick] (A)--(B)
node[pos=0,left]{$\wt F_1$}
node[pos=1,right]{$(-2)$};
\draw [thick] (E)--(F)
node[pos=0,left]{$\wt F_2$}
node[pos=1,right]{$(-2)$};
\draw [thick] (D)--(C)
node[pos=0,left]{$\wt F_3$}
node[pos=1,right]{$(-2)$};
\end{tikzpicture}
\caption{Fiber of type IV after blowing down $D_\infty$}
\label{fig ell 3 2}
\end{figure}
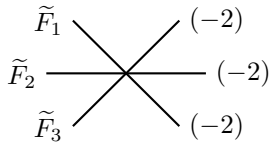

\end{proof}

Let $C_1,C_2,C_3$ be the fibers of $h_2:H_2\to\mathds P^1$ over the orbifold points $p_1,p_2,p_3$. First, blowing up $E_0\cap C_i$ $(i=1,2,3)$ yields $h_2^{(1)}:H_2^{(1)}\to\mathds P^1$ with exceptional curves $E_i^{(1)}$. Next, blowing up the three points $\wt C_i\cap E_i^{(1)}$, where $\wt C_i$ are the strict transforms of $C_i$, gives $h_2^{(2)}:H_2^{(2)}\to\mathds P^1$. Finally, blowing up three points $\wt E_i^{(1)}\cap E_i^{(2)}$, where $\wt E_i^{(1)}$ are the strict transforms of $E_i^{(1)}$ and $E_i^{(2)}$ the new exceptional curves, gives $h_2^{(3)}:H_2^{(3)}\to\mathds P^1$.

\begin{proposition}\label{prop stru for ell 3}
$\pi_3^\prime : \wt X_3\rightarrow\mds P^1$ is isomorphic to $h_2^{(3)} : H_2^{(3)}\rightarrow\mds P^1$.
\end{proposition}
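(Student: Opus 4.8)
The plan is to argue exactly as in the proof of Proposition~\ref{pro stru of ell 2}, by running a sequence of blow-downs that undoes the construction of $H_2^{(3)}$. First I would record, from Lemmas~\ref{lemm H fib for ell 3} and \ref{lemm E fib for ell 3}, the configuration of curves on $\wt X_3$: over each orbifold point $p_l$ ($l=1,2,3$) the fibre of $\pi_3'$ is the chain $E_{1l}(-2)-E_{2l}(-2)-D_l(-1)-F_l(-3)$ with multiplicities $1,2,3,1$; moreover $D_0$ and $D_\infty$ are sections of $\pi_3'$ with $D_0^2=-2$, $D_\infty^2=-1$, and the only nonzero intersections with the fibre components are $D_0\cdot E_{1l}=1$ and $D_\infty\cdot F_l=1$ (these come from Lemma~\ref{lemm E fib for ell 3} together with the projection-formula identities $\wt D_l\cdot\wt D_0=\wt D_l\cdot\wt D_\infty=1/3$), while $D_0\cdot D_\infty=0$. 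I would display this in a figure analogous to Figure~\ref{fig ell 2 1}.

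Next I would successively blow down the nine $(-1)$-curves in three rounds: first $D_1,D_2,D_3$; after these the $E_{2l}$ become $(-1)$-curves meeting the now-$(-2)$ curves $F_l^{(1)}$, so I blow down $E_{21}^{(1)},E_{22}^{(1)},E_{23}^{(1)}$; after these $E_{1l}^{(2)}$ and $F_l^{(2)}$ are $(-1)$-curves meeting at a point, so I blow down $F_1^{(2)},F_2^{(2)},F_3^{(2)}$. The routine contraction rules give as intermediate fibres over $p_l$ the chains $E_{1l}^{(1)}(-2)-E_{2l}^{(1)}(-1)-F_l^{(1)}(-2)$, then $E_{1l}^{(2)}(-1)-F_l^{(2)}(-1)$, and finally a smooth $0$-curve $E_{1l}^{(3)}$; since the remaining fibres of $\pi_3'$ were already smooth $\mds P^1$'s, the result $\pi_3^{\prime(3)}:\wt X_3^{(3)}\to\mds P^1$ is a $\mds P^1$-bundle, hence a Hirzebruch surface. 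None of the nine contracted points lies on $D_0$ (because $D_0\cdot D_l=D_0\cdot E_{2l}=D_0\cdot F_l=0$), so its image is still a section of self-intersection $-2$; whereas $D_\infty$ passes through each of the three points contracted in the last round (it meets every $F_l^{(2)}$), so its image is a section of self-intersection $-1+3=2$, disjoint from the image of $D_0$. Since on any $H_n$ the negative section is the unique irreducible curve of negative self-intersection (or directly by \cite[p.\,519]{gh94}), the image of $D_0$ is that section, whence $\wt X_3^{(3)}\cong H_2$, with $D_0$ playing the role of $E_\infty$, the image of $D_\infty$ that of a section of self-intersection $+2$ which we name $E_0$, and $E_{1l}^{(3)}$ that of the fibre $C_l$ of $h_2$ over $p_l$.

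Finally I would reverse the three rounds of contractions and compare them with the three rounds of blow-ups defining $H_2^{(3)}$ (identifying the index $l$ with $i$): undoing the contraction of $F_l^{(2)}$ blows up the point where the image of $D_\infty$ meets $C_l$, i.e. $E_0\cap C_l$, recovering $H_2^{(1)}$; undoing the contraction of $E_{2l}^{(1)}$ blows up $\wt C_l\cap E_l^{(1)}$, recovering $H_2^{(2)}$; undoing the contraction of $D_l$ blows up $\wt E_l^{(1)}\cap E_l^{(2)}$, recovering $H_2^{(3)}$. This identifies $\pi_3':\wt X_3\to\mds P^1$ with $h_2^{(3)}:H_2^{(3)}\to\mds P^1$. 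The one delicate part is this last bookkeeping: at each step one must check that the blow-up produced by reversing a contraction is centred at precisely the prescribed intersection point — and this is exactly where the intersection data $D_0\cdot E_{1l}=1$, $D_\infty\cdot F_l=1$, and the vanishing of all other intersections among these curves, are used.
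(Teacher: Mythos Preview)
Your proposal is correct and follows essentially the same approach as the paper: both blow down $D_l$, then $E_{2l}^{(1)}$, then $F_l^{(2)}$, identify the result as $H_2$ via $D_0^2=-2$, and reverse the process to match the construction of $H_2^{(3)}$. Your version is in fact more detailed than the paper's, which simply asserts that reversing the blow-downs recovers $H_2^{(3)}$ without spelling out the identification of the centres; your tracking of $D_\infty$ and the explicit matching $F_l^{(2)}\leftrightarrow E_l^{(1)}$, $E_{2l}^{(1)}\leftrightarrow E_l^{(2)}$ is a welcome addition.
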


\begin{proof}
By Lemma \ref{lemm H fib for ell 3} and \ref{lemm E fib for ell 3}, $\wt X_4$ is illustrated in Figure \ref{fig ell 3 3}.	

\begin{figure}[H]
\centering
\begin{tikzpicture}[xscale=0.8,yscale=0.5]
\coordinate (A) at (0,0);
\coordinate (B) at (6,0);
\coordinate (C) at (0,6);
\coordinate (D) at (6,6);

\draw [thick] (A)--(B)
node[pos=0,left]{$D_0$};
\draw [thick] (C)--(D)
node[pos=0,left]{$D_\infty$};

\coordinate (A2) at (1,-0.5);
\coordinate (A1) at (2,1.5);
\coordinate (B1) at (2,1);
\coordinate (B2) at (1,3);
\coordinate (C1) at (1,2.5);
\coordinate (C2) at (2,4.5);
\coordinate (D1) at (2,4);
\coordinate (D2) at (1,6.5);

\draw [thick] (A2)--(A1)
node[pos=0,left]{$E_{11}$};
\draw [thick] (B1)--(B2)
node[pos=0,right]{$E_{21}$};
\draw [thick] (C1)--(C2)
node[pos=0,left]{$D_1$};
\draw [thick] (D1)--(D2)
node[pos=1,left]{$F_1$};
\coordinate (A3) at (2.5,-0.5);
\coordinate (A4) at (3.5,1.5);
\coordinate (B3) at (3.5,1);
\coordinate (B4) at (2.5,3);
\coordinate (C3) at (2.5,2.5);
\coordinate (C4) at (3.5,4.5);
\coordinate (D3) at (3.5,4);
\coordinate (D4) at (2.5,6.5);

\draw [thick] (A3)--(A4)
node[pos=0,left]{$E_{12}$};
\draw [thick] (B3)--(B4)
node[pos=0,right]{$E_{22}$};
\draw [thick] (C3)--(C4)
node[pos=0,left]{$D_2$};
\draw [thick] (D3)--(D4)
node[pos=1,left]{$F_2$};


\coordinate (A5) at (4,-0.5);
\coordinate (A6) at (5,1.5);
\coordinate (B5) at (5,1);
\coordinate (B6) at (4,3);
\coordinate (C5) at (4,2.5);
\coordinate (C6) at (5,4.5);
\coordinate (D5) at (5,4);
\coordinate (D6) at (4,6.5);

\draw [thick] (A5)--(A6)
node[pos=0,left]{$E_{13}$};
\draw [thick] (B5)--(B6)
node[pos=0,right]{$E_{23}$};
\draw [thick] (C5)--(C6)
node[pos=0,left]{$D_3$};
\draw [thick] (D5)--(D6)
node[pos=1,left]{$F_3$};
\end{tikzpicture}
\caption{Configuration of curves on $\wt X_3$}
\label{fig ell 3 3}
\end{figure}

Blowing down $D_1$, $D_2$, $D_3$ successively gives $\pi_3^{\prime(1)} : \wt X_3^{(1)}\rightarrow\mds P^1$ with three singular fibers $\pi_3^{\prime(1)-1}(p_i)=E_{1i}^{(1)}+2E_{2i}^{(1)}+F_i^{(1)}$ (see dual graphs below)
\begin{equation*}
\begin{tikzpicture}[scale=0.5]
\node[circle, fill=black, inner sep=1.5pt] (A) at (0,0) {};
\node[circle, fill=black, inner sep=1.5pt] (B) at (2,0) {};
\node[circle, fill=black, inner sep=1.5pt] (C) at (4,0) {};	

\draw[thick] (A)--(B)--(C);

\node [below] at (A) {$-2$};
\node [below] at (B) {$-1$};
\node [below] at (C) {$-2$};

\node [above] at (A) {$E_{1i}^{(1)}$};
\node [above] at (B) {$E_{2i}^{(1)}$};
\node [above] at (C) {$F_i^{(1)}$};
\end{tikzpicture}
\end{equation*}
where $E_{1i}^{(1)}$, $E_{2i}^{(1)}$ and $F_i^{(1)}$ are the birational transformations of $E_{1i}$, $E_{2i}$ and $F_i$ respectively. Blowing down all $E_{2i}^{(1)}$ gives $\pi_3^{\prime(2)} : \wt X_3^{(2)}\rightarrow\mds P^1$ with singular fibers $\pi_3^{\prime(2)-1}(p_i)=E_{1i}^{(2)}+F_i^{(2)}$, where $E_{1i}^{(2)}$ and $F_i^{(2)}$ are the birational transformations of $E_{1i}^{(1)}$ and $F_i^{(1)}$ respectively. $E_{1i}^{(2)}$ and $F_i^{(2)}$ are exceptional curves of first kind. Blowing down all $F_{i}^{(2)}$ produces a $\mds P^1$-fibration $\pi_3^{\prime(3)} : \wt X_3^{(3)}\rightarrow\mds P^1$, which is a Hirzebruch surface $H_n$ for some $n$. Since $D_0^2=-2$, we conclude $n=2$. Reversing this process recovers $\wt X_3$, which coincides with $H_2^{(3)}$.
\end{proof}

=

\subsection{$\wt E_7$-case}

The elliptic curve $E_{(1,0)} : y^2 = x^3 + x$ admits a $\mu_4$-action defined by $\sigma_4 : E_{(1,0)} \to E_{(1,0)}$, $(x,y) \mapsto (-x,\sqrt{-1}\,y)$. The quotient stack $\mc X_4 = [E_{(1,0)}/\mu_4]$ has three orbifold points $p_1$, $p_2$, and $p_3$ with stabilizer groups $\mu_4$, $\mu_4$, and $\mu_2$, respectively.  
Since $\sigma_4^*\omega = \sqrt{-1}\,\omega$, we obtain $\mds P(T^\vee\mc X_4 \oplus \mc O_{\mc X_4}) = [E_{(1,0)} \times \mds P^1 / \mu_4]$, where the $\mu_4$-action is given by
\[
\sigma_4 : E_{(1,0)} \times \mds P^1 \to E_{(1,0)} \times \mds P^1, \quad 
(p,[z_0,z_1]) \mapsto (\sigma_4(p),[z_0,-\sqrt{-1}\,z_1]).
\]
In addition, the orbifold surface $[E_{(0,1)} \times \mds P^1 / \mu_4]$ has six orbifold points; equivalently, the coarse space $X_4 = (E_{(0,1)} \times \mds P^1)/\mu_4$ has exactly six singular points. Analogously, we obtain the following two diagrams:
\[
\begin{minipage}{0.45\textwidth}
	\centering
	\begin{tikzcd}[row sep=2em, column sep=1em]
		& \wt X_4 \arrow[dl,"\wt\pr_1"'] \arrow[dr,"\wt\pr_2"] & \\
		\mds P^1 && \mds P^1
	\end{tikzcd}
\end{minipage}
\hfill
\begin{minipage}{0.45\textwidth}
	\centering
	\begin{tikzcd}[row sep=2em, column sep=1em]
		& \wt X_4 \arrow[dl,"\pi_4^\prime"'] \arrow[dr,"\pi_4"] & \\
		\mds P^1 && \mds P^1
	\end{tikzcd}
\end{minipage}
\]
where $\wt X_4$ denotes the minimal resolution of $X_4$.

\begin{lemma}\label{lemm ell 4 H}
$\pi_4^\prime : \wt X_4\rightarrow\mds P^1$ is a generically $\mds P^1$-bundle whose singular fibers are exactly the ones over the orbifold points of $\mc X_4$. More precisely, there exist smooth rational curves $\{D_i\}_{1\leq i\leq 3}$,  $\{E_{ij}\}_{1\leq i\leq 3, 1\leq j\leq 2}$, $\{F_k\}_{1\leq k\leq 2}$, $\{E,F\}$ such that 
\begin{enumerate}[$(1)$]
\item for $l=1,2$, we have $\pi_4^{\prime-1}(p_l)=4D_l+E_{1l}+2E_{2l}+3E_{3l}+F_l$, whose dual graphs are 
\begin{center}
\begin{tikzpicture}[scale=0.5]
	\node[circle, fill=black, inner sep=1.5pt] (A) at (0,0)  {};
	\node[circle, fill=black, inner sep=1.5pt] (B) at (2,0)  {};
	\node[circle, fill=black, inner sep=1.5pt] (C) at (4,0)  {};
	\node[circle, fill=black, inner sep=1.5pt] (D) at (6,0)  {};
	\node[circle, fill=black, inner sep=1.5pt] (E) at (8,0)  {};
		
		
	\draw [thick] (A)--(B)--(C)--(D)--(E);
		
	\node [below]   at (A)  {$-2$};
	\node [below]    at (B)  {$-2$};
	\node [below]   at (C)  {$-2$};
	\node [below]   at  (D)  {$-1$};
	\node [below]    at  (E) {$-4$};
		
	\node [above]   at (A)  {$E_{1l}$};
	\node [above]    at (B)  {$E_{2l}$};
	\node [above]   at (C)  {$E_{3l}$};
	\node [above]   at  (D)  {$D_l$};
	\node [above]    at  (E) {$F_l$};
\end{tikzpicture}
\end{center}

\item $\pi_4^{\prime-1}(p_3)=2D_3+E+F$ with dual graph
\begin{center}
	\begin{tikzpicture}[scale=0.5]
		\node[circle, fill=black, inner sep=1.5pt] (A) at (0,0)  {};
		\node[circle, fill=black, inner sep=1.5pt] (B) at (2,0)  {};
		\node[circle, fill=black, inner sep=1.5pt] (C) at (4,0)  {};
		
		
		\draw [thick] (A)--(B)--(C);
		
		\node [below]    at (A)  {$-2$};
		\node [below]    at (B)  {$-1$};
		\node [below]    at (C)  {$-2$};
		\node [above]    at (A)  {$E$};
		\node [above]    at (B)  {$D_3$};
        \node [above]    at (C)  {$F$};
\end{tikzpicture}
\end{center}
\end{enumerate}
\end{lemma}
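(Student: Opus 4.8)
The plan is to run the argument of the $\wt D_4$- and $\wt E_6$-cases (Lemmas \ref{lemm H fib for ell 2} and \ref{lemm H fib for ell 3}), now keeping track of the two different stabilizer groups. First I would record that $\mc X_4=[E_{(1,0)}/\mu_4]$ has orbifold points $p_1,p_2$, the images of the $\sigma_4$-fixed points $(0,0)$ and $\infty$ of $E_{(1,0)}$ (stabilizer $\mu_4$), and $p_3$, the image of the $\sigma_4$-orbit $\{(\sqrt{-1},0),(-\sqrt{-1},0)\}$ (stabilizer $\mu_2$). Over $\mds P^1\setminus\{p_1,p_2,p_3\}$ the quotient map $E_{(1,0)}\times\mds P^1\to X_4$ is \'etale, so $X_4$ is smooth there and $\wt X_4\to X_4$ is an isomorphism over the smooth locus; hence the only reducible fibers of $\pi_4^\prime$ lie over $p_1,p_2,p_3$, and since the generic fiber is $\mds P^1$ every fiber has arithmetic genus $0$, so each reducible one is a tree of smooth rational curves.

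For $l=1,2$ I would set $\wt D_l:=\{q_l\}\times\mds P^1/\mu_4\subset X_4$ and check, as in the earlier lemmas, that it is a smooth rational curve with $\wt\pr_1^{-1}(p_l)=4\wt D_l$, since $E_{(1,0)}\to\mds P^1$ is totally ramified of order $4$ at $q_l$. The induced $\mu_4$-action on the $\mds P^1$-fiber of $\mds P(T^\vee\mc X_4\oplus\mc O)$ fixes exactly the zero section and the section at infinity, whose images I call $0$ and $\infty$, and these are the only points of $X_4$ on $\wt D_l$ that are singular. A weight computation from $\sigma_4^*\omega=\sqrt{-1}\,\omega$, using that $y$ is a local uniformizer of $E_{(1,0)}$ at both $(0,0)$ and $\infty$, shows the singularity at $0$ is of type $\tfrac14(1,3)$ and the one at $\infty$ of type $\tfrac14(1,1)$; by \cite[Satz~8]{rie77} the former has minimal resolution a chain $E_{1l}$, $E_{2l}$, $E_{3l}$ of three $(-2)$-curves (an $A_3$-configuration) and the latter a single $(-4)$-curve $F_l$. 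Writing $\pi_4^{\prime-1}(p_l)=4D_l+n_{1l}E_{1l}+n_{2l}E_{2l}+n_{3l}E_{3l}+m_lF_l$ with $D_l$ the strict transform of $\wt D_l$, I would intersect with each exceptional curve to express the intersection numbers of $D_l$ through the $n_{jl},m_l$, and then combine $\pi_4^{\prime-1}(p_l)\cdot K_{\wt X_4}=-2$, $\pi_4^{\prime-1}(p_l)^2=0$, $D_l^2<0$ and $D_l^2+D_l\cdot K_{\wt X_4}\ge-2$ to force $D_l$ to be a $(-1)$-curve and $(n_{1l},n_{2l},n_{3l},m_l)=(1,2,3,1)$. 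This yields $\pi_4^{\prime-1}(p_l)=4D_l+E_{1l}+2E_{2l}+3E_{3l}+F_l$ with the stated dual graph.

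For $l=3$ the same scheme is shorter: $\wt D_3:=\{q_3\}\times\mds P^1/\mu_2\subset X_4$ is a smooth rational curve with $\wt\pr_1^{-1}(p_3)=2\wt D_3$, the $\mu_2$-action $\sigma_4^2$ on the $\mds P^1$-fiber fixes $0$ and $\infty$, and at both of these the singularity is of type $\tfrac12(1,1)$, with minimal resolution a single $(-2)$-curve, which I name $E$ and $F$ respectively. Writing $\pi_4^{\prime-1}(p_3)=2D_3+nE+mF$ and running verbatim the intersection and adjunction computation from the proof of Lemma \ref{lemm H fib for ell 2} gives $D_3^2=D_3\cdot K_{\wt X_4}=-1$ and $n=m=1$, i.e. $\pi_4^{\prime-1}(p_3)=2D_3+E+F$ with the claimed dual graph.

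The step I expect to be the main obstacle is identifying the six quotient singularities of $X_4$ correctly: one must compute the $\mu_4$-weight on the momentum coordinate of $\mds P(T^\vee\mc X_4\oplus\mc O)$ from $\sigma_4^*\omega=\sqrt{-1}\,\omega$ and separate the two fixed $\mds P^1$-points over $q_1,q_2$ into the one giving $\tfrac14(1,3)$ and the one giving $\tfrac14(1,1)$, and likewise verify the $\tfrac12(1,1)$ type over $q_3$. Once the singularity types and their minimal resolutions (an $A_3$-chain of $(-2)$-curves, a $(-4)$-curve, and a $(-2)$-curve) are pinned down, the remaining intersection-theoretic bookkeeping is the same as in the $\wt D_4$- and $\wt E_6$-cases.
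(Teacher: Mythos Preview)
Your proposal is correct and follows essentially the same approach as the paper's proof: identify the singularity types $\tfrac14(1,3)$, $\tfrac14(1,1)$ on $\wt D_l$ for $l=1,2$ and $\tfrac12(1,1)$ on $\wt D_3$, record the exceptional configurations from \cite{rie77}, write the fiber as a combination with unknown multiplicities, and solve for them using self-intersection zero together with adjunction to force $D_l$ to be a $(-1)$-curve. The paper omits case~(2) entirely (saying it suffices to do $l=1$) and is terser about the weight computation, but your extra detail on the local uniformizer $y$ and the action $\sigma_4^*\omega=\sqrt{-1}\,\omega$ is exactly what fills in the ``as before'' there.
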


\begin{proof}
	
It suffices to prove case (1) when $l=1$. As before, there exists a smooth rational curve $\wt D_1$ on $X_4$ such that $\wt\pr_1^{-1}(p_1) = 4\wt D_1$, on which lie two singular points of $X_4$ of types $\tfrac{1}{4}(1,3)$ and $\tfrac{1}{4}(1,1)$. The corresponding exceptional curves are denoted by $E_{11}\cup E_{21}\cup E_{31}$ and $F_1$, respectively, and they satisfy $E_{11}^2=E_{21}^2=E_{31}^2=-2$, $E_{11}\cdot E_{21}=E_{21}\cdot E_{31}=1$, $E_{11}\cdot E_{31}=0$ and $F_1^2=-4$. Let $D_1$ be the strict transform of $\wt D_1$ on $\wt X_4$. We may write $\pi_4^{\prime -1}(p_1) = 4D_1 + n_1E_{11} + n_2E_{21} + n_3E_{31} + mF_1$ for some natural numbers $n_1,n_2,n_3,m$. Intersecting with itself yields $16D_1^2+(2n_1^2+2n_2^2+2n_3^2-2n_1n_2-2n_2n_3)+4m^2 = 0$, so $D_1^2<0$. By the adjunction formula, we obtain
$(4D_1+n_1E_{11}+n_2E_{21}+n_3E_{31}+mF_1)\cdot K_{\wt X_4} = -2$, which implies $2D_1\cdot K_{\wt X_4} = -1-m$. Hence $D_1$ is an exceptional curve and necessarily $m=1$. Up to reordering, we find $n_1=1$, $n_2=2$ and $n_3=3$.

\end{proof}

\begin{lemma}\label{lemm ell 4 E}
The elliptic fibration $\pi_4 : \wt X_4\rightarrow\mds P^1$ has exactly two singular fibers:
\begin{enumerate}[$(1)$]
\item $\pi_4^{-1}(0)=4D_0+3E_{11}+2E_{21}+E_{31}+3E_{12}+2E_{22}+E_{32}+2E$ is of type $\uppercase\expandafter{\romannumeral 3}^*$ $(\wt E_7)$.
\item $\pi_4^{-1}(\infty)=4D_\infty+F_1+F_2+2F$ with dual graph
\begin{center}
\begin{tikzpicture}[scale=0.5]
		\node[circle, fill=black, inner sep=1.5pt] (A) at (0,0)  {};
		\node[circle, fill=black, inner sep=1.5pt] (B) at (3,0)  {};
		\node[circle, fill=black, inner sep=1.5pt] (C) at (6,0)  {};
		\node[circle, fill=black, inner sep=1.5pt] (D) at (3,2)  {};
		
		\draw [thick] (A)--(B)--(C);
		\draw [thick] (B)--(D);
		\node [below]    at (A)  {$-4$};
		\node [below]    at (B)  {$-1$};
		\node [below]    at (C)  {$-4$};
		\node [right]     at (D)   {$-2$};

		\node [above]    at (A)  {$F_1$};
		\node [above left]    at (B)  {$D_\infty$};
		\node [above]    at (C)  {$F_2$};
		\node [left]  at (D)   {$F$};
		
\end{tikzpicture}
\end{center} 
\end{enumerate}
After two successive blow-downs of the exceptional curves over $\infty$, we obtain a relatively minimal elliptic surface whose singular fiber over $\infty$ is of type $\uppercase\expandafter{\romannumeral 3}$ $(\wt A_1)$.
\end{lemma}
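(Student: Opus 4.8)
The plan is to follow the template of Lemmas \ref{lemm E fib for ell 2}, \ref{lemm E fib for ell 3} and \ref{lemm ell 4 H}, combining the explicit description of the singularities of $X_4$ with the adjunction formula and Zariski's Lemma. Since the nontrivial stabilizers of the $\mu_4$-action on $E_{(1,0)}\times\mds P^1$ occur only over the two fixed points $0,\infty$ of the base $\mds P^1$, the surface $X_4$ is smooth away from finitely many points lying over $0$ and $\infty$, so $\pi_4$ has at most two singular fibers. From the action $(p,[z_0,z_1])\mapsto(\sigma_4(p),[z_0,-\sqrt{-1}z_1])$ one reads off the relevant tangent weights of $\mu_4$: $\sigma_4$ acts with weight $\sqrt{-1}$ on $T_OE$ and on $T_{(0,0)}E$, permutes the orbit $\{(\sqrt{-1},0),(-\sqrt{-1},0)\}$ (stabilizer $\mu_2$, acting by $-1$ on the tangent line), and acts on $T_0\mds P^1$, $T_\infty\mds P^1$ with weights $-\sqrt{-1}$ and $\sqrt{-1}$ respectively. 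As in the earlier cases there is a smooth rational curve $\wt D_0\subset X_4$ (resp. $\wt D_\infty$) with $\wt\pr_2^{-1}(0)=4\wt D_0$ (resp. $\wt\pr_2^{-1}(\infty)=4\wt D_\infty$) passing through the singular points over that point. Over $0$ the two tangent weights on $E$ and on $\mds P^1$ are inverse, giving two $\frac14(1,3)$ points (i.e. $A_3$), each resolved by a chain $E_{1j}-E_{2j}-E_{3j}$ of three $(-2)$-curves ($j=1,2$), together with one $\frac12(1,1)=A_1$ point resolved by a single $(-2)$-curve $E$. Over $\infty$ the two weights are equal, giving two $\frac14(1,1)$ points, each resolved by a single $(-4)$-curve $F_1$, $F_2$ (see \cite{rie77}), together with one $A_1$ point resolved by a $(-2)$-curve $F$.

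For part (1), write $\pi_4^{-1}(0)=4D_0+\sum_{i,j}\wt n_{ij}E_{ij}+\wt n\,E$ with $D_0$ the strict transform of $\wt D_0$. Since every $E_{ij}$ and $E$ is a $(-2)$-curve, $K_{\wt X_4}\cdot\pi_4^{-1}(0)=0$ forces $K_{\wt X_4}\cdot D_0=0$, whence $D_0^2=-2$. Because $\wt D_0$ is locally a coordinate axis through each singular point, $D_0$ meets one end of each $A_3$-chain and meets $E$, so the dual graph of $\{D_0,E_{ij},E\}$ is the affine Dynkin diagram $\wt E_7$; solving $\pi_4^{-1}(0)\cdot C=0$ for $C$ running over the components (Zariski's Lemma, \cite{bhpv04}) then yields the multiplicities $4;\,3,2,1;\,3,2,1;\,2$, i.e. the fiber $4D_0+3E_{11}+2E_{21}+E_{31}+3E_{12}+2E_{22}+E_{32}+2E$ of Kodaira type $\mathrm{III}^*$.

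For part (2), write $\pi_4^{-1}(\infty)=4D_\infty+\wt m_1F_1+\wt m_2F_2+\wt m\,F$ with $D_\infty$ the strict transform of $\wt D_\infty$. The adjunction formula gives $K_{\wt X_4}\cdot F_j=2$ (since $F_j^2=-4$) and $K_{\wt X_4}\cdot F=0$, so $K_{\wt X_4}\cdot\pi_4^{-1}(\infty)=0$ forces $K_{\wt X_4}\cdot D_\infty=-(\wt m_1+\wt m_2)/2<0$; since $D_\infty$ is a rational curve contained in a fiber of an elliptic fibration, this gives $D_\infty^2=K_{\wt X_4}\cdot D_\infty=-1$ and $\wt m_1=\wt m_2=1$, and then $\pi_4^{-1}(\infty)\cdot F=0$ gives $\wt m=2$. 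Hence $\pi_4^{-1}(\infty)=4D_\infty+F_1+F_2+2F$ with $D_\infty$ a $(-1)$-curve meeting each of $F_1$, $F_2$, $F$, as in the stated dual graph. Finally, contracting the $(-1)$-curve $D_\infty$ turns $F_1,F_2$ into $(-3)$-curves and $F$ into a $(-1)$-curve; contracting the latter then makes the images $F_1'',F_2''$ into $(-2)$-curves with $(F_1'')^2=(F_2'')^2=-2$, $K\cdot F_j''=0$, and $F_1''\cdot F_2''=2$, so on the resulting relatively minimal elliptic surface the fiber over $\infty$ is of Kodaira type $\mathrm{III}$ ($\wt A_1$).

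The intersection-number arithmetic and the applications of adjunction and Zariski's Lemma are routine and parallel the earlier cases. The one point demanding genuine care is the last step: one must check that $D_\infty$, and after its contraction the strict transform of $F$, are honest $(-1)$-curves meeting the rest of the fibration only inside the fiber over $\infty$, and that the final pair $F_1''+F_2''$ meets in a single \emph{tangential} point of intersection number $2$ (not two transverse points), so that it really is a Kodaira $\mathrm{III}$ fiber and the new surface is relatively minimal. This is verified by tracking how $C^2$ and $K\cdot C$ vary under each blow-down, and it is the only place where one cannot copy the proofs of Lemmas \ref{lemm E fib for ell 2}--\ref{lemm ell 4 H} verbatim.
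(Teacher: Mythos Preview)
Your proof is correct and follows essentially the same route as the paper: identify the singularity types on $\wt D_0$ and $\wt D_\infty$, use adjunction plus Zariski's Lemma to pin down $D_0^2=-2$ and $D_\infty^2=-1$ together with all multiplicities, and then perform the two blow-downs over $\infty$. Your derivation of the singularity types from the tangent weights is more explicit than the paper's, and your ``coordinate axis'' argument for which end of each $A_3$-chain $D_0$ meets replaces the paper's use of the projection formula $\wt D_l\cdot\wt D_0=1/4$ together with Lemma~\ref{lemm ell 4 H}; both are valid. One small sharpening: tracking $C^2$ and $K\cdot C$ alone does not distinguish a Kodaira $\mathrm{III}$ fiber from $\mathrm{I}_2$. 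The point you need (and which the paper spells out) is geometric: after contracting $D_\infty$ the three curves $F_1^{(1)},F_2^{(1)},F^{(1)}$ all pass through the single image point of $D_\infty$, so the subsequent contraction of $F^{(1)}$ forces $F_1^{(2)}$ and $F_2^{(2)}$ to meet only at that one point with local intersection number $2$, i.e.\ tangentially.
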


\begin{proof}
As before, we can show there exists a smooth rational curve $\wt D_0$ on $X_4$ such that $\wt\pr_2^{-1}(0)=4\wt D_0$, on which there are three singular points: two of type $\frac{1}{4}(1,3)$ and one of type $\frac{1}{2}(1,1)$. Hence we obtain
\begin{equation}\label{equ ell 4 div 0}
\textstyle{\pi_4^{-1}(0)=4D_0+\sum_{1\leq j\leq 2}\sum_{1\leq i\leq 3}n_{ij}E_{ij}+m_0E},
\end{equation}
where $D_0$ is the strict transformation of $\wt D_0$. From the relation $4D_0^2+\sum_{1\leq j\leq 2}\sum_{1\leq i\leq 3}n_{ij}E_{ij}\cdot D_0+m_0E\cdot D_0=0$ together with the adjunction formula, we deduce that $D_0^2=-2$. For the real vector space with basis ${D_0,E_{ij},E}_{1\leq i\leq 3,1\leq j\leq 2}$, the intersection form defines a quadratic form whose annihilator is one-dimensional with basis given by \eqref{equ ell 4 div 0} (see Zariski’s Lemma in \cite{bhpv04}). This quadratic form with annihilator \eqref{equ ell 4 div 0} corresponds to the affine Dynkin diagram $\wt E_7$ (see Lemma 2.12 in ibid.). In analogy with the proof of Lemma \ref{lemm E fib for ell 2}, we obtain $\wt D_l\cdot \wt D_0=1/4$, and hence by the projection formula, $(4D_l+E_{1l}+2E_{2l}+3E_{3l}+F_l)\cdot D_0=1$. Therefore,
\begin{equation*}
D_0\cdot E_{il}=
\begin{cases}
1 & \text{if } i=1 \\
0 & \text{otherwise}.
\end{cases}
\end{equation*}
This completes the proof of case (1). By Kodaira’s classification of singular elliptic fibers (see ibid., p.~201), the singular fiber over $0$ is of type $\uppercase\expandafter{\romannumeral 3}^*(\wt E_7)$. In a way analogous to case (2) of Lemma \ref{lemm E fib for ell 2}, we assume that $\pi_4^{-1}(\infty)= 4D_\infty+m_1F_1+m_2F_2+m_3F$ for some natural numbers $m_1$, $m_2$, $m_3$ (see Figure \ref{fig ell 4 1}), from which we obtain $8D_\infty^2+2m_1^2+2m_2^2+m_3^2=0$.

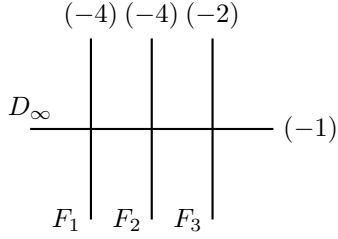
\begin{figure}[H]
	\centering
	\begin{tikzpicture}[scale=0.8]
		\coordinate (A) at (0,0);
		\coordinate (B) at (4,0);
		\draw [thick](A)--(B) 
		node[pos=0,above]{$D_\infty$} 
		node[pos=1,right]{$(-1)$};
		\coordinate (D) at (1,1.5);
		\coordinate (E) at (1,-1.5);
		\draw [thick](D)--(E) 
		node[left]{$F_1$}
		node[pos=0,above]{$(-4)$};
		\coordinate (F) at (2,1.5);
		\coordinate (G) at (2,-1.5);
		\draw [thick](F)--(G) 
		node[left]{$F_2$}
		node[pos=0,above]{$(-4)$};
		\coordinate (H) at (3,1.5);
		\coordinate (I) at (3,-1.5);
		\draw [thick](H)--(I) 
		node[left]{$F_3$}
		node[pos=0,above]{$(-2)$};
	\end{tikzpicture}
	\caption{Configuration of the fiber $\pi_4^{-1}(\infty)$ before contractions.}
	\label{fig ell 4 1}
\end{figure}

In addition, by the adjunction formula we obtain $4K_{\wt X_4}\cdot D_\infty=-2m_1-2m_2$. Hence, $D_\infty^2<0$ and $K_{\wt X_4}\cdot D_\infty<0$, which implies that $D_\infty$ is an exceptional curve of the first kind. A direct computation shows that $m_1=m_2=1$ and $m_3=2$. By blowing down $D_\infty$, we get $\pi_4^{(1)} : \wt X_4^{(1)}\rightarrow\mds P^1$ whose singular fiber at $\infty$ is $\pi_4^{(1)-1}(\infty)=F_1^{(1)}+F_2^{(1)}+2F^{(1)}$, where $F_1^{(1)}$, $F_2^{(1)}$ and $F^{(1)}$ are birational transformations of $F_1$, $F_2$ and $F$ respectively (see Figure \ref{fig ell 4 2}).

\begin{figure}[H]
	\centering
	\begin{tikzpicture}[scale=0.7]
		\coordinate (A) at (-1,1);
		\coordinate (B) at (1,-1);
		\coordinate (C) at (-1,0);
		\coordinate (D) at (1,0);
		\coordinate (E) at (-1,-1);
		\coordinate (F) at (1,1);
		\draw[thick] (A)--(B) node[pos=0,left]{$F_1^{(1)}$} node[pos=1,right]{$(-3)$};
		\draw[thick] (C)--(D) node[pos=0,left]{$F_2^{(1)}$} node[pos=1,right]{$(-3)$};
		\draw[thick] (E)--(F) node[pos=0,left]{$F^{(1)}$} node[pos=1,right]{$(-1)$};
	\end{tikzpicture}
	\caption{Singular fiber $\pi_4^{(1)-1}(\infty)$ after blowing down $D_\infty$.}
	\label{fig ell 4 2}
\end{figure}
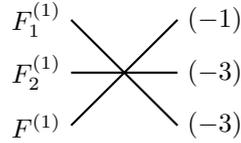

Note that $F_1^{(1)}$, $F_2^{(1)}$ and $F^{(1)}$ intersect at a single point, and that $F^{(1)}$ is an exceptional curve of the first kind. By blowing down $F^{(1)}$, we get a relatively minimal elliptic surface $\pi_4^{(2)} : \wt X_4^{(2)}\rightarrow\mds P^1$ whose singular fiber over $\infty$ is $\pi_4^{(2)-1}(\infty)=F_1^{(2)}+F_2^{(2)}$. Here $F_1^{(2)}$ and $F_2^{(2)}$ denote the birational transformations of $F_1^{(1)}$ and $F_2^{(1)}$ respectively,  satisfying $F_1^{(2)2}=F_2^{(2)2}=-2$ and $F_1^{(2)}\cdot F_2^{(2)}=2$ (see Figure \ref{fig ell 4 3}). The type of $\pi_4^{(2)-1}(\infty)$ is $\uppercase\expandafter{\romannumeral 3}$ and the intersection matrix is given by the affine Dynkin diagram $\wt A_1$.

\begin{figure}[H]
	\centering
	\begin{tikzpicture}[scale=0.7]
		\coordinate (A) at (0,1);
		\coordinate (B) at (0,-1);
		\draw[thick] (A) arc (0:-180:1) node[pos=1,left]{$F_1^{(2)}$} node[pos=0,right]{$(-2)$};
		\draw[thick] (B) arc (0:180:1) node[pos=1,left]{$F_2^{(2)}$} node[pos=0,right]{$(-2)$};
	\end{tikzpicture}
	\caption{Singular fiber $\pi_4^{(2)-1}(\infty)$ after blowing down $F^{(1)}$.}
	\label{fig ell 4 3}
\end{figure}
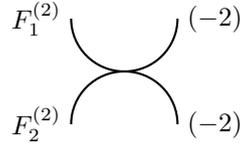

\end{proof}

In analogy with Proposition \ref{prop stru for ell 3}, blowing up $E_0\cap C_i$ ($i=1,2,3$) yields $h_2^{(1)}:H_2^{(1)}\rightarrow\mds P^1$ with exceptional divisors $E_i^{(1)}$. Blowing up the three points $E_i^{(1)}\cap \wt C_i$, where $\wt C_i$ denote the strict transformations of $C_i$. This yields $h^{(2)}_2 : H_2^{(2)}\rightarrow\mds P^1$ with exceptional divisors $E_i^{(2)}$. Next, blowing up $E_j^{(2)}\cap \wt E_j^{(1)}$ ($j=1,2$), where $\wt E_j^{(1)}$ are the strict transforms of $E_j^{(1)}$. This produces $h_2^{(3)} : H_2^{(3)}\rightarrow\mds P^1$ with new exceptional divisors $E_1^{(3)}$ and $E_2^{(3)}$. Finally, blowing up $E_j^{(3)}\cap\overline E_j^{(1)}$ ($j=1,2$), where $\overline E_j^{(1)}$ are the strict transforms of $\wt E_j^{(1)}$, gives $h_2^{(4)}:H_2^{(4)}\to\mathds P^1$.

\begin{proposition}\label{pro ell 4 str}
$\pi_4^\prime : \wt X_4\rightarrow\mds P^1$ is isomorphic to $h_2^{(4)} : H_2^{(4)}\rightarrow\mds P^1$. 
\end{proposition}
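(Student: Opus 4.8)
The plan is to follow the pattern of Propositions \ref{pro stru of ell 2} and \ref{prop stru for ell 3}: read off the configuration of rational curves on $\wt X_4$ from Lemmas \ref{lemm ell 4 H} and \ref{lemm ell 4 E}, contract the $(-1)$-curves lying in the fibres of $\pi_4^\prime$ down to a Hirzebruch surface, identify that surface as $H_2$ by means of the section $D_0$, and finally reverse the contractions to recognise the prescribed iterated blow-up $h_2^{(4)}$.

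First I would assemble the curves and their incidences (and display them in a figure analogous to Figures \ref{fig ell 2 1} and \ref{fig ell 3 3}). By Lemma \ref{lemm ell 4 H}, for $l=1,2$ the fibre $\pi_4^{\prime-1}(p_l)$ is a chain with components $E_{1l},E_{2l},E_{3l},D_l,F_l$ (in that order) of self-intersections $-2,-2,-2,-1,-4$, while $\pi_4^{\prime-1}(p_3)$ is a chain with components $E,D_3,F$ of self-intersections $-2,-1,-2$. As in the earlier cases, $D_0$ and $D_\infty$ are disjoint sections of $\pi_4^\prime$; by Lemma \ref{lemm ell 4 E}, $D_0$ is the $(-2)$-component of the type $\mathrm{III}^*$ fibre of $\pi_4$ over $0$ meeting precisely $E_{11},E_{12},E$, and $D_\infty$ is the $(-1)$-component of the fibre of $\pi_4$ over $\infty$ meeting precisely $F_1,F_2,F$.

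Next I would carry out the contraction. Blowing down $D_1,D_2,D_3$ (each disjoint from $D_0$) turns the chain over $p_l$ into one with components $E_{1l}^{(1)},E_{2l}^{(1)},E_{3l}^{(1)},F_l^{(1)}$, where $E_{3l}^{(1)2}=-1$ and $F_l^{(1)2}=-3$, and the chain over $p_3$ into one with components $E^{(1)},F^{(1)}$, both $(-1)$-curves. I would then iterate, in each reducible fibre, the contraction of the $(-1)$-curve that is \emph{not} adjacent to $D_0$: over $p_l$ this successively contracts $E_{3l}^{(1)}$, then $E_{2l}^{(2)}$, then $F_l^{(3)}$, leaving the fibre equal to the single $0$-curve carried by $E_{1l}$; over $p_3$ it contracts $F^{(1)}$, leaving the $0$-curve carried by $E$. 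Since $D_0\cdot E_{2l}=D_0\cdot E_{3l}=D_0\cdot F_l=D_0\cdot F=0$, the self-intersection $D_0^2=-2$ is untouched by these ten contractions, whereas the three curves contracted last in the three fibres, namely $F_1,F_2,F$, all meet $D_\infty$, so $D_\infty^2$ rises from $-1$ to $+2$. What remains is a relatively minimal $\mds P^1$-fibration over $\mds P^1$ carrying a section ($D_0$) of self-intersection $-2$, hence the Hirzebruch surface $H_2$ (see \cite[p.~519]{gh94}). Reversing the ten contractions exhibits $\wt X_4$ as an iterated blow-up of $H_2$ with all centres in fibres, so the resulting isomorphism is compatible with $\pi_4^\prime$ and $h_2^{(4)}$; matching the centres one stage at a time identifies this iterated blow-up with the four-stage construction preceding the statement.

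The hard part will be this last bookkeeping. One must check that at every step a $(-1)$-curve disjoint from $D_0$ is available — this is exactly what keeps $D_0^2=-2$ and so pins down $H_2$ rather than another $H_n$ — and that reversing the contractions reproduces, in order, the centres $E_0\cap C_i$, then $E_i^{(1)}\cap\wt C_i$, then $E_j^{(2)}\cap\wt E_j^{(1)}$, then $E_j^{(3)}\cap\overline E_j^{(1)}$ used to define $h_2^{(4)}$. This is a routine local analysis of how intersection points merge under each blow-down, already performed in the $\wt E_6$-case, and it is the only point requiring genuine verification.
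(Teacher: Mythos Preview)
Your proposal is correct and follows essentially the same approach as the paper: assemble the configuration from Lemmas \ref{lemm ell 4 H} and \ref{lemm ell 4 E}, contract the $(-1)$-curves in the fibres starting with $D_1,D_2,D_3$ while keeping $D_0$ untouched, identify the result as $H_2$ via $D_0^2=-2$, and reverse to match $h_2^{(4)}$. The paper's own proof is terser---it does not spell out the intermediate self-intersections or the rule ``contract the $(-1)$-curve not adjacent to $D_0$''---but your more explicit bookkeeping is exactly what the paper's sketch implicitly relies on, and your identification $D_\infty\leftrightarrow E_0$, $D_0\leftrightarrow E_\infty$ correctly matches the reversed contractions to the four stages defining $H_2^{(4)}$.
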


\begin{proof}
Based on Lemmas \ref{lemm ell 4 H} and \ref{lemm ell 4 E}, the surface $\wt X_4$ is illustrated in Figure \ref{fig ell 4 4}. Beginning with $D_1$, $D_2$ and $D_3$, We may carry out a sequence of blow-downs, and then obtain a $\mds P^1$-fibration which is a Hirzebruch surface $H_n$ for some $n$. Since $D_0$ remains unchanged throughout this process, it follows from $D_0^2=-2$ that the resulting Hirzebruch surface is the second one. Reversing this process, we can recover the original surface $\wt X_4$. Then $\wt X_4$ is precisely $H_2^{(4)} $, as deduced from the construction of $H_2^{(4)}$. The proof is complete.
\begin{figure}[H]
\centering
\begin{tikzpicture}[xscale=0.8, yscale=0.8]
		\coordinate (A) at  (0,0);
		\coordinate (B) at  (8,0);
		\coordinate (C)  at (0,5);
		\coordinate (D) at  (8,5);
		
		\draw[thick] (A)--(B)
		node [pos=0,left]{$D_0$};
		\draw[thick] (C)--(D)
		node [pos=0,left]{$D_\infty$};
		
		\coordinate (A1) at  (0.5,-0.5);
		\coordinate (A2) at  (2.5,1.5);
		\coordinate (C2) at  (2.5,0.5);
		\coordinate (C1) at  (0.5,2.5);
		\coordinate (D1) at  (2.5,3.5);
		\coordinate (D2) at  (0.5,1.5);
		\coordinate (E2) at  (2.5,2.5);
		\coordinate (E1) at  (0.5,4.5);
		\coordinate (F2) at  (0.5,3.5);
		\coordinate (F1) at  (2.5,5.5);
		
		\draw[thick]  (A1)--(A2)
		node [pos=0,left] {$E_{11}$};
		\draw[thick]  (C2)--(C1)
		node [pos=0,left] {$E_{21}$};
		\draw[thick]  (D2)--(D1)
		node [pos=1,left] {$E_{31}$};
		\draw[thick]  (E2)--(E1)
		node [pos=1,left] {$D_1$};
		\draw[thick]  (F2)--(F1)
		node [pos=1,left] {$F_1$};
		
		\coordinate (A3) at  (3,-0.5);
		\coordinate (A4) at  (5,1.5);
		\coordinate (C4) at  (5,0.5);
		\coordinate (C3) at  (3,2.5);
		\coordinate (D3) at  (5,3.5);
		\coordinate (D4) at  (3,1.5);
		\coordinate (E4) at  (5,2.5);
		\coordinate (E3) at  (3,4.5);
		\coordinate (F4) at  (3,3.5) ;
		\coordinate (F3) at  (5,5.5);
		\draw[thick]  (A3)--(A4)
		node [pos=0,left] {$E_{12}$};
		\draw[thick] (C4)--(C3)
		node [pos=0,left] {$E_{22}$};
		\draw[thick]  (D4)--(D3)
		node [pos=1,left] {$E_{32}$};
		\draw[thick]  (E4)--(E3)
		node [pos=1,left] {$D_2$};
		\draw[thick]  (F4)--(F3)
		node [pos=1,left] {$F_2$};
		
		\coordinate (G)  at  (5.5,-0.5);
		\coordinate (G1) at (7.5,1.5);
		\coordinate (H)  at  (7,0.5);
		\coordinate (I)  at  (7,4);
		\coordinate (J1) at (7.5,2.5);
		\coordinate (J)  at  (5.5,5.5);
		
		\draw[thick] (G)--(G1)
		node[pos=0,left]{$E$};
		\draw[thick] (H)--(I)
		node[pos=1,right]{$D_3$};
		\draw[thick] (J1)--(J)
		node[pos=1,right]{$F$};
\end{tikzpicture}
\caption{Configuration of curves on $\wt X_4$.}
\label{fig ell 4 4}
\end{figure}
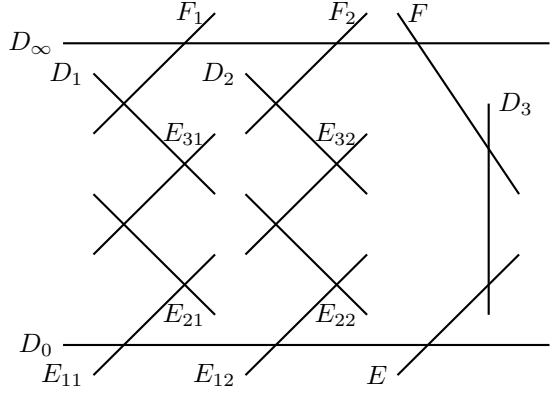
\end{proof}

\subsection{$\wt E_8$-case}

The morphism $\sigma_6=\sigma_3\circ\tau : E_{(0,1)} \to E_{(0,1)}$ defines a $\mu_6$-action on $E_{(0,1)}$.  
The quotient stack $\mc X_6 = [E_{(0,1)}/\mu_6]$ has three orbifold points $p_1$, $p_2$, and $p_3$ with stabilizer groups $\mu_6$, $\mu_3$, and $\mu_2$, respectively. Moreover, $\mds P(T^\vee\mc X_6 \oplus \mc O_{\mc X_6}) = [E_{(0,1)} \times \mds P^1 / \mu_6]$, whose coarse moduli space is $X_6 = (E_{(0,1)} \times \mds P^1)/\mu_6$.  
As before, we obtain two diagrams:
\begin{equation*}
	\begin{minipage}{0.45\textwidth}
		\centering
		\begin{tikzcd}[row sep=2em,column sep=1em]
			& X_6 \arrow[dl,"\wt\pr_1"'] \arrow[dr,"\wt\pr_2"] & \\
			\mds P^1 && \mds P^1
		\end{tikzcd}
	\end{minipage}
	\hfill
	\begin{minipage}{0.45\textwidth}
		\centering
		\begin{tikzcd}[row sep=2em,column sep=1em]
			& \wt X_6 \arrow[dl,"\pi_6^\prime"'] \arrow[dr,"\pi_6"] & \\
			\mds P^1 && \mds P^1
		\end{tikzcd}
	\end{minipage}
\end{equation*}
where $\wt X_6$ denotes the minimal resolution of $X_6$.

\begin{lemma}\label{lemm ell 6 H}
The singular fibers of the generically $\mds P^1$-fibration $\pi_6^\prime : \wt X_6\rightarrow\mds P^1$ are the ones over $p_1$, $p_2$, $p_3$. More specifically, there exist smooth rational curves $\{D_i\}_{1\leq i\leq 3}$, $\{E_j\}_{1\leq j\leq 9}$, $\{F_k\}_{1\leq k\leq 2}$ such that
\begin{enumerate}[$(1)$]
\item $\pi_6^{\prime-1}(p_1)=6D_1+E_1+2E_2+3E_3+4E_4+5E_5+F_1$ with dual graph

\begin{center}
\begin{tikzpicture}[scale=0.8]
\node[circle, fill=black, inner sep=1.5pt] (A) at (-2,0) {};
\node[circle, fill=black, inner sep=1.5pt] (B) at (-1,0) {};
\node[circle, fill=black, inner sep=1.5pt] (C) at (0,0) {};
\node[circle, fill=black, inner sep=1.5pt] (D) at (1,0) {};
\node[circle, fill=black, inner sep=1.5pt] (G) at (2,0) {};
\node[circle, fill=black, inner sep=1.5pt] (E) at (3,0) {};
\node[circle, fill=black, inner sep=1.5pt] (F) at (4,0) {};
		
\draw [thick] (A) -- (B) -- (C) -- (D) -- (G) -- (E) -- (F);
		
\node  [below]   at (A)  {$-2$};
\node  [below]   at (B)  {$-2$};
\node  [below]   at (C)  {$-2$};
\node  [below]   at (D)  {$-2$};
\node  [below]   at (G)  {$-2$};
\node  [below]  at (E)   {$-1$};
\node  [below]  at (F)   {$-6$};
\node  [above]   at (A)  {$E_1$};
\node [above]   at (B)  {$E_2$};
\node [above]   at (C)  {$E_3$};
\node [above]   at (D)  {$E_4$};
\node [above]   at (G)  {$E_5$};
\node  [above]  at (E)   {$D_1$};
\node  [above]  at (F)    {$F_1$};
\end{tikzpicture}
\end{center}

\item $\pi_6^{\prime-1}(p_2)=3D_2+E_6+2E_7+F_2$ with dual graph
\begin{center}
\begin{tikzpicture}[scale=0.5]
		\node[circle, fill=black, inner sep=1.5pt] (A) at (0,0)  {};
		\node[circle, fill=black, inner sep=1.5pt] (B) at (2,0)  {};
		\node[circle, fill=black, inner sep=1.5pt] (C) at (4,0)  {};
		\node[circle, fill=black, inner sep=1.5pt] (D) at (6,0)  {};
		
		
		\draw [thick] (A)--(B)--(C)--(D);
		
		\node [below]    at (A)  {$-2$};
		\node [below]    at (B)  {$-2$};
		\node [below]    at (C)  {$-1$};
		\node [below]    at (D)  {$-3$};
		
		\node [above]    at (A)  {$E_{6}$};
		\node [above]    at (B)  {$E_{7}$};
		\node [above]    at (C)  {$D_2$};
		\node [above]    at (D)   {$F_2$};
\end{tikzpicture}
\end{center} 
\item $\pi_6^{\prime-1}(p_3)=2D_3+E_8+E_9$ with dual graph
\begin{center}
\begin{tikzpicture}[scale=0.5]
		\node[circle, fill=black, inner sep=1.5pt] (A) at (0,0)  {};
		\node[circle, fill=black, inner sep=1.5pt] (B) at (2,0)  {};
		\node[circle, fill=black, inner sep=1.5pt] (C) at (4,0)  {};
		
		
		\draw [thick] (A)--(B)--(C);
		
		\node [below]    at (A)  {$-2$};
		\node [below]    at (B)  {$-1$};
		\node [below]    at (C)  {$-2$};
		
		\node [above]    at (A)  {$E_8$};
		\node [above]    at (B)  {$D_3$};
		\node [above]    at (C)  {$E_9$};
\end{tikzpicture}
\end{center} 
\end{enumerate}
\end{lemma}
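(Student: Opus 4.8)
The plan is to repeat, for the three singular fibers of $\pi_6^\prime$, the argument already carried out in Lemmas \ref{lemm H fib for ell 2}, \ref{lemm H fib for ell 3} and \ref{lemm ell 4 H}: combine the local structure of the quotient singularities of $X_6$ with intersection theory on the minimal resolution $\wt X_6$. As in those cases, $\wt\pr_1 : X_6 \to \mds P^1$ is generically a $\mds P^1$-fibration (its fiber over a non-orbifold point of $\mc X_6$ is a free $\mu_6$-quotient $\mds P^1$), so $\pi_6^\prime$ is too.

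First I would fix, for each orbifold point $p_l$ ($l=1,2,3$) with stabilizer $\mu_{m_l}$ ($m_1=6$, $m_2=3$, $m_3=2$), the fixed point $q_l\in E_{(0,1)}$ over it, and observe that $\{q_l\}\times\mds P^1$ descends to a smooth rational curve $\wt D_l\subset X_6$ with $\wt\pr_1^{-1}(p_l)=m_l\wt D_l$; the only singular points of $X_6$ on $\wt D_l$ are the images of $(q_l,0)$ and $(q_l,\infty)$. To identify their analytic types I would compute the $\mu_{m_l}$-weights on $T_{(q_l,\bullet)}(E_{(0,1)}\times\mds P^1)$: the weight in the $E_{(0,1)}$-direction at $q_l$ is read off from $\sigma_6^*\omega=\sigma_3^*\tau^*\omega=-e^{2\pi\sqrt{-1}/3}\,\omega$ and its restrictions to $\mu_3$, $\mu_2$, while the weights in the $\mds P^1$-direction at $0$ and $\infty$ are opposite and determined by the induced $\mu_{m_l}$-action on $\mds P(T^\vee\mc X_6\oplus\mc O)=[E_{(0,1)}\times\mds P^1/\mu_6]$, exactly as recorded in the $\wt D_4$, $\wt E_6$ and $\wt E_7$ subsections. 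This should give the two singularities on $\wt D_1$ as $\tfrac16(1,5)$ and $\tfrac16(1,1)$, the two on $\wt D_2$ as $\tfrac13(1,2)$ and $\tfrac13(1,1)$, and the two on $\wt D_3$ as $\tfrac12(1,1)$ each; their minimal resolutions (cf.\ \cite{rie77}, and Lemma 2.12(ii) of \cite{bhpv04}) produce respectively an $A_5$-chain of $(-2)$-curves together with a single $(-6)$-curve, an $A_2$-chain of $(-2)$-curves together with a single $(-3)$-curve, and two disjoint $(-2)$-curves — precisely the $E_1,\dots,E_9$ and $F_1,F_2$ of the statement.

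With $D_l$ denoting the strict transform of $\wt D_l$ on $\wt X_6$, I would then write $\pi_6^{\prime-1}(p_l)=m_lD_l+\sum(\text{unknown natural coefficients})\cdot E_\bullet+(\text{unknowns})\cdot F_\bullet$. Intersecting this numerically trivial class with each exceptional curve solves for those coefficients in terms of the intersection numbers $D_l\cdot E_\bullet$ and $D_l\cdot F_\bullet$; self-intersecting the relation forces $D_l^2<0$; and the adjunction formula applied to the fiber (using $K_{\wt X_6}\cdot\pi_6^{\prime-1}(p_l)=-2$ and $K_{\wt X_6}\cdot E_\bullet=0$) forces $K_{\wt X_6}\cdot D_l=-1$, so $D_l$ is a $(-1)$-curve. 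Feeding $D_l^2=K_{\wt X_6}\cdot D_l=-1$ back into the solved system pins down all multiplicities and the intersection pattern, giving the three dual graphs. I would only indicate these computations, since they are the same routine done three times above.

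The one genuinely delicate point — the main obstacle — is the middle step: correctly determining the $\mu_{m_l}$-weights in both the base and fiber directions, in particular the sign flip between the points $0$ and $\infty$ of each $\mds P^1$-fiber, so that the six quotient singularities come out with exactly the types listed. Once these types are fixed, the rest is bookkeeping identical to Lemmas \ref{lemm H fib for ell 2}--\ref{lemm ell 4 H}.
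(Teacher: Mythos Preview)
Your proposal is correct and follows essentially the same approach as the paper: identify the quotient singularities on each $\wt D_l$, read off their Hirzebruch--Jung resolutions, and then use fiber self-intersection together with adjunction to pin down $D_l$ as a $(-1)$-curve and solve for the multiplicities. The paper in fact only writes out case~(1) and declares the others analogous, just as you do. One small point of ordering: in cases~(1) and~(2) adjunction alone does not immediately give $K_{\wt X_6}\cdot D_l=-1$, since $K_{\wt X_6}\cdot F_1=4$ and $K_{\wt X_6}\cdot F_2=1$ contribute; the paper first observes $K_{\wt X_6}\cdot D_l<0$ (which holds for any nonnegative multiplicity of $F_\bullet$), combines this with $D_l^2<0$ to conclude $D_l$ is a $(-1)$-curve, and \emph{then} reads off the multiplicity of $F_\bullet$ --- but this is the same computation reordered.
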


\begin{proof}

We only need to prove case (1). Observe that $\wt\pr_1^{-1}(p_1) = 6\wt D_1$, where $\wt D_1$ is a smooth rational curve containing two singular points of types $\frac{1}{6}(1,5)$ and $\frac{1}{6}(1,1)$. Let $\cup_{i=1}^{5} E_i$ and $F_1$ denote the corresponding exceptional curves, and let $D_1$ be the strict transform of $\wt D_1$. Then $\pi_6^{\prime-1}(p_1) = 6D_1+\sum_{1\leq i\leq 5} n_iE_i + nF_1$ with $n_1,\dots,n_5,n\in\mbb N$ satisfying $(6D_1 + \sum_{1\leq i\leq 5} n_i E_i + n F_1)\cdot D_1 = 0$. By the adjunction formula, $(6D_1 + \sum_{i=1}^{5} n_i E_i + n F_1)\cdot K_{\wt X_6} = -2$, so that $6D_1\cdot K_{\wt X_6} = -2 - 4n$. Hence, $D_1^2 < 0$ and $D_1 \cdot K_{\wt X_6} < 0$, i.e., $D_1$ is an exceptional curve. It follows that $n = 1$ and $\sum_{1\leq i\leq 5}n_i^2-(n_1n_2 + n_2n_3 + n_3n_4 + n_4n_5) = 15$. Up to reordering, we can take $n_1 = 1$, $n_2 = 2$, $n_3 = 3$, $n_4 = 4$, $n_5 = 5$, which gives $D_1\cdot E_1 = D_1\cdot E_2 = D_1\cdot E_3 = D_1\cdot E_4 = 0$, $D_1\cdot E_5 = 1$ and $D_1\cdot F_1 = 1$.

\end{proof}

\begin{lemma}\label{lemm ell 6 E}
	There exist two smooth rational curves $D_0$ and $D_\infty$ on $\wt X_6$ such that the singular fibers of the elliptic fibration $\pi_6$ are as follows:
	\begin{enumerate}[$(1)$]
		\item $\pi_6^{-1}(0) = 6D_0 + E_5 + 2E_4 + 3E_3 + 4E_2 + 5E_1 + 4E_6 + 2E_7 + 3E_8$,  
		which is of type $\uppercase\expandafter{\romannumeral 2}^*$ $(\wt E_8)$;
		\item $\pi_6^{-1}(\infty) = 6D_\infty + F_1 + 2F_2 + 3E_9$,  
		whose dual graph is
		\begin{center}
			\begin{tikzpicture}[scale=0.45]
				\node[circle, fill=black, inner sep=1.5pt] (A) at (0,2)  {};
				\node[circle, fill=black, inner sep=1.5pt] (B) at (2,0)  {};
				\node[circle, fill=black, inner sep=1.5pt] (C) at (4,2)  {};
				\node[circle, fill=black, inner sep=1.5pt] (D) at (2,-2)  {};
				
				\draw [thick] (A)--(B)--(C);
				\draw [thick] (B)--(D);
				
				\node [right]    at (A)    {$-6$};
				\node [right]    at (B)    {$-1$};
				\node [left]     at (C)    {$-3$};
				\node [right]    at (D)    {$-2$};
				
				\node [left]     at (A)    {$F_1$};
				\node [left]     at (B)    {$D_\infty$};
				\node [right]    at (C)    {$F_2$};
				\node [left]     at (D)    {$E_9$};
			\end{tikzpicture}
		\end{center} 
	\end{enumerate}
Moreover, after performing three successive blow-downs of the exceptional curves over $\infty$, one obtains a relatively minimal elliptic fibration whose singular fiber over $\infty$ is of type $\uppercase\expandafter{\romannumeral 2}$.
\end{lemma}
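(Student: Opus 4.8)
The proof will parallel those of Lemmas~\ref{lemm E fib for ell 3} and~\ref{lemm ell 4 E}. First I would observe that $\wt\pr_2 : X_6\to\mds P^1$ has a smooth elliptic curve $E_{(0,1)}$ as fibre over every point outside $0$ and $\infty$, since over a free $\mu_6$-orbit the six copies of $E_{(0,1)}$ are permuted cyclically; hence the only singular fibres of $\pi_6=\wt\pr_2\circ\wt\pi_6$ lie over $0$ and $\infty$. Over $0$ there is a smooth rational curve $\wt D_0=E_{(0,1)}\times\{0\}/\mu_6\subset X_6$ with $\wt\pr_2^{-1}(0)=6\wt D_0$, carrying exactly the three singular points of $X_6$ over $0$. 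Keeping track of the two local weights of $\sigma_6=\sigma_3\circ\tau$ at each of $q_1,q_2,q_3$ (the weight on $T_{q_k}E_{(0,1)}$, read off from $\sigma_6^*\omega$, and the weight on the fibre direction of $\mds P(T^\vee\mc X_6\oplus\mc O)$ along the zero-section) shows these singularities are of type $\frac16(1,5)$, $\frac13(1,2)$, $\frac12(1,1)$, resolved respectively by the $A_5$-chain $E_1\cup\dots\cup E_5$, the $A_2$-chain $E_6\cup E_7$, and the single $(-2)$-curve $E_8$ — exactly the curves produced over $p_1,p_2,p_3$ in Lemma~\ref{lemm ell 6 H} (for the $\frac1n(1,1)$ points see \cite[Satz 8]{rie77}).

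Writing $\pi_6^{-1}(0)=6D_0+\sum_i\wt n_iE_i$ with $D_0$ the strict transform of $\wt D_0$, intersecting with each $E_i$ gives $9D_0^2+(\text{positive quadratic form in the }\wt n_i)=0$, so $D_0^2<0$; adjunction applied to the fibre gives $K_{\wt X_6}\cdot D_0=0$, and then nonnegativity of $\frac12(D_0^2+K_{\wt X_6}\cdot D_0)+1$ forces $D_0^2=-2$. Thus the intersection matrix of $\{D_0,E_1,\dots,E_8\}$, having $\pi_6^{-1}(0)$ as annihilator, is the affine Dynkin diagram $\wt E_8$ by Zariski's Lemma (\cite[Lemma 2.12]{bhpv04}); and since $D_0$ is a section of $\pi_6^{\prime}$ (it maps isomorphically to $\mds P^1$ under $\wt\pr_1$), the identities $D_0\cdot\pi_6^{\prime-1}(p_l)=1$ from Lemma~\ref{lemm ell 6 H} pin down which component $D_0$ meets, yielding the multiplicity vector $(6,5,4,3,2,1,4,2,3)$ and, by Kodaira's classification, type $\mathrm{II}^*$.

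For the fibre over $\infty$ the same argument applies to $\wt D_\infty=E_{(0,1)}\times\{\infty\}/\mu_6$, whose three singular points are now of type $\frac16(1,1)$, $\frac13(1,1)$, $\frac12(1,1)$, resolved by the single curves $F_1$ $(-6)$, $F_2$ $(-3)$, $E_9$ $(-2)$. Writing $\pi_6^{-1}(\infty)=6D_\infty+\wt m_1F_1+\wt m_2F_2+\wt m_3E_9$, the self-intersection equation gives $D_\infty^2<0$ while adjunction gives $6K_{\wt X_6}\cdot D_\infty=-2-(\wt m_1+\wt m_2+\wt m_3)<0$, so $D_\infty$ is a $(-1)$-curve and $\wt m_1=1$, $\wt m_2=2$, $\wt m_3=3$, which is the stated dual graph. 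For the last assertion I would contract successively $D_\infty$ (leaving $F_1,F_2,E_9$ through a common point with self-intersections $-5,-2,-1$), then $E_9$ (leaving $F_1,F_2$ with self-intersections $-4,-1$ and $F_1\cdot F_2=2$), then $F_2$ (leaving a single irreducible rational curve of self-intersection $0$); the resulting elliptic fibration is relatively minimal. Since $\wt X_6$ is rational, its relatively minimal model has Euler number $12$, of which the fibre over $0$ (type $\mathrm{II}^*$) contributes $10$ and, by the first paragraph, there are no further singular fibres, so the fibre over $\infty$ has Euler number $2$; being irreducible, it is of type $\mathrm{II}$.

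The step I expect to require the most care is the bookkeeping of the local $\mu_6$-weights used to identify the singularity types (in particular $\frac16(1,5)$ over $0$ versus $\frac16(1,1)$ over $\infty$, and similarly at the $\mu_3$- and $\mu_2$-points) from the explicit action $\sigma_6=\sigma_3\circ\tau$ on $E_{(0,1)}\times\mds P^1$; once these are correct, all the intersection-theoretic deductions are routine, having been made already in the $\wt D_4$, $\wt E_6$ and $\wt E_7$ cases. The only other subtlety — ruling out type $\mathrm{I}_1$ in favour of type $\mathrm{II}$ over $\infty$ — is settled by the Euler-number count above, since an irreducible fibre with Euler number $2$ is necessarily cuspidal.
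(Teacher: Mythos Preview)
Your approach is essentially the paper's: identify the singularity types on $\wt D_0$ and $\wt D_\infty$, show $D_0$ is a $(-2)$-curve and invoke Zariski's Lemma/\cite[Lemma~2.12]{bhpv04} for the $\wt E_8$ identification, then show $D_\infty$ is a $(-1)$-curve and contract. Two computational slips to fix. First, ``$9D_0^2+\cdots=0$'' is left over from the $\wt E_6$ case; here the central multiplicity is $6$, and intersecting $\pi_6^{-1}(0)$ with $D_0$ gives $6D_0^2+\sum_i\wt n_i\,E_i\!\cdot\!D_0=0$. Second, your adjunction identity over $\infty$ is wrong: since $K_{\wt X_6}\!\cdot\!F_1=4$, $K_{\wt X_6}\!\cdot\!F_2=1$, $K_{\wt X_6}\!\cdot\!E_9=0$, one gets $6K_{\wt X_6}\!\cdot\!D_\infty=-4\wt m_1-\wt m_2$, not $-2-(\wt m_1+\wt m_2+\wt m_3)$. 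With $D_\infty^2=K_{\wt X_6}\!\cdot\!D_\infty=-1$ this still yields $(\wt m_1,\wt m_2,\wt m_3)=(1,2,3)$, so your conclusion stands.

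The one genuine difference is your endgame for type~$\mathrm{II}$. The paper argues geometrically: after the second blow-down $F_1$ and $F_2$ meet \emph{tangentially} at a single point, so contracting the $(-1)$-curve $F_2$ produces a cusp on the image of $F_1$. Your Euler-number count is a clean alternative, but the justification ``since $\wt X_6$ is rational'' is circular in the paper's logical order (rationality is only established via Proposition~\ref{pro ell 6 str}, which uses this lemma). You can rescue it without rationality: compute $\chi(\wt X_6)=15$ directly from the $\pi_6'$-fibration of Lemma~\ref{lemm ell 6 H} (base $\mds P^1$ minus three points contributes $-2$, the three singular fibres contribute $8+5+4$), so after three blow-downs $\chi=12$, the $\mathrm{II}^*$ fibre accounts for $10$, and the remaining irreducible fibre has $\chi=2$, forcing type~$\mathrm{II}$.
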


\begin{proof}
As before, we have $\wt\pr_2^{-1}(0)=6\wt D_0$ for a smooth rational curve $\wt D_0$ containing three singular points of types $\frac{1}{6}(1,5)$, $\frac{1}{3}(1,2)$, and $\frac{1}{2}(1,1)$. Let $D_0$ denote its strict transform, so that
\begin{equation}\label{equ ell 6 div 0}
\textstyle{\pi_6^{-1}(0) = 6D_0 + \sum_{1\leq i\leq 8} \wt n_i E_i}
\end{equation}
for some natural numbers $\{\wt n_i\}$. From $6 D_0^2 + \sum_{i=1}^{8} \wt n_i E_i \cdot D_0 = 0$, we deduce $D_0^2 < 0$. The adjunction formula gives $K_{\wt X_6}\cdot(6 D_0 + \sum_{1\leq i\leq 8} \wt n_i E_i) = 0$, and hence $K_{\wt X_6}\cdot D_0 = 0$. Since $\frac{1}{2}(D_0^2 + K_{\wt X_6} \cdot D_0 + 2)$ is a nonnegative integer, 
we conclude $D_0^2 = -2$. Consider the real vector space with basis $\{D_0, E_i\}_{1\le i \le 8}$, 
on which the intersection form with annihilator (\ref{equ ell 6 div 0}) corresponds to the affine Dynkin diagram $\wt E_8$ (see Zariski's Lemma and Lemma 2.12 in \cite{bhpv04}).  
Using the projection formula and the fact that $\wt D_0\cdot \wt D_1 = 1/6$, we obtain $D_0 \cdot \bigl(6 D_1 + \sum_{i=1}^{5} i E_i + F_1\bigr) = 1$, so that
\[
D_0 \cdot E_i = 
\begin{cases}
	1, & i=1,\\
	0, & \text{otherwise}.
\end{cases}
\] 
Hence, we can write $\pi_6^{-1}(0) = 6 D_0 + E_5 + 2E_4 + 3E_3 + 4E_2 + 5E_1 + 4E_6 + 2E_7 + 3E_8$, which is of type $\uppercase\expandafter{\romannumeral 2}^*$ $(\wt E_8)$. For the fiber over $\infty$, we have $\pi_6^{-1}(\infty) = 6 D_\infty + m_1 F_1 + m_2 F_2 + m_3 E_9$, for some natural numbers $m_1, m_2, m_3$ (see Figure~\ref{fig ell 6 1}). Note that $D_\infty^2 < 0$ and $K_{\wt X_6}\cdot D_\infty < 0$. So $D_\infty$ is an exceptional curve of the first kind.  

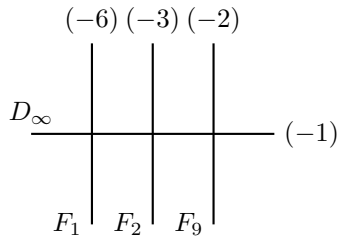
\begin{figure}[H]
	\centering
	\begin{tikzpicture}[scale=0.8]
		\coordinate (A) at (0,0); \coordinate (B) at (4,0);
		\draw [thick] (A)--(B) node[pos=0,above]{$D_\infty$} node[pos=1,right]{$(-1)$};
		\coordinate (D) at (1,1.5); \coordinate (E) at (1,-1.5);
		\draw [thick] (D)--(E) node[left]{$F_1$} node[pos=0,above]{$(-6)$};
		\coordinate (F) at (2,1.5); \coordinate (G) at (2,-1.5);
		\draw [thick](F)--(G) node[left]{$F_2$} node[pos=0,above]{$(-3)$};
		\coordinate (H) at (3,1.5); \coordinate (I) at (3,-1.5);
		\draw [thick](H)--(I) node[left]{$F_9$} node[pos=0,above]{$(-2)$};
	\end{tikzpicture}
	\caption{The singular fiber over $\infty$ in $\wt X_6$}
	\label{fig ell 6 1}
\end{figure}

Blowing down $D_\infty$ yields a new elliptic fibration $\pi_6^{(1)} : \wt X_6^{(1)} \to \mds {P}^1$, with singular fiber $\pi_6^{(1)-1}(\infty) = F_1^{(1)} + 2 F_2^{(1)} + 3 E_9^{(1)}$, where $F_1^{(1)}, F_2^{(1)}, E_9^{(1)}$ are the birational transforms of $F_1, F_2, E_9$ (Figure~\ref{fig ell 6 2}).  

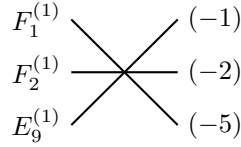
\begin{figure}[H]
	\centering
	\begin{tikzpicture}[scale=0.7]
		\draw [thick] (-1,1)--(1,-1) node[pos=0,left]{$F_1^{(1)}$} node[pos=1,right]{$(-5)$};
		\draw [thick] (-1,0)--(1,0) node[pos=0,left]{$F_2^{(1)}$} node[pos=1,right]{$(-2)$};
		\draw [thick] (-1,-1)--(1,1) node[pos=0,left]{$E_9^{(1)}$} node[pos=1,right]{$(-1)$};
	\end{tikzpicture}
	\caption{After blowing down $D_\infty$}
	\label{fig ell 6 2}
\end{figure}

Next, $E_9^{(1)}$ is an exceptional curve of the first kind. Blowing it down gives $\pi_6^{(2)} : \wt X_6^{(2)} \to \mds {P}^1$, with singular fiber $\pi_6^{(2)-1}(\infty) = F_1^{(2)} + 2 F_2^{(2)}$, where $F_1^{(2)}$ and $F_2^{(2)}$ are birational transforms of $F_1^{(1)}$ and $F_2^{(1)}$ (Figure~\ref{fig ell 6 3}).  

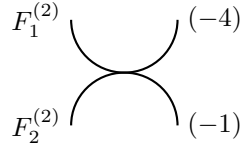
\begin{figure}[H]
	\centering
	\begin{tikzpicture}[scale=0.7]
		\draw [thick] (0,1) arc (0:-180:1) node[pos=0,right]{$(-4)$} node[pos=1,left]{$F_1^{(2)}$};
		\draw [thick] (0,-1) arc (0:180:1) node[pos=0,right]{$(-1)$} node[pos=1,left]{$F_2^{(2)}$};
	\end{tikzpicture}
	\caption{After blowing down $E_9^{(1)}$}
	\label{fig ell 6 3}
\end{figure}

Here, $F_1^{(2)}$ and $F_2^{(2)}$ intersect at one point, with $F_1^{(2)2} = -4$, $F_2^{(2)2} = -1$ and $F_1^{(2)}\cdot F_2^{(2)} = 2$. Finally, blowing down $F_2^{(2)}$ gives $\pi_6^{(3)} : \wt X_6^{(3)} \to \mds P^1$, whose singular fiber over $\infty$ is a cuspidal rational curve $F_1^{(3)}$ (Figure~\ref{fig ell 6 4}).  

\begin{figure}[H]
	\centering
	\begin{tikzpicture}[scale=0.7]
		\draw [thick] (0,1.5) arc (0:-90:1.5) node[pos=0,left]{$F_1^{(3)}$};
		\draw [thick] (0,-1.5) arc (0:90:1.5);
	\end{tikzpicture}
	\caption{Cuspidal fiber over $\infty$ after final blow-down}
	\label{fig ell 6 4}
\end{figure}
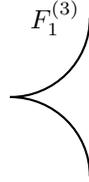

\end{proof}

We now show that $\wt X_6$ arises from the second Hirzebruch surface via a sequence of blow-ups. Let $C_1$, $C_2$, and $C_3$ be the fibers of $h_2 : H_2 \to \mds {P}^1$ over the orbifold points $p_1$, $p_2$, $p_3$. Blowing up $E_0 \cap C_i$ ($i=1,2,3$) yields a new fibration $h_2^{(1)} : H_2^{(1)} \to \mds {P}^1$ with exceptional curves $E_i^{(1)}$. Blowing up the three points $E_i^{(1)}\cap\wt C_i$, where $\wt C_i$ denote the strict transforms of $C_i$, gives $h_2^{(2)} : H_2^{(2)} \to \mds{P}^1$ with exceptional curves $E_i^{(2)}$. Next, blowing up $E_j^{(2)} \cap E_j^{(1,1)}$ ($j=1,2$), where $E_j^{(1,1)}$ denotes the strict transform of $E_j^{(1)}$, produces $h_2^{(3)} : H_2^{(3)} \to \mds{P}^1$ with exceptional curves $E_1^{(3)}$ and $E_2^{(3)}$. Let $E_1^{(1,2)}$ denote the strict transform of $E_1^{(1,1)}$. Blowing up the point $E_1^{(3)} \cap E_1^{(1,2)}$ yields $h_2^{(4)} : H_2^{(4)} \to \mds {P}^1$. Proceeding in this manner, we perform two further blow-ups and finally obtain $h_2^{(6)} : H_2^{(6)} \to \mds {P}^1$.

\begin{proposition}\label{pro ell 6 str}
$\pi_6^\prime : \wt X_6\rightarrow\mds P^1$ is isomorphic to $h_2^{(6)} : H_2^{(6)}\rightarrow\mds P^1$.
\end{proposition}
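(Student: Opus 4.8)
The plan is to follow exactly the strategy used in Propositions~\ref{pro stru of ell 2}, \ref{prop stru for ell 3} and \ref{pro ell 4 str}: assemble the global picture of $\wt X_6$ from the fiber data, identify all $(-1)$-curves, contract them in a controlled order until a Hirzebruch surface is reached, pin down which Hirzebruch surface it is via the self-intersection of $D_0$, and then observe that reversing the contractions reproduces precisely the surface $H_2^{(6)}$ built above.

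\textbf{Step 1 (global configuration).} First I would combine Lemma~\ref{lemm ell 6 H} (the three reducible fibers of $\pi_6'$ over $p_1,p_2,p_3$, with their dual graphs) and Lemma~\ref{lemm ell 6 E} (the two reducible fibers of $\pi_6$ over $0$ and $\infty$, of types $\mathrm{II}^*(\wt E_8)$ and the four-curve graph) to draw the full configuration of curves on $\wt X_6$, in a figure analogous to Figures~\ref{fig ell 2 1}, \ref{fig ell 3 3}, \ref{fig ell 4 4}. The curves $D_1,D_2,D_3$ are exceptional of the first kind by Lemma~\ref{lemm ell 6 H}; $D_0$ has $D_0^2=-2$ and $D_\infty$ is a $(-1)$-curve by Lemma~\ref{lemm ell 6 E}.

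\textbf{Step 2 (successive blow-downs to a Hirzebruch surface).} Starting from $D_1,D_2,D_3$ I would contract $(-1)$-curves one at a time. After contracting $D_1,D_2,D_3$ the fibers of the generic $\mds P^1$-fibration over $p_1,p_2,p_3$ shorten; new $(-1)$-curves appear among the birational transforms of the $E_j$'s (exactly as in the $\wt E_7$ case, where contracting $D_l$ turned the $-2$ chain into a configuration with a central $(-1)$-curve). Iterating, one checks at each stage that a $(-1)$-curve remains in each non-reduced fiber, and that $D_0$ is never touched (it meets only $E_1$, which is an end of the $\wt E_8$-chain, so it survives every contraction). The process terminates in a $\mds P^1$-bundle over $\mds P^1$, i.e. some Hirzebruch surface $H_n$. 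Since $D_0^2=-2$ is preserved throughout and $D_0$ maps to a section of negative self-intersection $-2$, we get $n=2$, so the terminal surface is $H_2$. (Tracking the precise order — which blow-downs correspond to the six blow-ups in the construction of $H_2^{(6)}$ — is the bookkeeping step; it is routine but must be done so that the reverse process literally reproduces the sequence $H_2 \leftarrow H_2^{(1)} \leftarrow \cdots \leftarrow H_2^{(6)}$ defined before the proposition.)

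\textbf{Step 3 (matching with $H_2^{(6)}$ and conclusion).} Reversing the chain of contractions realizes $\wt X_6$ as a sequence of six blow-ups of $H_2$, and by comparing the locations of the blown-up points (each $E_0\cap C_i$, then $\wt C_i \cap E_i^{(1)}$, then the points on the strict transforms of the $E_i^{(1)}$, etc.) with the explicit construction of $H_2^{(6)}$, one sees the two agree, compatibly with the maps to $\mds P^1$; hence $\pi_6' : \wt X_6 \to \mds P^1$ is isomorphic to $h_2^{(6)} : H_2^{(6)} \to \mds P^1$. The main obstacle is Step~2: verifying that the centers of the six blow-ups in the construction of $H_2^{(6)}$ are exactly the points that get contracted, in the correct order, so that the configuration of Figure from Step~1 is recovered on the nose rather than just up to an isomorphism of the generic $\mds P^1$-bundle.
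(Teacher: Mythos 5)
Your proposal is correct and follows essentially the same route as the paper: the paper's proof likewise assembles the configuration of curves on $\wt X_6$ from Lemmas~\ref{lemm ell 6 H} and~\ref{lemm ell 6 E}, contracts $(-1)$-curves beginning with $D_1,D_2,D_3$ down to a Hirzebruch surface identified as $H_2$ via $D_0^2=-2$, and reverses the process to match the construction of $H_2^{(6)}$. Your write-up is in fact more explicit about the bookkeeping than the paper, which largely defers to the figure and the argument of Proposition~\ref{pro ell 4 str}.
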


\begin{proof}
According to Lemmas \ref{lemm ell 6 H} and \ref{lemm ell 6 E}, the surface $\wt X_6$ can be depicted as in Figure~\ref{fig ell 6 5}. As in the proof of Proposition \ref{pro ell 4 str}, this figure illustrates the configuration of the curves, from which the proposition follows.
	
\begin{figure}[H]
\centering
\begin{tikzpicture}[xscale=0.8, yscale=0.5]
		    \coordinate (A) at (0,0);
			\coordinate (B) at (8,0);
			\coordinate (C) at (0,7);
			\coordinate (D) at (8,7);
			\draw [thick] (A)--(B)
			node[pos=0,left]{$D_0$};
			\draw [thick] (C)--(D)
			node[pos=0,left]{$D_\infty$};
			
			\coordinate (A1) at (0.5,-0.5);
			\coordinate (A2) at (2.5,1.5);
			\coordinate (B1) at (2.5,0.5);
			\coordinate (B2) at (0.5,2.5);
			\coordinate (C1) at (0.5,1.5);
			\coordinate (C2) at (2.5,3.5);
			\coordinate (D1) at (2.5,2.5);
			\coordinate (D2) at (0.5,4.5);
			\coordinate (E1) at (0.5,3.5);
			\coordinate (E2) at (2.5,5.5);
			\coordinate (F1) at (2.5,4.5);
			\coordinate (F2) at (0.5,6.5);
			\coordinate (G1) at  (0.5,5.5);
			\coordinate (G2) at (2.5,7.5);
			\draw [thick] (A1)--(A2)
			node[pos=0,left]{$E_1$};
			\draw [thick] (B1)--(B2)
			node[pos=0,left]{$E_2$};
			\draw [thick] (C1)--(C2)
			node[pos=0,left]{$E_3$};
			\draw [thick] (D1)--(D2)
			node[pos=0,left]{$E_4$};
			\draw [thick] (E1)--(E2)
			node[pos=0,left]{$E_5$};
			\draw [thick] (F1)--(F2)
			node[pos=0,left]{$D_1$};
			\draw [thick] (G1)--(G2)
			node[pos=0,left]{$F_1$};
			
			
			\coordinate (A3) at (3.5,-0.5);
			\coordinate (A4) at (4.5,2.5);
			\coordinate (B3) at (4.5,1.5);
			\coordinate (B4) at (3.5,4.5);
			\coordinate (C3) at (3.5,3.5);
			\coordinate (C4) at (4.5,6.5);
			\coordinate (D3) at (4.5,5.5);
			\coordinate (D4) at (3.5,7.5);
			
			\draw [thick] (A3)--(A4)
			node[pos=0,left]{$E_6$};
			\draw [thick] (B3)--(B4)
			node[pos=0,right]{$E_7$};
			\draw [thick] (C3)--(C4)
			node[pos=0,left]{$D_2$};
			\draw [thick] (D3)--(D4)
			node[pos=0,right]{$F_2$};
			
			\coordinate (A5) at (5,-0.5);
			\coordinate (A6) at (6,2.5);
			\coordinate (B5) at (6,2);
			\coordinate (B6) at (5,5);
			\coordinate (C5) at (5,4.5);
			\coordinate (C6) at (6,7.5);
			
			\draw [thick] (A5)--(A6)
			node[pos=0,left]{$E_8$};
			\draw[thick] (B5)--(B6)
			node[pos=0,right]{$D_3$};
			\draw[thick] (C5)--(C6)
			node[pos=0,left]{$E_9$};
\end{tikzpicture}
\caption{Configuration of the curves on $\wt X_6$.}
\label{fig ell 6 5}
\end{figure}
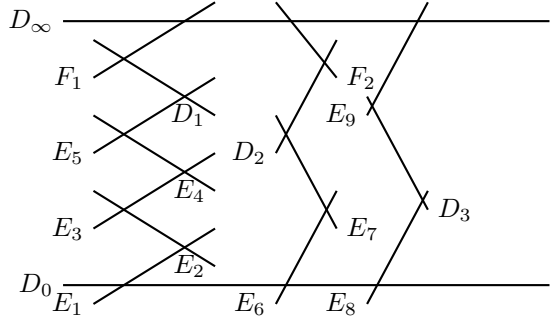

\end{proof}

\section{Appendix}
Throughout this appendix, we always assume that $\mc X$ is a Deligne-Mumford stack of finite type over $k$ with finite diagonal, whose coarse moduli space is $\pi : \mc X\rightarrow X$.

\begin{appendices}

\section{Orbifold Chern character and Euler form}\label{sec orb chern char}
Without loss of generality, assume $\mc X$ is a projective orbifold surface with a single stacky point $p$, whose stabilizer group is $G$. Let $\iota: BG\hookrightarrow \mc X$ denote the residue gerbe at $p$. Define $\mc O_p:=\iota_*\mc O_{BG}$ and, for any $G$-representation $\rho$, set $\mc O_p\otimes\rho:=\iota_*V_\rho$, where $V_\rho$ is the locally free sheaf on $BG$ corresponding to $\rho$. For $g\in G$, let $(g)$ be its conjugacy class, $C(g)$ its centralizer, and $\mc T$ the set of conjugacy classes of $G$. Since $K_0(BG)\cong K(G)$, the pullback $\iota^*$ induces a homomorphism $\iota^*: K_0(\mc X)\to K(G)$, and for any coherent sheaf $E$ on $\mc X$, we write $\rho_E := \iota^*[E]$ for the associated virtual $G$-representation.

\subsection{Orbifold Chern character}

The inertia stack decomposes as
\begin{equation*}
	I\mc X = \mc X \,\coprod\, \coprod_{(g)\in\mc T\setminus\{(1)\}} BC(g),
\end{equation*}
and its cohomology splits accordingly:
\begin{equation}\label{equ decom of cohomgy}
	H^*(I\mc X,\mbb C) = H^*(\mc X,\mbb C) \,\oplus\, \bigoplus_{(g)\in\mc T\setminus\{(1)\}} H^*(BC(g),\mbb C).
\end{equation}
With respect to this decomposition, the orbifold Chern character of a coherent sheaf $E$ is
\begin{equation*}
	\wt{\ch}(E) = \big(\ch(E), (\ch^{(g)}(E))_{(g)\in\mc T\setminus\{(1)\}}\big),
\end{equation*}
where $\ch(E)\in H^{\rm even}(\mc X,\mbb C)$ and $\ch^{(g)}(E)\in\mbb C$.  

\begin{lemma}\label{lemm chern ch of vb}
	For $(g)\in\mc T\setminus\{(1)\}$, $\ch^{(g)}(E) = \chi_{\rho_E}(g)$.
	\qed
\end{lemma}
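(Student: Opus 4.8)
The plan is to reduce to the case where $E$ is locally free and then simply unwind the definition of the orbifold Chern character on the twisted sectors. Both assignments $E\mapsto\ch^{(g)}(E)$ and $E\mapsto\chi_{\rho_E}(g)=\chi_{\iota^*[E]}(g)$ factor through $K_0(\mc X)$ and are additive there: the first because $\wt\ch$ is a group homomorphism $K_0(\mc X)\to H^*(I\mc X,\mbb C)$, the second because $\iota^*:K_0(\mc X)\to K(G)$ is a ring homomorphism and taking the character value at $g$ is additive. Since $\mc X$ is smooth, $K_0(\mc X)$ is generated by classes of vector bundles, so it suffices to prove the identity when $E$ is locally free; this is the essential simplification, because for such $E$ one has $\iota^*[E]=E_p$ honestly (the fibre at $p$ with its $G$-action), whereas for a general coherent sheaf $\rho_E$ is only a virtual representation.

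For $E$ locally free, I would then recall how $\wt\ch(E)$ is computed on the twisted sector $BC(g)\subset I\mc X$. Let $e_g:BC(g)\to\mc X$ be the structure map; since the only stacky point is $p$, it factors as $BC(g)\to BG\xrightarrow{\iota}\mc X$, the first arrow induced by $C(g)\hookrightarrow G$. Hence $e_g^*E$ is the representation $\rho_E=E_p$ restricted to $C(g)$, carrying the action of the central element $g$. Decompose it into $g$-eigenbundles $e_g^*E=\bigoplus_\zeta(\rho_E)_\zeta$, where $g$ acts on $(\rho_E)_\zeta$ by the root of unity $\zeta$; this is a decomposition of $C(g)$-representations since $g$ is central in $C(g)$. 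By the definition of the orbifold Chern character, $\ch^{(g)}(E)=\sum_\zeta\zeta\cdot\ch\bigl((\rho_E)_\zeta\bigr)\in H^*(BC(g),\mbb C)$.

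The key observation is then that $C(g)$ is a finite group, so $H^i(BC(g),\mbb C)=0$ for $i>0$ and $H^0(BC(g),\mbb C)=\mbb C$; consequently the Chern character of any vector bundle on $BC(g)$ equals its rank, placed in degree $0$. Therefore
\[
\ch^{(g)}(E)=\sum_\zeta\zeta\cdot\dimm_{\mbb C}(\rho_E)_\zeta=\tr\bigl(g\mid\rho_E\bigr)=\chi_{\rho_E}(g),
\]
which is the asserted identity.

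The only point requiring real care is to pin down the conventions in the definition of $\wt\ch$ on twisted sectors — that the eigenvalue appearing in the formula is $\zeta$ rather than $\zeta^{-1}$, and that the distinguished automorphism of $e_g^*E$ is $g$ acting in the given representation — together with the bookkeeping that the twisted sectors of $I\mc X$ coming from the single stacky point $p$ are exactly the $BC(g)$ with rationally trivial positive cohomology. Once these are fixed, the computation above is immediate.
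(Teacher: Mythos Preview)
Your proposal is correct and is exactly the unwinding of the definition of the orbifold Chern character on twisted sectors that the paper has in mind; the paper itself offers no proof beyond a \qed, treating the statement as immediate from the definitions together with the vanishing of $H^{>0}(BC(g),\mbb C)$, which is precisely what you spell out.
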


\begin{lemma}\label{lemm equiv koszul cp}
	Let $G$ act on $\mbb C[\![x,y]\!]$ via a 2-dimensional representation $\tau$. Then there is a $G$-equivariant exact sequence
	\begin{equation}\label{equ equiv koszul seq}
		0 \to \mbb C[\![x,y]\!]\otimes \det(\tau) \to \mbb C[\![x,y]\!]\otimes \tau \to \mbb C[\![x,y]\!] \to \mbb C \to 0.
	\end{equation}
	\qed
\end{lemma}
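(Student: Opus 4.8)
The statement to prove is Lemma~\ref{lemm equiv koszul cp}: for a finite group $G$ acting on $\mbb C[\![x,y]\!]$ via a $2$-dimensional representation $\tau$, there is a $G$-equivariant exact sequence
\[
0 \to \mbb C[\![x,y]\!]\otimes \det(\tau) \to \mbb C[\![x,y]\!]\otimes \tau \to \mbb C[\![x,y]\!] \to \mbb C \to 0.
\]

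The plan is to recognize this as the Koszul complex resolving the residue field, equipped with its natural $G$-equivariant structure. First I would fix a basis $x,y$ of the representation space $V=\tau^\vee$ (or $\tau$, depending on convention; I would track this carefully so that $\mbb C[\![x,y]\!]=\wh{\sym}(V^\vee)$ with the induced action), so that $\mathfrak m=(x,y)$ is the maximal ideal and is $G$-stable. The Koszul complex on the regular sequence $(x,y)$ reads
\[
0 \to R\otimes \wedge^2 W \xrightarrow{\ \partial_2\ } R\otimes W \xrightarrow{\ \partial_1\ } R \to R/\mathfrak m \to 0,
\]
where $R=\mbb C[\![x,y]\!]$, $W$ is the $2$-dimensional space spanned by the Koszul generators $e_x, e_y$, $\partial_1(e_x)=x$, $\partial_1(e_y)=y$, and $\partial_2(e_x\wedge e_y)=x\,e_y - y\,e_x$. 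Since $(x,y)$ is a regular sequence in the regular local ring $R$, this complex is exact with homology only $R/\mathfrak m=\mbb C$ at the right end; that is the standard commutative-algebra input I would cite.

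Next I would identify the $G$-equivariant structure. The point is that $\partial_1$ is, up to the identification $W\cong \tau$, precisely the natural multiplication pairing, so $G$-equivariance forces $W$ to carry the representation $\tau$ (the generators $e_x,e_y$ transform like $x,y$, i.e. like $\tau$ under the convention where $R=\wh{\sym}(\tau^\vee)\otim\cdots$ — again I would pin down the convention so that the middle term is literally $R\otimes\tau$). Then $\wedge^2 W \cong \wedge^2\tau \cong \det(\tau)$ as a $1$-dimensional representation, giving the left term $R\otimes\det(\tau)$. Checking that $\partial_1,\partial_2$ are $G$-equivariant is a direct computation: $g\cdot\partial_1(e_x) = g\cdot x$ and $\partial_1(g\cdot e_x)=\partial_1$ of the transformed generator, which agree because $e_x,e_y$ and $x,y$ carry the same action; similarly $\partial_2$ is equivariant because $x\,e_y-y\,e_x$ is the image of $e_x\wedge e_y$ under the canonical $G$-equivariant map $\wedge^2\tau\hookrightarrow \tau\otimes(R)_1\subset R\otimes\tau$ (the Koszul differential is built from the canonical maps $\wedge^\bullet$, which are functorial). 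I would phrase this last part invariantly to avoid getting bogged down in matrix entries: the Koszul complex $K_\bullet(\tau)$ of a locally free module map $\tau^\vee\otimes R \to R$ (here the evaluation/augmentation) is functorial in $\tau$, hence any $G$-action on $\tau$ inducing the given action on $R$ automatically makes $K_\bullet$ equivariant, and $K_2 = \wedge^2\tau\otimes R$.

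I do not anticipate a serious obstacle here — the lemma is essentially "the Koszul resolution is natural, hence equivariant." The only point requiring care, and the one I would spell out, is the bookkeeping of which representation ($\tau$ versus $\tau^\vee$) sits where, so that the final exact sequence matches the stated form with $\det(\tau)$ on the left and $\tau$ in the middle; this is purely a matter of matching the paper's convention for how $G$ acts on $\mbb C[\![x,y]\!]$ relative to its action on the cotangent space (cf.\ Lemma~\ref{lemm local str of DM}, where the $G$-action on $\mbb C[\![x,y]\!]$ is induced by the action on the cotangent space of $\mc X$ at $p$), and with $\wedge^2\tau\cong\det\tau$ being a tautology. I would therefore present the proof in three short steps: (i) write down the Koszul complex on $(x,y)$ and invoke regularity for exactness; (ii) observe the generators of $K_1$ transform as $\tau$ and those of $K_2$ as $\det\tau$, using naturality of Koszul complexes; (iii) conclude.
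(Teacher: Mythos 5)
Your proof is correct: the paper omits the argument entirely (the lemma is stated with a \qed as standard), and the intended justification is exactly the one you give — the Koszul complex on the regular sequence $(x,y)$, exact by regularity, made $G$-equivariant by naturality with $K_1\cong R\otimes\tau$ and $K_2\cong R\otimes\wedge^2\tau\cong R\otimes\det(\tau)$. Your care about the $\tau$ versus $\tau^\vee$ convention is the only point of substance, and you handle it correctly.
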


\begin{proposition}\label{pro ch of rho}
Let $\rho$ be a $G$-representation of degree $d$. Then 
\begin{gather*}
	\ch(\mc O_p\otimes\rho)=(0,0,d/|G|),\\
	\ch^{(g)}(\mc O_p\otimes\rho)=\dett(\id-\rho_{\Omega_\mc X}(g))\tr(\rho(g)),\quad (g)\in\mc T\setminus\{(1)\}.
\end{gather*}
\end{proposition}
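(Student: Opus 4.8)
The plan is to split the orbifold Chern character according to the decomposition (\ref{equ decom of cohomgy}): the untwisted component $\ch(\mc O_p\otimes\rho)\in H^*(\mc X,\mbb C)$ is a global invariant of the projective stack $\mc X$, while each twisted component $\ch^{(g)}(\mc O_p\otimes\rho)\in\mbb C$ is attached to the sector $BC(g)$, which maps into the $g$-fixed locus $\{p\}$ and is therefore computed in an \'etale (or formal) neighborhood of $p$.

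\textbf{Untwisted part.} Here $\mc O_p\otimes\rho=\iota_*V_\rho$ is a torsion sheaf supported on the residue gerbe $BG$, so its rank vanishes, and since its support has codimension two its first Chern class vanishes as well; thus $\ch(\mc O_p\otimes\rho)=(0,0,\ch_2)$. To compute $\ch_2$ I would apply Grothendieck--Riemann--Roch to the proper closed immersion $\iota\colon BG\hookrightarrow\mc X$ (available for Deligne--Mumford stacks, as in the Toen--Riemann--Roch formula \cite{ts10}): $\ch(\iota_*V_\rho)=\iota_*\bigl(\ch(V_\rho)\cdot\td(N_{BG/\mc X})^{-1}\bigr)$. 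As $H^{>0}(BG,\mbb Q)=0$, one has $\ch(V_\rho)=\dim\rho=d$ in $H^0(BG,\mbb Q)$ and $\td(N_{BG/\mc X})^{-1}=1$, so $\ch(\iota_*V_\rho)=d\cdot\iota_*(1)\in H^4(\mc X,\mbb Q)$. Finally $\int_{\mc X}\iota_*(1)=\int_{BG}1=1/|G|$ is the stacky degree of $BG$ over a point, giving $\ch_2=d/|G|$. (Equivalently, pulling $\iota_*V_\rho$ back along a smooth chart $\spec A\to\mc X$ as in Lemma \ref{lemm local str of DM} turns it into a skyscraper of ordinary length $d$, and dividing by $|G|$ recovers the stacky length $d/|G|$.)

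\textbf{Twisted parts.} Fix $(g)\in\mc T\setminus\{(1)\}$. Since $g$ acts on the chart $[\spec\mbb C[\![x,y]\!]/G]$ of Lemma \ref{lemm local str of DM} with $p$ as its only fixed point, $BC(g)$ maps into $\{p\}$, so $\ch^{(g)}(\mc O_p\otimes\rho)$ is determined by the restriction of $\mc O_p\otimes\rho$ to this chart and I would compute there. Tensoring the $G$-equivariant Koszul resolution (\ref{equ equiv koszul seq}) of Lemma \ref{lemm equiv koszul cp} with $\rho$, and using that the $2$-dimensional representation $\tau$ on $\mbb C[\![x,y]\!]$ is the cotangent representation $\rho_{\Omega_\mc X}$ (with $\det\tau=\rho_{K_\mc X}$), gives in $K_0$ of the chart
\[
[\mc O_p\otimes\rho]=[\mc O\otimes\rho]-[\mc O\otimes(\rho_{\Omega_\mc X}\otimes\rho)]+[\mc O\otimes(\rho_{K_\mc X}\otimes\rho)],
\]
where $\mc O\otimes\sigma$ denotes the locally free sheaf with fiber the $G$-representation $\sigma$ at $p$. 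By Lemma \ref{lemm chern ch of vb} the value of $\ch^{(g)}$ on a vector bundle is the character at $g$ of its fiber at $p$, so $\ch^{(g)}(\mc O\otimes\sigma)=\chi_\sigma(g)=\tr(\sigma(g))$; applying the additive homomorphism $\ch^{(g)}$ to the displayed identity yields
\[
\ch^{(g)}(\mc O_p\otimes\rho)=\tr(\rho(g))\bigl(1-\tr(\rho_{\Omega_\mc X}(g))+\dett(\rho_{\Omega_\mc X}(g))\bigr)=\dett(\id-\rho_{\Omega_\mc X}(g))\,\tr(\rho(g)),
\]
where the last equality is the $2\times2$ identity $\dett(\id-A)=1-\tr(A)+\dett(A)$.

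\textbf{Main obstacle.} The individual steps are routine once Lemmas \ref{lemm chern ch of vb} and \ref{lemm equiv koszul cp} are in hand; the delicate point is that the argument is genuinely hybrid. The class $\ch_2$ lives in $H^4$ of the \emph{projective} stack and must be read off by GRR along $\iota$ (or via the smooth chart), whereas the twisted numbers vanish away from $p$ and are computed on the non-proper formal chart, where $H^4$ is zero and the Koszul alternating sum would (consistently) return $0$ for the chart-level untwisted $\ch_2$. I would therefore be careful to state which invariant is computed in which model, and to pin down the two normalizations involved: $\int_{BG}1=1/|G|$, and the identification $\mfr m/\mfr m^2\cong\Omega_\mc X|_p$ underlying $\tau=\rho_{\Omega_\mc X}$.
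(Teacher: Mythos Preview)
Your proof is correct and follows essentially the same approach as the paper: the paper's proof is the one-line ``By applying Lemma \ref{lemm equiv koszul cp}, the conclusion follows by direct computation,'' and you have carried out exactly that computation, using the equivariant Koszul resolution together with Lemma \ref{lemm chern ch of vb} for the twisted sectors and a GRR/stacky-degree argument for the untwisted $\ch_2$. Your careful separation of the global (untwisted) and local (twisted) computations, and the explicit $2\times 2$ determinant identity, make the argument more transparent than the paper's terse version.
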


\begin{proof}
By applying Lemma 1, the conclusion of this proposition follows by direct computation
\end{proof}


\subsection{Euler form and $K^{\rm num}(\mc X)$}\label{subsec euler form}
As in the case of schemes, the Euler characteristic of a pair $(E_1,E_2)$ of coherent sheaves is
\begin{equation*}
		\chi(E_1,E_2)=\sum_{0\leq i\leq 2}(-1)^i\dimm_\mbb C\ext^i(E_1,E_2).
\end{equation*}
By To\"{e}n-Riemann-Roch formula, we have
\begin{equation*}
		\chi(E_1,E_2)
		=\int_{I\mc X}\wt{\ch}(E_1^\vee)\wt{\ch}(E_2)\wt{\td}(T\mc X).
\end{equation*}
Here $(\cdot)^\vee : K_0(\mc X)\rightarrow K_0(\mc X)$ is the involution, which extends the operation ``taking the dual of a locally free sheaf'' linearly to whole $K_0(\mc X)$.
	
\begin{definition}
The Euler form $\bar{\chi} : K_0(\mc X)\times K_0(\mc X)\rightarrow\mathds Z$ is a biadditive integer-valued function determined by $\bar{\chi}([E_1],[E_2]):=\chi(E_1,E_2)$.
\end{definition}
	
By a straightforward computation, it is easy to prove the following formulas.
\begin{proposition}\label{pro comp of euler form}
\begin{enumerate}[$(1)$]
\item $\chi(E_1,E_2)=\chi(E_2,E_1\otimes K_\mc X)$.
\item $\chi(\mc O_\mc X,\mc O_\mc X)=\chi(\mc O_\mc X)$, where $\chi(\mc O_\mc X)$ is the Euler characteristic of $\mc O_\mc X$;
\item $\chi(\mc O_q,\mc O_q)=0$ and $\chi(\mc O_\mc X,\mc O_q)=\chi(\mc O_q,\mc O_\mc X)=1$;
\item $\chi(\mc O_p\otimes\rho_i,\mc O_p\otimes\rho_j)=\langle\chi_{\rho_j},\chi_{\rho_i}\rangle-\langle\chi_{\rho_j},\chi_{\rho_i\otimes\rho_{\Omega_\mc X}}\rangle+\langle\chi_{\rho_j},\chi_{\rho_i\otimes\rho_{K_\mc X}}\rangle$;
\item $\chi(\mc O_\mc X,\mc O_p\otimes\rho_i)=\langle\chi_{\rho_i},\chi_{\rho_0}\rangle$;
\item $\chi(\mc O_p\otimes\rho_i,\mc O_\mc X)=\langle\chi_{\rho_0},\chi_{\rho_i\otimes\rho_{K_{\mc X}}}\rangle$.
\end{enumerate}
\qed
\end{proposition}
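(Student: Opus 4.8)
The plan is to reduce all six identities to two ingredients: Serre duality on the smooth proper Deligne--Mumford stack $\mc X$, and the fact that a sheaf supported on the residue gerbe $\iota : BG\hookrightarrow\mc X$ at $p$ has cohomology equal to that of $BG$, which over $\mbb C$ is concentrated in degree $0$ and computes $G$-invariants. Identity $(1)$ is Serre duality: since $\mc X$ is smooth and proper of dimension $2$ with dualizing line bundle $K_\mc X$, one has $\ext^i(E_1,E_2)\cong\ext^{2-i}(E_2,E_1\otimes K_\mc X)^\vee$, and the substitution $i\mapsto 2-i$ preserves parity, so the two alternating sums coincide; equivalently this falls out of the To\"en--Riemann--Roch formula together with the orbifold duality of Todd classes. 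Identity $(2)$ is a tautology, since $\ext^i(\mc O_\mc X,\mc O_\mc X)=H^i(\mc X,\mc O_\mc X)$.

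Next I would dispatch $(3)$ and $(5)$. For $(3)$, the point $q$ lies in the non-stacky locus, so $\wt\ch(\mc O_q)$ has trivial twisted-sector components and its untwisted part is the point class, sitting in top cohomological degree; in the expression $\chi(\mc O_q,\mc O_q)=\int_{I\mc X}\wt\ch(\mc O_q^\vee)\,\wt\ch(\mc O_q)\,\wt\td(T\mc X)$ the product of the two top-degree classes already vanishes, giving $\chi(\mc O_q,\mc O_q)=0$, while $\chi(\mc O_\mc X,\mc O_q)=\int_\mc X[q]\cdot\td(T\mc X)=1$ for degree reasons and $\chi(\mc O_q,\mc O_\mc X)=1$ either by the same computation or from $(1)$ using $\mc O_q\otimes K_\mc X\cong\mc O_q$. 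For $(5)$, $\mc O_p\otimes\rho_i=\iota_*V_{\rho_i}$ with $\iota$ a closed immersion, so $H^k(\mc X,\mc O_p\otimes\rho_i)=H^k(BG,V_{\rho_i})$, which vanishes for $k>0$ and equals $\rho_i^G$ for $k=0$; hence $\chi(\mc O_\mc X,\mc O_p\otimes\rho_i)=\dim_\mbb C\rho_i^G=\langle\chi_{\rho_i},\chi_{\rho_0}\rangle$. Identity $(6)$ then follows from $(1)$: $\chi(\mc O_p\otimes\rho_i,\mc O_\mc X)=\chi(\mc O_\mc X,\mc O_p\otimes\rho_i\otimes K_\mc X)=\chi(\mc O_\mc X,\mc O_p\otimes(\rho_i\otimes\rho_{K_\mc X}))$, and applying $(5)$ to the (possibly reducible) representation $\rho_i\otimes\rho_{K_\mc X}$ yields $\langle\chi_{\rho_0},\chi_{\rho_i\otimes\rho_{K_\mc X}}\rangle$.

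The substantive case is $(4)$. Using Lemma~\ref{lemm local str of DM} I would pass to the local model $[\spec\mbb C[\![x,y]\!]/G]$ at $p$, where $G$ acts through the cotangent representation $\tau=\rho_{\Omega_\mc X}$, so that $\det\tau=\rho_{K_\mc X}$. Tensoring the $G$-equivariant Koszul sequence of Lemma~\ref{lemm equiv koszul cp} by $\rho_i$ produces a $G$-equivariant locally free resolution of $\mc O_p\otimes\rho_i$ whose $k$-th term is $\mbb C[\![x,y]\!]\otimes\wedge^k\tau\otimes\rho_i$; applying $\Hom(-,\mc O_p\otimes\rho_j)$ and taking $G$-invariants gives a three-term complex of vector spaces with $k$-th term $\bigl((\wedge^k\tau\otimes\rho_i)^\vee\otimes\rho_j\bigr)^G$. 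Since $BG$ has no higher cohomology over $\mbb C$, the local-to-global spectral sequence degenerates and $\ext^k_\mc X(\mc O_p\otimes\rho_i,\mc O_p\otimes\rho_j)$ is the $k$-th cohomology of this complex; as the Euler characteristic of a complex equals that of its cohomology, $\chi(\mc O_p\otimes\rho_i,\mc O_p\otimes\rho_j)=\sum_{k=0}^{2}(-1)^k\dim_\mbb C\bigl((\wedge^k\tau\otimes\rho_i)^\vee\otimes\rho_j\bigr)^G$. Each summand is $\dim\homm_G(\wedge^k\tau\otimes\rho_i,\rho_j)=\langle\chi_{\rho_j},\chi_{\rho_i\otimes\wedge^k\tau}\rangle$ by Schur's lemma and orthogonality of characters, and with $\wedge^0\tau=\rho_0$, $\wedge^1\tau=\rho_{\Omega_\mc X}$, $\wedge^2\tau=\rho_{K_\mc X}$ this is exactly $\langle\chi_{\rho_j},\chi_{\rho_i}\rangle-\langle\chi_{\rho_j},\chi_{\rho_i\otimes\rho_{\Omega_\mc X}}\rangle+\langle\chi_{\rho_j},\chi_{\rho_i\otimes\rho_{K_\mc X}}\rangle$. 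The main obstacle here is bookkeeping rather than conceptual: I must verify that $G$ acts on the Koszul complex through $\rho_{\Omega_\mc X}$ and not its dual, track the twist by $\det\tau=\rho_{K_\mc X}$ correctly, and justify that $\ext$-groups on the stack are computed by $G$-invariant $\ext$-groups over the local ring. As a consistency check, $(4)$ can alternatively be obtained directly from To\"en--Riemann--Roch via Proposition~\ref{pro ch of rho}: the untwisted sector contributes $0$ because it pairs two top-degree classes, and the twisted sectors reproduce the same character sums through Lemma~\ref{lemm chern ch of vb}.
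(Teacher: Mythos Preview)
Your proof is correct. The paper itself gives no argument beyond ``by a straightforward computation'' with the To\"en--Riemann--Roch formula as context, so there is little to compare against; but your approach is genuinely different in spirit. Where the paper implicitly asks the reader to plug the orbifold Chern characters from Proposition~\ref{pro ch of rho} and Lemma~\ref{lemm chern ch of vb} into the TRR integral and simplify sector by sector, you instead reduce most items to structural facts: Serre duality for $(1)$ and $(6)$, the vanishing of higher cohomology on $BG$ in characteristic zero for $(5)$, and a direct Koszul computation for $(4)$. This has the advantage that it explains \emph{why} the character-theoretic answer in $(4)$ takes the Koszul shape $\langle-,-\rangle-\langle-,-\otimes\rho_{\Omega}\rangle+\langle-,-\otimes\rho_K\rangle$, and it avoids having to unwind the orbifold Todd class in the twisted sectors. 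One small point worth making explicit: in your argument for $(4)$ the differentials in $\Hom_R(\text{Koszul},\mbb C\otimes\rho_j)$ are identically zero (they are given by multiplication by $x,y$, which act trivially on the residue field), so the $\ext^k$ are literally the terms of the complex, not just their Euler characteristic---this makes the computation even cleaner than you indicate. The TRR route the paper gestures at would of course give the same answers and is perhaps faster once Proposition~\ref{pro ch of rho} is in hand, but your argument is more self-contained.
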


\begin{definition}\label{def num K gp}
$K^{\rm num}(\mc X):=K_0(\mc X)/I$, where $I$ is the subgroup of $K_0(\mc X)$ consisting of those $\kappa_1$, which satisfy the condition: $\bar{\chi}(\kappa_1,\kappa_2)=\bar{\chi}(\kappa_2,\kappa_1)=0$ for all $\kappa_2\in K_0(\mc X)$. The element of $K^{\rm num}(\mc X)$ is called numerical K-class of $\mc X$.
\end{definition}
	
\begin{lemma}
The kernel of $\wt{\ch} : K_0(\mc X)\rightarrow H^{\rm even}(\mc X,\mbb C)$ is $I$.
\qed
\end{lemma}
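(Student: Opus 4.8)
The plan is to realise the Euler form $\bar{\chi}$ as a fixed bilinear form on cohomology pulled back along $\wt{\ch}$, and then to reduce the statement to a non-degeneracy assertion governed by the Hodge index theorem and by (orbifold) Poincar\'e duality on the inertia stack $I\mc X$.

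First, the inclusion $\ker(\wt{\ch})\subseteq I$ is formal. By the To\"en--Riemann--Roch formula recalled in Subsection~\ref{subsec euler form}, $\bar{\chi}([E_1],[E_2])=\int_{I\mc X}\wt{\ch}(E_1)^\vee\cdot\wt{\ch}(E_2)\cdot\wt{\td}(T\mc X)$, so $\bar{\chi}=B\circ(\wt{\ch}\times\wt{\ch})$ for the bilinear form $B(a,b):=\int_{I\mc X}a^\vee\cdot b\cdot\wt{\td}(T\mc X)$ on the target of $\wt{\ch}$, which by $(\ref{equ decom of cohomgy})$ is $H^{\rm even}(\mc X,\mbb C)\oplus\bigoplus_{(g)\neq(1)}\mbb C$. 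Hence $\wt{\ch}(\kappa_1)=0$ forces $\bar{\chi}(\kappa_1,\kappa_2)=\bar{\chi}(\kappa_2,\kappa_1)=0$ for all $\kappa_2\in K_0(\mc X)$, i.e.\ $\kappa_1\in I$.

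For the reverse inclusion it suffices to show that $B$ is non-degenerate on the $\mbb C$-span $V$ of $\im(\wt{\ch})$: granting this, $\kappa_1\in I$ means $B(\wt{\ch}(\kappa_1),v)=0$ for $v$ ranging over a spanning set of $V$, whence $\wt{\ch}(\kappa_1)=0$. So I would first identify $V$ explicitly. From $\wt{\ch}(\mc O_\mc X)=1$ and $\wt{\ch}(\mc O_q)=[\mathrm{pt}]$ for a nonstacky point $q$ we get $H^0$ and $H^4$; for any coherent sheaf $E$ one has $\ch_1(E)=c_1(\det E)\in\ns(\mc X)$, and conversely $c_1$ of line bundles exhausts $\ns(\mc X)$, so the degree-two part of $V$ is $\ns_\mbb C(\mc X)=\num_\mbb C(\mc X)\subseteq H^2(\mc X,\mbb C)$; and by Proposition~\ref{pro ch of rho} the twisted-sector component of $\wt{\ch}(\mc O_p\otimes\rho_i)$ is $(g)\mapsto\det(\id-\rho_{\Omega_\mc X}(g))\,\chi_{\rho_i}(g)$. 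Here smallness of the stabilizer groups (forced by the stacky locus having codimension two, so there are no pseudoreflections) gives $\det(\id-\rho_{\Omega_\mc X}(g))\neq0$ for $g\neq1$, while invertibility of the character table shows that, after subtracting off suitable multiples of $[\mathrm{pt}]$, these classes span $\bigoplus_{(g)\neq(1)}\mbb C$. Thus $V=\mbb C\cdot1\oplus\num_\mbb C(\mc X)\oplus\mbb C\cdot[\mathrm{pt}]\oplus\bigoplus_{(g)\neq(1)}\mbb C$.

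Finally I would verify that $B|_V$ is non-degenerate. Since $\wt{\td}(T\mc X)$ has invertible degree-zero part on every component of $I\mc X$ (on a twisted sector $BC(g)$ this part is $\det(\id-\rho_{\Omega_\mc X}(g))^{-1}$, a nonzero scalar, again by smallness), multiplication by $\wt{\td}(T\mc X)$ and the involution $(\cdot)^\vee$ are automorphisms of $H^*(I\mc X,\mbb C)$, and classes supported on distinct components of $I\mc X$ are $B$-orthogonal. So $B|_V$ is the orthogonal sum of its restrictions to $\mbb C\cdot1\oplus\num_\mbb C(\mc X)\oplus\mbb C\cdot[\mathrm{pt}]$ and to $\bigoplus_{(g)\neq(1)}\mbb C$. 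On the first summand the Poincar\'e pairing makes $H^0$ dual to $H^4$ and restricts to the intersection form on $\num_\mbb C(\mc X)$, which is non-degenerate by the Hodge index theorem (Theorem~\ref{pro hodge index tm}, together with Lemma~\ref{lemm proty of num gp}); twisting by the unit $\wt{\td}(T\mc X)$ preserves this anti-block-triangular structure, so this restriction is non-degenerate. On the second summand, $(\cdot)^\vee$ matches the sector indexed by $(g)$ with that indexed by $(g^{-1})$, on which $\int_{I\mc X}$ and $\wt{\td}(T\mc X)$ contribute nonzero scalars, so $B$ restricts there to a non-degenerate pairing. Hence $B|_V$ is non-degenerate and the lemma follows. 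The only genuinely non-formal inputs are the Hodge index theorem for the untwisted degree-two block and (orbifold) Poincar\'e duality for the twisted block; the main delicacy I anticipate is the bookkeeping in this last step, keeping the grading, the involution $(\cdot)^\vee$, and the twisted Todd class mutually consistent across the sectors of $I\mc X$.
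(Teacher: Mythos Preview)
The paper does not actually prove this lemma: it is stated with a \qed\ and no argument. Your proof is correct and supplies exactly the kind of verification the paper suppresses. The overall strategy---factor $\bar\chi$ as $B\circ(\wt\ch\times\wt\ch)$, identify the $\mbb C$-span $V$ of $\im\wt\ch$ explicitly, and check that $B|_V$ is non-degenerate---is the natural one, and your identification of $V$ using $\mc O_\mc X$, $\mc O_q$, line bundles, and the $\mc O_p\otimes\rho_i$ (invoking smallness of $G$ to get $\det(\id-\rho_{\Omega_\mc X}(g))\neq0$, and the invertibility of the character table) is clean. One small point of care: on the twisted sectors the involution induced by $E\mapsto E^\vee$ acts on $\wt\ch^{(g)}$ by $\chi_{\rho_E}(g)\mapsto\chi_{\rho_E}(g^{-1})$, so it permutes sectors $(g)\leftrightarrow(g^{-1})$ rather than fixing each one; you note this, but make sure your ``orthogonal sum'' claim is only between the untwisted block and the totality of twisted sectors, not sector-by-sector. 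With that bookkeeping straightened out, the non-degeneracy check on the untwisted block (anti-triangular in $H^0,H^4$ with the Hodge-index form on $\num_\mbb C(\mc X)$ in the middle) and on the twisted block (nonzero scalars times the character pairing) goes through as you say.
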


\begin{corollary}\label{cor chern char of num k gp}
The orbifold chern character map $\wt{\ch} : K^{\rm num}(\mc X)\rightarrow H^{\rm even}(I\mc X,\mbb C)$ is an injective map.
\qed
\end{corollary}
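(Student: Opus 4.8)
The statement is a formal consequence of the preceding Lemma, so the plan is simply to unwind the definitions. Since the orbifold Chern character is additive on short exact sequences, $\wt{\ch}$ is a homomorphism of abelian groups $K_0(\mc X)\to H^{\rm even}(I\mc X,\mbb C)$. The Lemma identifies its kernel with the subgroup $I$, i.e.\ with the radical of the Euler pairing $\bar\chi$. Hence $\wt{\ch}$ factors, by the first isomorphism theorem, as an injective homomorphism through the quotient $K_0(\mc X)/I$; and by Definition \ref{def num K gp} this quotient is exactly $K^{\rm num}(\mc X)$. So $\wt{\ch}$ descends to the asserted injective map, and the corollary is proved modulo the Lemma.

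The real content lies in the Lemma, which I would establish as follows. The To\"{e}n--Riemann--Roch formula reads $\bar\chi([E_1],[E_2])=\int_{I\mc X}\wt{\ch}(E_1^\vee)\,\wt{\ch}(E_2)\,\wt{\td}(T\mc X)$; since the involution $(\cdot)^\vee$ on $K_0(\mc X)$ is intertwined by $\wt{\ch}$ with a linear involution of $H^{\rm even}(I\mc X,\mbb C)$ and $\wt{\td}(T\mc X)$ is a unit, this exhibits $\bar\chi$ as the pullback along $\wt{\ch}$ of a fixed pairing $P$ on $H^{\rm even}(I\mc X,\mbb C)$. The inclusion $\ker\wt{\ch}\subseteq I$ is then immediate: $\wt{\ch}(\kappa)=0$ forces $\bar\chi(\kappa,\,\cdot\,)=\bar\chi(\,\cdot\,,\kappa)=0$. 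For the reverse inclusion, the condition $\kappa\in I$ says precisely that $\wt{\ch}(\kappa)$ is $P$-orthogonal to the image $\operatorname{im}(\wt{\ch})$; so it remains to show that $P$ restricted to $\operatorname{im}(\wt{\ch})$ is non-degenerate, which then forces $\wt{\ch}(\kappa)=0$.

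This non-degeneracy is the step I expect to require the most care, and it is where the geometry enters. Because $I\mc X$ is a disjoint union, $P$ is block-diagonal with respect to $H^{\rm even}(I\mc X,\mbb C)=H^{\rm even}(\mc X,\mbb C)\oplus\bigoplus_{(g)\neq(1)}H^{0}(BC(g),\mbb C)$. One first checks that $\operatorname{im}(\wt{\ch})$ contains the entire twisted subspace $T=\bigoplus_{(g)\neq(1)}H^{0}(BC(g),\mbb C)$: the classes $\wt{\ch}(\mc O_{p_k}\otimes\rho)$ from Proposition \ref{pro ch of rho}, after subtracting the appropriate multiple of $\wt{\ch}(\mc O_q)$ to kill the untwisted part, span $T$, using that $\dett(\id-\rho_{\Omega_\mc X}(g))\neq0$ for $g\neq1$ and that the irreducible characters of $G_k$ separate conjugacy classes. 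Consequently $\operatorname{im}(\wt{\ch})=T\oplus\bigl(H^0(\mc X,\mbb C)\oplus\ns(\mc X)_{\mbb C}\oplus H^4(\mc X,\mbb C)\bigr)$. On the twisted block $P$ is non-degenerate by the first orthogonality relation for characters (the computation underlying Proposition \ref{pro comp of euler form}(4)); on the untwisted block it is non-degenerate by the duality between $H^0$ and $H^4$ together with the Hodge index theorem (Section \ref{subsec hodge indx}) applied to the intersection form on $\ns(\mc X)_{\mbb C}$. Combining the two blocks gives $\operatorname{im}(\wt{\ch})\cap\operatorname{im}(\wt{\ch})^{\perp_P}=0$, which completes the Lemma and hence the corollary.
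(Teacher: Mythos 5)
Your first paragraph is exactly the paper's (implicit) argument: the corollary is stated with no proof because it is the formal quotient of the Lemma identifying $\ker\wt{\ch}$ with $I$ together with Definition \ref{def num K gp}, and you reproduce that deduction correctly. The rest of your proposal goes beyond the paper, which asserts the Lemma itself with a \qed and no argument; your proof of it is sound. In particular, the three key points all check out: (i) To\"en--Riemann--Roch does exhibit $\bar\chi$ as the pullback of a pairing $P$ on $H^{\rm even}(I\mc X,\mbb C)$, since $\wt{\ch}(\kappa^\vee)$ is a fixed linear involution applied to $\wt{\ch}(\kappa)$ (sign alternation on the untwisted sector, the permutation $(g)\mapsto(g^{-1})$ with conjugation on the twisted ones); (ii) $P$ is block-diagonal for the untwisted/twisted decomposition, is essentially a scaled permutation matrix on the twisted block (nonzero scalars because $\dett(\id-\rho_{\Omega_\mc X}(g))\neq 0$ for $g\neq 1$, the stacky locus having codimension two), and is non-degenerate on $H^0\oplus\ns_{\mbb C}\oplus H^4$ by the $H^0$--$H^4$ duality plus Proposition \ref{pro hodge index tm}; (iii) the classes $\mc O_{p_k}\otimes\rho$ do span the twisted block by Proposition \ref{pro ch of rho} and the completeness of irreducible characters. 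The only cosmetic caveat is that $\operatorname{im}(\wt{\ch})$ is a subgroup rather than a $\mbb C$-subspace, so the non-degeneracy argument should be run on its $\mbb C$-linear span (equivalently on $K_0(\mc X)_\mbb C$); bilinearity of $P$ makes this harmless.
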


\section{Hodge Index Theorem for orbifold surfaces}\label{subsec hodge indx}

We recall some basic facts on the de Rham and Hodge theory of projective orbifolds 
(i.e.\ smooth projective Deligne--Mumford stacks with trivial generic stabilizers); 
see \cite{baily56, baily57, steen76, beh04}. Let $\Theta$ be a Kähler metric on $\mc X^{\rm an}$ representing $c_1$ of an ample line bundle. Then

\begin{enumerate}[(i)]
	\item $H^m(\mc X,\ulin{\mbb C})=H^m(X,\ulin{\mbb C})$.
	\item $H^m(\mc X^{\rm an},\ulin{\mbb Z})\otimes\mbb C
	=H^m(\mc X^{\rm an},\ulin{\mbb C})
	=H^m_{DR}(\mc X^{\rm an})\otimes\mbb C$.
	\item $\dim_\mbb C H^{p,q}(\mc X^{\rm an})<\infty$.
	\item (Dolbeault--Kodaira) $H^q(\mc X^{\rm an},\Omega^p_\mc X)=H^{p,q}(\mc X^{\rm an})$.
	\item (Hodge decomposition) 
	\[
	H^m(\mc X^{\rm an},\ulin{\mbb C})
	=\bigoplus_{p+q=m} H^{p,q}(\mc X^{\rm an}),\qquad
	H^{p,q}(\mc X^{\rm an})=\ov{H^{q,p}(\mc X^{\rm an})}.
	\]
	\item (Hard Lefschetz) 
	$L^k:H^{n-k}(\mc X^{\rm an},\ulin{\mbb C})\to H^{n+k}(\mc X^{\rm an},\ulin{\mbb C})$ 
	is an isomorphism.  
	Define 
	\[
	P^{n-k}(\mc X^{\rm an})=\ker\!\left(
	L^{k+1}:H^{n-k}(\mc X^{\rm an},\ulin{\mbb C})
	\to H^{n+k+2}(\mc X^{\rm an},\ulin{\mbb C})\right),
	\]
	then
	\[
	H^m(\mc X^{\rm an},\ulin{\mbb C})
	=\bigoplus_k L^k P^{m-2k}(\mc X^{\rm an}).
	\]
	\item (Hodge--Riemann) The form
	\[
	Q(\xi,\eta)=\int_{\mc X}\xi\wedge\eta\wedge\Theta^k
	\]
	is positive definite on $P^{p,q}$ up to the usual sign convention.
\end{enumerate}

\begin{proposition}[Hodge Index Theorem]\label{pro hodge index tm}
	For a smooth projective orbifold surface $\mc X$,
	\[
	\ns_{\mbb R}(\mc X)
	=(P^{1,1}(\mc X^{\rm an})\cap H^2(\mc X^{\rm an},\ulin{\mbb R}))
	\oplus \mbb R\cdot\Theta,
	\]
	and $Q$ restricts to a nondegenerate form of signature $(1,\rho(\mc X)-1)$.
	\qed
\end{proposition}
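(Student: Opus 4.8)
The plan is to deduce the statement from the identification $\ns_{\mbb R}(\mc X)=H^{1,1}(\mc X^{\rm an})\cap H^2(\mc X^{\rm an},\ulin{\mbb R})$ established in \eqref{equ ns group}, combined with the Hard Lefschetz theorem (vi) and the Hodge--Riemann bilinear relations (vii) recalled above. Since $\dimm_{\mbb C}\mc X=2$, only the case $k=0$ of the form $Q$ intervenes, i.e.\ $Q(\xi,\eta)=\int_{\mc X}\xi\wedge\eta$ on $H^2(\mc X^{\rm an},\ulin{\mbb C})$, which is the orbifold intersection form.

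First I would produce the direct sum decomposition. By Hard Lefschetz, $H^2(\mc X^{\rm an},\ulin{\mbb C})=P^2(\mc X^{\rm an})\oplus L\bigl(H^0(\mc X^{\rm an},\ulin{\mbb C})\bigr)$, and because $L$ is cup product with the real $(1,1)$-class $\Theta$, this decomposition is defined over $\mbb R$, is $Q$-orthogonal, and refines the Hodge decomposition. Since $L(H^0)=\mbb C\cdot\Theta$ with $\Theta\in H^{1,1}(\mc X^{\rm an})$, intersecting with $H^{1,1}(\mc X^{\rm an})$ gives $H^{1,1}(\mc X^{\rm an})=P^{1,1}(\mc X^{\rm an})\oplus\mbb C\cdot\Theta$, and intersecting once more with $H^2(\mc X^{\rm an},\ulin{\mbb R})$ yields, via \eqref{equ ns group},
\[
\ns_{\mbb R}(\mc X)=\bigl(P^{1,1}(\mc X^{\rm an})\cap H^2(\mc X^{\rm an},\ulin{\mbb R})\bigr)\oplus\mbb R\cdot\Theta .
\]
By Lemma \ref{lemm proty of num gp}, $\dimm_{\mbb R}\ns_{\mbb R}(\mc X)=\rho(\mc X)$, so the primitive summand has real dimension $\rho(\mc X)-1$.

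Next I would read off the signature of $Q$ using the orthogonality of this sum. On the line $\mbb R\cdot\Theta$ one has $Q(\Theta,\Theta)=\int_{\mc X}\Theta^2>0$ since $\Theta$ is a Kähler class. On the primitive part, the Hodge--Riemann relations in degree $m=2$ with $k=0$ assert $i^{p-q}(-1)^{m(m-1)/2}Q(\xi,\ov\xi)>0$ for nonzero primitive $\xi$ of type $(p,q)$; in bidegree $(1,1)$ this reduces to $-Q(\xi,\ov\xi)>0$, and restricting to real classes (where $\ov\xi=\xi$) shows $Q$ is negative definite on $P^{1,1}(\mc X^{\rm an})\cap H^2(\mc X^{\rm an},\ulin{\mbb R})$. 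Hence $Q|_{\ns_{\mbb R}(\mc X)}$ is nondegenerate with exactly one positive eigenvalue, i.e.\ of signature $(1,\rho(\mc X)-1)$.

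The one point that needs care—and the closest thing to an obstacle here—is the bookkeeping of the real structure and the Weil sign: one must verify that the Lefschetz decomposition of orbifold cohomology is simultaneously $Q$-orthogonal and defined over $\mbb R$, and that $i^{p-q}(-1)^{m(m-1)/2}$ specializes correctly in bidegree $(1,1)$ with $m=2$. All of this is a formal consequence of the de Rham and Hodge package (i)--(vii) for projective orbifolds recalled above (\cite{baily56, baily57, steen76, beh04}); no genuinely new input beyond the smooth projective surface case is needed, so I do not expect a substantive difficulty.
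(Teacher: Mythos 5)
Your argument is correct and is precisely the derivation the paper intends: the proposition is stated there without proof, as a formal consequence of the de Rham/Hodge package (i)--(vii) together with the identification $\ns_{\mbb R}(\mc X)=H^{1,1}(\mc X^{\rm an})\cap H^2(\mc X^{\rm an},\ulin{\mbb R})$ from \eqref{equ ns group}. Your sign bookkeeping is right ($i^{p-q}(-1)^{m(m-1)/2}=-1$ in bidegree $(1,1)$ with $m=2$, giving negative definiteness on the real primitive part, while $Q(\Theta,\Theta)>0$), and the $Q$-orthogonality of the Lefschetz splitting follows from primitivity, so the signature $(1,\rho(\mc X)-1)$ is established exactly as in the classical surface case.
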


\section{Base change theorem for relative Ext-sheaves on DM stacks}\label{app rela ext}

Let $\mc X$ be equipped with the small \'{e}tale site, and denote the category of $\mc O_{\mc X}$-modules by $\mathrm{Mod}(\mc X)$. Lemma \ref{lemm enough injec} is standard (see \cite{ol16}). The proofs of Lemmas \ref{lemm  local ext}, \ref{lemm free ext}, \ref{lemm exact se ext} and Corollaries \ref{cor coherence ext}, \ref{cor com ext} are analogous to the scheme case and are omitted.

\begin{lemma}\label{lemm enough injec}
Mod($\mc X$) has enough injective objects.
\end{lemma}

For a morphism of Deligne-Mumford stacks $f:\mc X\to\mc Y$, we have the adjoint functors
\[
f^*:\mathrm{Mod}(\mc Y)\to \mathrm{Mod}(\mc X),\quad
f_*:\mathrm{Mod}(\mc X)\to \mathrm{Mod}(\mc Y).
\]
For $F\in \mathrm{Mod}(\mc X)$, the functor $\Hom_{\mc O_{\mc X}}(F,-)$ is left exact, and so is $f_*\circ \Hom_{\mc O_{\mc X}}(F,-)$.

\begin{definition}
$\Ext_f^i(F,-)$ is defined to be the $i$-th right derived functor of $f_*\circ\Hom_{\mc O_\mc X}(F,-)$.
\end{definition}

\begin{lemma}\label{lemm  local ext}
For $F,G\in\text{Mod}(\mc X)$, $\Ext_f^i(F,G)$ is the sheaf associated to the presheaf:
\begin{equation*}
(U\overset{\alpha}{\rightarrow}\mc Y)\mapsto\ext^i(F|_{\mc X_U},G|_{\mc X_U}),
\end{equation*}
where $U\overset{\alpha}{\rightarrow}\mc Y$ is an \'{e}tale morphsim from a scheme $U$ to $\mc X$.
\qed
\end{lemma}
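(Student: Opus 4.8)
The plan is to identify both sides as the terms of universal $\delta$-functors in the second variable and conclude by the uniqueness of such. Throughout, write $f:\mc X\to\mc Y$, and for an \'etale morphism $\alpha:U\to\mc Y$ set $\mc X_U:=\mc X\times_{\mc Y}U$ with its (\'etale, hence flat) projection $p_U:\mc X_U\to\mc X$; for $F\in\mathrm{Mod}(\mc X)$ write $F|_{\mc X_U}:=p_U^*F$. The only nonformal inputs are: (a) restriction $p_U^*:\mathrm{Mod}(\mc X)\to\mathrm{Mod}(\mc X_U)$ along an \'etale morphism is exact and preserves injective objects, because it admits an exact left adjoint $j_!$ (extension by zero for $\mc O$-modules), whose existence on the small \'etale site is standard (\cite{ol16}); and (b) the sheaf-$\Hom$ and the pushforward are compatible with \'etale base change, namely $\bigl(\Hom_{\mc O_\mc X}(F,G)\bigr)(\mc X_U\to\mc X)=\homm_{\mc O_{\mc X_U}}(F|_{\mc X_U},G|_{\mc X_U})$ by definition of the sheaf-$\Hom$ on a site, and $(f_*H)(\alpha:U\to\mc Y)=H(\mc X_U\to\mc X)$ by definition of $f_*$ on the small \'etale site.

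First I would settle the case $i=0$. Combining the two identities in (b), one gets $\Ext^0_f(F,G)=f_*\Hom_{\mc O_\mc X}(F,G)$, whose value on $\alpha:U\to\mc Y$ is $\homm_{\mc O_{\mc X_U}}(F|_{\mc X_U},G|_{\mc X_U})=\ext^0(F|_{\mc X_U},G|_{\mc X_U})$; since this presheaf is already a sheaf, it coincides with its sheafification, so the lemma holds in degree $0$. Next, for fixed $F$ consider the two families of functors of $G$: $T^i(G):=\Ext^i_f(F,G)$ and $S^i(G):=$ the sheaf on $\mc Y$ associated to the presheaf $U\mapsto\ext^i(F|_{\mc X_U},G|_{\mc X_U})$. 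Both are cohomological $\delta$-functors $\mathrm{Mod}(\mc X)\to\mathrm{Mod}(\mc Y)$: for $T^\bullet$ this is its construction as a right derived functor; for $S^\bullet$, a short exact sequence $0\to G'\to G\to G''\to 0$ induces, for every $U$, the long exact $\ext$-sequence in the second variable, hence a long exact sequence of presheaves, and since sheafification is exact we obtain the connecting maps and exactness for $S^\bullet$. By the previous step $T^0\cong S^0$. Now $T^\bullet$ is universal, being a derived functor; and $S^\bullet$ is effaceable, hence also universal: for an injective $I\in\mathrm{Mod}(\mc X)$ and $i>0$, the restriction $I|_{\mc X_U}$ is injective by (a), so $\ext^i(F|_{\mc X_U},I|_{\mc X_U})=0$ for every $U$ and thus $S^i(I)=0$, while $\mathrm{Mod}(\mc X)$ has enough injectives by Lemma \ref{lemm enough injec}. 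Two universal $\delta$-functors agreeing in degree $0$ are canonically isomorphic, so $T^i\cong S^i$ for all $i$, which is the assertion.

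The main obstacle is input (a): one must know that on the small \'etale site of a Deligne--Mumford stack, restriction along an \'etale morphism has an exact left adjoint, so that it preserves injectives. This is the one place where the stacky setting needs more than a verbatim copy of the scheme argument, and it is precisely the kind of site-theoretic fact recorded in \cite{ol16}; everything else (the $\delta$-functor formalism, base change for $\Hom$ and $f_*$, exactness of sheafification) is formal. An equivalent route avoids (a) by instead showing that $\Hom_{\mc O_\mc X}(F,I^j)$ is $f_*$-acyclic for an injective resolution $I^\bullet$ of $G$, and then commuting sheafification past the cohomology of the complex $f_*\Hom_{\mc O_\mc X}(F,I^\bullet)$; this needs the flasqueness of injectives on the small \'etale site, which is again the point one must import from \cite{ol16}.
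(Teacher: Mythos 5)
Your proof is correct, and it is exactly the standard scheme-case argument (universal $\delta$-functors in the second variable, with effaceability of the sheafified presheaf coming from the fact that restriction along an \'etale morphism preserves injectives because it has the exact left adjoint $j_!$) that the paper invokes when it omits this proof as ``analogous to the scheme case.'' You have correctly isolated the one genuinely stacky input, namely the site-theoretic facts about the small \'etale site of a Deligne--Mumford stack from \cite{ol16}, so nothing is missing.
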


\begin{corollary}\label{cor coherence ext}
If $F,G$ are two quasi-coherent sheaves on $\mc X$, then $\Ext_f^i(F,G)$ are quasi-coherent on $\mc Y$.
\qed
\end{corollary}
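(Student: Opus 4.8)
The plan is to deduce the statement from the relative local--to--global Ext spectral sequence, combining the usual scheme-theoretic input on absolute Ext-sheaves with the fact that higher pushforward along a morphism of Deligne--Mumford stacks preserves quasi-coherence. I would first treat the case in which $F$ is coherent --- the only one needed in the body of the paper --- and recover the general quasi-coherent case afterwards, exactly as over a Noetherian scheme, by writing $F$ as the filtered union of its coherent subsheaves and passing to the limit.

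The first key step is that each absolute Ext-sheaf $\Ext^q_{\mc O_\mc X}(F,G)$ is quasi-coherent on $\mc X$. Quasi-coherence is \'etale-local on $\mc X$, so one restricts to an affine \'etale chart $U\to\mc X$, where $U$ is Noetherian and $F$ admits a resolution $E^\bullet\to F$ by finite free modules; then $\Ext^q_{\mc O_\mc X}(F,G)$ is the $q$-th cohomology sheaf of the complex $\Hom(E^\bullet,G)$ of quasi-coherent sheaves, hence quasi-coherent. Note that such resolutions are needed only \'etale-locally on $\mc X$, so the possible failure of the resolution property on $\mc X$ itself plays no role. The second key step is that, for all $p$, the functor $R^pf_*$ sends quasi-coherent $\mc O_\mc X$-modules to quasi-coherent $\mc O_\mc Y$-modules and has finite cohomological dimension on them: here $f$ is of finite type, hence quasi-compact and quasi-separated, between Deligne--Mumford stacks with finite diagonal, and this follows from the structural results on pushforward along coarse-space and \'etale-presentation morphisms already used in the paper (see e.g.\ \cite[Theorem 3.2]{aov08}, \cite[Corollary 1.3]{ni08}, \cite{ol16}). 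Finally, since $\Hom(F,I)$ is flasque, hence $f_*$-acyclic, for $I$ an injective $\mc O_\mc X$-module, the composite-functor spectral sequence
\begin{equation*}
E_2^{p,q}=R^pf_*\,\Ext^q_{\mc O_\mc X}(F,G)\ \Longrightarrow\ \Ext^{p+q}_f(F,G)
\end{equation*}
is available, and its $E_2$-terms are quasi-coherent by the two preceding steps; as the quasi-coherent $\mc O_\mc Y$-modules are stable under kernels, cokernels, and extensions, every $E_\infty$-term and every step of the induced finite filtration on the abutment is quasi-coherent, whence $\Ext^i_f(F,G)$ is quasi-coherent on $\mc Y$. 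One may also argue by first reducing to an affine base via Lemma \ref{lemm  local ext}.

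The step I expect to be the main obstacle is the second one: the preservation of quasi-coherence and finite cohomological dimension under $R^pf_*$ for a morphism of Deligne--Mumford stacks. Unlike the remaining ingredients --- \'etale-local free resolutions, acyclicity of $\Hom(F,I)$, and the behaviour of quasi-coherent sheaves under kernels, cokernels, and extensions --- which transfer verbatim from the theory over schemes, this genuinely relies on the structure theory of tame Deligne--Mumford stacks in characteristic zero (reduction to coarse moduli spaces and to affine \'etale presentations), and it is the point where ``analogous to the scheme case'' is doing substantive work. A lesser, purely technical point is the passage from coherent to general quasi-coherent $F$, which is the only place the Noetherian hypothesis on $\mc X$ is used and which affects none of the applications of the corollary in Sections \ref{sec poisson st}--\ref{sec orb Hilb schemes}.
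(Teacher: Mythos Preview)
Your argument for coherent $F$ is correct. The paper omits its own proof, but given the placement immediately after Lemma \ref{lemm  local ext} and the phrase ``analogous to the scheme case'', the intended route is almost certainly the direct one you allude to at the end: restrict to an affine \'etale chart $U=\spec A\to\mc Y$ and use that, for $F$ coherent, the groups $\ext^i(F|_{\mc X_U},G|_{\mc X_U})$ commute with localization in $A$, so the presheaf of Lemma \ref{lemm  local ext} is already a quasi-coherent sheaf on $U$. Your spectral-sequence approach is a legitimate alternative that trades this explicit localization check for two black boxes (quasi-coherence of the local $\Ext^q$ and of $R^pf_*$); both routes need $F$ coherent at exactly the same spot---the paper's for flat base change of Ext, yours for the \'etale-local free resolution feeding into $\Ext^q_{\mc O_\mc X}(F,G)$.

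The genuine gap is your reduction from coherent to arbitrary quasi-coherent $F$. Writing $F=\varinjlim F_\alpha$ and ``passing to the limit'' goes the wrong way: $\Ext^i_f(-,G)$ is contravariant, so you obtain a (derived) inverse limit, and $\rm Qcoh(\mc Y)$ is not closed under infinite products or limits. In fact the corollary is false as literally stated. Take $\mc X=\mc Y=\spec k[x]$, $f=\id$, $G=\mc O$, and $F$ the quasi-coherent sheaf associated to the $k[x]$-module $k[x,x^{-1}]$. Then $\Ext^0_f(F,G)=\Hom_{\mc O}(F,G)$ has zero global sections (any $k[x]$-linear map $k[x,x^{-1}]\to k[x]$ vanishes), yet restricts to $k[x,x^{-1}]$ over $D(x)$, so it is not quasi-coherent. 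This is not a non-Noetherian pathology; the paper is simply imprecise. Since every application in the paper---in particular Proposition \ref{pro comp ext} and its corollaries---has $F$ coherent, you should state and prove only that case and drop the attempted extension.
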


\begin{lemma}\label{lemm free ext}
Assume that $L\in\text{Mod}(\mc X)$ and $N\in\text{Mod}(\mc Y)$ are locally free sheaves. Then, we have
\begin{equation*}
\Ext_f^i(L\otimes F,G)=\Ext_f^i(F,L^\vee\otimes G),\quad \Ext_f^i(F,f^*N\otimes G)=\Ext_f^i(F,G)\otimes N.
\end{equation*}
\qed
\end{lemma}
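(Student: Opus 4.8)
The plan is to reduce both identities to the corresponding statements for the sheaf-$\Hom$ functor, and then to carry them through the derived functors by producing well-adapted injective resolutions. The one ingredient that is not purely formal is the following remark, which I would record first: if $L$ is a locally free sheaf of finite rank on $\mc X$, then $-\otimes L:\mathrm{Mod}(\mc X)\to\mathrm{Mod}(\mc X)$ is exact and preserves injective objects. Exactness holds on the small \'etale site because tensoring with a finite-rank locally free sheaf is \'etale-locally a finite direct sum of copies of the identity; and $-\otimes L$ is right adjoint to the exact functor $-\otimes L^\vee$ (via the natural isomorphism $\Hom_{\mc O_\mc X}(A\otimes L^\vee,B)\cong\Hom_{\mc O_\mc X}(A,B\otimes L)$), so it sends injectives to injectives. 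The identical argument applies to $-\otimes f^*N$ on $\mc X$ and to $N\otimes-$ on $\mc Y$ whenever $N$ is locally free of finite rank on $\mc Y$.

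For the first identity I would begin from the natural isomorphism of $\mc O_\mc X$-modules
\[
\Hom_{\mc O_\mc X}(L\otimes F,G)\cong\Hom_{\mc O_\mc X}(F,L^\vee\otimes G),
\]
which holds since $L$ is locally free of finite rank, and apply $f_*$ to get an isomorphism of left-exact functors $f_*\circ\Hom_{\mc O_\mc X}(L\otimes F,-)\cong f_*\circ\Hom_{\mc O_\mc X}(F,L^\vee\otimes(-))$ on $\mathrm{Mod}(\mc X)$. Choosing an injective resolution $G\to I^\bullet$, the remark makes $L^\vee\otimes I^\bullet$ an injective resolution of $L^\vee\otimes G$, so
\[
\Ext_f^i(F,L^\vee\otimes G)=\mc H^i\!\bigl(f_*\Hom_{\mc O_\mc X}(F,L^\vee\otimes I^\bullet)\bigr)=\mc H^i\!\bigl(f_*\Hom_{\mc O_\mc X}(L\otimes F,I^\bullet)\bigr)=\Ext_f^i(L\otimes F,G).
\]

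For the second identity I would combine two projection formulas. Since $f^*N$ is locally free of finite rank on $\mc X$, there is a natural isomorphism $\Hom_{\mc O_\mc X}(F,f^*N\otimes G)\cong f^*N\otimes\Hom_{\mc O_\mc X}(F,G)$; and, $N$ being locally free of finite rank on $\mc Y$, the ordinary projection formula $f_*(f^*N\otimes\mc G)\cong N\otimes f_*\mc G$ holds, which one checks \'etale-locally on $\mc Y$ where $N$ is free and then glues. Composing these gives $f_*\circ\Hom_{\mc O_\mc X}(F,f^*N\otimes(-))\cong N\otimes f_*\circ\Hom_{\mc O_\mc X}(F,-)$. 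Taking an injective resolution $G\to I^\bullet$, the remark makes $f^*N\otimes I^\bullet$ an injective resolution of $f^*N\otimes G$, so $\Ext_f^i(F,f^*N\otimes G)$ is the $i$-th cohomology of $N\otimes f_*\Hom_{\mc O_\mc X}(F,I^\bullet)$; since $N$ is locally free, hence flat, the functor $N\otimes-$ commutes with cohomology, and this equals $N\otimes\Ext_f^i(F,G)$.

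The only genuinely delicate point is the preservation of injectives under tensoring by a finite-rank locally free sheaf on the small \'etale site of a Deligne--Mumford stack; it is settled by the adjointness argument above, which is valid in any ringed topos, so no stack-specific obstacle arises, and the remainder is routine manipulation of left-exact functors and resolutions. I would also note that all the isomorphisms produced this way are natural in $F$ and $G$, which is what is needed when the lemma is later applied compatibly with the connecting homomorphisms of long exact sequences.
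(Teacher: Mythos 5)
Your proof is correct, and since the paper omits this proof with the remark that it is "analogous to the scheme case," your argument — the adjunction $\Hom(L\otimes F,G)\cong\Hom(F,L^\vee\otimes G)$, preservation of injectives by $-\otimes L$ via its exact left adjoint, and the projection formula for finite-rank locally free $N$ — is exactly the standard argument the authors have in mind, carried out on the small \'etale site where it applies verbatim. The one hypothesis you rightly make explicit (finite rank of $L$ and $N$) is implicit in the paper's usage, so there is nothing to add.
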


\begin{lemma}\label{lemm exact se ext}
If $\xymatrix@C=0.5cm{0 \ar[r] & F_1 \ar[r] & F \ar[r] & F_2 \ar[r] & 0 }$ is an exact sequence in Mod($\mc X$), then there is a long exact sequence
\begin{equation*}
\xymatrix@=0.5cm{
  \cdots \ar[r] & {\Ext}_f^i(F_2,G) \ar[r] & {\Ext}_f^i(F,G)  \ar[r] & {\Ext}_f^i(F_1,G) \ar[r] & {\Ext}_f^{i+1}(F_2,G) \ar[r] & \cdots}.
\end{equation*}
\qed
\end{lemma}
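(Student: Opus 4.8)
The plan is to transcribe the scheme-theoretic proof, producing the long exact sequence as the cohomology sequence of a short exact sequence of complexes. First I would invoke Lemma~\ref{lemm enough injec} to fix an injective resolution $0\to G\to I^\bullet$ in $\mathrm{Mod}(\mc X)$; by the very definition of a right derived functor this already gives $\Ext_f^i(F',G)=H^i\!\bigl(f_*\Hom_{\mc O_\mc X}(F',I^\bullet)\bigr)$ for $F'=F_1,F,F_2$, with no acyclicity needed at this stage.

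Next I would record, exactly as over a ringed space, that an injective object $I$ of $\mathrm{Mod}(\mc X)$ restricts to an injective object along any \'etale morphism $j:V\to\mc X$ (the pullback $j^*$ has the exact left adjoint $j_!$), so that $\Hom_{\mc O_\mc X}(-,I)$ is exact on $\mathrm{Mod}(\mc X)$, and that $\Hom_{\mc O_\mc X}(F',I)$ is $f_*$-acyclic: since $\Ext^{>0}_{\mc O_\mc X}(F',I)=0$, the local-to-global $\Ext$ spectral sequence on $\mc X_{\et}$ degenerates and forces $H^{>0}\!\bigl(V,\Hom_{\mc O_\mc X}(F',I)\bigr)=0$ for every \'etale $V\to\mc X$, whence $R^{>0}f_*\Hom_{\mc O_\mc X}(F',I)=0$. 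Applying the exact functor $\Hom_{\mc O_\mc X}(-,I^j)$ to the given short exact sequence then produces, for each $j$, an exact sequence
\begin{equation*}
0\to\Hom_{\mc O_\mc X}(F_2,I^j)\to\Hom_{\mc O_\mc X}(F,I^j)\to\Hom_{\mc O_\mc X}(F_1,I^j)\to 0,
\end{equation*}
hence a short exact sequence of complexes in $\mathrm{Mod}(\mc X)$ all of whose terms are $f_*$-acyclic. Pushing forward along $f$ therefore preserves exactness and yields a short exact sequence of complexes
\begin{equation*}
0\to f_*\Hom_{\mc O_\mc X}(F_2,I^\bullet)\to f_*\Hom_{\mc O_\mc X}(F,I^\bullet)\to f_*\Hom_{\mc O_\mc X}(F_1,I^\bullet)\to 0
\end{equation*}
in $\mathrm{Mod}(\mc Y)$. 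Its long exact cohomology sequence, together with the identification of the first paragraph, is precisely the asserted sequence, the connecting maps $\Ext_f^i(F_1,G)\to\Ext_f^{i+1}(F_2,G)$ being the snake-lemma boundary homomorphisms.

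The one ingredient that is not pure formalism is this pair of facts about injective $\mc O_\mc X$-modules — restriction to injectives along \'etale morphisms, and $f_*$-acyclicity of $\Hom_{\mc O_\mc X}(F',I)$ — both of which rest on the local-to-global $\Ext$ spectral sequence over the small \'etale site of a Deligne--Mumford stack. This is exactly the bookkeeping inherited from the scheme case, and verifying it carefully on $\mc X_{\et}$ is the only step I would expect to require genuine attention; everything downstream is the standard long-exact-sequence argument.
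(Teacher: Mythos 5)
Your proof is correct and is precisely the standard scheme-theoretic argument that the paper has in mind when it omits this lemma as ``analogous to the scheme case'': resolve $G$ by injectives, use exactness of $\Hom_{\mc O_\mc X}(-,I^j)$ for $I^j$ injective to obtain a short exact sequence of complexes of $f_*$-acyclic sheaves, push forward, and take the long exact cohomology sequence, identifying the terms via the definition of $\Ext_f^i$ as a derived functor in the second variable. The only point worth polishing is that deducing the $f_*$-acyclicity of $\Hom_{\mc O_\mc X}(F',I)$ from the local-to-global $\Ext$ spectral sequence is mildly circular (the construction of that spectral sequence already presupposes this acyclicity); the direct argument --- for \'etale $j:V\to\mc X$ one has $\Gamma(V,\Hom_{\mc O_\mc X}(F',I))=\homm(j_!j^*F',I)$, and injectivity of $I$ makes the resulting presheaf sections surjective under restriction, giving vanishing of the higher cohomology on every object of the small \'etale site --- avoids this.
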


\begin{corollary}\label{cor com ext}
Let $\xymatrix@C=0.5cm{L^\bullet \ar[r] & F \ar[r] & 0 }$ be an exact sequence in Mod($\mc X$). If the complex $L^\bullet$ satisfies
\begin{equation*}
\Ext_f^i(L^e,G)=0 \quad\text{for $i\geq 1$},
\end{equation*}
then we get $\Ext_f^i(F,G)=\hh^i(f_*\Hom_{\mc O_{\mc X}}(L^\bullet,G))\quad\text{for all $i$}$.
\qed
\end{corollary}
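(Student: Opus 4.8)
The plan is to deduce this from the standard homological principle that a left resolution by objects acyclic for a contravariant left exact functor computes its right derived functors, and to carry it out by dimension shifting. The only inputs needed are the long exact sequence of Lemma~\ref{lemm exact se ext} (the $\delta$-functor structure of $\Ext_f^{\bullet}(-,G)$ in the first variable) together with the left exactness of $f_{*}\circ\Hom_{\mc O_{\mc X}}(-,G)$. I would deliberately avoid building a Cartan--Eilenberg double complex $f_{*}\Hom_{\mc O_{\mc X}}(L^{\bullet},I^{\bullet})$ from an injective resolution $G\to I^{\bullet}$: that route would require knowing that each $\Hom_{\mc O_{\mc X}}(L^{e},I^{q})$ is $f_{*}$-acyclic, which is not available here since the corollary does not assume the $L^{e}$ locally free --- this is the one genuine subtlety, and it is precisely what forces the purely $\delta$-functorial argument.

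Concretely, I would write the augmented complex as $\cdots\to L^{-1}\to L^{0}\to F\to 0$, set $K^{0}:=F$ and $K^{e}:=\operatorname{im}(L^{-e}\to L^{-e+1})$ for $e\geq 1$, giving short exact sequences $0\to K^{e+1}\to L^{-e}\to K^{e}\to 0$ for every $e\geq 0$. Put $T:=\Hom_{\mc O_{\mc X}}(-,G)$, so that $f_{*}T$ is contravariant and left exact with $\Ext_f^{0}(-,G)=f_{*}T$. Applying $\Ext_f^{\bullet}(-,G)$ to the short exact sequences, Lemma~\ref{lemm exact se ext} together with the vanishing $\Ext_f^{i}(L^{-e},G)=0$ for $i\geq 1$ yields, for each $e\geq 0$, a four-term exact sequence
\[
0\to f_{*}T(K^{e})\to f_{*}T(L^{-e})\to f_{*}T(K^{e+1})\to\Ext_f^{1}(K^{e},G)\to 0
\]
and isomorphisms $\Ext_f^{i}(K^{e+1},G)\cong\Ext_f^{i+1}(K^{e},G)$ for $i\geq 1$; composing the latter gives $\Ext_f^{1}(K^{i-1},G)\cong\Ext_f^{i}(F,G)$ for every $i\geq 1$.

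It then remains to compare these groups with the cohomology sheaves of $f_{*}T(L^{\bullet})=\bigl[f_{*}T(L^{0})\xrightarrow{d^{0}}f_{*}T(L^{-1})\xrightarrow{d^{1}}\cdots\bigr]$. For $i=0$, left exactness of $f_{*}T$ on $L^{-1}\to L^{0}\to F\to 0$ gives $\hh^{0}\bigl(f_{*}T(L^{\bullet})\bigr)=\ker d^{0}=f_{*}T(F)=\Ext_f^{0}(F,G)$. For $i\geq 1$, left exactness of $f_{*}T$ on $L^{-i-1}\to L^{-i}\to K^{i}\to 0$ identifies $\ker d^{i}$ with $f_{*}T(K^{i})$; factoring the differential $L^{-i}\to L^{-i+1}$ of $L^{\bullet}$ as $L^{-i}\twoheadrightarrow K^{i}\hookrightarrow L^{-i+1}$ shows that, under this identification, $\operatorname{im} d^{i-1}$ corresponds to the image of the map $f_{*}T(L^{-i+1})\to f_{*}T(K^{i})$ induced by $K^{i}\hookrightarrow L^{-i+1}$, so
\[
\hh^{i}\bigl(f_{*}T(L^{\bullet})\bigr)\cong\operatorname{coker}\bigl(f_{*}T(L^{-i+1})\to f_{*}T(K^{i})\bigr)\cong\Ext_f^{1}(K^{i-1},G)\cong\Ext_f^{i}(F,G),
\]
the middle isomorphism being the four-term sequence with $e=i-1$. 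Finally one checks that these isomorphisms are maps of $\mc O_{\mc Y}$-modules, natural in $G$, and independent of the chosen resolution, which is formal and proceeds exactly as over a scheme. Beyond the acyclicity point noted above, the only thing requiring care is the index bookkeeping in this last step --- aligning the cohomological degrees, the syzygy sheaves $K^{e}$, and the kernels and images of the $d^{i}$ --- and no genuinely new difficulty arises over the scheme case, since Lemma~\ref{lemm exact se ext} already supplies the needed $\delta$-functor structure on Deligne-Mumford stacks.
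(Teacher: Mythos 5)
Your argument is correct and complete: the dimension-shifting via the syzygy sheaves $K^{e}$, using only the long exact sequence of Lemma~\ref{lemm exact se ext}, the acyclicity hypothesis on the $L^{e}$, and the left exactness of $f_{*}\circ\Hom_{\mc O_{\mc X}}(-,G)$, is exactly the standard scheme-case proof that the paper omits as ``analogous to the scheme case.'' Your remark on why a Cartan--Eilenberg double complex would need extra acyclicity input is a fair observation, but no further comment is needed.
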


\begin{definition}\label{def a family of dm}
A flat family of projective stacks over a $k$-scheme $S$ is a flat morphism $p : \mc X\rightarrow S$, where $\mc X$ is a tame separated $S$-global quotient stack (see \cite[Definition 2.9]{ehkv01}) and $p$ factorizes as $\pi : \mc X\rightarrow X$ composed with a projective morphism $q : X\rightarrow S$ i.e.
\begin{equation*}
\xymatrix@=0.5cm{
  \mc X \ar[rr]^{\pi} \ar[dr]_{p} &&    X \ar[dl]^{q}.    \\
                & S}
\end{equation*}
\end{definition}

\begin{lemma}\label{lemm proj}
Suppose that $p : \mc X\rightarrow\spec A$ is a flat family of projective stacks over $A$ with $A$ a finitely generated $k$-algebra. If $E$ is a $A$-flat coherent sheaf on $\mc X$ satisfying: $R^ip_*E=0$ for $i>0$, then for any $A$-algebra $A^\prime$ and $A^\prime$-module $M$
\begin{equation*}
\begin{split}
p_{A^\prime*}(E\otimes_AM)=p_*E\otimes_AM\quad\text{and}\quad
R^ip_{A^\prime*}(E\otimes_{A}M)=0\quad\text{for $i>0$},
\end{split}
\end{equation*}
where
\begin{equation*}
\xymatrix@=0.3cm{
  & X_{A^\prime} \ar[rr] \ar'[d]^{q_{A^\prime}}[dd]   &&    X   \ar[dd]^{q}        \\
  \mc X_{A^\prime}\ar[ur]^{\pi_{A^\prime}}\ar[rr]  \ar[dr]_{p_{A^\prime}}  && \mc X \ar[ur]^{\pi}\ar[dr]^{p} \\
  &  \spec A^\prime \ar[rr]  && \spec A    }.
\end{equation*}
\end{lemma}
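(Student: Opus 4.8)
The plan is to push everything down to the coarse moduli space and reduce to the classical cohomology–and–base–change statement for the projective morphism $q : X \to \spec A$, which I would then prove by dimension shifting.

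\emph{Reduction to the scheme case.} Since $\mc X$ is a tame global quotient stack, $\pi_*$ is exact and its formation commutes with arbitrary base change on $\spec A$, as well as with tensoring by quasicoherent sheaves; these are the inputs of \cite[Theorem 3.2]{aov08} and \cite[Corollary 1.3]{ni08} already used in Section \ref{sec orb Hilb schemes}. Hence $F := \pi_* E$ is an $A$-flat coherent sheaf on $X$, and for every $A$-algebra $A'$ and $A'$-module $M$ one has $\pi_{A'*}(E \otimes_A M) = F \otimes_A M$ on $X_{A'}$ together with $R^i\pi_{A'*}(E \otimes_A M) = 0$ for $i>0$. Feeding this into the Leray spectral sequence for $p_{A'} = q_{A'}\circ \pi_{A'}$ — which degenerates because the higher direct images along $\pi_{A'}$ vanish — identifies $R^ip_{A'*}(E\otimes_A M)$ with $R^iq_{A'*}(F\otimes_A M)$; taking $A'=A$, $M=A$ also gives $p_*E = q_*F$ and $R^iq_*F = R^ip_*E = 0$ for $i>0$. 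So it suffices to prove: if $q : X \to \spec A$ is projective, $F$ is $A$-flat coherent, and $R^{i}q_* F = 0$ for all $i>0$, then $q_{A'*}(F\otimes_A M) = q_*F\otimes_A M$ and $R^iq_{A'*}(F\otimes_A M)=0$ for $i>0$, for every $A$-algebra $A'$ and $A'$-module $M$.

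\emph{The scheme case.} First I would show $R^iq_*(F\otimes_A M) = 0$ for all $i>0$ and all $A$-modules $M$. Choosing a free presentation $A^{(S)} \twoheadrightarrow M$ with kernel $N$ and using flatness of $F$, the long exact sequence together with $R^iq_*(F\otimes_A A^{(S)}) = (R^iq_*F)^{(S)} = 0$ (cohomology commutes with direct sums for the quasi-compact quasi-separated morphism $q$) produces an injection $R^iq_*(F\otimes_A M) \hookrightarrow R^{i+1}q_*(F\otimes_A N)$; iterating and invoking the vanishing of $R^{i}q_*$ above the relative dimension of $q$ forces the claim. Consequently $M \mapsto q_*(F\otimes_A M)$ is an exact, $A$-linear, direct-sum-preserving functor on $A$-modules, hence by the Eilenberg--Watts theorem naturally isomorphic to $q_*(F\otimes_A A)\otimes_A - = q_*F\otimes_A -$; this settles the case $A'=A$ for arbitrary $M$. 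For a general $A$-algebra $A'$ and an $A'$-module $M$, I would use that $g : X_{A'} \to X$ is affine, so $R^iq_{A'*}(-) = R^iq_*(g_*-)$ and $g_*(F\otimes_A M) = F\otimes_A M$ as an $\mc O_X$-module; the statement already established over $A$ then gives $R^iq_{A'*}(F\otimes_A M) = R^iq_*(F\otimes_A M) = 0$ for $i>0$ and $q_{A'*}(F\otimes_A M) = q_*F\otimes_A M$, with the evident $A'$-module structure.

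\emph{Main obstacle.} The homological bookkeeping is routine; the point needing care is that the two base-change isomorphisms — for $\pi_{A'}$ and for $q_{A'}$ — are the natural comparison maps and compose correctly, and in particular that $\pi_*$ commutes with the twist by an \emph{arbitrary} (not necessarily flat) $A$-module $M$. This is precisely where tameness of $\mc X$ enters essentially, rather than mere flat base change, and where the cited stacky results \cite[Theorem 3.2]{aov08} and \cite[Corollary 1.3]{ni08} do the real work.
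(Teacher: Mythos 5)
Your proposal is correct, and the reduction to the coarse moduli space (exactness of $\pi_*$, compatibility of $\pi_*$ with $-\otimes_A M$ by tameness, degeneration of the Leray spectral sequence for $p_{A'}=q_{A'}\circ\pi_{A'}$) is exactly the reduction the paper performs, citing the same results of Abramovich--Olsson--Vistoli and Nironi. Where you diverge is in the scheme-level step. The paper embeds $X$ into $\mds P^n_A$ and uses the \v{C}ech complex of $\iota_*\pi_*E$ on the standard affine cover: the hypothesis $R^ip_*E=0$ for $i>0$ makes $0\to H^0\to C^0\to\cdots\to C^n\to 0$ an exact complex of flat $A$-modules, which therefore stays exact after $\otimes_A M$, giving both conclusions at once (and, as a byproduct, the flatness of $p_*E$ over $A$). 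You instead prove the vanishing of $R^iq_*(F\otimes_A M)$ by dimension shifting along free presentations, using commutation with direct sums and vanishing above the relative dimension, and then identify $q_*(F\otimes_A -)$ with $q_*F\otimes_A -$ via Eilenberg--Watts; the passage to a general $A$-algebra $A'$ is handled through the affine morphism $X_{A'}\to X$. Both arguments are standard and complete; the paper's is the more economical ``finite flat complex computes cohomology'' device of Mumford/Hartshorne III.12, while yours avoids choosing an embedding and a cover at the cost of invoking Eilenberg--Watts and the affine base-change bookkeeping, which you correctly identify as routine.
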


\begin{proof}
Since $\pi_{A^\prime}$ is exact (\cite[Theorem 3.2]{aov08}) and $\pi_{A^\prime*}$ sends injective sheaves to flasque sheaves (\cite[Lemma 1.10]{ni08}), it suffice to show
\begin{equation}\label{equ lemm proj 1}
\begin{split}
q_{A^\prime*}(\pi_{A^\prime*}(E\otimes_AM))&=q_{A^\prime*}(\pi_*E\otimes_AM)=p_*E\otimes_AM,\\
R^iq_{A^\prime*}(\pi_{A^\prime*}(E\otimes_{A}M))&=R^iq_{A^\prime*}(\pi_*E\otimes_{A}M)=0
\end{split}
\end{equation}
by \cite[Prop.~1.5, Lem.~2.8]{ni08}. Since $q : X\rightarrow\spec(A)$ is projective, there exists a closed embedding $\iota : X\rightarrow\mds P^n_A$ i.e.
For the standard affine cover $\mfr U=(D_+(x_j))$ of $\proj(A[x_0,x_1,\cdots,x_n])$, the \v{C}ech complex of $E$ gives an exact sequence of $A$-modules
\begin{equation}\label{equ lemm proj 2}
\xymatrix@C=0.5cm{
  0 \ar[r] & H^0(\mds P^n_A,\iota_*\pi_*E) \ar[r] & C^0(\mfr U,\iota_*\pi_*E) \ar[r] & \cdots \ar[r] & C^n(\mfr U,\iota_*\pi_*E) \ar[r] & 0 },
\end{equation}
whose terms are flat $A$-modules. Hence, for any $A$-algebra $A^\prime$ and $A^\prime$-module $M$, $H^0(X_{A^\prime},\pi_*E\otimes_AM)=H^0(X,\pi_*E)\otimes_AM$ and $H^i(X_{A^\prime},\pi_*E\otimes_AM)=0$ for $i>0$.
\end{proof}

\begin{proposition}\label{pro comp ext}
Suppose that $p : \mc X\rightarrow S$ is a flat family of projective stacks over a $k$-scheme $S$ of finite type, and let $F,G$ be $S$-flat coherent sheaves on $\mc X$. Then there exists a locally free resolution $\xymatrix@=0.5cm{ L^\bullet \ar[r] & F \ar[r] & 0 }$ with the universal property: for any morphism of finite type $u : S^\prime\rightarrow S$, and any quasicoherent $\mc O_{S'}$-module $M$, one has
\begin{equation*}
\Ext_{p_{S'}}^i(F\otimes\mc O_{S'},\, G\otimes M)
=\hh^i\!\big(p_{S'*}\Hom(L^\bullet\otimes\mc O_{S'},\, G\otimes M)\big).
\end{equation*}
where
\begin{equation*}
\xymatrix@=0.3cm{
  & X_{S^\prime} \ar[rr] \ar'[d]^{q_{S^\prime}}[dd]   &&    X   \ar[dd]^{q}        \\
  \mc X_{S^\prime}\ar[ur]^{\pi_{S^\prime}}\ar[rr] \ar[dr]_{p_{S^\prime}}  && \mc X \ar[ur]^{\pi}\ar[dr]^{p} \\
  &  S^\prime \ar[rr]^{u} &&   S     }
\end{equation*}

\end{proposition}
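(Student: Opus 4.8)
The plan is to mimic the classical construction of a locally free resolution with universal base-change properties (as in \cite[Section~2.1.7]{hl10} or Lange's work), transported to the Deligne--Mumford setting via the coarse-moduli morphism $\pi : \mc X \to X$ and the tameness/exactness results already available. First I would reduce to the affine base case: since the statement is local on $S$ (both $\Ext_{p_{S'}}^i$ and the hypercohomology of $p_{S'*}\Hom(L^\bullet\otimes\mc O_{S'},G\otimes M)$ are compatible with restriction to an open cover of $S$), we may assume $S=\spec A$ with $A$ a finitely generated $k$-algebra. Fix a generating sheaf $\mc E$ on $\mc X$ (in the sense of \cite[Definition~2.6]{ni08}). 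Using the boundedness and regularity results for coherent sheaves on projective DM stacks, choose integers $n_0 \gg n_1 \gg \cdots$ so that each kernel appearing below is globally generated after the appropriate twist; then inductively build
\[
\cdots \to p^*p_*\!\bigl(K^{1}\otimes\pr_2^*\mc E^\vee(n_1)\bigr)\otimes\pr_2^*\mc E(-n_1) \to p^*p_*\!\bigl(F\otimes\pr_2^*\mc E^\vee(n_0)\bigr)\otimes\pr_2^*\mc E(-n_0) \to F \to 0,
\]
where each term $L^e$ is locally free (a twist of the pullback of a free $A$-module under $p$, tensored with the locally free $\mc E$), and $K^e$ is the $e$-th syzygy. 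This gives the resolution $L^\bullet \to F \to 0$.

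The key step is to arrange, by choosing the $n_e$ sufficiently negative at each stage, that each $L^e$ satisfies $R^i p_*\Hom(L^e, G\otimes M) = 0$ for $i>0$ and, more importantly, that this vanishing persists after arbitrary base change $u : S' \to S$ and arbitrary quasicoherent twist $M$. Concretely, $\Hom(L^e, G) \cong (L^e)^\vee \otimes G$ is a twist of $G$ by a locally free sheaf, and $G$ is $S$-flat; so the vanishing of higher direct images after twisting by $\pr_2^*\mc E^\vee(n_e)^{\vee}$ and base change follows from Lemma~\ref{lemm proj} (the stacky Grothendieck-type base-change lemma established above), applied to the $A$-flat coherent sheaf $\Hom(L^e,G)$ once $n_e$ is chosen negative enough to kill all higher cohomology. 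Here one uses that $F$ and $G$ are $S$-flat, that $\pi_{S'*}$ is exact and sends injectives to flasque sheaves (\cite[Theorem~3.2]{aov08}, \cite[Lemma~1.10]{ni08}), and that on the coarse space $X$ the usual projective base-change machinery (via a closed embedding into $\mds P^n_A$ and the \v{C}ech complex, exactly as in the proof of Lemma~\ref{lemm proj}) applies.

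Granting this, the identification is formal: by Corollary~\ref{cor com ext}, for the morphism $p_{S'}$ one has $\Ext_{p_{S'}}^i(F\otimes\mc O_{S'}, G\otimes M) = \hh^i\!\bigl(p_{S'*}\Hom(L^\bullet\otimes\mc O_{S'}, G\otimes M)\bigr)$, since $L^\bullet\otimes\mc O_{S'}$ is still a locally free resolution of $F\otimes\mc O_{S'}$ (flatness of $F$ over $S$ ensures the pulled-back complex stays exact) and each $L^e\otimes\mc O_{S'}$ satisfies the required $\Ext_{p_{S'}}$-acyclicity by the previous paragraph. Finally one invokes Lemma~\ref{lemm free ext} to move the twist by $M$ outside, and Lemma~\ref{lemm proj} once more to commute $p_{S'*}$ with the base change $u$.

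The main obstacle is the second step: ensuring the base-change-stable acyclicity of the terms $\Hom(L^e, G\otimes M)$ for \emph{all} $S'$ and \emph{all} quasicoherent $M$ simultaneously, not just for a fixed one. This requires choosing the twisting integers $n_e$ using a \emph{uniform} boundedness statement — that the family of sheaves $\{\Hom(L^e,G)\}$ (or rather the relevant syzygies) has regularity bounded independently of the base change — which is exactly where the tameness hypothesis on $\mc X$ and the stacky regularity theory of \cite{ni08} do the work, reducing everything through $\pi$ to a uniform statement on $X \hookrightarrow \mds P^n_A$ where the classical flat \v{C}ech-complex argument of Lemma~\ref{lemm proj} gives what is needed.
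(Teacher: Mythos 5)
Your proposal is correct and follows essentially the same route as the paper: reduce to affine $S$, build the resolution from surjections of the form $\pi^*q^*q_*(\pi_*(\mc E^\vee\otimes F)(l))(-l)\otimes\mc E\twoheadrightarrow F$ with $l$ large (your $p^*p_*(\cdot)$ terms are the same thing via $p=q\circ\pi$), verify $\Ext^i_p(L^e,G)=R^iq_*((L^e)^\vee\otimes G)=0$ for $i>0$ by a regularity bound, and then obtain the stability under arbitrary base change $S'\to S$ and arbitrary quasicoherent twist $M$ from the flat \v{C}ech-complex argument of Lemma~\ref{lemm proj}, concluding via Corollary~\ref{cor com ext}. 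The only deviations are cosmetic (the stray $\pr_2^*$ in your formulas, and describing $V$ as free rather than locally free).
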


\begin{proof}
For simplicity, we assume $S=\spec(A)$. Fix a generating sheaf $\mc E$ on $\mc X$. The natural morphism
\begin{equation}\label{equ pro comp ext 1}
	(\pi^*\pi_*(\mc E^\vee\otimes F))\otimes\mc E \to F
\end{equation}
is surjective \cite{os03}, and $\pi_*(\mc E^\vee\otimes F)$ is $S$-flat \cite[Cor.~3.3]{aov08}.  
By Serre vanishing, there exists $N$ such that $H^i(X,\pi_*(\mc E^\vee\otimes F)(l))=0 \quad (i>0,\, l\ge N)$, hence the Mumford-Castelnuovo regularities  ${\rm reg}(\pi_*(\mc E^\vee\otimes F)\otimes_Ak(s))\le N+n$ for all $s\in S$. Moreover, $q^*q_*(\pi_*(\mc E^\vee\otimes F)(l))$ is locally free and $q^*q_*(\pi_*(\mc E^\vee\otimes F)(l))\rightarrow\pi_*(\mc E^\vee\otimes F)(l)$ is surjective. Writing $V=q^*q_* (\pi_*(\mc E^\vee\otimes F)(l))$, we thus obtain a surjection
\begin{equation}\label{equ app gl}
	\pi^*V(-l)\otimes\mc E \twoheadrightarrow F.
\end{equation}
For any $G$, Lemma~\ref{lemm free ext} gives
\begin{equation}\label{equ pro comp ext 3}
	\Ext^i_p(\pi^*V(-l)\otimes\mc E,G)
	=R^iq_*(V^\vee(l)\otimes\pi_*(\mc E^\vee\otimes G)).
\end{equation}
As before, there exists $\wt N$ with ${\rm reg}(\pi_*(\mc E^\vee\otimes G)\otimes_Ak(s))\le\wt N$, so by \cite[Thm.~12.11]{ha77}, $R^iq_*(V^\vee(l)\otimes\pi_*(\mc E^\vee\otimes G))=0 \quad (i>0,\, l\ge \wt N)$.
Thus
\begin{equation}\label{equ pro comp ext 5}
	\Ext^i_p(\pi^*V(-l)\otimes\mc E,G)=0 \quad (i>0,\, l\ge \wt N).
\end{equation}
Iterating \eqref{equ app gl}, we construct a locally free resolution
\begin{equation}\label{equ pro compl ext 6}
	L^\bullet \to F \to 0
\end{equation}
with $\Ext^i_p(L^e,G)=0$ for $i>0$, $e\ge0$. Hence $\Ext^i_p(F,G)=\hh^i(p_*\Hom(L^\bullet,G))$. For $S'=\spec(A')$, the base change $L^\bullet\otimes_AA'$ is again a locally free resolution of $F\otimes_AA'$. By Lemma~\ref{lemm proj}, $\Ext^i_{p_{S'}}(L^e\otimes_AA',\, G\otimes_AM)=0 \quad (i>0,\, e\ge0)$, thus \eqref{equ pro compl ext 6} satisfies the required universal property.

\end{proof}

\begin{corollary}\label{cor pro comp ext}
Under the above hypothesis, there exists a bounded below complex $W^\bullet$ of locally free sheaves on $S$
such that $\Ext^i_{p_{S^\prime}}(F\otimes_{\mc O_S}\mc O_{S^\prime}, G\otimes_{\mc O_S}M)=\hh^i(W^\bullet\otimes_{\mc O_S}M)$.

\end{corollary}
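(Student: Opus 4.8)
The plan is to package the resolution already built in Proposition~\ref{pro comp ext} into the desired complex. That proposition produces a locally free resolution $L^\bullet\to F\to 0$ whose construction yields two features: the ``absolute'' vanishing $\Ext^i_p(L^e,G)=0$ for all $i>0$ and all $e\ge 0$, and the universal identity $\Ext^i_{p_{S'}}(F\otimes\mc O_{S'},G\otimes M)=\hh^i\!\big(p_{S'*}\Hom(L^\bullet\otimes\mc O_{S'},G\otimes M)\big)$ for every finite-type $u:S'\to S$ and every quasicoherent $M$ on $S'$. I would set $W^e:=p_*\Hom(L^e,G)=\Ext^0_p(L^e,G)$, take $W^\bullet$ to be the evident cochain complex of $\mc O_S$-modules, and reduce (as in the proof of Proposition~\ref{pro comp ext}) to $S=\spec A$ with $A$ finitely generated over $k$, hence Noetherian. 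It then suffices to establish two termwise facts: (i) each $W^e$ is locally free of finite rank on $S$; (ii) $p_{S'*}\Hom(L^e\otimes\mc O_{S'},G\otimes M)=W^e\otimes_{\mc O_S}M$. Substituting (i)--(ii) into the universal identity gives $\Ext^i_{p_{S'}}(F\otimes\mc O_{S'},G\otimes M)=\hh^i(W^\bullet\otimes_{\mc O_S}M)$, which is the assertion.

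To prove (i) and (ii) I would unwind the shape of the terms $L^e$: by the construction in Proposition~\ref{pro comp ext} each $L^e$ is of the form $\pi^*q^*V(-l)\otimes\mc E$ with $V$ locally free on $S$ and $l$ large, and by \eqref{equ pro comp ext 3} (taken in degree $0$) the sheaf $W^e=p_*\Hom(L^e,G)$ is, up to tensoring with a locally free sheaf on $S$, of the form $q_*\big(\pi_*(\mc E^\vee\otimes G)(l)\big)$ with $l\gg 0$. Since $\pi_*(\mc E^\vee\otimes G)$ is $S$-flat and, for $l$ large, $q_*$ of its $l$-th twist has vanishing higher direct images and is $S$-flat, Lemma~\ref{lemm proj} applies and shows simultaneously that $W^e$ is coherent (properness of $q$ together with tameness of $\pi$), that it is $S$-flat, and that its formation commutes with arbitrary base change --- which is exactly (ii). A coherent $S$-flat sheaf on the Noetherian scheme $S$ is locally free of finite rank, giving (i); and then $W^\bullet$ is the sought complex. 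The only point that is not pure bookkeeping is the $S$-flatness of $W^e$: this is ``cohomology and base change'' in the harmless direction, and it works precisely because the resolution of Proposition~\ref{pro comp ext} was engineered so that $\Ext^{>0}_p(L^e,G)=0$ for every $e$. I do not expect a genuine obstacle here.

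If one additionally wants $W^\bullet$ to be a \emph{bounded} complex --- which is convenient in the applications --- I would append a truncation step, valid whenever $p$ has finite cohomological dimension, e.g.\ when $\mc X$ is regular so that $F$ has finite Tor-dimension: then the amplitudes of $R\Hom$ and of $Rp_*$ add to a bound $d$ that is independent of the base $S'$, so $\hh^i(W^\bullet\otimes_{\mc O_S}M)=\Ext^i_{p_{S'}}(F\otimes\mc O_{S'},G\otimes M)=0$ for $i>d$ and all $M$. Consequently $W^d\to W^{d+1}\to\cdots$ is a flat resolution of $Z:=\ker(W^d\to W^{d+1})$, so $Z\otimes_A^L M$ is computed by $W^d\otimes M\to W^{d+1}\otimes M\to\cdots$, whose cohomology in positive degrees is $\hh^{>d}(W^\bullet\otimes M)=0$; hence $Z$ is $S$-flat and coherent, therefore locally free of finite rank, and $Z\otimes_A M=\ker(W^d\otimes M\to W^{d+1}\otimes M)$. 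Replacing the tail of $W^\bullet$ by $Z$ placed in degree $d$ leaves every cohomology sheaf $\hh^i(W^\bullet\otimes_{\mc O_S}M)$ unchanged and yields a bounded locally free representative. The one genuinely non-formal ingredient in this optional refinement is the \emph{uniform} (base-change-independent) bound $d$, which is where regularity of $\mc X$ is needed.
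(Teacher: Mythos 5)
Your proposal is correct and follows essentially the same route as the paper: you set $W^\bullet=p_*\Hom(L^\bullet,G)=p_*({L^\bullet}^\vee\otimes G)$, use the engineered vanishing $\Ext^{>0}_p(L^e,G)=0$ together with Lemma~\ref{lemm proj} to get local freeness of each term and compatibility with base change, and substitute into the universal identity from Proposition~\ref{pro comp ext}. The third paragraph on truncating to a bounded complex is an optional refinement not present in (and not needed for) the paper's statement.
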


\begin{proof}
By Proposition~\ref{pro comp ext}, there exists a bounded above complex $L^\bullet$ of locally free sheaves such that
\begin{equation}\label{equ cor comp ext 1}
	\Ext_{p_{S^\prime}}^i(F\otimes_{\mc O_S}\mc O_{S^\prime},G\otimes_{\mc O_S}M)
	=\hh^i(p_{S^\prime*}\Hom(L^\bullet\otimes_{\mc O_S}\mc O_{S^\prime},G\otimes_{\mc O_S}M)).
\end{equation}
Since $\Ext^i_p(L^e,G)=R^ip_*({L^e}^\vee\otimes G)=0$ for $i>0$, each $p_*({L^e}^\vee\otimes G)$ is locally free on $S$. Set $W^\bullet=p_*({L^\bullet}^\vee\otimes G)$. Then, by \eqref{equ cor comp ext 1} and Lemma~\ref{lemm proj}, $\Ext_{p_{S^\prime}}^i(F\otimes_{\mc O_S}\mc O_{S^\prime},G\otimes_{\mc O_S}M)
=\hh^i(W^\bullet\otimes_{\mc O_S}M)$.
\end{proof}

We give a simple proof of the base change theorem for relative $\ext$-sheaves on Deligne Mumford stacks which had been proved by Hall (see \cite[Theorem A]{hal14}).
\begin{theorem}\label{thm base ch for ext}
Let $y\in S$ be a point.
\begin{enumerate}[$(1)$]
\item If the natural map $\Ext_f^i(F,G)\otimes k(y)\rightarrow \ext^i(F_y,G_y)$ is surjective, then it is an isomorphism, and the same is true for all $y^\prime$ in a neighborhood of $y$.
\item Assume that the natural map $\Ext_f^i(F,G)\otimes k(y)\rightarrow \ext^i(F_y,G_y)$ is surjective. Then the following are equivalent:
    \begin{enumerate}[$(a)$]
    \item $\Ext_f^{i-1}(F,G)\otimes k(y)\rightarrow\ext^{i-1}(F_y,G_y)$ is surjective;
    \item $\Ext_f^i(F,G)$ is locally free in a neighborhood of $y$.
    \end{enumerate}
\end{enumerate}
\end{theorem}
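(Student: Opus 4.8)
The plan is to reduce this to the classical Grothendieck cohomology-and-base-change theorem by replacing $\Ext_f^i(F,G)$ with the cohomology of a bounded complex of locally free sheaves on $S$ whose formation commutes with arbitrary base change. Concretely, I would first work locally on $S$ and assume $S=\spec A$ with $A$ a finitely generated $k$-algebra, so that the setting of Proposition \ref{pro comp ext} and Corollary \ref{cor pro comp ext} applies. By Corollary \ref{cor pro comp ext} there is a bounded-below complex $W^\bullet=p_*({L^\bullet}^\vee\otimes G)$ of locally free $\mc O_S$-modules such that, for every morphism of finite type $u:S'\to S$ and every quasicoherent $\mc O_{S'}$-module $M$, one has a functorial isomorphism
\[
\Ext_{p_{S'}}^i(F\otimes_{\mc O_S}\mc O_{S'},\,G\otimes_{\mc O_S}M)\;\cong\;\hh^i(W^\bullet\otimes_{\mc O_S}M).
\]
In particular, taking $S'=\spec k(y)$ and $M=k(y)$ gives $\ext^i(F_y,G_y)\cong\hh^i(W^\bullet\otimes k(y))$, and the natural map in the statement of the theorem is identified with the base-change map $\hh^i(W^\bullet)\otimes k(y)\to\hh^i(W^\bullet\otimes k(y))$ for the complex $W^\bullet$.

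Once this identification is in place, both assertions are purely a statement about a bounded complex of finite locally free modules over a Noetherian ring, and they follow verbatim from the standard homological lemmas underlying cohomology and base change (see e.g. \cite[III.12]{ha77} or Mumford's "Abelian Varieties"). For part (1): if $\hh^i(W^\bullet)\otimes k(y)\to\hh^i(W^\bullet\otimes k(y))$ is surjective at $y$, then by the exchange/semicontinuity lemma it is an isomorphism, and surjectivity — hence the isomorphism — persists on an open neighborhood of $y$ since the locus where a coherent base-change map fails to be surjective is closed. For part (2): under the surjectivity hypothesis at $y$, the equivalence of "$\hh^{i-1}(W^\bullet)\otimes k(y)\to\hh^{i-1}(W^\bullet\otimes k(y))$ is surjective" with "$\hh^i(W^\bullet)$ is locally free near $y$" is exactly the second half of the cohomology-and-base-change dichotomy, proved by the same diagram chase with the complex of locally free modules and Nakayama's lemma.

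The main obstacle is not the final homological argument, which is routine, but making sure the reduction is legitimate in the stacky setting: namely that Proposition \ref{pro comp ext} and Corollary \ref{cor pro comp ext} genuinely produce a complex of locally free $\mc O_S$-modules computing $\Ext_f^i$ compatibly with \emph{all} finite-type base changes, including non-flat ones and arbitrary quasicoherent coefficient modules $M$. This rests on the exactness of $\pi_{S'*}$ for tame stacks (\cite[Theorem 3.2]{aov08}), the flatness of $\pi_*(\mc E^\vee\otimes F)$ (\cite[Corollary 3.3]{aov08}), and the boundedness of Castelnuovo–Mumford regularity in families, all of which are recorded in the proof of Proposition \ref{pro comp ext}; I would simply cite these and check that the resulting resolution $L^\bullet$ satisfies $\Ext^i_p(L^e,G)=0$ for $i>0$ so that $W^\bullet$ has locally free terms. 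After that, the two parts of the theorem are obtained exactly as in the scheme case and need no further stack-theoretic input.
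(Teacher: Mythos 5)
Your proposal is correct and follows essentially the same route as the paper: the paper also reduces to the classical cohomology-and-base-change argument of \cite[Thm.~12.11]{ha77} by viewing $M\mapsto\ext^i(F,G\otimes_A M)$ as a $\delta$-functor computed by the locally free complex furnished by Proposition \ref{pro comp ext} and Corollary \ref{cor pro comp ext}. You merely make explicit the identification with $\hh^i(W^\bullet\otimes M)$ that the paper leaves implicit in the phrase ``applies verbatim.''
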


\begin{proof}
Without loss of generality, let $S=\spec(A)$ with $A$ a finitely generated $k$-algebra.  
Define functors $T^i:\modd(A)\to\modd(A)$ by
\[
M \mapsto \ext^i(F,G\otimes_A M),\quad i\ge0.
\]
Then the proof of \cite[Thm.~12.11]{ha77} applies verbatim, yielding the base change theorem for relative Ext-sheaves.
\end{proof}

\end{appendices}

\section*{Acknowledgement}
{We are grateful to Professor Oleg Chalykh for drawing our attention to his work with Philip Argyres and Yongchao Lü on Poisson deformations of elliptic surfaces in the context of elliptic Cherednik algebras and SCFTs, which is closely related to the orbifold Hilbert schemes studied in this paper.



\begin{thebibliography}{99}



\bibitem[AOV08]{aov08}D. Abramovich, M. Olsson and A. Vistoli, \textit{Tame stacks in positive characteristic}, Ann. Inst. Fourier (Grenoble), 58(4) (2008), 1057-1091.


\bibitem[ACL]{acl23}P. Argyres, O. Chalykh and Y. L\"{u}, \textit{Complex crystallographic reffection groups and Seiberg–Witten integrable systems: rank 1 case}, arXiv:2309.12760v2.






\bibitem[Ba56]{baily56}W. L. Baily, \textit{The decomposition theorem for V-manifolds}, Amer. J. Math. 78 (1956), 862-888.



\bibitem[Ba57]{baily57}W. L. Baily, \textit{On the imbedding of V -manifolds in projective space}, Amer. J. Math. 79 (1957), 403-430.



\bibitem[BHPV04]{bhpv04}W. Barth, K. Hulek, Chris Peters, A. Van de Ven,
\textit{Compact complex surfaces}, 2nd enlarged ed. Ergebnisse der Mathematik und ihrer Grenzgebiete. 3. Folge 4. Berlin: Springer (ISBN 3-540-00832-2/hbk). xii, 436 p. (2004).



\bibitem[Bea92]{bea92}A. Beauville, \textit{Sur la cohomologie de certains espaces de modules de fibr\'{e}s vectoriels}, Geometry and analysis (Bombay, 1992), Tata Inst. Fund. Res., Bombay (1995), 37-40.



\bibitem[Bea00]{bea00}A. Beauville, \textit{Symplectic singularities}, Invent. math. 139, 541-549 (2000).




\bibitem[Beh04]{beh04} K. Behrend, \textit{Cohomology of stacks}, from: Intersection theory and moduli, (E. Arbarello, G. Ellingsrud, L. Goettsche, editors), ICTP Lect. Notes \uppercase\expandafter{\romannumeral 19}, Abdus Salam Int. Cent. Theoret. Phys. Trieste (2004), 249-294.





\bibitem[Br77]{br77}J. Brian\c{c}on, \textit{Description de $\hilb^n\mbb C\{x,y\}$}, Invent. Math. 41 (1977), 45-89.




\bibitem[BKR01]{bkr01}T. Bridgeland, A. King and M. Reid, \textit{The McKay Correspondence as an Equivalence of Derived Categories}, J. Amer. Math. Soc. 14 (2001), 535-554. 








\bibitem[Boa11]{boa11}P. Boalch, \textit{Simply-laced isomonodromy systems},Publ. Math-Paris, 116 (2012), pp. 1-68.






\bibitem[Bo95]{bo95}F. Bottacin, \textit{Poisson structures on moduli spaces of sheaves over Poisson surfaces}, Invent. Math. 121 (1985), 421-436.



\bibitem[Bo98]{bo98}F. Bottacin, \textit{Poisson structures on Hilbert schemes of points of a surface and integrable systems}, Manuscripta Math. 97 (1998), 517-527.




\bibitem[Br09]{bro09}S. Brochard, \textit{Foncteur de Picard dun champ alg\'{e}brique}, Math. Ann. 343 (2009), 541-602.


\bibitem[Br12]{bro12}S. Brochard, \textit{Finiteness theorems for the Picard objects of an algebraic stack}, Adv. Math. 229 (2012) 1555-1585.



\bibitem[BZ23]{bz23}D. Bejleri and G. Zaimi, \textit{The topology of equivariant Hilbert schemes}, Res. Math. Sci. 10:28 (2023), https://doi.org/10.1007/s40687-023-00393-z.



\bibitem[Ca20]{ca20}A. C\u{a}ld\u{a}raru, \textit{Derived categories of twisted sheaves on Calabi-Yau manifolds}, Thesis, Cornell Univ., May 2000.



\bibitem[CGGS21]{cggs21}A. Craw, S. Gammelgaard, \'{A}. Gyenge and B. Szendr\H{o}i, \textit{Punctual Hilbert schemes for Kleinian singularities as quiver varieties}, Algebraic Geom. 8(6) (2021), 680-704, doi:10.14231/AG-2021-021.




\bibitem[Cr01]{cr01}W. Crawley-Boevey, \textit{Geometry of the moment map for representation of quivers}, Compositio Math. 126 (2001), 257-293.



\bibitem[CT08]{ct08}J. Chen and H. Tseng, \textit{A note on derived Mckay correspondence}, Math. Res. Lett. 15 (2008), No. 3, 435-445.








\bibitem[EHKV01]{ehkv01}D. Edidin, B. Hassett, A. Kresch and A. Vistoli, \textit{Brauer groups and quotient stacks}, Amer. J. Math. 123 (2001), No. 4, 761-777.



\bibitem[ES93]{es93}G. Ellingsrud and S. Stromme, \textit{Towards the Chow ring of the Hilbert scheme of $\mds P^2$}, J. Reine Angew. Math. 441 (1993), 33-44.



\bibitem[Es85]{es85}H. Esnault, \textit{Reflexive modules on quotient surface singularities}, J. Reine Angew. Math. 362 (1985), 63-71.



\bibitem[FGIKNV05]{fgiknv05}B. Fantechi, L. Gottsche, L. Illusie, S. L. Kleiman, N. Nitsure, and
 A. Vistoli, \textit{Fundamental algebraic geometry: Grothendiecks FGA explained}, Mathematical Surveys and Monographs, vol. 123, American Mathematical Society, Providence, RI, 2005.






\bibitem[Fu05]{fu05}B. Fu, \textit{Poisson resolutions}, J. Reine Angew. Math. 587 (2005), 17-26.



\bibitem[GS17]{gs17}A. Geraschenko and M. Satriano, \textit{A ``Bottom Up" Characterization of Smooth Deligne-Mumford Stacks}, Int. Math. Res. Not. (2017), No. 21, pp. 6469-6483.




\bibitem[GNR01]{gnr01}A. Gorsky, N. Nekrasov and V. Rubtsov, \textit{Hilbert Schemes, Separated Variables, and D-Branes}, Commun. Math. Phys. 222 (2001), 299-318.



\bibitem[GH94]{gh94}P. Griffiths and J. Harris, \textit{Principles of algebraic geometry. 2nd ed.}, Wiley Classics Library. New York, NY: John Wiley, Sons Ltd. xii, 813 p. (1994).



\bibitem[Go14]{go14}M. Groechenig, \textit{Hilbert schemes as moduli of Higgs bundles and local systems}, Int. Math. Res. Not. (2014), No. 23, 6523-6575.



\bibitem[Gr60]{gro60}A. Grothendieck, \textit{Techniques de construction en g\'{e}om\'{e}trie analytique. \uppercase\expandafter{\romannumeral 9}. Quelques probl\`{e}mes de modules}, S\'{e}minaire Henri Cartan, Volume 13 (1960-1961) No. 2, Talk No. 16, 20 p.


\bibitem[SGA1]{sga1}A. Grothendieck and M. Raynaud, \textit{Rev\^{e}tements \'{E}tales et Groupe Fondamental}, S\'{e}minaire de G\'{e}om\'{e}trie Alg\'{e}brique du Bois Marie 1960/61 (SGA 1).


\bibitem[Har77]{ha77}R. Hartshorne, \textit{Algebraic Geometry}, Graduate Texts in Mathematics, No. 52. Springer-Verlag, New York-Heidelberg, 1977. xvi+496 pp. ISBN: 0-387-90244-9.



\bibitem[Hal14]{hal14}J. Hall, \textit{Cohomology and base change for algebraic stacks}, Math. Z. (2014) 278, 401-429.


\bibitem[Hu24]{hu24}Y. Huang, \textit{Mirror symmetry and Hitchin system on Deligne-Mumford curves: Strominger-Yau-Zaslow duality}, Canad. J. Math. Vol. 77 (5), 2025 pp. 1488–1545.


\bibitem[Hi87]{hi87}N. Hitchin, \textit{The self-duality equations on a Riemann surface}, Proc. London Math. Soc. (3) 55 (1987), 59-126.


\bibitem[HL10]{hl10}D. Huybrechts and M. Lehn, \textit{The geometry of moduli spaces of sheaves. 2nd ed.}, Cambridge University Press (2010).



\bibitem[Il71]{il71}L. Illusie, \textit{Complexe cotangent et deformations.\uppercase\expandafter{\romannumeral 1}.}, Lecture Notes in Mathematics, Vol. 239. Springer-Verlag, Berlin (1971).



\bibitem[Is02]{is02}A. Ishii, \textit{On the McKay correspondence for a finite small subgroup of $\GL(2,\mbb C)$}, J. Reine Angew. Math. 549 (2002), 221-233.







\bibitem[Jia25]{jia25}Z. Jia, \textit{Symplectic Geometry of Higgs Moduli and the Hilbert Scheme of Points},  arXiv:2505.02334v2.



\bibitem[KLC06]{klc06}D. Kaledin, M. Lehn and C. Sorger, \textit{Singular symplectic moduli spaces}, Invent. Math. 164 (2006), 591-614.



\bibitem[Ko07]{ko07}J. Kollar, \textit{Lectures on resolution of singularities}, Annals of Mathematics Studies 166. Princeton, NJ: Princeton University Press (ISBN 978-0-691-12923-5/pbk; 978-0-691-12922-8/hbk). vi, 208 p. (2007).




\bibitem[Kre05]{kr05}A. Kresch, \textit{On the geometry of Deligne-Mumford stacks}, In: Abramovich, D; Bertram, A; Katzarkov, L; Pandharipande, R; Thaddeus, M. Algebraic Geometry: Seattle 2005. Providence, Rhode Island: Amer. Math. Soc. 259-271.



\bibitem[Ku24]{ku24}N. Kuhn, \textit{The Atiyah class on algebraic stacks}, Forum Math. Sigma 12 (2024), Paper No. e100, 54 p.



\bibitem[Ku07]{ku07}A. Kuznetsov, \textit{Quiver varieties and Hilbert schemes}, Mosc. Math. J., 7(4): 673-697, 767, 2007.



\bibitem[Li11]{lie11}M. Lieblich, \textit{Moduli of twisted orbifold sheaves}, Adv. Math. 226 (5) (2011), 4145-4182.



\bibitem[LR22]{lr22}B. Lim and F. Rota, \textit{Characteristic classes and stability conditions for projective Kleinian orbisurfaces}, Math. Z. (2022) 300: 827-849.





\bibitem[Ma02]{ma02}E. Markman, \textit{Generators of the cohomology ring of moduli spaces of sheaves on symplectic surfaces}, J. Reine Angew. Math. 544 (2002), 61-82.


\bibitem[Ma07]{ma07}E. Markman, \textit{Integral generators for the cohomology ring of moduli spaces of sheaves over Poisson surfaces}, Adv. Math. 208 (2007), No.2, 622-646.





\bibitem[Mk87]{mk87}S. Mukai, \textit{On the moduli space of bundles on K3 sufaces \uppercase\expandafter{\romannumeral 1}}, in: Vector Bundles on Algebraic Varieties, Proc. Bombay Conference, 1984, in: Tata Inst. Fund. Res. Stud., vol. 11, Oxford Univ. Press, 1987, pp. 341-413.


\bibitem[Mu94]{Mu94}D. Mumford, J. Fogarty and F. Kirwan, \textit{Geometric invariant theory}, Ergebnisse der Mathematik und ihrer Grenzgebiete (2)[Results in Mathematics and Related Areas (2)], xiv+292 pp., 1994, Springer-Verlag, Berlin.



\bibitem[Na02]{na02} H. Nakajiama, \textit{Geometric Construction of Representations of Affine algebras}, ICM 2002. Vol. I, 423-438.



\bibitem[Ni08]{ni08}F. Nironi, \textit{Moduli spaces of semistable sheaves on Projective Deligne-Mumford Stack}, arXiv:0811.1949v2 (2008).




\bibitem[Ni09]{ni09}F. Nironi, \textit{Grothendieck duality for Deligne-Mumford stacks}, arXiv:0811.1955v2 (2009).




\bibitem[Ol16]{ol16}M. Olsson, \textit{Algebraic spaces and stacks}, American Mathematical Society Colloquium Publications, 62. American Mathematical Society, Providence, RI (2016). xi+298 pp. ISBN: 978-1-4704-2798-6


\bibitem[OS03]{os03}M. Olsson and J. Starr, \textit{Quot functors for Deligne-Mumford stacks}, Special issue in honor of Steven L. Kleiman. Comm. Algebra 31 (2003), No. 8, 4069-4096.





\bibitem[Rie77]{rie77}O. Riemenschneider, \textit{Die Invarianten der endlichen Untergruppen von $GL(2,\mbb C)$}, Math. Z. 153 (1977), 37-50.




\bibitem[St77]{steen76}J. H. M. Steenbrink, \textit{Mixed Hodge structure on the vanishing cohomology}, Real and complex singularities (Proc. Ninth Nordic Summer School/NAVF Sympos. Math., Oslo, 1976), Sijthoff and Noordhoff, Alphen aan den Rijn (1977), 525-563.


\bibitem[Toe99(1)]{toen99}B. Toen, \textit{Th\'{e}or\`{e}mes de Riemann-Roch pour les champs de Deligne-Mumford}, K-Theory 18 (1999), No. 1, 33-76.


\bibitem[Toe99(2)]{toen99(1)}B. Toen, \textit{K-th\'{e}orie et cohomologie des champs alg\'{e}briques: Th\'{e}or\`{e}mes de Riemann-Roch, D-modules et th\'{e}or\`{e}mes ``GAGA''}, thesis, Universit\'{e} Paul Sabatier de Toulouse 3, 1999.



\bibitem[Ts10]{ts10}H. Tseng, \textit{Orbifold quantum Riemann-Roch, Lefschetz and Serre}, Geom. Topol. 14 (2010), No. 1, 1-81.



\bibitem[Yo01]{yo01}K. Yoshioka, \textit{Moduli spaces of stable sheaves on abelian surfaces}, Math. Ann. 321 (4) (2001), 817-884.






\bibitem[Ze23]{ze23}X. Zheng, \textit{Irreducibility of the Hilbert schemes of points on surfaces with Kleinian singularities}, Commun. Algebra, 51:1 (2023), 98-115,DOI: 10.1080/00927872.2022.2089993.






\end{thebibliography}
\end{document}